\theoremstyle{definition}
\newtheorem{definition}{Definition}[section]
\newtheorem{assumption}[definition]{Assumption}
\newtheorem{notation}[definition]{Notation}
\theoremstyle{plain}
\newtheorem{lemma}[definition]{Lemma}
\newtheorem{theorem}[definition]{Theorem}
\newtheorem{proposition}[definition]{Proposition}
\newtheorem{conjecture}[definition]{Conjecture}
\theoremstyle{remark}
\newtheorem{remark}[definition]{Remark}
\newtheorem{example}[definition]{Example}
\newcommand{\bp}{\mathbf{p}}
\newcommand{\bA}{\mathbb{A}}
\newcommand{\bC}{\mathbb{C}}
\newcommand{\bF}{\mathbb{F}}
\newcommand{\bN}{\mathbb{N}}
\newcommand{\bP}{\mathbb{P}}
\newcommand{\bQ}{\mathbb{Q}}
\newcommand{\bR}{\mathbb{R}}
\newcommand{\bZ}{\mathbb{Z}}
\newcommand{\calD}{\mathcal{D}}
\newcommand{\calL}{\mathcal{L}}
\newcommand{\calO}{\mathcal{O}}
\newcommand{\calX}{\mathcal{X}}
\newcommand{\rmE}{\mathrm{E}}
\newcommand{\rmG}{\mathrm{G}}
\newcommand{\rmH}{\mathrm{H}}
\newcommand{\rmI}{\mathrm{I}}
\newcommand{\rmK}{\mathrm{K}}
\newcommand{\rmM}{\mathrm{M}}
\newcommand{\rmV}{\mathrm{V}}
\newcommand{\rmZ}{\mathrm{Z}}
\DeclareMathOperator{\disc}{disc}
\DeclareMathOperator{\divop}{div}
\DeclareMathOperator{\Gr}{Gr}
\DeclareMathOperator{\im}{im}
\DeclareMathOperator{\MW}{MW}
\DeclareMathOperator{\NS}{NS}
\DeclareMathOperator{\Pic}{Pic}
\DeclareMathOperator{\rank}{rank}
\DeclareMathOperator{\Stab}{Stab}
\numberwithin{table}{section}
\begin{document}

\title[Degenerations and Fibrations of K3 Surfaces]{Degenerations and Fibrations of K3 Surfaces: Lattice Polarisations and Mirror Symmetry}



\author[L. Giovenzana]{Luca Giovenzana}
\address{School of Mathematics and Statistics, University of Sheffield, Hicks Building, Hounsfield Road, Sheffield, S3~7RH, United Kingdom}
\email{L.Giovenzana@sheffield.ac.uk}

\author[A. Thompson]{Alan Thompson}
\address{Department of Mathematical Sciences, Loughborough University, Loughborough, Leicestershire, LE11~3TU, United Kingdom}
\email{A.M.Thompson@lboro.ac.uk}
\thanks{The authors were supported by Engineering and Physical Sciences Research Council (EPSRC) New Investigator Award EP/V005545/1.\\
\indent For the purpose of open access, the authors have applied a Creative Commons Attribution (CC BY) licence to any Author Accepted Manuscript version arising.}

\begin{abstract}
Tyurin degenerations of K3 surfaces are degenerations whose central fibre consists of a pair of rational surfaces glued along a smooth elliptic curve. We study the lattice theory of such Tyurin degenerations, establishing a notion of lattice polarisation that is compatible with existing definitions for the general fibre and the rational surfaces comprising the central fibre. We separately consider elliptically fibred K3 surfaces, where the base of the fibration admits a splitting into a pair of discs with specified monodromy around the boundary. In this setting we establish a notion of lattice polarisation for the induced elliptic fibrations over discs, which is compatible with the existing definition for K3 surfaces. Finally, we discuss the mirror symmetric correspondence between these two settings.
\end{abstract}

\date{}
\maketitle

\section{Introduction}



A K3 surface of degree 2 is given by a double cover of the projective plane branched along a sextic curve $(f_6 = 0) \subset \bP^2$; such a K3 surface can be described explicitly as a weighted projective hypersurface $(z^2 = f_6)\subset \bP(1,1,1,3)$. A simple degeneration of such a K3 surface occurs when the sextic curve degenerates to a double cubic. Explicitly, consider the family of K3 surfaces over the punctured disc $\Delta^*$ defined by the equation $(z^2={f_3}^2 + tf_6)\subset \bP(1,1,1,3)\times \Delta^*$, where $t \in \Delta^*$. This family can be completed to a flat family $\mathcal X \to \Delta$ with smooth total space $\calX$ and central fibre $X_0 = V_1\cup_C V_2$ , where $V_1\cong \mathbb P^2$ and $V_2\cong \textrm{Bl}_{18}\mathbb P^2$ are rational surfaces glued along a smooth anticanonical curve isomorphic to $(f_3=0)\subset \mathbb P^2$. 

One may associate a lattice to this degeneration in a canonical way. On $X_0$, the orthogonal complement of the hyperplane section inside $K_{V_1}^{\perp}\oplus K_{V_2}^{\perp} \subset \NS(V_1) \oplus \NS(V_2)$ is generated by classes $(e_i-e_{i+1})$, where $e_1,\ldots,e_{18} \in \NS(V_2)$ are the classes of the exceptional divisors in $V_2\cong \textrm{Bl}_{18}\mathbb P^2$. These classes span the negative definite root lattice $A_{17}$.

Degenerations such as this one, where a K3 surface degenerates into a pair of smooth rational surfaces $V_i$ glued along a smooth curve $C$, are called \emph{Tyurin degenerations}. By adjunction $C$ is necessarily an elliptic curve and is anticanonical in each of the surfaces $V_i$. We call such smooth rational surfaces with smooth anticanonical divisor \emph{quasi del Pezzo surfaces}.

Besides the degeneration described above, a K3 surface of degree $2$ admits three other Tyurin degenerations, which have corresponding lattices $D_{16}A_1$, $E_8E_8A_1$, and $E_7D_{10}$ \cite[Section 5]{npgttk3s}. Dolgachev observed that under mirror symmetry such degenerations correspond to elliptic fibrations on the mirror K3 surfaces, with reducible fibres governed by the lattices above \cite[Remark 7.11]{mslpk3s}. Indeed, in this case we see that mirrors to K3 surfaces of degree $2$ (which are polarised by the lattice $H\oplus E_8 \oplus E_8 \oplus A_1$) admit four different elliptic fibrations, with reducible singular fibres of Kodaira types $\rmI_{18}\ (\widetilde{A}_{17})$, $\rmI_{12}^*\,\rmI_2\ (\widetilde{D}_{16}\widetilde{A}_1)$, $\rmI\rmI^*\,\rmI\rmI^*\,\rmI_2\ (\widetilde{E}_8\widetilde{E}_8\widetilde{A}_1$), and $\rmI\rmI\rmI^*\, \rmI_6^*\ (\widetilde{E}_7\widetilde{D}_{10})$ respectively.

The DHT philosophy, introduced in \cite{mstdfcym}, attempts to link this observation to the theory of mirror symmetry for the surfaces $V_i$. It postulates that the base of each elliptic fibration $Y \to \bP^1$ on the mirror K3 surface should admit a splitting $\bP^1 \cong \Delta_1 \cup_{\gamma} \Delta_2$ into a pair of discs $\Delta_i$ glued along a simple loop $\gamma$, so that the restriction $Y_i \to \Delta_i$ of the fibration to the disc $\Delta_i$ is the Landau-Ginzburg (LG) model for the quasi del Pezzo surface $V_i$.

In our running example of the degeneration of a K3 surface of degree $2$, it is known that the mirror to $V_1 \cong \bP^2$ is a Landau-Ginzburg  model consisting of an elliptic fibration over $\bA^1$ with three singular fibres of Kodaira type $\rmI_1$ and monodromy $\begin{psmallmatrix} 1 & -9 \\ 0 & 1 \end{psmallmatrix}$ around the point at infinity (see, for instance, \cite{msdpsvccs}). This may be equivalently viewed as a fibration over a disc $\Delta_1$, by simply removing a small neighbourhood of the point at infinity. In this setting, the DHT philosophy suggests that there should exist a simple loop $\gamma$ in the base of the mirror fibration $Y \to \bP^1$, such that there are three singular fibres of types $\rmI_1\, \rmI_1\, \rmI_1$ on one side of $\gamma$, four singular fibres of types $\rmI_1\, \rmI_1\, \rmI_1\, \rmI_{18}$ on the other side of $\gamma$, and such that monodromy around $\gamma$ is given by $\begin{psmallmatrix} 1 & -9 \\ 0 & 1 \end{psmallmatrix}$. Moreover, there should be some sense in which an elliptic fibration over a disc with singular fibres of types $\rmI_1\, \rmI_1\, \rmI_1\, \rmI_{18}$ may be thought of as an LG model for $V_2\cong \textrm{Bl}_{18}\mathbb P^2$ (which is not a del Pezzo surface and so does not have an LG model in the traditional sense).

The aim of this paper is to establish a general framework for this theory. There are substantial technical obstacles to doing this, related to establishing compatibility between the various theories of mirror symmetry involved.

At the highest level, mirror symmetry for $n$-dimensional Calabi-Yau manifolds predicts an exchange of Hodge numbers $h^{p,q}(X) = h^{p,n-q}(Y)$ between a Calabi-Yau $X$ and its mirror $Y$. For K3 surfaces, however, this statement is vacuous, as all K3 surfaces have the same Hodge numbers.

A solution to this problem was given by Dolgachev, Nikulin, and Pinkham \cite{sedeask3,fggahfrsg2r,mslpk3s}, who realised that mirror symmetry for K3 surfaces could instead be formulated as an exchange between the algebraic (i.e. N\'{e}ron-Severi) and transcendental parts of $H^{1,1}(X)$. Their theory relies upon the notion of lattice polarisations, which impose constraints on the N\'{e}ron-Severi lattice of a K3 surface.

On the other hand, existing theories of mirror symmetry for quasi del Pezzo surfaces are somewhat less fine-grained. Work of Auroux, Katzarkov, and Orlov \cite{msdpsvccs} has established a version of homological mirror symmetry between del Pezzo surfaces and their LG models, and Harder and the second author \cite{pdpslf} have extended this to a homological mirror symmetry statement for quasi del Pezzo surfaces, using Kuznetsov's theory of pseudolattices \cite{ecslc}. However, these theories are not sensitive to much apart from the del Pezzo degree (i.e. the self-intersection number of the anticanonical divisor); they do not, for instance, draw any distinction between LG models for different weak del Pezzo surfaces of the same degree, such as $\bP^1 \times \bP^1$ and the Hirzebruch surface $\bF_2$.

To remedy this, one may ask for a notion of lattice polarised mirror symmetry for quasi del Pezzo surfaces and their LG models, in the style of Dolgachev's theory for K3 surfaces. However, it is not obvious how such a concept should be correctly defined: one has $H^2(V;\bZ) \cong \NS(V)$ for any quasi del Pezzo surface $V$, so there is no algebraic / transcendental decomposition to work with as there is in the K3 case. An attempt to solve this problem in the weak del Pezzo case, by defining lattice polarisations in terms of effective classes instead of algebraic classes, was made by Doran and the second author in \cite{mslpdps}.

Concretely, the aims of this paper can be summarised as follows.
\begin{enumerate}
\item Rigorously establish a theory of lattice polarisations for Tyurin degenerations of K3 surfaces $X \rightsquigarrow X_0 = V_1 \cup_C V_2$, linking Dolgachev's \cite{mslpk3s} theory of lattice polarisations for the K3 surface $X$ to the (pseudo)lattice theory of the quasi del Pezzo surfaces $V_i$ described in \cite{mslpdps,pdpslf} (Theorems~\ref{thm:polarisedtyurinstable} and \ref{thm:Lpolwdp}).
\item Enhance the theory of pseudolattices for elliptic fibrations over discs from the setting of \cite{pdpslf}, where all singular fibres have Kodaira type $\rmI_1$, to allow (non-multiple) fibres of arbitrary Kodaira types (Theorem \ref{thm:qdpfibration}).
\item Rigorously develop a theory of lattice polarisations for elliptic fibrations over discs (with arbitrary Kodaira fibres), which is compatible both with Dolgachev's \cite{mslpk3s} theory of lattice polarisations for an elliptically fibred K3 surface $Y \to \bP^1$ when there is an appropriate splitting $\bP^1 \cong \Delta_1 \cup_{\gamma} \Delta_2$ of the base, and with the theory of lattice polarised rational elliptic surfaces described in \cite{mslpdps} (Theorems \ref{thm:Gammapolfibration} and \ref{thm:ratellcompatibility}).
\item Formulate a mirror symmetric correspondence between these two settings.
\end{enumerate}

To elaborate further on (4), we will develop a rigorous definition for when a lattice polarised Tyurin degeneration $X \rightsquigarrow X_0 = V_1 \cup_C V_2$ and a lattice polarised elliptically fibred K3 surface $Y \to \bP^1$ with a splitting $\bP^1 \cong \Delta_1 \cup_{\gamma} \Delta_2$ form a \emph{mirror pair} (Definition \ref{def:mirrorpair}). We will show that this definition is compatible with existing theories of mirror symmetry in the following ways.
\begin{itemize}
\item The K3 surfaces $X$ and $Y$ are lattice polarised mirrors in the sense of \cite{mslpk3s} (Section \ref{sec:dolgachevnikulin});
\item $(V_i,C)$ and the restriction $Y_i \to \Delta_i$ form a quasi del Pezzo surface / LG model mirror pair, in the sense of \cite{pdpslf} (Theorem \ref{thm:HMS});
\item We formulate a version of lattice polarised mirror symmetry for weak del Pezzo surfaces $(V,C)$ and elliptic fibrations over discs $W \to \Delta$ (Conjecture~\ref{con:wdpmirror}), which is compatible with the theory above and the formulation of mirror symmetry for weak del Pezzo surfaces from \cite{mslpdps} (Theorem~\ref{thm:wdpcompatibility}).
\end{itemize}

Given this, we conclude by rigorously formulating the DHT philosophy for K3 surfaces as a conjecture (Conjecture \ref{conj:DHT}), as follows: If $X \rightsquigarrow X_0 = V_1 \cup_C V_2$ is a Tyurin degeneration of lattice polarised K3 surfaces, then the mirror K3 surface $Y$ to $X$ should admit an elliptic fibration $Y \to \bP^1$ with a splitting $\bP^1 \cong \Delta_1 \cup_{\gamma}  \Delta_2$, so that $X$ and $Y$ form a mirror pair. 

There remain several obstacles to proving this conjecture, principal amongst which is appropriate choice of the splitting loop $\gamma$. Evidence from \cite{nftdk3spr18l} suggests that this loop cannot be determined by lattice theoretic considerations alone; additional combinatorial data is likely to be required.

\subsection{Outline of the paper} 
This paper is structured as follows. In Section \ref{sec:pseudolattices} we begin by developing the abstract theory of lattices and pseudolattices that we will need for the remainder of the paper. The aim is to develop tools to translate between the lattice theory of the K3 surfaces $X$ and $Y$ and the theory of pseudolattices associated with the quasi del Pezzo surfaces $V_i$ and the fibrations over discs $Y_i \to \Delta_i$ (as described in \cite{pdpslf}). We then describe how to define lattice polarisations in each of these settings, in a compatible way.

In Section \ref{sec:degenerations} we apply these tools to the study of Tyurin degenerations of K3 surfaces. Many of the ideas here are extensions of ideas of Friedman \cite{npgttk3s} to the lattice polarised setting; there is also a close relationship with recent work of Alexeev and Engel on lattice polarised degenerations \cite{cmk3s}. In particular, in Section \ref{sec:degenerationpolarisation} we define lattice polarisations on Tyurin degenerations and on their central fibres, and show (Theorems \ref{thm:polarisedtyurinstable} and \ref{thm:Lpolwdp}) that these notions are compatible with the definition of a lattice polarisation on a weak del Pezzo surface given in \cite{mslpdps}. In Section \ref{sec:primitivity}, we introduce the concept of a \emph{doubly admissible} degeneration, which is a natural extension of Dolgachev's \cite{mslpk3s} $1$-admissible condition to the setting of Tyurin degenerations; in the remainder of Section \ref{sec:degenerations} we show that this assumption allows some major simplifications of the underlying theory and we explore its implications for moduli.

In Section \ref{sec:fibrations} we study elliptically fibred K3 surfaces. In this setting there is much more work to do. The first serious obstacle is that the results of \cite{pdpslf} only hold for genus $1$ Lefschetz fibrations over discs, which necessarily have all singular fibres of Kodaira type $\rmI_1$, whereas elliptically fibred K3 surfaces may have singular fibres of many different Kodaira types. Our first main result of this section, Theorem \ref{thm:qdpfibration}, shows that the results of \cite{pdpslf} may be extended to arbitrary elliptic fibrations over discs (without multiple fibres).

Using this, in Section \ref{sec:ellipticK3} we proceed to study elliptic fibrations on lattice polarised K3 surfaces $Y \to \bP^1$. We describe when a loop $\gamma \subset \bP^1$ splits the base of such a fibration into two LG models and show that the theory of Section \ref{sec:pseudolattices} applies in this setting. This allows us, in Section \ref{sec:fibrationpolarisation}, to describe a compatibility condition between the lattice polarisation on $Y$ and the splitting loop $\gamma \subset \bP^1$, under which we obtain induced lattice polarisations on the pieces $Y_i \to \Delta_i$ (Theorem \ref{thm:Gammapolfibration}). In Section \ref{sec:primitivity2} we explore the analogue of the doubly admissible condition in this setting and show that it approximately corresponds to the existence of a section for the elliptic fibration $Y \to \bP^1$ along with a $1$-admissible $0$-cusp in the Baily-Borel compactification of the moduli space of $Y$. 

To conclude our treatment of fibrations, in Section \ref{sec:rationalelliptic} we explore the compatibility between the notions of lattice polarisation defined here and in \cite{mslpdps}. This is complicated by the fact that our lattice polarisations are defined for elliptic fibrations over discs, whilst those in \cite{mslpdps} are defined for rational elliptic surfaces. Compatibility between the two definitions is established by Theorem \ref{thm:ratellcompatibility}.

In Section \ref{sec:mirror} we discuss how mirror symmetry relates the degeneration and fibration pictures presented in the previous sections. We begin by defining when a Tyurin degeneration of lattice polarised K3 surfaces $X \rightsquigarrow X_0 = V_1 \cup_C V_2$ and a lattice polarised elliptically fibred K3 surface $Y \to \bP^1$ with a splitting $\bP^1 \cong \Delta_1 \cup_{\gamma} \Delta_2$ form a \emph{mirror pair} (Definition \ref{def:mirrorpair}). We give the most general form of this definition first, which is quite complicated to state, then show that adding the doubly admissible assumption allows a dramatic simplification (Proposition \ref{prop:mirrordoublyadmissible}).

In Section \ref{sec:mirrorcompatibility} we prove compatibility between this definition and some established formulations of mirror symmetry. We demonstrate that our mirror pairs are compatible with lattice polarised mirror symmetry for the K3 surfaces $X$ and $Y$, in the sense of \cite{mslpk3s}, and homological mirror symmetry for the quasi del Pezzo surfaces $V_i$ and the pieces $Y_i \to \Delta_i$, in the sense of \cite{pdpslf} (Theorem \ref{thm:HMS}). 

Compatibility between our mirror pairs and the notion of lattice polarised mirror symmetry for lattice polarised del Pezzo surfaces (in the sense of \cite{mslpdps}) is more complicated. Firstly, one must address the fact that the mirror correspondence in \cite{mslpdps} is stated for rational elliptic surfaces, not elliptic fibrations over discs; this is dealt with by Conjecture \ref{con:wdpmirror}, which reformulates the main conjecture of \cite{mslpdps} in our setting. Secondly, and more seriously, one must also account for the fact that effective curves on $X$ (resp. $Y$) may not give rise to effective curves supported on just one component $V_i$ (resp. one piece $Y_i \to \Delta_i$); the failure of this is controlled by an object that we call the \emph{coupling group}. When this coupling group is torsion, Theorem \ref{thm:wdpcompatibility} shows that our mirror pairs are compatible with the theory from \cite{mslpdps}. On the other hand, if the coupling group is not torsion, we do not expect lattice polarised mirror symmetry for the K3 surfaces $X$ and $Y$ (in the sense of \cite{mslpk3s}) to induce lattice polarised mirror symmetry for $V_i$ and $Y_i \to \Delta_i$ (in the sense of \cite{mslpdps}).

Finally, Section \ref{sec:dht} explores the consequences of this work for the DHT philosophy. In particular, we give a precise conjectural statement (Conjecture \ref{conj:DHT}) about the mirror symmetric relationship between Tyurin degenerations of K3 surfaces and elliptically fibred K3 surfaces.
\medskip

\noindent\textbf{Data access statement:} No datasets were generated or analysed during this study.
\medskip

\noindent\textbf{Acknowledgments:} LG would like to thank Giulia Gugiatti and Franco Rota for stimulating discussions. He also wishes to thank Franco Giovenzana for engaging in fruitful discussions throughout the duration of this project. AT would like to thank Charles Doran and Andrew Harder for helpful discussions. The authors would also like to thank Klaus Hulek for helpful comments on an earlier version.  Part of this project was completed whilst AT was attending the thematic programme on \emph{K-theory, algebraic cycles and motivic homotopy theory} at the Isaac Newton Institute in the summer of 2022; he would like to thank the Isaac Newton Institute for their excellent hospitality over this period. 

\section{Pseudolattices} \label{sec:pseudolattices}

\subsection{Background}\label{sec:background}

We begin by briefly recapping some relevant background from the theory of pseudolattices, based on \cite{ecslc,pdpslf}; a much more detailed treatment of these concepts may be found in those two papers. We start with some basic definitions.

\begin{definition} \cite[Definition 2.1]{ecslc}
A \emph{pseudolattice} $(\mathrm{G},\langle \cdot, \cdot \rangle_\mathrm{G})$ is a finitely generated free abelian group $\mathrm{G}$ equipped with a (not necessarily symmetric) nondegenerate bilinear form $\langle \cdot, \cdot \rangle_\mathrm{G} \colon \mathrm{G} \times \mathrm{G} \rightarrow \mathbb{Z}$.
The \emph{rank} of $\rmG$ is its rank as a free abelian group and $\rmG$ is called \emph{unimodular} if the form $\langle \cdot, \cdot \rangle_\mathrm{G}$ induces an isomorphism between $\mathrm{G}$ and $\mathrm{Hom}_\mathbb{Z}(\mathrm{G},\mathbb{Z})$. To simplify notation, we will usually refer to $(\mathrm{G},\langle \cdot, \cdot \rangle_\mathrm{G})$ by its underlying group $\mathrm{G}$.
\end{definition}

A simple example of a pseudolattice, that we will use repeatedly, is the \emph{elliptic curve pseudolattice}.

\begin{definition} \label{def:ellipticcurvepseudolattice} \cite[Definition 2.14]{pdpslf} The \emph{elliptic curve pseudolattice}, henceforth denoted by $\mathrm{E}$, is the unimodular pseudolattice of rank $2$ generated by primitive elements $a,b$, with bilinear form defined by
\[
\langle a,b \rangle_\mathrm{E} = -1, \quad\quad \langle b,a \rangle_\mathrm{E} = 1, \quad\quad \langle a,a \rangle_\mathrm{E} = \langle b,b \rangle_\mathrm{E} = 0,
\]
extended to $\bZ^2$ by linearity.
\end{definition}

The pseudolattice $\mathrm{E}$ arises naturally from an elliptic curve $C$ in two ways (see \cite[Example 2.15]{pdpslf}).
\begin{enumerate}
\item The numerical Grothendieck group $\mathrm{K}_0^{\mathrm{num}}(\mathbf{D}(C))$ associated to the bounded derived category $\mathbf{D}(C)$ of coherent sheaves on $C$, with its Euler pairing, is isomorphic to $\mathrm{E}$. The element $a$ is identified with the class of the structure sheaf $\calO_p$ of a point $p \in C$, and $b$ may be identified with the class of $\calO_C$.
\item The first integral homology $\mathrm{H}_1(C;\bZ)$ is isomorphic to $\rmE$. In this case $(a,b)$ is taken to be a symplectic basis for $\mathrm{H}_1(C;\bZ)$ and the bilinear form $\langle \cdot,\cdot \rangle_{\rmE}$ is the negative of the standard symplectic form\footnote{\cite[Example 2.15]{pdpslf} does not specify the orientation of the basis $(a,b)$ or the sign convention for the bilinear form; this is the correct choice for compatibility with the results of \cite[Section 4]{pdpslf}.}. This may also be interpreted as the numerical Grothendieck group associated to a triangulated category, in this case the \emph{derived Fukaya category} $\mathbf{F}(C)$ associated to $C$.
\end{enumerate}
These two interpretations can be thought of as mirror to one another \cite[Section 5]{pdpslf}; the first will be important for us in Section \ref{sec:degenerations} and the second in Section \ref{sec:fibrations}.

We now define some important structures on pseudolattices: \emph{Serre operators} and \emph{exceptional bases}.

\begin{definition} \cite[Section 2.1]{ecslc}
Let $\mathrm{G}$ be a pseudolattice. A \emph{Serre operator} $S_\mathrm{G}$ is an automorphism of $\mathrm{G}$ so that $\langle u_1 , u_2 \rangle_\mathrm{G} = \langle u_2, S_\mathrm{G}(u_1) \rangle_\mathrm{G}$ for all $u_1,u_2 \in \mathrm{G}$. 
\end{definition}

We note that if a Serre operator exists then it is unique, and that every unimodular pseudolattice has a Serre operator.

\begin{definition} \cite[Definition 2.2]{ecslc}
An element $e \in\mathrm{G}$ is called \emph{exceptional} if $\langle e,e\rangle_{\mathrm{G}} = 1$. A sequence of elements $(e_1,\ldots,e_n)$ is called an \emph{exceptional sequence} if $\langle e_i,e_j \rangle_{\mathrm{G}} = 0$ for all $i > j$. An \emph{exceptional basis} for $\mathrm{G}$ is an exceptional sequence whose elements form a basis for $\mathrm{G}$.
\end{definition}

A subclass of pseudolattices, that will be important for this paper, is the \emph{surface-like} pseudolattices.

\begin{definition}\label{def:surfacelike} \cite[Definition 3.1]{ecslc}
A pseudolattice $\mathrm{G}$ is \emph{surface-like} if there is a primitive element $\mathbf{p} \in\mathrm{G}$ such that
\begin{enumerate}
\item $\langle \mathbf{p},\mathbf{p} \rangle_\mathrm{G} = 0$.
\item $\langle \mathbf{p} ,u \rangle_\mathrm{G} = \langle u , \mathbf{p}\rangle_\mathrm{G}$ for all $u \in \mathrm{G}$.
\item If $u_1,u_2 \in \mathrm{G}$ satisfy $\langle u_1,\mathbf{p} \rangle_\mathrm{G}= \langle u_2 ,\mathbf{p} \rangle_\mathrm{G} = 0$, then $\langle u_1,u_2 \rangle_\mathrm{G} = \langle u_2,u_1 \rangle_\mathrm{G}$.
\end{enumerate}
An element $\bp$ with the above properties is called a \emph{point-like} element in $\mathrm{G}$.
\end{definition}

Using a point-like element, one may define the \emph{N\'{e}ron-Severi lattice} of a surface-like pseudolattice $\rm{G}$.

\begin{definition} \cite[Section 3.2]{ecslc}
Let $\mathrm{G}$ be a surface-like pseudolattice. Define the \emph{N\'eron-Severi group} of $\mathrm{G}$ to be the group $\NS(\mathrm{G}) := {\mathbf{p}^{\perp}} / \mathbf{p}$. It is a finitely generated free abelian group, equipped with a nondegenerate integral symmetric bilinear form $q(\cdot, \cdot)$, defined as follows: if $u_1,u_2 \in {\mathbf{p}}^{\perp}$ and $[u_1], [u_2]$ are their classes in $\NS(\mathrm{G})$, then $q\left([u_1], [u_2]\right) := -\langle u_1, u_2 \rangle_\mathrm{G}$, and this definition is independent of the choice of representatives for $[u_1]$, $[u_2]$. The pair $(\NS(\mathrm{G}), q(\cdot,\cdot))$ is called the \emph{N\'eron-Severi lattice} of $\mathrm{G}$. As before, to simplify notation, we will usually refer to $(\NS(\mathrm{G}), q(\cdot,\cdot))$ by its underlying group $\NS(\mathrm{G})$. 
\end{definition}

Following \cite[Section 3.2]{ecslc}, one may define a rank function on a surface-like pseudolattice $\rmG$ as follows. For $u \in \rmG$, we have $\rank(u) := \langle  u, \mathbf{p} \rangle_\mathrm{G} = \langle \bp, u \rangle_{\mathrm{G}}$.
Using this, one may define a map $\lambda\colon \bigwedge^2 \mathrm{G}\to \mathrm{NS}(\mathrm{G})$ by
\[
\lambda( u_1 \wedge u_2) := [\rank(u_1) u_2 - \rank(u_2) u_1].
\]
This map can be used to define a distinguished class $K_\mathrm{G}$  in $\NS(\mathrm{G}) \otimes \bQ$ , called the \emph{canonical class} of $\mathrm{G}$. Existence and uniqueness of the canonical class is established in \cite[Lemma 3.12]{ecslc}.

\begin{definition}\label{def:canonical}
The \emph{canonical class of $\mathrm{G}$} is the unique class $K_\mathrm{G} \in \mathrm{NS}(\mathrm{G}) \otimes \bQ$ such that for every $u_1,u_2 \in \mathrm{G}$,
\[
\langle u_1,u_2 \rangle_\mathrm{G} - \langle u_2,u_1 \rangle_\mathrm{G} = - q(K_\mathrm{G},\lambda(u_1 \wedge u_2) ).
\]
\end{definition}

By \cite[Lemma 3.12]{ecslc}, if $\mathrm{G}$ is unimodular, then $K_\mathrm{G}$ is integral (i.e. $K_{\mathrm{G}} \in \mathrm{NS}(\mathrm{G})$).

We next introduce a special class of maps between pseudolattices.

\begin{definition} \cite[Definition 2.16]{pdpslf}
Let $\mathrm{G}$ and $\mathrm{H}$ be pseudolattices. A \emph{spherical homomorphism} from $\mathrm{G}$ to $\mathrm{H}$ is a homomorphism of abelian groups $f\colon\mathrm{G} \to \mathrm{H}$ with the following properties:
\begin{enumerate}
\item The homomorphism $f$ has a right adjoint $r\colon \mathrm{H} \to \mathrm{G}$, in the sense that 
\[
\langle f(u), v \rangle_{\mathrm{H}} = \langle u, r(v) \rangle_{\mathrm{G}}
\]
for any $u \in \mathrm{G}$ and $v \in \mathrm{H}$. 
\item The \emph{twist} and \emph{cotwist} endomorphisms, which are defined as
\[
{T}_f := \mathrm{id}_{\mathrm{H}} - fr, \quad \quad {C}_f := \mathrm{id}_{\mathrm{G}} - rf,
\]
respectively, are invertible. In fact, if ${T}_f$ is invertible with inverse $T_f^{-1}$, then ${C}_f$ is invertible with inverse $C_f^{-1} = \mathrm{id}_{\mathrm{G}} + rT_f^{-1}f$, and vice versa.
\end{enumerate}
\end{definition}

Note that, by \cite[Remark 2.17]{pdpslf}, if a right adjoint for a homomorphism $f$ exists, then it must be unique, and a right adjoint always exists if $\rmG$ is unimodular.

\begin{definition}\cite[Definition 2.18]{pdpslf}
A spherical homomorphism $f \colon \mathrm{G} \rightarrow \mathrm{H}$ is \emph{relative $(-1)^n$-Calabi-Yau} (usually abbreviated to \emph{relative $(-1)^n$-CY}) if $\mathrm{G}$ has a Serre operator $S_{\mathrm{G}}$ and ${C}_f = (-1)^nS_{\mathrm{G}}$.
\end{definition}

Note that this definition only depends upon the parity of $n$; consequently we will usually take $n$ to be an element of $\bZ/2\bZ$. An example of a relative $(-1)^0$-CY spherical homomorphism that will be important for this paper is as follows.

\begin{example}\cite[Example 2.20]{pdpslf} \label{ex:Z(v)}
Let $\mathrm{E}$ be the elliptic curve pseudolattice and let $v \in \mathrm{E}$ be a primitive vector. Note that $\langle v, v\rangle_\mathrm{E} = 0$, as the bilinear form on $\rmE$ is antisymmetric. Let $\mathrm{Z}(v)$ be the unimodular pseudolattice generated by a single element $z$, with $\langle z, z \rangle_{\mathrm{Z}(v)}  =1$. Note that the Serre operator on $\rmZ(v)$ is the identity and $(z)$ is an exceptional basis for $\mathrm{Z}(v)$.

There is a spherical homomorphism $\zeta\colon \mathrm{Z}(v) \rightarrow \mathrm{E}$ sending $z$ to $v$. The right adjoint of this homomorphism is $\rho\colon \mathrm{E} \to \mathrm{Z}(v)$ defined by $w \mapsto \langle v, w \rangle_\mathrm{E} z$.
The cotwist of $\zeta$ is the identity on $\bZ(v)$, so it follows that $\zeta$ is relative $(-1)^0$-CY. The twist of $\zeta$ acts on $w \in \mathrm{E}$ by $T_{\zeta}(w) = w - \langle v, w \rangle_\mathrm{E} v$. If we identify $\rmE$ with $\mathrm{H}_1(C;\bZ)$ for an elliptic curve $C$, and $v \in \mathrm{H}_1(C;\bZ)$ is the class of a simple oriented cycle, then $T_{\zeta}$ is the action on $\mathrm{H}_1(C;\bZ)$ induced by the Dehn twist associated to $v$.
\end{example}

We can use spherical homomorphisms to glue pseudolattices together.

\begin{definition}\cite[Definition 2.22]{pdpslf} \label{def:gluing}
Let $\mathrm{G}_1, \mathrm{G}_2$ and  $\mathrm{H}$ be pseudolattices. Assume that 
\[
f_1 \colon \mathrm{G}_1 \longrightarrow \mathrm{H}, \quad\quad f_2 \colon \mathrm{G}_2 \longrightarrow \mathrm{H}
\]
are homomorphisms of the underlying groups. Define $\mathrm{G}_1 \oright_{\mathrm{H}} \mathrm{G}_2$ to be the pseudolattice with underlying group $\mathrm{G}_1 \oplus \mathrm{G}_2$ and bilinear form $\langle \cdot , \cdot \rangle_{\mathrm{G}_1 \oright_{\mathrm{H}} \mathrm{G}_2}$ given as follows: for $u_i \in \mathrm{G}_i$ and $v_j \in \mathrm{G}_j$, define
\[
\langle u_i, v_j \rangle_{\mathrm{G}_1 \oright_{\mathrm{H}}\mathrm{G}_2}
=\begin{cases}
\langle u_i, v_j \rangle_{\mathrm{G}_i} & \text{if }  i = j\\
\langle f_i(u_i), f_j(v_j) \rangle_\mathrm{H} & \text{if } i=1,\ j = 2 \\
0 & \text{if }  i=2,\ j=1 
\end{cases}
\]
and extend to $\mathrm{G}_1 \oplus \mathrm{G}_2$ by linearity. By \cite[Proposition 2.23]{pdpslf}, there is a natural spherical homomorphism $f_1 \oright f_2 \colon \mathrm{G}_1 \oright_{\rmH} \mathrm{G}_2 \rightarrow \mathrm{H}$, sending $(u_1,u_2) \in \mathrm{G}_1 \oright_{\rmH} \mathrm{G}_2$ to $f_1(u_1) + f_2(u_2)$, which has the property that ${T}_{f_1 \oright f_2} = {T}_{f_1} \cdot {T}_{f_2}$.
\end{definition}

We will mostly be interested in the case of spherical homomorphisms where the target is the elliptic curve pseudolattice $\rmE$. In this case we have the following result, which is a special case of \cite[Proposition 2.23]{pdpslf}.

\begin{lemma} \label{lem:sumisCY}
Given two pseudolattices $\mathrm{G}_1$ and $\mathrm{G}_2$ with relative $(-1)^0$-CY spherical homomorphisms $f_i\colon \mathrm{G}_i \rightarrow \mathrm{E}$, then $f_1 \oright f_2$ is also relative $(-1)^0$-CY.
\end{lemma}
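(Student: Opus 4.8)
The plan is to deduce Lemma~\ref{lem:sumisCY} directly from the machinery already set up: the formula $T_{f_1 \oright f_2} = T_{f_1} \cdot T_{f_2}$ from Definition~\ref{def:gluing}, and the general identities relating twists, cotwists, and Serre operators for spherical homomorphisms. First I would recall that for any relative $(-1)^0$-CY spherical homomorphism $f_i \colon \mathrm{G}_i \to \mathrm{E}$, by definition $\mathrm{G}_i$ has a Serre operator $S_{\mathrm{G}_i}$ with $C_{f_i} = S_{\mathrm{G}_i}$. The goal is to show that $f := f_1 \oright f_2$ is spherical (which is already guaranteed by Definition~\ref{def:gluing} via \cite[Proposition 2.23]{pdpslf}), that $\mathrm{G}_1 \oright_{\mathrm{E}} \mathrm{G}_2$ has a Serre operator $S$, and that $C_f = S$.

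The key computation is to identify the cotwist $C_f = \mathrm{id} - rf$, where $r$ is the right adjoint of $f = f_1 \oright f_2$. I would first work out $r$ explicitly in terms of the right adjoints $r_1, r_2$ of $f_1, f_2$: since the pairing on $\mathrm{G}_1 \oright_{\mathrm{E}} \mathrm{G}_2$ restricts to $\langle \cdot,\cdot\rangle_{\mathrm{G}_i}$ on each summand, mixes via $f_i$ across summands in the order $(1,2)$, and vanishes in the order $(2,1)$, the adjoint $r$ should send $v \in \mathrm{E}$ to $(r_1(v), r_2(v))$ up to a correction coming from the asymmetry of the gluing — this needs to be checked carefully against the adjunction identity $\langle f(u), v\rangle_{\mathrm{E}} = \langle u, r(v)\rangle_{\mathrm{G}_1 \oright \mathrm{G}_2}$. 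Once $r$ is pinned down, one computes $rf$ on a general element $(u_1, u_2)$ and reads off $C_f$ in block form; the diagonal blocks should reproduce $C_{f_1} = S_{\mathrm{G}_1}$ and $C_{f_2} = S_{\mathrm{G}_2}$, while the off-diagonal blocks record the coupling. Then I would define $S$ on $\mathrm{G}_1 \oplus \mathrm{G}_2$ to be exactly this operator $C_f$ and verify the Serre identity $\langle u, w \rangle = \langle w, S(u)\rangle$ directly, splitting into the cases where $u, w$ lie in the same or different summands and using that $S_{\mathrm{G}_i}$ is the Serre operator on $\mathrm{G}_i$ together with the adjunction property of $r_i$ on the cross terms. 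Invertibility of $S$ follows since $C_f$ is invertible by the spherical hypothesis. Alternatively — and this may be the cleanest route — I would invoke the general fact (implicit in \cite{ecslc,pdpslf}) that $C_f = S_{\mathrm{G}}$ holds whenever $\mathrm{H} = \mathrm{E}$ has Serre operator equal to $-\mathrm{id}$ (note $\langle a,b\rangle_{\mathrm{E}} = -1 = -\langle b, a\rangle_{\mathrm{E}}$, so $S_{\mathrm{E}} = -\mathrm{id}$) and each $f_i$ is relative $(-1)^0$-CY, because the relation between $S_{\mathrm{G}}$, $S_{\mathrm{H}}$, $T_f$ and $C_f$ for a relative Calabi-Yau functor forces the parity to propagate through the gluing; the statement $T_{f_1 \oright f_2} = T_{f_1} T_{f_2}$ is the shadow of this on the twist side.

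The main obstacle I anticipate is the bookkeeping for the off-diagonal blocks of $C_f$: because the gluing pairing is deliberately asymmetric (nonzero for $i=1, j=2$ but zero for $i=2, j=1$), the right adjoint $r$ and hence $rf$ will not be block-diagonal, and one must check that the resulting non-symmetric operator $C_f$ nonetheless satisfies the Serre identity on $\mathrm{G}_1 \oright_{\mathrm{E}} \mathrm{G}_2$. Concretely, the cross term $\langle u_1, u_2 \rangle = \langle f_1(u_1), f_2(u_2)\rangle_{\mathrm{E}}$ must match $\langle u_2, S(u_1)\rangle$; since $\langle u_2, u_1\rangle = 0$ by construction, this means $S(u_1)$ must have a nonzero $\mathrm{G}_2$-component precisely accounting for $\langle f_1(u_1), f_2(u_2)\rangle_{\mathrm{E}}$ against all $u_2$, i.e. this component is $r_2(f_1(u_1))$ (up to sign). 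Verifying this identification and that it is consistent with $C_{f_i} = S_{\mathrm{G}_i}$ on the diagonal is the crux; everything else is linear algebra. Since Definition~\ref{def:gluing} already hands us sphericality and the twist formula for free, and the problem is explicitly flagged as "a special case of \cite[Proposition 2.23]{pdpslf}", I expect the proof to be short once the adjoint $r$ is written down correctly, likely just a few lines citing that proposition and noting that $S_{\mathrm{E}} = -\mathrm{id}$ makes the $(-1)^0$-CY property additive under $\oright$.
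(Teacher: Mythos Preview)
Your proposal is correct, and in fact goes well beyond what the paper does: the paper gives no proof at all, simply noting (in the sentence immediately preceding the lemma) that this is a special case of \cite[Proposition 2.23]{pdpslf}. Your closing observation that the argument should reduce to ``a few lines citing that proposition'' is exactly the paper's treatment; the explicit block computation of $C_f$ you sketch is a valid unpacking of what that cited proposition establishes, but the paper does not carry it out.
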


As an important example of this gluing process, we can glue the spherical homomorphisms $\rmZ(v) \to \rmE$ from Example \ref{ex:Z(v)} to build pseudolattices of higher rank.

\begin{example}\label{ex:multiZ} \cite[Example 2.24]{pdpslf}
Suppose we have an ordered $n$-tuple of elements $(v_1,\ldots,v_n)$ in $\rmE$. For each $i \in \{1,\ldots,n\}$, define rank one pseudolattices $\mathrm{Z}(v_i)$ generated by elements $z_i$ with $\langle z_i,z_i \rangle_{\mathrm{Z}(v_i)} = 1$, as in Example \ref{ex:Z(v)}. By Example \ref{ex:Z(v)}, we have relative $(-1)^0$-CY spherical homomorphisms $\zeta_i \colon \mathrm{Z}(v_i) \to \mathrm{E}$ taking $z_i$ to $v_i$.

Let $\mathrm{Z}(v_1,\ldots,v_n)$ denote the pseudolattice 
\[\mathrm{Z}(v_1,\ldots,v_n) := \mathrm{Z}(v_1) \oright_{\mathrm{E}} \mathrm{Z}(v_2) \oright_{\mathrm{E}}  \cdots \oright_{\mathrm{E}}  \rmZ(v_n).\] 
Then $\mathrm{Z}(v_1,\ldots,v_n)$ is a unimodular pseudolattice of rank $n$ generated by $z_1,\ldots,z_n$, with bilinear form given by
\[
\langle z_i, z_j \rangle_{\mathrm{Z}(v_1,\ldots,v_{n})} = \begin{cases}1 & \text{ if } i = j \\
\langle v_i, v_j \rangle_\mathrm{E} & \text{ if } i < j \\
0 & \text{ if } i > j.
\end{cases}
\]
$(z_1,\ldots,z_n)$ thus form an exceptional basis for $Z(v_1,v_2)$.

The spherical homomorphism $\zeta = \zeta_1 \oright \cdots \oright \zeta_n \colon \mathrm{Z}(v_1,\ldots,v_{n}) \rightarrow \mathrm{E}$ sending $z_i$ to $v_i$ is relative $(-1)^{0}$-CY and the twist ${T}_{\zeta} = {T}_{\zeta_{1}}  \cdots {T}_{\zeta_{n}}$.
\end{example}

Spherical homomorphisms to the elliptic curve pseudolattice give a convenient way to construct surface-like pseudolattices. The following result is a combination of the results of \cite[Propositions 3.1, 3.2, 3.3]{pdpslf}.

\begin{proposition} \label{prop:sphericalsurfacelike} Suppose that $(a,b)$ is a basis for $\rmE$ as in Definition \ref{def:ellipticcurvepseudolattice}. Let $f \colon \rmG \to \rmE$ be a relative $(-1)^0$-CY spherical homomorphism with right adjoint $r$. Then $\rmG$ is surface-like with point-like vector $\bp = r(a)$ if and only if $r(a)$ is primitive and the twist $T_f$ satisfies $T_f(a) = a$. Moreover, if this holds, then:
\begin{enumerate}
\item $r(b) \in \bp^{\perp}$ and $[r(b)] = -K_{\rmG}$, where the square brackets $[-]$ denote the class in $\NS(\rmG)$.
\item In the basis $(a,b)$, the twist $T_f$ has matrix  $\begin{psmallmatrix} 1 & -d \\ 0 & 1\end{psmallmatrix}$, where $d := q(K_{\mathrm{G}},K_{\mathrm{G}})$.
\end{enumerate}
\end{proposition}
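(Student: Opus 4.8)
The plan is to distil everything from the relative $(-1)^0$-CY and spherical hypotheses into a single identity and then read off the rest by linear algebra in $\rmE$. Since $C_f = S_{\rmG}$ by the relative $(-1)^0$-CY hypothesis, the Serre property together with adjunction gives, for all $u_1,u_2\in\rmG$, that $\langle u_1,u_2\rangle_{\rmG} = \langle u_2,\,u_1 - rf(u_1)\rangle_{\rmG} = \langle u_2,u_1\rangle_{\rmG} - \langle f(u_2),f(u_1)\rangle_{\rmE}$, i.e.
\begin{equation}
\langle u_1,u_2\rangle_{\rmG} - \langle u_2,u_1\rangle_{\rmG} = -\langle f(u_2),f(u_1)\rangle_{\rmE}. \tag{$\star$}
\end{equation}
In the same way $S_{\rmG}(r(a)) = r(a) - rf(r(a)) = r(T_f(a))$, so that $\langle r(a),u\rangle_{\rmG} = \langle u, S_{\rmG}(r(a))\rangle_{\rmG} = \langle f(u),T_f(a)\rangle_{\rmE}$, while $\langle u, r(a)\rangle_{\rmG} = \langle f(u),a\rangle_{\rmE}$ directly by adjunction. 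Everything then reduces to manipulations inside the rank-two pseudolattice $\rmE$ with its basis $(a,b)$.

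I would prove the equivalence first. Write $\bp = r(a)$. By the two formulas above, condition (ii) of Definition~\ref{def:surfacelike} for $\bp$ is equivalent to $\langle f(u), T_f(a) - a\rangle_{\rmE} = 0$ for all $u\in\rmG$, and condition (i) reads $\langle f(r(a)),a\rangle_{\rmE} = -\langle T_f(a),a\rangle_{\rmE} = 0$, i.e.\ $T_f(a)\in\bZ a$; since $T_f$ is invertible this forces $T_f(a) = \pm a$, and $T_f(a) = -a$ would force $\langle u,\bp\rangle_{\rmG} = 0$ for all $u$, impossible by nondegeneracy of $\langle\cdot,\cdot\rangle_{\rmG}$ as $\bp$ is primitive (hence nonzero). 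Conversely, assuming $r(a)$ primitive and $T_f(a) = a$, conditions (i) and (ii) are immediate from the formulas, and (iii) holds automatically: if $\langle u_i,\bp\rangle_{\rmG} = \langle f(u_i),a\rangle_{\rmE} = 0$ then each $f(u_i)\in\bZ a$, so $\langle f(u_2),f(u_1)\rangle_{\rmE} = 0$ and $(\star)$ gives the required symmetry. This shows $\rmG$ is surface-like with point-like vector $r(a)$ precisely when $r(a)$ is primitive and $T_f(a) = a$; I now assume this holds.

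For (1), adjunction gives $\langle \bp, r(b)\rangle_{\rmG} = \langle f(r(a)),b\rangle_{\rmE} = \langle a - T_f(a),b\rangle_{\rmE} = 0$, so $r(b)\in\bp^{\perp}$. To identify its class I compare the characterisation of $K_{\rmG}$ in Definition~\ref{def:canonical} with $(\star)$: nondegeneracy forces $\rank\not\equiv 0$, so I may fix $w$ with $c := \rank(w)\neq 0$; for $v\in\bp^{\perp}$ one has $\lambda(v\wedge w) = -c[v]$, and Definition~\ref{def:canonical} together with $(\star)$ gives $c\,q(K_{\rmG},[v]) = -\langle f(w),f(v)\rangle_{\rmE}$. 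Expanding in the basis $(a,b)$ — where $f(v) = \gamma a$ since $\rank(v) = 0$, and the $b$-coordinate of $f(w)$ equals $c$ — the right side is $-c\gamma$, so $q(K_{\rmG},[v]) = -\gamma$; on the other hand $q(-[r(b)],[v]) = \langle r(b),v\rangle_{\rmG} = \langle f(v),b\rangle_{\rmE} = -\gamma$ by condition (iii) and adjunction. Hence $q(K_{\rmG}+[r(b)],\cdot)\equiv 0$ on $\NS(\rmG)$, and nondegeneracy of $q$ yields $[r(b)] = -K_{\rmG}$. For (2), we have $T_f(a) = a$ and $T_f(b) = b - fr(b)$; adjunction gives $\langle fr(b),a\rangle_{\rmE} = \langle r(b),\bp\rangle_{\rmG} = 0$, so $fr(b)\in\bZ a$, while $\langle fr(b),b\rangle_{\rmE} = \langle r(b),r(b)\rangle_{\rmG} = -q([r(b)],[r(b)]) = -q(K_{\rmG},K_{\rmG}) = -d$ pins down $fr(b) = d\,a$. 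Thus $T_f(b) = b - d\,a$, so $T_f$ has matrix $\begin{psmallmatrix} 1 & -d \\ 0 & 1\end{psmallmatrix}$ in the basis $(a,b)$.

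The genuinely delicate step is the identification $[r(b)] = -K_{\rmG}$: one must take care that the image of $\lambda$ need not span $\NS(\rmG)$ integrally and that $\rank$ need not attain the value $1$, which is exactly why the argument is run over $\bQ$ using an arbitrary class $w$ of nonzero rank rather than by exhibiting an exceptional vector. The remaining parts are routine bookkeeping with the adjunction relation and $(\star)$.
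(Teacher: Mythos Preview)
Your argument is correct. The paper itself does not supply a proof here; it simply records that the statement is a combination of \cite[Propositions~3.1,~3.2,~3.3]{pdpslf}. Your write-up therefore cannot be compared line by line against the paper, but it stands on its own as a clean, self-contained derivation of all three parts from the relative $(-1)^0$-CY identity $(\star)$ and adjunction.

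A couple of minor remarks on presentation. In the forward direction you use condition~(ii) to rule out $T_f(a)=-a$, but the text reads as though this follows from condition~(i) alone; it would be clearer to signpost that (i) gives $T_f(a)=\pm a$ while (ii) together with nondegeneracy eliminates the minus sign. In part~(1), the step $\langle r(b),v\rangle_{\rmG} = \langle v,r(b)\rangle_{\rmG}$ indeed uses condition~(iii), as you note parenthetically; making this explicit avoids any appearance of having used adjunction in the wrong slot. These are cosmetic; the mathematics is sound.
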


Now we come to the central definition of \cite{pdpslf}.

\begin{definition}\cite[Definition 3.7]{pdpslf} \label{def:qdp}
Let $\rmG$ be a pseudolattice of rank $n$. A spherical homomorphism of pseudolattices $f\colon \mathrm{G} \to \mathrm{E}$ is called a \emph{quasi del Pezzo homomorphism} if there exists a basis $(a,b)$ for $\mathrm{E}$, as in Definition \ref{def:ellipticcurvepseudolattice}, so that all of the following conditions hold. 
\begin{enumerate}[(1)]
\item $\mathrm{G}$ is surface-like with point-like element $\mathbf{p} = r(a)$.
\item $f$ is relative $(-1)^0$-Calabi-Yau.
\item $\mathrm{G}$ admits an exceptional basis $e_{\bullet} = (e_1,\ldots,e_n)$ such that $f(e_i) \in \mathrm{E}$ is primitive for each $1 \leq i \leq n$.
\item $\NS(\mathrm{G})$ has signature $(1,n-3)$.
\end{enumerate}
\end{definition}

Note that conditions (1) and (2) from this definition imply that Proposition \ref{prop:sphericalsurfacelike} holds for all quasi del Pezzo homomorphisms. The motivating example behind this definition is as follows.

\begin{example} \cite[Example 3.10]{pdpslf} \label{ex:qdp} Let $V$ be a nonsingular rational surface and suppose that the anticanonical linear system $|-K_V|$ contains a smooth member $C$; such surfaces are called \emph{quasi del Pezzo surfaces}. Note that, by adjunction, $C$ is an elliptic curve. Let $i^*\colon \mathrm{K}_0^{\mathrm{num}}(\mathbf{D}(V)) \to \mathrm{K}_0^{\mathrm{num}}(\mathbf{D}(C))$ be the map between the numerical Groethendieck groups of the bounded derived categories of coherent sheaves $\mathbf{D}(V)$ and $\mathbf{D}(C)$ on $V$ and $C$, respectively, given by the derived pull-back under the inclusion $i\colon C \hookrightarrow V$. Then $i^*$ is a relative $(-1)^0$-CY spherical homomorphism, with right adjoint $i_*$ given by derived push-forward. 

Identify $\mathrm{K}_0^{\mathrm{num}}(\mathbf{D}(C)) \cong \mathrm{E}$ as above: identify $a$ with the class $[\calO_p]$ of the structure sheaf $\calO_p$ of a point $p \in C$ and $b$ with the class $[\calO_C]$ of $\calO_C$. Then $i^*$ is a quasi del Pezzo homomorphism of pseudolattices.

The point-like element $\mathbf{p} = i_*([\calO_p])$ is the class of the structure sheaf of a point in $\mathrm{K}_0^{\mathrm{num}}(\mathbf{D}(V))$. With respect to this point-like element we have an isomorphism of lattices 
\[\bp^{\perp}/\bp =: \NS(\mathrm{K}_0^{\mathrm{num}}(\mathbf{D}(V))) \cong \NS(V) \cong H^2(V;\bZ),\] which takes the canonical class in $\NS(\mathrm{K}_0^{\mathrm{num}}(\mathbf{D}(V)))$ (Definition \ref{def:canonical}) to the usual canonical class in $\NS(V)$.
\end{example}

The main result of \cite{pdpslf} is a classification of quasi del Pezzo homomorphisms. In order to state it, we first need to say what it means for two quasi del Pezzo homomorphisms to be isomorphic.

\begin{definition}\cite[Definition 3.9]{pdpslf} \label{def:qdpiso} Two quasi del Pezzo homomorphisms $f_1\colon \mathrm{G}_1 \to \mathrm{E}$ and $f_2\colon \mathrm{G}_2 \to \mathrm{E}$ are \emph{isomorphic} if
\begin{itemize}
\item there are bases $(a_1,b_1)$ and $(a_2,b_2)$ for $\mathrm{E}$, so that $(a_i,b_i)$ satisfies the conditions of Definition \ref{def:qdp} for $f_i\colon \mathrm{G}_i \to \mathrm{E}$, and an automorphism $\varphi \colon \mathrm{E} \to \mathrm{E}$ taking $(a_1,b_1)$ to $(a_2,b_2)$,
\item there is an isomorphism of pseudolattices $\psi\colon \mathrm{G}_1 \to \mathrm{G}_2$ (i.e. an isomorphism of the underlying abelian groups which preserves the bilinear form), and
\item we have $\varphi f_1 = f_2 \psi$.
\end{itemize}
It is easy to check that the last part of this definition implies that $\psi r_1 = r_2 \varphi$, where $r_i$ is the right adjoint to $f_i$.
\end{definition}

The next theorem is the main result of \cite{pdpslf}, which classifies quasi del Pezzo homomorphisms up to isomorphism. The version stated here is a slight simplification of the original.

\begin{theorem} \label{thm:qdpclassification} \cite[Theorem 3.25]{pdpslf} Let $f\colon \mathrm{G} \to \mathrm{E}$ be a quasi del Pezzo homomorphism and let $n:=\rank(\mathrm{G})$. Then $f\colon \mathrm{G} \to \mathrm{E}$ is isomorphic to exactly one of:
\begin{enumerate}
\item $\mathrm{Z}(b,3a+b,6a+b,a,\ldots,a) \to \mathrm{E}$, for any $n \geq 3$; or
\item $\mathrm{Z}(b,2a+b,2a+b,4a+b) \to \mathrm{E}$ for $n = 4$.
\end{enumerate} 
\end{theorem}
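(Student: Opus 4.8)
The plan is to proceed in two stages: first reduce an arbitrary quasi del Pezzo homomorphism to the glued normal form $\zeta\colon\mathrm{Z}(v_1,\dots,v_n)\to\mathrm{E}$ of Example~\ref{ex:multiZ}, and then classify the admissible tuples $(v_1,\dots,v_n)$ of primitive vectors up to the equivalence of Definition~\ref{def:qdpiso}.

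For the first stage, fix a basis $(a,b)$ of $\mathrm{E}$ and an exceptional basis $e_\bullet=(e_1,\dots,e_n)$ of $\mathrm{G}$ as in Definition~\ref{def:qdp}. Unwinding the relative $(-1)^0$-CY condition $C_f=S_{\mathrm{G}}$ together with the defining property of the Serre operator gives, for all $u_1,u_2\in\mathrm{G}$,
\[
\langle u_1,u_2\rangle_{\mathrm{G}}-\langle u_2,u_1\rangle_{\mathrm{G}}=\langle f(u_1),f(u_2)\rangle_{\mathrm{E}}.
\]
Applying this with $u_1=e_i$, $u_2=e_j$ for $i<j$, and using that $e_\bullet$ is exceptional (so $\langle e_j,e_i\rangle_{\mathrm{G}}=0$), we obtain $\langle e_i,e_j\rangle_{\mathrm{G}}=\langle f(e_i),f(e_j)\rangle_{\mathrm{E}}$. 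Setting $v_i:=f(e_i)$, which is primitive by hypothesis, and comparing with the Gram matrix of $\mathrm{Z}(v_1,\dots,v_n)$ computed in Example~\ref{ex:multiZ}, the assignment $z_i\mapsto e_i$ is an isomorphism of pseudolattices $\mathrm{Z}(v_1,\dots,v_n)\xrightarrow{\sim}\mathrm{G}$ intertwining $\zeta$ with $f$. Hence every quasi del Pezzo homomorphism is isomorphic — with $\varphi=\mathrm{id}_{\mathrm{E}}$ — to one of the form $\zeta\colon\mathrm{Z}(v_1,\dots,v_n)\to\mathrm{E}$ with all $v_i$ primitive.

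For the second stage, I would first record what the remaining conditions of Definition~\ref{def:qdp} say about such a tuple. Since $\zeta$ is automatically relative $(-1)^0$-CY (Example~\ref{ex:multiZ}), Proposition~\ref{prop:sphericalsurfacelike} shows that condition~(1) holds precisely when $r(a)$ is primitive and the twist $T_{\zeta}=T_{\zeta_1}\cdots T_{\zeta_n}$ — a product of the transvections $w\mapsto w-\langle v_i,w\rangle_{\mathrm{E}}v_i$ acting on $\mathrm{E}\cong\bZ^2$ — fixes $a$; in that case $T_\zeta=\begin{psmallmatrix}1&-d\\0&1\end{psmallmatrix}$ in the basis $(a,b)$ with $d=q(K_{\mathrm{G}},K_{\mathrm{G}})$ and $[r(b)]=-K_{\mathrm{G}}$. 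Condition~(4) then says that the rank $n-2$ lattice $\NS(\mathrm{G})$ is Lorentzian, the characteristic feature of the N\'eron--Severi lattice of a rational surface. On the equivalence side, the relation of Definition~\ref{def:qdpiso} is generated by the $\mathrm{GL}_2(\bZ)$-action $\varphi$ on $\mathrm{E}$ (a simultaneous change of coordinates on all the $v_i$, compatibly with the permitted change of the pair $(a,b)$), together with mutations of the exceptional basis $e_\bullet$: a mutation at position $i$ replaces the adjacent pair $(v_i,v_{i+1})$ by $\bigl(v_{i+1}-\langle v_i,v_{i+1}\rangle_{\mathrm{E}}v_i,\ v_i\bigr)$, and conversely.

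The heart of the argument, and the step I expect to be the main obstacle, is to show that every admissible tuple can be brought by these moves to exactly one of the two listed normal forms. This is the pseudolattice counterpart of the Gorodentsev--Rudakov theory of exceptional collections on surfaces: one introduces a nonnegative complexity measure on tuples (for instance built from the coordinates of the $v_i$ after normalising $(a,b)$, or from $|d|$), shows that a tuple not already in normal form admits a mutation strictly decreasing this measure, and treats the finitely many irreducible base cases by hand. The Lorentzian condition~(4) is what makes the reduction terminate and forces the outcome to be a rational-surface configuration, namely $\mathrm{Z}(b,3a+b,6a+b,a,\dots,a)$ for any $n\ge 3$ (the ``chain'' case, corresponding to $\mathrm{Bl}_{n-3}\bP^2$, with $d=12-n$) or the sporadic $\mathrm{Z}(b,2a+b,2a+b,4a+b)$ at $n=4$ (corresponding to $\bP^1\times\bP^1$, with $d=8$). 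Finally one checks these are pairwise non-isomorphic: any isomorphism of quasi del Pezzo homomorphisms preserves $\rank(\mathrm{G})$, the lattice $\NS(\mathrm{G})$ and the class $K_{\mathrm{G}}$, so different $n$ are distinguished by rank, and the two forms at $n=4$ are distinguished by the parity of $\NS(\mathrm{G})$ — odd in the chain case, even (a hyperbolic plane) in the sporadic case — even though they share the value $d=8$.
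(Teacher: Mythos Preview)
The paper does not supply its own proof of this theorem: it is quoted verbatim (in slightly simplified form) from \cite[Theorem~3.25]{pdpslf}, so there is nothing in the present paper to compare your argument against directly.

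That said, your two-stage plan matches the architecture of the proof in \cite{pdpslf}. Your first stage is correct and complete: the identity $\langle u_1,u_2\rangle_{\mathrm{G}}-\langle u_2,u_1\rangle_{\mathrm{G}}=\langle f(u_1),f(u_2)\rangle_{\mathrm{E}}$ is exactly \cite[Lemma~2.21]{pdpslf} (used elsewhere in the paper, in the proof of Lemma~\ref{lem:GKforms}), and applying it to an exceptional basis does yield the isomorphism with $\mathrm{Z}(v_1,\dots,v_n)$ with $\varphi=\mathrm{id}_{\mathrm{E}}$.

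Your second stage, however, is a description of a strategy rather than a proof. You correctly identify the moves available (mutations of the exceptional basis, plus the $\mathrm{GL}_2(\bZ)$ action on the pair $(a,b)$) and the shape of the termination argument (a complexity measure that mutations decrease), and you note yourself that this is ``the main obstacle''. It is: the actual reduction in \cite{pdpslf} occupies much of Section~3.3 there and requires nontrivial work to show that the signature constraint~(4) forces the mutation process to terminate at one of the two listed forms rather than cycling or landing elsewhere. Your final uniqueness check (rank, then parity of $\NS(\mathrm{G})$ at $n=4$) is correct, but as written your proposal is an accurate outline of the cited proof, not an independent proof.
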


It is not difficult to show (c.f. \cite{sdp}, \cite[Proposition 0.4]{asdps}) that if $V$ is a quasi del Pezzo surface and $C \subset V$ is a smooth anticanonical divisor, then $(V,C)$ is either 
\begin{itemize}
\item a blow-up of a smooth cubic curve in $\bP^2$ in $k \geq 0$ points in almost general position (a set of points is in \emph{almost general position} if no stage of the blowing-up involves blowing up a point which lies on a rational $(-2)$-curve; infinitely near points are allowed as long as this condition is not violated),  or
\item a smooth curve of bidegree $(2,2)$ in $\bP^1 \times \bP^1$, or 
\item a smooth anticanonical curve in the Hirzebruch surface $\bF_2$.
\end{itemize}
It then follows from Theorem \ref{thm:qdpclassification}, \cite[Example 3.10]{pdpslf}, and the analysis in \cite[Section 3.3]{pdpslf}, that every quasi del Pezzo homomorphism is isomorphic to $i^*\colon \mathrm{K}_0^{\mathrm{num}}(\mathbf{D}(V)) \to \mathrm{K}_0^{\mathrm{num}}(\mathbf{D}(C))$ for $V$ a quasi del Pezzo surface and $i\colon C \hookrightarrow V$ a smooth anticanonical divisor. Explicitly, case (1) of Theorem \ref{thm:qdpclassification} corresponds to the case where $V$ is a blow-up of a smooth cubic curve in $\bP^2$ in $n-3$ points in almost general position, and case (2) of Theorem \ref{thm:qdpclassification} corresponds to the case where $V \cong \bP^1 \times \bP^1$ or $V \cong \bF_2$.

The main observation of \cite{pdpslf} is that quasi del Pezzo homomorphisms also arise naturally from genus $1$ Lefschetz fibrations over closed discs $Y \to \Delta$. In this setting they occur as boundary maps $H_2(Y,F;\bZ) \to H_1(F;\bZ)$, where $F$ is a smooth fibre over a point on the boundary of $\Delta$ and we place pseudolattice structures on $H_2(Y,F;\bZ)$ and $H_1(F;\bZ)$ via the \emph{Seifert pairing} and the usual intersection form respectively; we refer the reader to Section \ref{sec:fibrationsondiscs} for a more detailed treatment.

The fact that quasi del Pezzo homomorphisms arise in both of these settings should be thought of as a manifestation of Fano-LG mirror symmetry for quasi del Pezzo surfaces; see \cite[Section 5]{pdpslf}.

\subsection{Further properties of quasi del Pezzo homomorphisms}

In this subsection we will collect some additional properties of quasi del Pezzo homomorphisms, which do not come from \cite{ecslc,pdpslf}, that will be useful to us in the rest of the paper.

\begin{lemma}\label{lem:riproperties}
Let $f \colon \rmG \to \rmE$ be a quasi del Pezzo homomorphism of pseudolattices and let $r\colon \rmE \to \rmG$ denote the right adjoint of $f$. Let $(a,b)$ be a basis for $\rmE$ as in Definition \ref{def:qdp}. Then $r$ is injective and satisfies $fr(a) = 0$.
\end{lemma}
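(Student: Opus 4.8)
The plan is to extract both claims ($r$ injective and $fr(a)=0$) from the structural results already available for quasi del Pezzo homomorphisms, principally Proposition \ref{prop:sphericalsurfacelike} together with the description of the twist $T_f$. First I would recall that, by conditions (1) and (2) of Definition \ref{def:qdp}, the hypotheses of Proposition \ref{prop:sphericalsurfacelike} are satisfied: $\rmG$ is surface-like with point-like element $\bp = r(a)$, and the twist $T_f = \mathrm{id}_{\rmE} - fr$ fixes $a$, i.e. $T_f(a) = a$. The equality $fr(a) = 0$ is then immediate: since $T_f(a) = a - fr(a) = a$, we get $fr(a) = 0$. This disposes of the second assertion with essentially no work.

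For injectivity of $r$, I would argue as follows. Since $f$ is relative $(-1)^0$-CY, its cotwist satisfies $C_f = \mathrm{id}_{\rmG} - rf = S_{\rmG}$, which is an automorphism of $\rmG$; in particular $C_f$ is invertible. Now suppose $r(w) = 0$ for some $w \in \rmE$. Applying $f$ gives $fr(w) = 0$, so $T_f(w) = w - fr(w) = w$; thus $w$ lies in the fixed subspace of $T_f$. By part (2) of Proposition \ref{prop:sphericalsurfacelike}, in the basis $(a,b)$ the twist $T_f$ has matrix $\begin{psmallmatrix} 1 & -d \\ 0 & 1 \end{psmallmatrix}$ with $d = q(K_{\rmG},K_{\rmG})$. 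Provided $d \neq 0$, the fixed space of $T_f$ is exactly $\bZ a$, so $w = ca$ for some $c \in \bZ$, whence $r(w) = c\, r(a) = c\,\bp$; since $\bp$ is primitive (hence nonzero), $r(w) = 0$ forces $c = 0$ and $w = 0$. That $d \neq 0$ follows from condition (4) of Definition \ref{def:qdp}: $\NS(\rmG)$ has signature $(1,n-3)$, so its associated quadratic form is non-degenerate and not negative (semi)definite, and since $[r(b)] = -K_{\rmG}$ by Proposition \ref{prop:sphericalsurfacelike}(1) one checks $K_{\rmG}$ is a nonzero class on which $q$ does not vanish — indeed, the classification in Theorem \ref{thm:qdpclassification} gives $d \geq 1$ in all cases (for case (1), $d = 9 - (n-3)$, which is handled in the relevant range, and for case (2), $d = 8$); alternatively one can invoke the fact that $K_{\rmG}$ pairs non-trivially by the signature hypothesis. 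Either route yields $d \neq 0$ and completes the proof.

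An alternative, perhaps cleaner, route to injectivity avoids case analysis: from $C_f = \mathrm{id}_{\rmG} - rf$ invertible, one knows $\ker(rf) = 0$, so $f$ is injective on $\rmG$; but this controls $f$, not $r$. To handle $r$ directly, I would instead use that $r(a) = \bp$ is primitive and that $r(b) \in \bp^\perp$ represents $-K_{\rmG} \neq 0$ in $\NS(\rmG) = \bp^\perp/\bp$ (Proposition \ref{prop:sphericalsurfacelike}(1)); since $K_{\rmG} \neq 0$ in $\NS(\rmG)\otimes\bQ$ — because a surface-like pseudolattice with $\NS$ of signature $(1,n-3)$ and $n \geq 3$ has $q(K_{\rmG},K_{\rmG}) = d > 0$ in the relevant range — the element $r(b)$ is not a multiple of $\bp$. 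Hence $\{r(a), r(b)\} = \{\bp, r(b)\}$ is linearly independent in $\rmG$, so $r$ has rank $2$ on $\rmE \cong \bZ^2$ and is therefore injective.

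The only genuine subtlety — and the step I expect to need the most care — is verifying that $K_{\rmG} \neq 0$, equivalently $d \neq 0$, so that the fixed space of $T_f$ is exactly $\bZ a$ and $r(b)$ is independent of $\bp$; if one does not want to lean on the full classification Theorem \ref{thm:qdpclassification}, this should be deduced purely from the signature condition (4) via the observation that the canonical class of a surface-like pseudolattice whose Néron–Severi lattice has signature $(1, n-3)$ is anisotropic-or-positive in the appropriate sense. Everything else is a short formal manipulation of the twist, cotwist, and adjunction identities already in hand.
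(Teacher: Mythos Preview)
Your argument for $fr(a)=0$ is correct and identical to the paper's: $T_f(a)=a$ together with $T_f=\mathrm{id}_{\rmE}-fr$ gives $fr(a)=0$ immediately.

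For injectivity, however, both of your routes hinge on the claim $d=q(K_{\rmG},K_{\rmG})\neq 0$, and this claim is false. By Lemma~\ref{lem:NSproperties}, in case~(1) of the classification one has $d=12-n$, and Theorem~\ref{thm:qdpclassification} allows any $n\geq 3$; in particular $d=0$ when $n=12$ (a blow-up of $\bP^2$ in nine points) and $d<0$ for $n>12$. So the assertion ``the classification gives $d\geq 1$ in all cases'' is wrong, and neither the signature condition~(4) of Definition~\ref{def:qdp} nor any soft argument will produce $d\neq 0$. When $d=0$ the twist $T_f$ is the identity, its fixed space is all of $\rmE$, and your first argument collapses entirely.

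Your second route is salvageable, but not with the justification you gave. What you actually need is $K_{\rmG}\neq 0$ in $\NS(\rmG)$ (so that $r(b)\notin\bZ\bp$), not $d\neq 0$. This is true --- for instance, Table~\ref{tab:NSlattices} shows $K_{\rmG}^\perp$ has rank $n-3<n-2=\rank\NS(\rmG)$ for every $n\geq 3$, so $K_{\rmG}$ is never zero --- but establishing it already amounts to reading off information from the classification. The paper does exactly this: it simply says injectivity is an easy consequence of Theorem~\ref{thm:qdpclassification}, without attempting a classification-free argument.
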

\begin{proof}
Injectivity of $r$ is an easy consequence of the explicit classification of quasi del Pezzo homomorphisms (Theorem \ref{thm:qdpclassification}). Moreover, as $f$ is a quasi del Pezzo homomorphism, Proposition \ref{prop:sphericalsurfacelike} gives $T_{f}(a) = a$. But $T_{f}(a) = a - fr(a)$ by definition, so we must have $fr(a) = 0$. 
\end{proof}

We next collect some information about the N\'{e}ron-Severi lattice associated to a quasi del Pezzo homomorphism.

\begin{lemma} \label{lem:NSproperties} Let $f\colon \mathrm{G} \to \mathrm{E}$ be a quasi del Pezzo homomorphism and let $n:=\rank(\mathrm{G})$.
\begin{enumerate}
\item If $f\colon \mathrm{G} \to \mathrm{E}$ is isomorphic to case (1) in Theorem \ref{thm:qdpclassification}, then $\NS(\rmG)$ is the odd unimodular lattice $\rmI_{1,n-3}$ of signature $(1,n-3)$. The canonical class $K_{\rmG}$ has $q(K_{\rmG},K_{\rmG}) = 12-n$ and its orthogonal complement $K_{\rmG}^{\perp}$ in $\NS(\rmG)$ is given in Table \ref{tab:NSlattices}. This orthogonal complement is negative definite if $n < 12$, degenerate if $n = 12$, and indefinite of signature $(1,n-4)$ if $n > 12$. If $n \notin \{3,12\}$, the discriminant group $\disc(K_{\rmG}^{\perp}) \cong \bZ\,/\,|12-n|\bZ$. The quadratic form on $\disc(K_{\rmG}^{\perp})$ is generated by an element $e$ with $e^2 = \frac{1}{n-12}$, with values taken modulo $\bZ$ if $n$ is odd and modulo $2\bZ$ if $n$ is even. 
\item If $f\colon \mathrm{G} \to \mathrm{E}$ is isomorphic to case (2) in Theorem \ref{thm:qdpclassification}, then $\NS(\rmG)$ is the even unimodular lattice $H = \rmI\rmI_{1,1}$ of signature $(1,1)$ (the hyperbolic plane).  The canonical class $K_{\rmG}$ has $q(K_{\rmG},K_{\rmG}) = 8$ and the orthogonal complement $K_{\rmG}^{\perp}$ is the root lattice $A_1$, which is negative definite with $\disc(K_{\rmG}^{\perp}) \cong \bZ/2\bZ$. The quadratic form on $\disc(K_{\rmG}^{\perp})$ is generated by an element $e$ with $e^2 = -\frac{1}{2}$, with values taken modulo $2\bZ$.
\end{enumerate}
\end{lemma}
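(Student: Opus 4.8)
The plan is to reduce everything to the explicit models of Theorem~\ref{thm:qdpclassification}, using Proposition~\ref{prop:sphericalsurfacelike} to pin down the canonical class and its square, and then to compute $\NS(\rmG)$ and $K_{\rmG}^\perp$ by direct bilinear-form computations in the exceptional basis. First I would treat case (1): here $f$ is isomorphic to $\mathrm{Z}(b,3a+b,6a+b,a,\ldots,a)\to\rmE$, so by Example~\ref{ex:multiZ} the pseudolattice $\rmG$ has an exceptional basis $(z_1,\ldots,z_n)$ with $\langle z_i,z_i\rangle=1$ and $\langle z_i,z_j\rangle=\langle v_i,v_j\rangle_\rmE$ for $i<j$ (and $0$ for $i>j$), where $(v_1,\ldots,v_n)=(b,3a+b,6a+b,a,\ldots,a)$. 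Using $\langle a,b\rangle_\rmE=-1$, $\langle b,a\rangle_\rmE=1$ one writes down the full Gram matrix explicitly. To compute $\NS(\rmG)=\bp^\perp/\bp$ I would identify the point-like vector: by Lemma~\ref{lem:riproperties} and Proposition~\ref{prop:sphericalsurfacelike}, $\bp=r(a)$, and $r$ is computed from the adjunction $\langle f(z_i),w\rangle_\rmE=\langle z_i,r(w)\rangle_\rmG$; since $f(z_i)=v_i$ this determines $r(a)$ as an explicit integer combination of the $z_i$. One then checks $\rank(z_i)=\langle z_i,\bp\rangle$, produces an explicit basis of $\bp^\perp$, and computes the induced symmetric form $q([u_1],[u_2])=-\langle u_1,u_2\rangle$.

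For the signature and unimodularity statements in case (1): signature $(1,n-3)$ is condition~(4) of Definition~\ref{def:qdp}, and unimodularity of $\NS(\rmG)$ follows because $\rmG$ is unimodular (it is a $\oright_\rmE$-gluing of the unimodular $\mathrm{Z}(v_i)$, cf.\ Example~\ref{ex:multiZ}) and taking $\bp^\perp/\bp$ for a primitive isotropic $\bp$ in a unimodular pseudolattice yields a unimodular lattice. An odd unimodular lattice of signature $(1,n-3)$ is $\rmI_{1,n-3}$ by the classification of indefinite unimodular lattices; oddness is seen by exhibiting a vector of odd square (e.g.\ the image of some $z_i$ in $\NS(\rmG)$). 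For the canonical class: Proposition~\ref{prop:sphericalsurfacelike}(1) gives $[r(b)]=-K_{\rmG}$, so I compute $r(b)$ from the adjunction as above and read off $K_{\rmG}$ in the chosen basis of $\NS(\rmG)$; then $q(K_{\rmG},K_{\rmG})=-\langle r(b),r(b)\rangle$ modulo $\bp$, which should come out to $12-n$ (consistent with Proposition~\ref{prop:sphericalsurfacelike}(2), where $d=q(K_\rmG,K_\rmG)$ is the off-diagonal entry of the twist matrix; one can cross-check via $T_f=T_{f_1}\cdots T_{f_n}$ and Example~\ref{ex:Z(v)}, each $T_{\zeta_i}$ contributing $-\langle v_i,\cdot\rangle v_i$, giving total off-diagonal $-(\langle a,b\rangle\text{-contributions})$ summing to $n-12$).

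Next, $K_{\rmG}^\perp$ in the unimodular lattice $\rmI_{1,n-3}$: since $K_{\rmG}$ is primitive with $K_{\rmG}^2=12-n\neq0$ when $n\neq12$, the orthogonal complement has discriminant group of order $|12-n|$, and one identifies it with a standard root/Coxeter-type lattice as in Table~\ref{tab:NSlattices} by exhibiting explicit generators (differences $z_i-z_{i+1}$ among the $a$-block give an $A$-type sublattice, and the combination from the $b,3a+b,6a+b$ block contributes the extra generator). The discriminant form statement ($\disc(K_{\rmG}^\perp)\cong\bZ/|12-n|\bZ$ generated by $e$ with $e^2=\tfrac{1}{n-12}$ mod $\bZ$ or $2\bZ$) follows from the standard fact that for a primitive vector $v$ in a unimodular lattice $L$, $\disc(v^\perp)\cong\bZ/|v^2|\bZ$ with generator of square $-1/v^2$ (taking the generator $\tfrac{1}{v^2}(\text{projection of a dual element})$), and the modulus is $\bZ$ when $L$ is odd and $2\bZ$ when $L$ is even; signs are fixed by $v^2=12-n<0$. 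Degeneracy at $n=12$ is immediate since then $K_{\rmG}^2=0$ so $K_{\rmG}\in K_{\rmG}^\perp$, and the signature of $K_{\rmG}^\perp$ (negative definite, degenerate, or $(1,n-4)$) follows from whether $K_{\rmG}$ is positive, null, or negative in the signature-$(1,n-3)$ lattice $\NS(\rmG)$, i.e.\ from the sign of $12-n$. Case (2) is entirely analogous but shorter: here $n=4$, $\rmG\cong\mathrm{Z}(b,2a+b,2a+b,4a+b)$, the same computation gives $\NS(\rmG)$ even unimodular of signature $(1,1)$, hence $\cong H$, with $K_{\rmG}^2=8$ and $K_{\rmG}^\perp\cong A_1$ (the unique even negative definite rank-$1$ lattice with the right discriminant), and the discriminant form of $A_1$ is the standard one with generator of square $-\tfrac12$ mod $2\bZ$.

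\textbf{Main obstacle.} The conceptual content is light — the real work is bookkeeping: choosing the right explicit basis of $\bp^\perp/\bp$ so that $K_{\rmG}^\perp$ visibly matches the lattices tabulated in Table~\ref{tab:NSlattices}, and correctly tracking signs and moduli in the discriminant-form computation (the $\bZ$ vs.\ $2\bZ$ distinction according to parity, and the sign of $e^2$). The case $n=3$ must be excluded from the discriminant statement because then $K_{\rmG}^2=9$ but $K_{\rmG}^\perp$ is the zero lattice (rank $n-4=-1$ is vacuous; more precisely $\NS(\rmG)=\rmI_{1,0}$ has rank $1$ and $K_{\rmG}$ spans it rationally), so I would note this explicitly. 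I expect no genuine difficulty, only the need to present the explicit Gram matrices and the change of basis cleanly; the cross-check between $K_{\rmG}^2=12-n$ computed two ways (via $[r(b)]$ and via the twist matrix off-diagonal entry $d$ from Proposition~\ref{prop:sphericalsurfacelike}(2)) is a useful sanity check to include.
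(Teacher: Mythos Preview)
Your approach is correct and would work, but the paper takes a much shorter route. Rather than computing directly in the exceptional basis of $\rmZ(v_1,\ldots,v_n)$, the paper invokes the classification: by Theorem~\ref{thm:qdpclassification} and the discussion following it, every quasi del Pezzo homomorphism is isomorphic to $i^*\colon \mathrm{K}_0^{\mathrm{num}}(\mathbf{D}(V)) \to \mathrm{K}_0^{\mathrm{num}}(\mathbf{D}(C))$ for a quasi del Pezzo surface $V$, and Example~\ref{ex:qdp} gives a lattice isomorphism $\NS(\rmG) \cong \NS(V)$ identifying canonical classes. All of the claims about $\NS(V)$, $K_V^2$, $K_V^\perp$, and their discriminant forms are then classical facts about Picard lattices of blow-ups of $\bP^2$ (respectively $\bP^1\times\bP^1$ or $\bF_2$), for which the paper simply cites \cite[Chapter~8]{cagmv}. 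Your direct pseudolattice computation is self-contained and avoids the external reference, at the cost of considerably more bookkeeping; the paper's route is a one-line reduction but imports the del Pezzo lattice theory wholesale.

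Two minor slips to watch if you carry out your version. First, the rank of $K_\rmG^\perp$ in $\NS(\rmG)$ is $n-3$, not $n-4$ (since $\NS(\rmG)$ has rank $n-2$); your parenthetical about $n=3$ is therefore slightly off, though your conclusion that $K_\rmG^\perp=\{0\}$ there is correct. Second, your justification for the $\bZ$ versus $2\bZ$ modulus is not quite right: it is not governed by the parity of the ambient lattice $\NS(\rmG)$ (which is odd in case~(1)), but rather reflects that $K_\rmG^\perp$ is always an even lattice, so its discriminant quadratic form is naturally $\bQ/2\bZ$-valued; when $|12-n|$ is odd the reduction mod $\bZ$ loses no information, which is why the paper phrases it in terms of the parity of $n$.
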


\begin{table}
\begin{tabular}{|c|c|c|c|c|c|c|c|}
\hline
$n$ & $3$ & $4$ & $5$ & $6$ & $7$ & $8$ & $\geq 9$ \\
\hline
\rule{0pt}{3ex} $K_{\rmG}^{\perp}$ & $\{0\}$ & $(-8)$ & $\begin{psmallmatrix} -2 & 1 \\ 1 & -4\end{psmallmatrix}$ & $A_2 \oplus A_1$ & $A_4$ & $D_5$ & $E_{n-3}$\\
\hline
\end{tabular}
\caption{N\'{e}ron-Severi lattices in case (1) of Lemma \ref{lem:NSproperties}. Here $A_n$, $D_n$, $E_6$, $E_7$, and $E_8$ are the usual negative definite root lattices, and we extend the definition of the $E_n$ series of lattices to $n \geq 9$ in the obvious way.}
\label{tab:NSlattices}
\end{table}

\begin{proof} This follows from the isomorphism between $\NS(\rmG)$ and $\NS(V)$ for a quasi del Pezzo surface $V$, along with the well-known lattice theory for quasi del Pezzo surfaces (see, for example, \cite[Chapter 8]{cagmv}).
\end{proof}

Now we come to the main result of this section, which characterises isomorphisms between quasi del Pezzo homomorphisms in terms of their action on the N\'{e}ron-Severi lattices.

\begin{proposition} \label{prop:qdpisomorphism} Suppose that $f_1\colon \rmG_1 \to \rmE$ and $f_2\colon \rmG_2 \to \rmE$ are two quasi del Pezzo homomorphisms. 

If $f_1\colon \rmG_1 \to \rmE$ and $f_2\colon \rmG_2 \to \rmE$ are isomorphic, in the sense of Definition \ref{def:qdpiso}, then the isomorphism $\psi \colon \rmG_1 \to \rmG_2$ induces an isometry $\hat{\psi} \colon \NS(\rmG_1) \to \NS(\rmG_2)$ such that $\hat{\psi}(K_{\rmG_1}) = K_{\rmG_2}$.

Conversely, if $\hat{\psi} \colon \NS(\rmG_1) \to \NS(\rmG_2)$ is an isometry such that $\hat{\psi}(K_{\rmG_1}) = K_{\rmG_2}$, then there exists an isomorphism between $f_1\colon \rmG_1 \to \rmE$ and $f_2\colon \rmG_2 \to \rmE$ that induces $\hat{\psi}$.
\end{proposition}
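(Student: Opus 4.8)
The forward direction is essentially bookkeeping: an isomorphism of quasi del Pezzo homomorphisms consists of $\psi\colon \rmG_1\to\rmG_2$ and $\varphi\colon\rmE\to\rmE$ with $\varphi f_1 = f_2\psi$, hence $\psi r_1 = r_2\varphi$. The plan is to check that $\psi$ carries the point-like element $\bp_1 = r_1(a_1)$ to the point-like element $\bp_2 = r_2(a_2)$ (which follows from $\psi r_1 = r_2\varphi$ together with $\varphi(a_1) = a_2$), so $\psi$ descends to a map $\hat\psi\colon\bp_1^\perp/\bp_1 \to \bp_2^\perp/\bp_2$, i.e. $\NS(\rmG_1)\to\NS(\rmG_2)$. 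That $\hat\psi$ is an isometry is immediate since $q$ is defined by $-\langle\cdot,\cdot\rangle$ on $\bp^\perp$ and $\psi$ preserves the pseudolattice form. Finally, by Proposition~\ref{prop:sphericalsurfacelike}(1), $-K_{\rmG_i}$ is the class of $r_i(b_i)$ in $\NS(\rmG_i)$; since $\psi r_1(b_1) = r_2\varphi(b_1) = r_2(b_2)$, we get $\hat\psi(K_{\rmG_1}) = K_{\rmG_2}$.

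For the converse I would first use Lemma~\ref{lem:NSproperties} to reduce to cases. An isometry $\hat\psi$ with $\hat\psi(K_{\rmG_1})=K_{\rmG_2}$ forces $q(K_{\rmG_1},K_{\rmG_1}) = q(K_{\rmG_2},K_{\rmG_2})$; since this number is $12-n$ in case (1) and $8$ in case (2), and these overlap only at $n=4$, we see that $\rmG_1$ and $\rmG_2$ are either both of type (1) with the same rank $n$, or both of type (2), or one of each with $n=4$ (the $\bP^1\times\bP^1$/$\bF_2$ ambiguity at degree $8$). In each case Theorem~\ref{thm:qdpclassification} gives an explicit model, and the exceptional basis $(z_1,\dots,z_n)$ of that model projects (via $[\;\cdot\;]$ on $\bp^\perp$, after adjusting each $z_i$ by a multiple of $\bp$ to land in $\bp^\perp$) to a distinguished basis of $\NS(\rmG_i)$ — for type (1) these are the classes $h,e_1,\dots,e_{n-1}$ of $\rmI_{1,n-3}$, with $K = -3h+\sum e_i$. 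So the task becomes: given the isometry $\hat\psi$ on N\'eron–Severi fixing $K$, lift it to a pseudolattice isomorphism $\psi$ and a compatible $\varphi$.

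The key step is the lifting. I would argue as follows. The rank function $\rank\colon\rmG\to\bZ$ together with the projection to $\NS(\rmG)$ and the data of $K_{\rmG}$ essentially reconstruct the pseudolattice: concretely, once one fixes the point-like element $\bp$ and a lift $\tilde b\in\bp^\perp$ of $-K_{\rmG}$, the elements $\bp,\tilde b$ span a copy of $\rmE$ inside $\rmG$ on which $f$ restricts to an isomorphism onto $\rmE$ (this is Lemma~\ref{lem:riproperties}: $r$ is injective, and $r(a),r(b)$ span such a copy), and the bilinear form on all of $\rmG$ is then determined by the symmetric form $q$ on $\NS(\rmG)$, the rank function, and $K_{\rmG}$ via Definition~\ref{def:canonical} and the surface-like axioms. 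Thus an isometry $\hat\psi$ of N\'eron–Severi lattices fixing $K$ extends uniquely to a group isomorphism $\psi\colon\rmG_1\to\rmG_2$ preserving $\bp$, the rank function, and the chosen $\tilde b$; one checks it preserves the full bilinear form by expanding an arbitrary $u = (\text{mult. of }\bp) + (\text{mult. of }\tilde b) + (\text{lift of an }\NS\text{ class})$ and using bilinearity. Then define $\varphi\colon\rmE\to\rmE$ as the automorphism induced by $\psi$ on the spans $\langle r_1(a_1),r_1(b_1)\rangle \xrightarrow{\psi} \langle r_2(a_2),r_2(b_2)\rangle$, transported to $\rmE$ via $f_i$; compatibility $\varphi f_1 = f_2\psi$ then holds by construction, at least on the exceptional basis, hence everywhere, and one verifies $\varphi$ sends $(a_1,b_1)$ to $(a_2,b_2)$ so that the new basis still satisfies Definition~\ref{def:qdp}.

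The main obstacle is this lifting step — specifically, verifying that the pseudolattice form on $\rmG_i$ is genuinely recovered from $(\NS(\rmG_i), q, \rank, K_{\rmG_i})$ and the choice of $\bp,\tilde b$, and that nothing is lost in the non-symmetric part of the form. Definition~\ref{def:canonical} controls exactly the antisymmetric part $\langle u_1,u_2\rangle - \langle u_2,u_1\rangle$ in terms of $K$ and $\lambda$, while the symmetric part is $-q$ on $\bp^\perp$ plus rank contributions; the care needed is in handling elements of nonzero rank and in checking the exceptional-basis condition (3) of Definition~\ref{def:qdp} is preserved — i.e. that $f_i$ of each basis element stays primitive — which should follow because primitivity of $f(e_j)$ can be read off the explicit models and is respected by the correspondence. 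A cleaner alternative, which I would fall back on if the direct argument gets unwieldy, is to use the classification directly: reduce both sides to the standard models of Theorem~\ref{thm:qdpclassification}, observe that for type (1) the only isometries of $\rmI_{1,n-3}$ fixing $K$ form the Weyl group $W(K^\perp)$ (generated by reflections in $(-2)$-classes) together with the obvious symmetries of the configuration, and check case-by-case that each such isometry is induced by an automorphism of the standard model as a quasi del Pezzo homomorphism (permuting the $z_i$ with $v_i = a$, applying Cremona-type transformations); the type (2) case is a short finite check.
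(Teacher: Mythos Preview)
Your forward direction is the same as the paper's.

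For the converse, you and the paper share the same underlying insight --- that the pseudolattice $\rmG$ is recovered from $\NS(\rmG)$, the canonical class $K_{\rmG}$, and rank data --- but the paper executes this much more directly. Rather than arguing abstractly about lifting, it invokes the identification $\rmG_j \cong \rmK_0^{\mathrm{num}}(\mathbf{D}(V_j)) \cong \bZ \oplus \NS(V_j) \oplus \bZ$ via $v \mapsto (\rank(v), c_1(v), \chi(v))$ from \cite[Example 3.5]{ecslc}, writes down the explicit matrices for the bilinear form and for $f_j$ in these coordinates (both visibly depending only on $Q_j$ and $K_j$), and then simply takes $\psi$ to be the block-diagonal matrix $\mathrm{diag}(1, P, 1)$ where $P$ is the matrix of $\hat\psi$. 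Verifying that this is a pseudolattice isomorphism with $\varphi f_1 = f_2 \psi$ becomes a short matrix calculation using $P^TQ_2P = Q_1$ and $PK_1 = K_2$. This sidesteps entirely the obstacles you flag: there is no need to check that primitivity of $f_i(e_j)$ is preserved, no need to compare types (1) and (2) separately, and no fallback to a Weyl-group case analysis.

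One genuine slip in your sketch: you claim that $f$ restricts to an isomorphism on the span of $\bp = r(a)$ and $\tilde b$, citing Lemma~\ref{lem:riproperties}. But that lemma says precisely that $f r(a) = 0$, so $f$ is certainly not an isomorphism on that span. What you presumably meant is that $r$ is an isomorphism from $\rmE$ onto that span (injectivity of $r$), and that $\rmG$ splits as a group as $\bZ\bp \oplus (\text{lift of }\NS) \oplus \bZ(\text{something of rank }1)$. The paper's $(\rank, c_1, \chi)$ coordinates make exactly this splitting canonical and turn your informal reconstruction claim into an explicit formula.
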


\begin{proof}
Let $r_i$ denote the right adjoint to $f_i$. Suppose first that there exists an isomorphism between $f_1\colon \rmG_1 \to \rmE$ and $f_2\colon \rmG_2 \to \rmE$, in the sense of Definition \ref{def:qdpiso}. Then there are bases $(a_1,b_1)$ and $(a_2,b_2)$  for $\mathrm{E}$ so that $(a_i,b_i)$ satisfies the conditions of Definition \ref{def:qdp} for $f_i\colon \mathrm{G}_i \to \mathrm{E}$, and an automorphism $\varphi \colon \mathrm{E} \to \mathrm{E}$ taking $(a_1,b_1)$ to $(a_2,b_2)$, along with an isomorphism $\psi \colon \rmG_1 \to \rmG_2$ which commutes with $\varphi$.

As $\psi r_1 = r_2 \varphi$, we have $\psi(r_1(a_1)) = r_2(\varphi(a_1)) = r_2(a_2)$, so $\psi$ maps the point-like vector in $\rmG_1$ to the point-like vector in $\rmG_2$. It follows that $\psi$ induces an isometry $\hat{\psi} \colon \NS(\rmG_1) \to \NS(\rmG_2)$. Moreover, as $\psi(r_1(b_1)) = r_2(\varphi(b_1)) = r_2(b_2)$ and $K_{\rmG_i} = -[r_i(b_i)]$ for each $i \in \{1,2\}$, we have $\hat{\psi}(K_{\rmG_1}) = K_{\rmG_2}$.

Next, suppose that $\hat{\psi} \colon \NS(\rmG_1) \to \NS(\rmG_2)$ is an isometry such that $\hat{\psi}(K_{\rmG_1}) = K_{\rmG_2}$. 

By the classification of quasi del Pezzo homomorphisms, the homomorphism $f_j \colon \rmG_j \to \rmE$ is isomorphic to $i^*\colon \mathrm{K}_0^{\mathrm{num}}(\mathbf{D}(V_j)) \to \mathrm{K}_0^{\mathrm{num}}(\mathbf{D}(C_j))$ for $V_j$ a quasi del Pezzo surface and $i\colon C_j \hookrightarrow V_j$ a smooth anticanonical divisor. We may identify $\mathrm{K}_0^{\mathrm{num}}(\mathbf{D}(C_j)) \cong \mathrm{E}$ in the usual way: identify $a_j$ with the class of a point $p \in C_j$ and $b_j$ with the class of $\calO_{C_j}$. Moreover, by \cite[Example 3.5]{ecslc}, there are isomorphisms $\mathrm{K}_0^{\mathrm{num}}(\mathbf{D}(V_j))\cong \bZ \oplus \NS(V_j) \oplus \bZ$ given by $v \mapsto (\rank(v),c_1(v),\chi(v))$,  i.e. the rank, first Chern class, and Euler characteristic of an element $v \in \mathrm{K}_0^{\mathrm{num}}(\mathbf{D}(V_j))$.

Choose bases for $\NS(\rmG_j)$ as free $\bZ$-modules for each $j \in \{1,2\}$. Let $Q_j$ denote the matrix of the intersection form and let $K_j$ denote the column vector representing $K_{\rmG_j}$ with respect to this basis. Then in the realisation $\rmG_j \cong \bZ \oplus \NS(V_j) \oplus \bZ$ given above, one may compute that the bilinear form on $\rmG_j$ is given explicitly by
\[\begin{pmatrix}
-1 & 0 & 1 \\ Q_jK_j & -Q_j & 0 \\ 1 & 0 & 0
\end{pmatrix}\]
and the map $f_j \colon \rmG_j \to \rmE$ is given explicitly by
\[\begin{pmatrix}
0 & -K_j^TQ_j & 0 \\ 1 & 0 & 0
\end{pmatrix}\]
with respect to the basis $a_j = \begin{psmallmatrix} 1\\0 \end{psmallmatrix}$ and $b_j = \begin{psmallmatrix} 0\\1 \end{psmallmatrix}$ of $\rmE$.

Let $P$ be the matrix of the isometry $\NS(\rmG_1) \to \NS(\rmG_2)$ with respect to the chosen bases of these two lattices. Then $P^TQ_2P = Q_1$ and $PK_1 = K_2$. Define a map $\psi \colon \rmG_1 \to \rmG_2$ by the matrix
\[\begin{pmatrix}
1 & 0 & 0 \\ 0 & P & 0   \\ 0 & 0 & 1
\end{pmatrix}\]
and let $\varphi$ be the automorphism $\varphi \colon \mathrm{E} \to \mathrm{E}$ taking $(a_1,b_1)$ to $(a_2,b_2)$. Then it is a straightforward exercise in linear algebra to show that $\psi$ defines an isomorphism of pseudolattices $\rmG_1 \to \rmG_2$ and that $\varphi f_1 = f_2 \psi$, so we obtain an isomorphism of quasi del Pezzo homomorphisms between  $f_1\colon \rmG_1 \to \rmE$ and $f_2\colon \rmG \to \rmE$.
\end{proof}

\subsection{From pseudolattices to lattices}\label{sec:pseudolattice}

With this background in place, we will explore what happens when one glues pairs of quasi del Pezzo homomorphisms. In particular, we will show that if this is done in the correct way, it gives rise to an exact sequence of lattices.

Let $f_i \colon \mathrm{G}_i \to \mathrm{E}$, for $i \in \{1,2\}$ be two quasi del Pezzo homomorphisms of pseudolattices, as in Definition \ref{def:qdp}, and suppose further that the basis $(a,b)$ for $\mathrm{E}$ appearing in this definition can be taken to be the same for both $i \in \{1,2\}$. Let $r_i$ denote the right adjoint of $f_i$. Finally, assume that $q(K_{\mathrm{G}_1},K_{\mathrm{G}_1}) = - q(K_{\mathrm{G}_2},K_{\mathrm{G}_2})$. 

The homomorphism $(-f_2) \colon \mathrm{G}_2 \to \mathrm{E}$ is clearly spherical, with right adjoint $(-r_2)$ and twist and cotwist $T_{(-f_2)} = T_{f_2}$ and $C_{(-f_2)} = C_{f_2}$ respectively. From this it follows immediately that $(-f_2)$ is also relatively $(-1)^0$-CY.

Now define $\mathrm{G} := \rmG_1 \oright_{\mathrm{E}} \mathrm{G}_2$ glued along the homomorphisms $f_1$ and $(-f_2)$, and let $f\colon\mathrm{G} \to \mathrm{E}$ be the natural homomorphism
\begin{align*}&f\colon \mathrm{G}_1 \oright_{\mathrm{E}} \mathrm{G}_2 \longrightarrow \mathrm{E}.\\
&f(u_1,u_2)  = f_1(u_1)-f_2(u_2).
\end{align*}
By Lemma \ref{lem:sumisCY}, $f$ is a relative $(-1)^0$-CY spherical homomorphism. We denote the right adjoint of $f$ by $r$. By the proof of  \cite[Proposition 2.23]{pdpslf}, the map $r$ is given explicitly by
\begin{equation} \label{eq:r} r(v) = (r_1T_{f_2}(v),-r_2(v)) \in \rmG_1 \oright_{\mathrm{E}} \mathrm{G}_2\end{equation}
for any $v \in \rmE$.  Let $\mathrm{K} \subset \mathrm{G}$ denote the kernel of $f$.

\begin{lemma} The bilinear form $\langle \cdot , \cdot \rangle_{\mathrm{G}}$ is symmetric on $\mathrm{K}$, so $\mathrm{K}$ is a lattice \textup{(}in the classical sense\textup{)}.
\end{lemma}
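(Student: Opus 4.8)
The plan is to use the formula for the canonical class of a pseudolattice (Definition~\ref{def:canonical}) together with the explicit description of the right adjoint $r$ in~\eqref{eq:r}. Recall that for any pseudolattice $\rmG$ with canonical class $K_\rmG$ we have
\[
\langle u_1, u_2 \rangle_{\rmG} - \langle u_2, u_1 \rangle_{\rmG} = -q(K_\rmG, \lambda(u_1 \wedge u_2))
\]
for all $u_1, u_2 \in \rmG$, where $\lambda(u_1 \wedge u_2) = [\rank(u_1)u_2 - \rank(u_2)u_1]$. So to show symmetry of the form on $\mathrm{K}$, it suffices to show that $q(K_\rmG, \lambda(u_1 \wedge u_2)) = 0$ whenever $u_1, u_2 \in \mathrm{K}$. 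The key observation will be that $K_\rmG = -[r(b)]$ by Proposition~\ref{prop:sphericalsurfacelike}(1), and $\rank(u) = \langle u, \bp \rangle_\rmG = \langle u, r(a) \rangle_\rmG = \langle f(u), a \rangle_\rmE$, so the rank of an element of $\mathrm{K}$ is controlled by $f$.

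First I would compute $r(b)$ explicitly using~\eqref{eq:r}: $r(b) = (r_1 T_{f_2}(b), -r_2(b))$. Since $f_i$ is a quasi del Pezzo homomorphism, Proposition~\ref{prop:sphericalsurfacelike}(2) gives that $T_{f_2}$ has matrix $\begin{psmallmatrix} 1 & -d_2 \\ 0 & 1\end{psmallmatrix}$ in the basis $(a,b)$, where $d_2 = q(K_{\rmG_2}, K_{\rmG_2})$; hence $T_{f_2}(b) = b - d_2 a$, and so $r(b) = (r_1(b) - d_2 r_1(a), -r_2(b))$. Next I would observe that for $u = (u_1, u_2) \in \mathrm{K}$ we have $f_1(u_1) = f_2(u_2) =: w \in \rmE$, and compute $\rank(u)$ and more importantly $q(K_\rmG, [u])$ for $u \in \bp^\perp$. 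The cleanest route is to show directly that $q(K_\rmG, \lambda(u_1 \wedge u_2)) = 0$ for $u_1, u_2 \in \mathrm{K}$: using the defining identity this equals $\langle u_2, u_1\rangle_\rmG - \langle u_1, u_2\rangle_\rmG$, and one expands the right-hand side using the gluing formula of Definition~\ref{def:gluing}. Writing $u_j = (u_j^{(1)}, u_j^{(2)})$ with $f_1(u_j^{(1)}) = f_2(u_j^{(2)}) = w_j$, the cross terms in $\langle u_1, u_2 \rangle_\rmG$ are $\langle f_1(u_1^{(1)}), -f_2(u_2^{(2)})\rangle_\rmE = -\langle w_1, w_2\rangle_\rmE$ (note the sign from gluing along $f_1$ and $-f_2$), while the cross terms in $\langle u_2, u_1\rangle_\rmG$ are $-\langle w_2, w_1\rangle_\rmE$, plus the $\langle G_2, G_1\rangle = 0$ term. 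Then
\[
\langle u_1, u_2\rangle_\rmG - \langle u_2, u_1\rangle_\rmG = \bigl(\langle u_1^{(1)}, u_2^{(1)}\rangle_{\rmG_1} - \langle u_2^{(1)}, u_1^{(1)}\rangle_{\rmG_1}\bigr) + \bigl(\langle u_1^{(2)}, u_2^{(2)}\rangle_{\rmG_2} - \langle u_2^{(2)}, u_1^{(2)}\rangle_{\rmG_2}\bigr) - \langle w_1, w_2\rangle_\rmE + \langle w_2, w_1\rangle_\rmE.
\]
Now apply Definition~\ref{def:canonical} to each $\rmG_i$: the first bracket is $-q(K_{\rmG_1}, \lambda_1(u_1^{(1)} \wedge u_2^{(1)}))$, the second is $-q(K_{\rmG_2}, \lambda_2(u_1^{(2)} \wedge u_2^{(2)}))$, and $-\langle w_1, w_2\rangle_\rmE + \langle w_2, w_1\rangle_\rmE = 2\langle w_2, w_1\rangle_\rmE$ since $\rmE$ is antisymmetric.

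The main obstacle, and the part requiring genuine care, is tracking how $\lambda_i$ on each $\rmG_i$ relates to the ranks computed via $f_i$, and showing that the contributions from the three terms cancel. Here $\rank(u_j^{(i)})$ in $\rmG_i$ equals $\langle u_j^{(i)}, r_i(a)\rangle_{\rmG_i} = \langle f_i(u_j^{(i)}), a\rangle_\rmE = \langle w_j, a\rangle_\rmE$, which is the same for $i = 1, 2$ — call it $\rho_j$. Using that $[r_i(b)] = -K_{\rmG_i}$ and $q(K_{\rmG_i}, -) = -\langle r_i(b), -\rangle_{\rmG_i}$ on $\bp^\perp$, one computes $q(K_{\rmG_i}, \lambda_i(u_1^{(i)} \wedge u_2^{(i)})) = -\langle r_i(b), \rho_1 u_2^{(i)} - \rho_2 u_1^{(i)}\rangle_{\rmG_i} = -\rho_1\langle w_2, b\rangle_\rmE + \rho_2 \langle w_1, b\rangle_\rmE$ by the adjunction $\langle r_i(b), v\rangle_{\rmG_i} = \langle b, f_i(v)\rangle_\rmE$... wait, one must be careful that $r_i$ is a \emph{right} adjoint, so $\langle f_i(v), b\rangle_\rmE = \langle v, r_i(b)\rangle_{\rmG_i}$, giving $\langle r_i(b), v\rangle_{\rmG_i}$ via the Serre operator; since the relevant elements lie in $\bp^\perp$ and we only need the symmetric pairing $q$, the $(-1)^0$-CY condition makes this manageable. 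Summing the two $i$-contributions (which are equal, since everything depends only on $w_j$ and $\rho_j$, not on $i$) against $2\langle w_2, w_1\rangle_\rmE$, and using $\rho_j = \langle w_j, a\rangle_\rmE$ together with the fact that $(a,b)$ is a symplectic-type basis with $\langle a,b\rangle_\rmE = -1$, one expands $\langle w_2, w_1\rangle_\rmE$ in coordinates and checks the total vanishes. Thus $\langle u_1, u_2\rangle_\rmG = \langle u_2, u_1\rangle_\rmG$ for all $u_1, u_2 \in \mathrm{K}$, so the form restricts to a symmetric form on $\mathrm{K}$; since $\mathrm{K}$ is a subgroup of the free abelian group $\rmG$ it is free, and we should also remark that the form is nondegenerate on $\mathrm{K}$ (this may need a separate short argument, or may be deferred). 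Hence $\mathrm{K}$ is a lattice in the classical sense.
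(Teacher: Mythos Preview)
Your approach is essentially correct but takes a much longer route than the paper's, and misses the one-line argument that the setup is designed to enable. The paper simply uses that $f$ is relative $(-1)^0$-CY: this means the Serre operator on $\rmG$ equals the cotwist $C_f = \mathrm{id}_{\rmG} - rf$. Hence for $u,v \in \rmK = \ker(f)$ one has $S_{\rmG}(u) = u - rf(u) = u$, and then
\[
\langle u,v \rangle_{\rmG} = \langle v, S_{\rmG}(u)\rangle_{\rmG} = \langle v,u\rangle_{\rmG}.
\]
That is the entire proof. Your computation---expanding via the gluing formula, applying the canonical class identity on each $\rmG_i$, and checking cancellation in $(a,b)$-coordinates---does go through (the surface-like condition on $\rmG_i$ makes the form symmetric on $r_i(a)^{\perp}$, resolving your ``wait, one must be careful'' worry), but it reproves by hand a special case of what the $(-1)^0$-CY hypothesis already packages.

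Two minor corrections. First, your opening appeal to $K_{\rmG}$ and $\lambda$ on the glued pseudolattice $\rmG$ presupposes that $\rmG$ itself is surface-like, which has not been established at this point in the argument; you rightly abandon this and work on each $\rmG_i$ separately, but the framing is misleading. Second, your closing remark about nondegeneracy is unnecessary and in fact wrong: the lemma only asserts symmetry (``lattice in the classical sense'' as opposed to \emph{pseudo}lattice), and indeed $\rmK$ is \emph{not} nondegenerate---the very next lemma shows $\im(r)$ is a totally degenerate sublattice of $\rmK$.
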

\begin{proof} Let $S_{\mathrm{G}}$ denote the Serre operator of $\mathrm{G}$; as $f$ is relatively $(-1)^0$-CY, we have $S_{\mathrm{G}} = C_f := \mathrm{id}_{\mathrm{G}} - rf$. Then for any $u,v \in \mathrm{K}$, we have
\[\langle u,v \rangle_{\mathrm{G}} = \langle v, S_{\mathrm{G}}(u)\rangle_{\mathrm{G}} = \langle v, u-rf(u) \rangle_{\mathrm{G}} = \langle v,u\rangle_{\mathrm{G}}\]
as $f(u) = 0$ by assumption. 
\end{proof}

\begin{lemma}\label{lem:r(E)isotrivial} Let $u \in \mathrm{E}$. Then $r(u) \in \mathrm{K}$ and $\langle v, r(u) \rangle_{\mathrm{G}} = 0$ for all $v \in \mathrm{K}$, so $\im(r)$ is a totally degenerate sublattice of $\rmK$.
\end{lemma}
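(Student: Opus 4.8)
The plan is to prove both assertions directly from the explicit formula \eqref{eq:r} for the right adjoint $r$ together with the defining properties of the glued pseudolattice $\mathrm{G} = \mathrm{G}_1 \oright_{\mathrm{E}} \mathrm{G}_2$. For the first claim, $r(u) \in \mathrm{K}$, I would observe that $f$ and $r$ are related by the twist: since $f$ is relative $(-1)^0$-CY we have $C_f = \mathrm{id}_{\mathrm{G}} - rf = S_{\mathrm{G}}$, and by Definition \ref{def:gluing} the twist of $f = f_1 \oright (-f_2)$ factors as $T_f = T_{f_1} T_{(-f_2)} = T_{f_1} T_{f_2}$. Composing $f$ with $r$ gives $fr = \mathrm{id}_{\mathrm{E}} - T_f$, so $f(r(u)) = u - T_{f_1}T_{f_2}(u)$. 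By Lemma \ref{lem:riproperties} (or equivalently Proposition \ref{prop:sphericalsurfacelike}(2)) applied to each quasi del Pezzo homomorphism $f_i$, the twist $T_{f_i}$ fixes $a$ and in the basis $(a,b)$ has matrix $\begin{psmallmatrix} 1 & -d_i \\ 0 & 1 \end{psmallmatrix}$ where $d_i = q(K_{\mathrm{G}_i},K_{\mathrm{G}_i})$; since we have assumed $d_1 = -d_2$, the product $T_{f_1} T_{f_2}$ is the identity on $\mathrm{E}$, whence $f(r(u)) = 0$ for all $u \in \mathrm{E}$, i.e. $\im(r) \subseteq \mathrm{K}$.

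For the second claim — that $\im(r)$ pairs trivially against all of $\mathrm{K}$ — I would use the adjunction property of $r$ directly. For $v \in \mathrm{K}$ and $u \in \mathrm{E}$, we have $\langle v, r(u) \rangle_{\mathrm{G}} = \langle f(v), u \rangle_{\mathrm{E}} = \langle 0, u \rangle_{\mathrm{E}} = 0$, since $v \in \mathrm{K} = \ker f$. This already gives $\langle v, r(u) \rangle_{\mathrm{G}} = 0$; for the other order of arguments one then invokes the previous lemma (symmetry of $\langle \cdot,\cdot\rangle_{\mathrm{G}}$ on $\mathrm{K}$), noting that both $v$ and $r(u)$ lie in $\mathrm{K}$ by the first claim, so $\langle r(u), v \rangle_{\mathrm{G}} = \langle v, r(u) \rangle_{\mathrm{G}} = 0$ as well. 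Hence $\im(r)$ is contained in the radical of $\mathrm{K}$, i.e. it is a totally degenerate sublattice, as claimed.

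I do not expect a serious obstacle here: both parts are short formal manipulations. The one point requiring a little care is justifying the identity $fr = \mathrm{id}_{\mathrm{E}} - T_f$ and the factorisation $T_f = T_{f_1}T_{f_2}$ — these come respectively from the definition $T_f := \mathrm{id}_{\mathrm{E}} - fr$ and from the displayed assertion in Definition \ref{def:gluing} that $T_{f_1 \oright f_2} = T_{f_1} \cdot T_{f_2}$ — and then checking that $\begin{psmallmatrix} 1 & -d_1 \\ 0 & 1 \end{psmallmatrix}\begin{psmallmatrix} 1 & -d_2 \\ 0 & 1 \end{psmallmatrix} = \begin{psmallmatrix} 1 & -(d_1+d_2) \\ 0 & 1 \end{psmallmatrix}$ is the identity precisely because of the hypothesis $q(K_{\mathrm{G}_1},K_{\mathrm{G}_1}) = -q(K_{\mathrm{G}_2},K_{\mathrm{G}_2})$. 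Alternatively, one can bypass the twist entirely for the first claim: compute $f(r(u))$ from \eqref{eq:r} directly as $f_1(r_1 T_{f_2}(u)) - f_2(-r_2(u)) = f_1 r_1 T_{f_2}(u) + f_2 r_2(u)$, then use $f_i r_i = \mathrm{id}_{\mathrm{E}} - T_{f_i}$ to get $(T_{f_2}(u) - T_{f_1}T_{f_2}(u)) + (u - T_{f_2}(u)) = u - T_{f_1}T_{f_2}(u)$, and conclude as before. I would probably present the adjunction argument for the degeneracy statement and the twist computation for the kernel statement, as that keeps the proof to a few lines.
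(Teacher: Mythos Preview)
Your proposal is correct and matches the paper's proof essentially line for line: the paper also shows $T_f = T_{f_1}T_{f_2} = \mathrm{id}_{\mathrm{E}}$ via the matrices $\begin{psmallmatrix}1 & -d_i\\0 & 1\end{psmallmatrix}$ with $d_1+d_2=0$ to get $fr=0$, and then uses the adjunction $\langle v, r(u)\rangle_{\mathrm{G}} = \langle f(v), u\rangle_{\mathrm{E}} = 0$ for the degeneracy. Your extra remarks (the alternative direct computation from \eqref{eq:r} and the symmetry argument for the reversed pairing) are fine but not needed, since the statement only asserts vanishing in the order $\langle v, r(u)\rangle_{\mathrm{G}}$.
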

\begin{proof} By Definition \ref{def:gluing}, the twist $T_f = T_{f_1}\cdot T_{-f_2} = T_{f_1}\cdot T_{f_2}$. Moreover, by Proposition \ref{prop:sphericalsurfacelike}, in the basis $(a,b)$ for $\mathrm{E}$ we have $T_{f_i} = \begin{psmallmatrix} 1 & -d_i \\ 0 & 1 \end{psmallmatrix}$, where $d_i := q(K_{\mathrm{G}_i},K_{\mathrm{G}_i})$. Thus, since $d_1 = -d_2$ by assumption, we have $T_f = \mathrm{id}_{\mathrm{E}}$. But $T_f = \mathrm{id}_{\mathrm{E}} - fr$ by definition, so $fr(u) = 0$ for all $u \in \mathrm{E}$ and hence $r(u) \in \mathrm{K}$.

To prove the second part of the statement, note that by adjunction $\langle v, r(u) \rangle_{\mathrm{G}} = \langle f(v), u \rangle_{\mathrm{E}} = 0$,
as $f(v) = 0$ by assumption.
\end{proof}

Let $\overline{\mathrm{E}} \subset \mathrm{K}$ denote the saturation of the $\bZ$-submodule $\im(r)$, equipped with the bilinear form induced by $\langle \cdot , \cdot \rangle_{\mathrm{G}}$; by the results above we see that $\overline{\mathrm{E}}$ is isomorphic to $\bZ^2$ with the zero form. By Lemma \ref{lem:r(E)isotrivial}, we may  take the quotient $\mathrm{K}/\overline{\mathrm{E}}$ to obtain a new lattice, which we denote by $\rmM$. We thus have an exact sequence of lattices (in the classical sense)
\begin{equation} \label{eq:ses} 0 \longrightarrow \overline{\mathrm{E}} \longrightarrow \mathrm{K} \longrightarrow \rmM \longrightarrow 0.\end{equation}

\begin{remark}
Note that if $(u_1,u_2)$ and $(v_1,v_2)$ are elements of $\rmK$, for $u_i,v_i \in \rmG_i$, then as $\rmK = \ker(f)$ we have $f_1(u_1) = f_2(u_2)$ and $f_1(v_1) = f_2(v_2)$. Consequently
\begin{align*}
\langle (u_1,u_2),\,(v_1,v_2)\rangle_{\rmG_1\oright_{\rmE}\rmG_2} &= \langle u_1,v_1\rangle_{\rmG_1} - \langle f_1(u_1),f_2(v_2)\rangle_{\rmE} + \langle u_2,v_2\rangle_{\rmG_2}\\
&=\langle u_2,v_2\rangle_{\rmG_2} - \langle f_2(u_2),f_1(v_1)\rangle_{\rmE} + \langle u_1,v_1\rangle_{\rmG_1}\\
&= \langle (u_2,u_1),\,(v_2,v_1)\rangle_{\rmG_2\oright_{\rmE}\rmG_1}
\end{align*}
It follows that switching the order of the factors in the definition of $\rmG$ does not change the lattice $\rmK$: we obtain the same $\rmK$ if we define $\rmG$ to be $\rmG_1\oright_{\rmE}\rmG_2$ or $\rmG_2\oright_{\rmE}\rmG_1$. From this it is easy to see that the remaining results of this section, particularly the exact sequence \eqref{eq:ses}, are also independent of the ordering of the factors in $\rmG$.
\end{remark}

As a consequence of this remark, we may freely switch the labels on $\rmG_1$ and $\rmG_2$ without affecting the subsequent development. For concreteness, we will henceforth assume that labels have been chosen so that $q(K_{\mathrm{G}_1},K_{\mathrm{G}_1}) \geq 0$.

\begin{definition}
Define the \emph{degree} of $\rmG$ to be $\deg(\rmG) = q(K_{\rmG_1},K_{\rmG_1})$, whenever $q(K_{\rmG_1},K_{\rmG_1}) \neq 8$. If $q(K_{\rmG_1},K_{\rmG_1}) = 8$ then, by Theorem \ref{thm:qdpclassification} and Lemma \ref{lem:NSproperties}, there are two possibilities for the quasi del Pezzo homomorphism $f_1 \colon\rmG_1 \to \rmE$: we say that $\deg(\rmG) = 8$ if $\NS(\rmG_1) \cong \rmI_{1,1}$ (case (1) of Theorem \ref{thm:qdpclassification}) and $\deg(\rmG) = 8'$ if $\NS(\rmG_1) \cong \rmI\rmI_{1,1}$ (case (2) of Theorem \ref{thm:qdpclassification}).
\end{definition}

As a consequence of the classification of quasi del Pezzo homomorphisms (Theorem \ref{thm:qdpclassification}), we note that $f\colon \rmG \to \rmE$ is determined up to isomorphism by $\deg(\rmG)$, and that $0 \leq \deg(\rmG) \leq 9$. 

\subsection{Point-like vectors and the N\'{e}ron-Severi lattice}\label{sec:pointsandNS}

Next we consider the vectors $(r_1(a),0)$ and $(0,r_2(a))$ in $\rmG_1 \oright_{\mathrm{E}} \mathrm{G}_2 = \rmG$. Define $\Psi$ to be the $\bZ$-submodule of $\rmG$ generated by these two elements.

\begin{lemma} \label{lem:psiproperties} $\Psi$ has the following properties.
\begin{enumerate}
\item $\Psi$ is primitive in $\rmG$.
\item If $p \in \Psi$, then $\langle p,v\rangle_{\rmG} = \langle v, p\rangle_{\rmG}$ for any $v \in \rmG$.
\item $\Psi$ is contained in $\rmK$.
\item The restriction of the bilinear form $\langle \cdot,\cdot\rangle_{\rmG}$ to $\Psi$ is identically zero.
\item $\Psi \cap \overline{\rmE}$ is generated by the primitive element 
\[r(a) = (r_1(a),-r_2(a)) \in \rmG_1 \oright_{\mathrm{E}} \mathrm{G}_2 = \rmG.\]
\end{enumerate}
\end{lemma}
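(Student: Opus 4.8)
The plan is to verify the five properties by unwinding the explicit description of the gluing $\rmG_1 \oright_{\mathrm{E}} \mathrm{G}_2$ from Definition \ref{def:gluing}, together with the formula \eqref{eq:r} for the right adjoint $r$, and the properties of quasi del Pezzo homomorphisms recorded in Lemma \ref{lem:riproperties} and Proposition \ref{prop:sphericalsurfacelike}. Throughout I would write $p_i := r_i(a) \in \rmG_i$, so that $\Psi = \bZ\langle (p_1,0),(0,p_2)\rangle$, and recall that each $p_i$ is the point-like element of $\rmG_i$ (condition (1) of Definition \ref{def:qdp}), hence primitive in $\rmG_i$, and that $f_i(p_i) = 0$ by Lemma \ref{lem:riproperties}.

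For (1), primitivity of $\Psi$: since $p_1$ is primitive in $\rmG_1$ and $p_2$ is primitive in $\rmG_2$, the two generators $(p_1,0)$ and $(0,p_2)$ form part of a $\bZ$-basis of $\rmG_1\oplus\rmG_2 = \rmG$, so the rank-$2$ sublattice they span is saturated. For (3), $\Psi\subset\rmK$: we compute $f(p_1,0) = f_1(p_1) = 0$ and $f(0,p_2) = -f_2(p_2) = 0$ using Lemma \ref{lem:riproperties}, so both generators lie in $\rmK = \ker f$. For (4), vanishing of the form on $\Psi$: since $p_i$ is point-like in $\rmG_i$, Definition \ref{def:surfacelike}(1) gives $\langle p_i,p_i\rangle_{\rmG_i}=0$; and using the gluing formula, the cross terms are $\langle f_1(p_1),\pm f_2(p_2)\rangle_{\rmE}=0$ again by Lemma \ref{lem:riproperties}. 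Hence every pairing among $(p_1,0),(0,p_2)$ vanishes, and the form is identically zero on $\Psi$ by bilinearity. Property (2) is similar: for $v=(v_1,v_2)\in\rmG$ and the generator $(p_1,0)$, the gluing formula gives $\langle (p_1,0),v\rangle_{\rmG} = \langle p_1,v_1\rangle_{\rmG_1} + \langle f_1(p_1),f_2(v_2)\rangle_{\rmE}$ and $\langle v,(p_1,0)\rangle_{\rmG} = \langle v_1,p_1\rangle_{\rmG_1} + 0$ (the $i=2,j=1$ cross term is zero by definition of the glued form); since $f_1(p_1)=0$ and $p_1$ is point-like in $\rmG_1$ so that $\langle p_1,v_1\rangle_{\rmG_1}=\langle v_1,p_1\rangle_{\rmG_1}$ by Definition \ref{def:surfacelike}(2), the two agree. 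The analogous computation for the generator $(0,p_2)$ uses that the $i=1,j=2$ cross term $\langle f_1(v_1),-f_2(p_2)\rangle_{\rmE}$ vanishes, that the $i=2,j=1$ cross term is zero by definition, and that $p_2$ is point-like in $\rmG_2$; then extend by linearity over $\Psi$.

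For (5), I would first observe that $r(a) = (r_1T_{f_2}(a),-r_2(a))$ by \eqref{eq:r}, and that $T_{f_2}(a)=a$ by Proposition \ref{prop:sphericalsurfacelike} (condition (2) of Definition \ref{def:qdp}), so $r(a) = (p_1,-p_2)$, which manifestly lies in $\Psi$; it also lies in $\overline{\rmE}\supseteq\im(r)$, and it is primitive in $\rmG$ for the same reason as in (1) (its components are part of a basis). Conversely, suppose $w=\alpha(p_1,0)+\beta(0,p_2) = (\alpha p_1,\beta p_2)\in\Psi\cap\overline{\rmE}$. Since $\overline{\rmE}$ is the saturation of $\im(r)$ inside $\rmK$ and $\im(r)$ has rank at most $2$, I expect $\overline{\rmE}$ to have rank $2$ with $\im(r)$ of finite index; the cleanest route is to use that $r$ is injective (Lemma \ref{lem:riproperties} for $\rmG_1$, hence $r$ injective by \eqref{eq:r}) together with the explicit form $r(xa+yb) = (x p_1 + y\, r_1(b), -x p_2 - y\, r_2(b))$. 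An element of the saturation is a $\bQ$-combination of $r(a)$ and $r(b)$ that happens to be integral; for such a combination $\lambda r(a)+\mu r(b)$ to lie in $\Psi$, i.e. to have first component a $\bZ$-multiple of $p_1$ and second component a $\bZ$-multiple of $p_2$, one needs the $r_i(b)$-contributions to cancel against the lattice structure — and since $[r_i(b)] = -K_{\rmG_i}\ne 0$ in $\NS(\rmG_i)$ (Proposition \ref{prop:sphericalsurfacelike}(1)), the class $r_i(b)$ is not proportional to $p_i$ in $\rmG_i\otimes\bQ$, forcing $\mu=0$. Then $\lambda r(a)$ integral and primitivity of $r(a)$ force $\lambda\in\bZ$, so $w\in\bZ\, r(a)$.

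The main obstacle is the converse inclusion in (5): one must rule out "extra" integral vectors in the saturation $\overline{\rmE}$ that happen to land in $\Psi$, which amounts to controlling how $r_1(b)$ and $r_2(b)$ sit relative to the point-like vectors. I would handle this by working in the concrete model $\rmG_j\cong\bZ\oplus\NS(\rmG_j)\oplus\bZ$ from the proof of Proposition \ref{prop:qdpisomorphism}, where $p_j$ is the generator of the last $\bZ$-summand and $r_j(b)$ has nonzero $\NS$-component equal to $-K_{\rmG_j}$; in that model the claim becomes an immediate linear-algebra statement about which combinations of the explicit columns are integral and lie in the span of the two "point" vectors. All other parts are short formal manipulations with the gluing pairing and the identities $f_i r_i(a)=0$, $T_{f_i}(a)=a$, $[r_i(b)]=-K_{\rmG_i}$.
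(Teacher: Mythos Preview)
Your arguments for (1)--(4) are correct and essentially identical to the paper's.

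For (5), both you and the paper first compute $r(a)=(p_1,-p_2)$ via $T_{f_2}(a)=a$ and observe it is primitive, but the converse inclusion is handled differently. The paper argues by rank: since $\Psi$ and $\overline{\rmE}$ are both rank~$2$ and saturated in $\rmG$, it suffices to exhibit one element of $\Psi$ not in $\overline{\rmE}$; they take $(0,-p_2)$ and derive a contradiction from injectivity of $r_2$. Your approach instead parametrises $\overline{\rmE}\otimes\bQ$ by $\lambda r(a)+\mu r(b)$ and forces $\mu=0$ using that $[r_i(b)]=-K_{\rmG_i}\neq 0$ in $\NS(\rmG_i)$, so $r_i(b)\notin\bQ p_i$. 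This is correct and equally short; the paper's version avoids invoking $K_{\rmG_i}\neq 0$, while yours avoids the auxiliary rank argument.

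One small slip: your displayed formula $r(xa+yb)=(xp_1+yr_1(b),\,-xp_2-yr_2(b))$ drops the $T_{f_2}$ twist on the first component. The correct expression is $r(xa+yb)=\bigl((x-yd_2)p_1+yr_1(b),\,-xp_2-yr_2(b)\bigr)$ where $d_2=q(K_{\rmG_2},K_{\rmG_2})$, since $T_{f_2}(b)=-d_2a+b$. This does not affect your argument, because the coefficient of $r_i(b)$ is still $y$ (or $\mu$) and that is all you use; but the formula as written is not quite right.
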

\begin{proof}
To prove (1), note that $r_i(a)$ is primitive in $\rmG_i$ for each $i \in \{1,2\}$, by Proposition \ref{prop:sphericalsurfacelike}. As $\rmG$ is isomorphic to $\rmG_1 \oplus \rmG_2$ as a $\bZ$-module, it follows that $\Psi$ is primitive in $\rmG$.

To prove (2), it suffices to show that $\langle r_i(a),v\rangle_{\rmG} = \langle v, r_i(a)\rangle_{\rmG}$ for each $i \in \{1,2\}$ and any $v \in \rmG$. So let $v \in \rmG$. Writing $v = (v_1,v_2)$ for $v_i \in \rmG_i$, by Definition \ref{def:gluing} we have
\begin{align*}
\langle r_1(a), v\rangle_{\rmG} &= \langle r_1(a),v_1\rangle_{\rmG_1} + \langle f_1r_1(a),-f_2(v_2) \rangle_{\rmE},\\
\langle v, r_1(a)\rangle_{\rmG} &= \langle v_1,r_1(a)\rangle_{\rmG_1}.
\end{align*}
But $f_1r_1(a) = 0$ by Lemma \ref{lem:riproperties}, so $\langle r_1(a), v\rangle_{\rmG} = \langle r_1(a),v_1\rangle_{\rmG_1}$.
To complete the proof, we use the fact that, by Definition \ref{def:qdp}, $r_1(a)$ is point-like in $\rmG_1$, so $\langle r_1(a),v_1\rangle_{\rmG_1} = \langle v_1,r_1(a)\rangle_{\rmG_1}$;
we conclude that $\langle r_1(a),v\rangle_{\rmG} = \langle v, r_1(a)\rangle_{\rmG}$ as required. The proof for $r_2$ is completely analogous.

To prove (3), we have $f(r_1(a),0) = f_1r_1(a)$ and $f(0,r_2(a)) = -f_2r_2(a)$, and both of these are equal to $0$ by Lemma \ref{lem:riproperties}. So $(r_1(a),0), (0,r_2(a)) \in \ker(f) = \rmK$, which implies $\Psi \subset \rmK$.

To prove (4) we note that, as $r_i(a)$ is point-like in $\rmG_i$ for $i \in \{1,2\}$, we must have $\langle r_i(a),r_i(a)\rangle_{\rmG_i} = 0$. So, by Definition \ref{def:gluing},
\[\langle (r_1(a),0), (r_1(a),0) \rangle_{\rmG} = \langle (0,r_2(a)), (0,r_2(a)) \rangle_{\rmG} = 0\]
Moreover, $\langle (0,r_2(a)),(r_1(a),0) \rangle_{\rmG} = 0$ by definition and
\[\langle (r_1(a),0), (0,r_2(a)) \rangle_{\rmG} = \langle f_1r_1(a),-f_2r_2(a) \rangle_{\rmE} = 0,\]
as $f_ir_i(a) = 0$ for each $i \in \{1,2\}$ by Lemma \ref{lem:riproperties}. We conclude that the restriction of the bilinear form $\langle \cdot,\cdot\rangle_{\rmG}$ to $\Psi$ is identically zero.

Finally, to prove (5) we observe that, by Equation \eqref{eq:r}, the map $r$ is given by
\[r(v) = (r_1T_{f_2}(v),-r_2(v)) = (r_1(v)-r_1f_2r_2(v),-r_2(v)).\]
Applying this to $a \in \rmE$, we see that $r(a) = (r_1(a),-r_2(a))$, where we have used the fact that $f_2r_2(a) = 0$. Thus $(r_1(a),-r_2(a)) \in \im(r) \subset \overline{\rmE}$, and this element is primitive by the same argument that we used to prove (1).

To complete the proof of (5), as both $\Psi$ and $\overline{\rmE}$ have rank $2$, it suffices to find an element of $\Psi$ that is not in $\overline{\rmE}$. Consider $(0,-r_2(a)) \in \Psi$. Assume for a contradiction that some multiple of this is equal to $r(v)$, for $v \in \rmE$. By the formula above, this can only be true if $r_2(v) = kr_2(a)$, for $k \in \bZ \setminus \{0\}$. But $r_2$ is linear and injective by Lemma \ref{lem:riproperties}, so we must have $v = ka$. It then follows from our computation above that $r(ka) = (kr_1(a),-kr_2(a))$; this is a contradiction.
\end{proof}

As a consequence of Lemma \ref{lem:psiproperties}(2), we see that the left and right orthogonal complements of $\Psi$ in $\rmG$ agree. We denote this orthogonal complement by $\Psi^{\perp}_{\rmG}$. The following lemma describes its properties.

\begin{lemma}\label{lem:psiperpG}
Equipping $\Psi_{\rmG}^{\perp}$ with the bilinear form induced from $-\langle \cdot,\cdot\rangle_{\rmG}$ \textup{(}note the negative sign\textup{)}, there is a natural isomorphism of lattices
\[\Psi_{\rmG}^{\perp}/\Psi \cong \NS(\rmG_1) \oplus \NS(\rmG_2),\]
where $\NS(\rmG_i)$ is equipped with its usual bilinear form $q(\cdot,\cdot)$.
\end{lemma}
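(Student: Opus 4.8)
The plan is to identify $\Psi_{\rmG}^{\perp}$ explicitly as a direct sum inside $\rmG = \rmG_1 \oplus \rmG_2$, to note that $\Psi$ sits inside it as the span of the two point-like vectors $\bp_i := r_i(a)$, and then to check that the form induced by $-\langle\cdot,\cdot\rangle_{\rmG}$ splits as the required orthogonal direct sum under the resulting quotient.

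First I would compute the orthogonal complement. Using the definition of the glued form (Definition~\ref{def:gluing}), for $v = (v_1,v_2) \in \rmG$ one finds $\langle v,(\bp_1,0)\rangle_{\rmG} = \langle v_1,\bp_1\rangle_{\rmG_1}$ and $\langle v,(0,\bp_2)\rangle_{\rmG} = \langle v_2,\bp_2\rangle_{\rmG_2}$, the second identity also using $f_2 r_2(a) = 0$ (Lemma~\ref{lem:riproperties}). Since the left and right orthogonal complements of $\Psi$ coincide (Lemma~\ref{lem:psiproperties}(2)), this identifies $\Psi_{\rmG}^{\perp}$ with $\bp_1^{\perp} \oplus \bp_2^{\perp}$, where $\bp_i^{\perp}$ is taken inside $\rmG_i$ (unambiguously, as $\bp_i$ is point-like). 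As $\Psi = \bZ\bp_1 \oplus \bZ\bp_2$, quotienting gives a group isomorphism $\Psi_{\rmG}^{\perp}/\Psi \cong (\bp_1^{\perp}/\bZ\bp_1) \oplus (\bp_2^{\perp}/\bZ\bp_2) = \NS(\rmG_1) \oplus \NS(\rmG_2)$, sending $(v_1,v_2)+\Psi$ to $([v_1],[v_2])$.

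Next I would check that this is an isometry. The crucial observation --- and the one step requiring a little care --- is that the ``cross term'' in the glued form vanishes on $\Psi_{\rmG}^{\perp}$. Indeed, if $v_i \in \bp_i^{\perp}$ then by adjunction $\langle f_i(v_i),a\rangle_{\rmE} = \langle v_i, r_i(a)\rangle_{\rmG_i} = \langle v_i,\bp_i\rangle_{\rmG_i} = 0$, which in the basis $(a,b)$ of $\rmE$ forces $f_i(v_i) \in \bZ a$; since $\langle a,a\rangle_{\rmE} = 0$, the term $\langle f_1(v_1),f_2(w_2)\rangle_{\rmE}$ that appears in $\langle (v_1,v_2),(w_1,w_2)\rangle_{\rmG}$ vanishes whenever $(v_1,v_2),(w_1,w_2)\in\Psi_{\rmG}^{\perp}$. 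Hence $\langle (v_1,v_2),(w_1,w_2)\rangle_{\rmG} = \langle v_1,w_1\rangle_{\rmG_1} + \langle v_2,w_2\rangle_{\rmG_2}$, and negating and applying the definition of the N\'eron--Severi form of a surface-like pseudolattice gives $-\langle (v_1,v_2),(w_1,w_2)\rangle_{\rmG} = q([v_1],[w_1]) + q([v_2],[w_2])$ --- exactly the orthogonal-direct-sum form on $\NS(\rmG_1)\oplus\NS(\rmG_2)$.

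I expect everything except the cross-term cancellation to be a mechanical unwinding of the gluing construction and the definition of $\NS(-)$; the cancellation is the only place where the quasi del Pezzo hypotheses on the $f_i$ (via $f_i r_i(a) = 0$ and the point-likeness of $r_i(a)$) really enter, so that is the step I would treat most carefully.
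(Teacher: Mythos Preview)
Your proof is correct and follows essentially the same approach as the paper's: identify $\Psi_{\rmG}^{\perp}$ with $\bp_1^{\perp}\oplus\bp_2^{\perp}$, show the cross term $\langle f_1(v_1),-f_2(w_2)\rangle_{\rmE}$ vanishes on this subspace because each $f_i(v_i)$ is forced into $\bZ a$, then quotient by $\Psi$. The only cosmetic difference is that the paper presents the two halves in the opposite order (forms agree first, then sets agree) and, when identifying $\Psi_{\rmG}^{\perp}$, computes one of the two pairings with the arguments swapped so as not to invoke $f_2r_2(a)=0$ at that stage; your use of Lemma~\ref{lem:riproperties} there is equally valid.
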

\begin{proof}
Recall that, for each $i \in \{1,2\}$, the point-like element in $\rmG_i$ is given by $r_i(a)$, so $\NS(\rmG_i) := (r_1(a))^{\perp}_{\rmG_1}/r_i(a)$.

We begin by showing that $(r_1(a))^{\perp}_{\rmG_1} \oplus (r_2(a))^{\perp}_{\rmG_2}$ is a sublattice of $\rmG$ in the obvious way. To do this we need to show that the bilinear forms induced on $(r_1(a))^{\perp}_{\rmG_1} \oplus (r_2(a))^{\perp}_{\rmG_2}$ from those on $\rmG_1$ and $\rmG_2$, respectively from $\rmG$, agree.

Let $u,v \in (r_1(a))^{\perp}_{\rmG_1} \oplus (r_2(a))^{\perp}_{\rmG_2}$ and write $u=(u_1,u_2)$, $v = (v_1,v_2)$. Then
\[\langle u,v \rangle_{\rmG} = \langle u_1,v_1\rangle_{\rmG_1} + \langle u_2,v_2\rangle_{\rmG_2} + \langle f_1(u_1), -f_2(v_2)\rangle_{\rmE}.\]
To show that this agrees with the bilinear form induced from those on $\rmG_1$ and $\rmG_2$, we need to show that $\langle f_1(u_1), -f_2(v_2)\rangle_{\rmE} = 0$. As $u_1 \in (r_1(a))^{\perp}_{\rmG_1}$, we have $0 = \langle u_1,r_1(a) \rangle_{\rmG_1} = \langle f_1(u_1),a \rangle_{\rmE}$. By the definition of the bilinear form on $\rmE$, it follows that $f_1(u_1)$ must be an integer multiple of $a$. A similar argument shows that $f_2(v_2)$ must also be an integer multiple of $a$, so $\langle f_1(u_1), -f_2(v_2)\rangle_{\rmE} = c\, \langle a,a \rangle_{\rmE} = 0$, for some $c \in \bZ$, as required.

Next we show that $\Psi_{\rmG}^{\perp} = (r_1(a))^{\perp}_{\rmG_1} \oplus (r_2(a))^{\perp}_{\rmG_2}$ as subsets of $\rmG$. For any $v = (v_1,v_2) \in \rmG$, we have
\begin{align*}
\langle (v_1,v_2),\, (r_1(a),0)\rangle_{\rmG} &= \langle v_1,r_1(a) \rangle_{\rmG_1},\\
\langle (0,r_2(a)),\, (v_1,v_2) \rangle_{\rmG} &= \langle r_2(a),v_2 \rangle_{\rmG_2},
\end{align*}
so $v \in  \Psi_{\rmG}^{\perp}$ if and only if $v_1 \in (r_1(a))^{\perp}_{\rmG_1}$ and $v_2 \in (r_2(a))^{\perp}_{\rmG_2}$.

We thus have an equality of sublattices $\Psi^{\perp}_{\rmG} = (r_1(a))^{\perp}_{\rmG_1} \oplus (r_2(a))^{\perp}_{\rmG_2}$ of $\rmG$. We conclude by quotienting by $\Psi$ and noting that the bilinear form $q(\cdot,\cdot)$ on $\NS(\rmG_i)$ is induced by $-\langle \cdot,\cdot\rangle_{\rmG_i}$.
\end{proof}

As a consequence of Lemma \ref{lem:psiproperties}, we also see that $(r_1(a),0)$ and $(0,r_2(a))$ both descend to the same primitive element $\bp \in \rmM$. 

\begin{proposition} \label{prop:NS} $\rmM$ is a surface-like pseudolattice with point-like element $\bp$. The N\'{e}ron-Severi group of $\rmM$, defined by $\NS(\rmM) := \bp^{\perp}/\bp$, fits into the following exact sequence
\[0 \longrightarrow  \bZ \zeta \longrightarrow \Psi^{\perp}_{\rmK}/\Psi \longrightarrow \NS(\rmM) \longrightarrow 0,\] 
where $\Psi^{\perp}_{\rmK}$ denotes the orthogonal complement of $\Psi$ in $\rmK$ and $\zeta$ is a primitive generator of $\overline{\rmE}/r(a)$. 

If $\Psi^{\perp}_{\rmK}/\Psi$ is equipped with the bilinear form induced by the natural inclusion 
\[\Psi^{\perp}_{\rmK}/\Psi \subset \Psi_{\rmG}^{\perp}/\Psi \cong \NS(\rmG_1) \oplus \NS(\rmG_2),\]
$\NS(\rmM)$ is equipped with its usual bilinear form $q(\cdot,\cdot)$, and $\zeta$ is isotropic \textup{(}$\zeta.\zeta = 0$\textup{)}, then the above sequence becomes an exact sequence of lattices.
\end{proposition}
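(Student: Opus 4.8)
The strategy is to reduce every assertion to elementary lattice theory inside $\rmK$, using Lemmas~\ref{lem:r(E)isotrivial}, \ref{lem:psiproperties} and~\ref{lem:psiperpG} together with one structural input from the classification: by our labelling convention $q(K_{\rmG_2},K_{\rmG_2})\le 0$, and then Lemma~\ref{lem:NSproperties} forces $K_{\rmG_2}$ to be primitive in $\NS(\rmG_2)$. Since the form on $\rmK$ is symmetric and $\overline{\rmE}$ lies in its radical (Lemma~\ref{lem:r(E)isotrivial}, using that $\overline{\rmE}$ is a saturation), the quotient $\rmM=\rmK/\overline{\rmE}$ inherits a symmetric bilinear form, so conditions~(2) and~(3) of Definition~\ref{def:surfacelike} are automatic, and $\langle\mathbf{p},\mathbf{p}\rangle_{\rmM}=0$ is Lemma~\ref{lem:psiproperties}(4). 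The genuine content of the first claim is thus (i)~nondegeneracy of the form on $\rmM$, i.e.\ $\overline{\rmE}=\mathrm{rad}(\rmK)$, and (ii)~primitivity of $\mathbf{p}$ in $\rmM$.

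I would first pin down the middle term. From $\Psi^{\perp}_{\rmK}=\Psi^{\perp}_{\rmG}\cap\rmK$ and Lemma~\ref{lem:psiperpG}, using the adjunction $\langle f_i(u),v\rangle_{\rmE}=\langle u,r_i(v)\rangle_{\rmG_i}$ and the definition of the gluing, one checks that for $u_i\in(r_i(a))^{\perp}_{\rmG_i}$ the image $f_i(u_i)$ is a multiple of $a$, and that $(u_1,u_2)\in\rmK$ precisely when $q([u_1],K_{\rmG_1})=q([u_2],K_{\rmG_2})$; equivalently $\Psi^{\perp}_{\rmK}/\Psi$ is the orthogonal complement $\kappa^{\perp}$ of $\kappa:=(K_{\rmG_1},-K_{\rmG_2})$ in $\NS(\rmG_1)\oplus\NS(\rmG_2)$. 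The hypothesis $q(K_{\rmG_1},K_{\rmG_1})=-q(K_{\rmG_2},K_{\rmG_2})$ makes $\kappa$ isotropic, and primitivity of $K_{\rmG_2}$ makes $\kappa$ primitive; as $\NS(\rmG_1)\oplus\NS(\rmG_2)$ is unimodular (Lemma~\ref{lem:NSproperties}), $\mathrm{rad}(\kappa^{\perp})=\bZ\kappa$ and $\kappa^{\perp}/\bZ\kappa$ is nondegenerate. Finally, since $\overline{\rmE}\subseteq\Psi^{\perp}_{\rmK}$ maps to a rank-one subgroup of $\NS(\rmG_1)\oplus\NS(\rmG_2)$ containing the image of $r(b)$, which equals $-\kappa$ by Proposition~\ref{prop:sphericalsurfacelike}, the primitive generator $\zeta$ of $\overline{\rmE}/r(a)$ (which injects into $\Psi^{\perp}_{\rmK}/\Psi$ by Lemma~\ref{lem:psiproperties}(5)) is identified with $\pm\kappa$, a primitive generator of $\mathrm{rad}(\Psi^{\perp}_{\rmK}/\Psi)$.

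Given this, (i): an $x\in\mathrm{rad}(\rmK)$ lies in $\Psi^{\perp}_{\rmK}$ and maps into $\mathrm{rad}(\kappa^{\perp})=\bZ\kappa$, while $\mathrm{rad}(\rmK)\cap\Psi=\bZ\,r(a)$ --- this because $\langle\alpha(r_1(a),0)+\beta(0,r_2(a)),(z_1,z_2)\rangle_{\rmG}=(\alpha+\beta)\langle f_1(z_1),a\rangle_{\rmE}$ for $(z_1,z_2)\in\rmK$ and $z\mapsto\langle f_1(z_1),a\rangle_{\rmE}$ is surjective; hence $\rank\mathrm{rad}(\rmK)\le 2=\rank\overline{\rmE}$, and since $\overline{\rmE}\subseteq\mathrm{rad}(\rmK)$ with both saturated in $\rmK$, they coincide. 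For (ii), primitivity of $\mathbf{p}$ in $\rmM$ is equivalent to $\Psi+\overline{\rmE}=\bZ(r_1(a),0)+\overline{\rmE}$ being saturated in $\rmG$; if $(r_1(a),0)=p\,y+e$ with $p$ prime, $y\in\rmK$, $e\in\overline{\rmE}$, the projection $\pi_2\colon\rmG\to\rmG_2$ gives $\pi_2(e)\in p\rmG_2\cap\pi_2(\overline{\rmE})$. Primitivity of $K_{\rmG_2}$ (with $[r_2(b)]=-K_{\rmG_2}$ and $\rank r_2(b)=0$) makes $\pi_2(\overline{\rmE})=r_2(\rmE)$ saturated in $\rmG_2$, and $\pi_2|_{\overline{\rmE}}$ is injective since $\overline{\rmE}\cap\rmG_1=0$ (as $r_2$ is injective); together these give $e\in p\overline{\rmE}$, hence $r_1(a)\in p\rmG_1$, contradicting primitivity of $r_1(a)$.

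For the second part, one computes $\langle\bar x,\mathbf{p}\rangle_{\rmM}$ for $x\in\rmK$ by lifting $\mathbf{p}$ either to $(r_1(a),0)$ or to $(0,r_2(a))$, which differ by $r(a)\in\mathrm{rad}(\rmK)$; hence the preimage of $\mathbf{p}^{\perp}_{\rmM}$ in $\rmK$ is $\Psi^{\perp}_{\rmG}\cap\rmK=\Psi^{\perp}_{\rmK}$, so $\NS(\rmM)=\Psi^{\perp}_{\rmK}/(\Psi+\overline{\rmE})$ and the claimed sequence is the tautological one, with kernel $(\Psi+\overline{\rmE})/\Psi\cong\overline{\rmE}/(\overline{\rmE}\cap\Psi)=\bZ\zeta$ by Lemma~\ref{lem:psiproperties}(5). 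The statements about the bilinear forms are then bookkeeping with signs: the forms on all three terms are induced by $-\langle\cdot,\cdot\rangle_{\rmG}$ on lifts to $\rmK$, compatibly with the quotients, and $\zeta$ is isotropic because its lift lies in $\mathrm{rad}(\rmK)$. The main obstacle is the pair (i)--(ii): for arbitrary surface-like pseudolattices these fail, so one must genuinely invoke the classification, here through primitivity of $K_{\rmG_2}$.
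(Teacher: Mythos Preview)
Your argument is correct. The route differs from the paper's in one substantive respect and is otherwise a more explicit version of the same idea.

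For the exact sequence itself, the paper sets up the two short exact sequences
\[0 \to \bZ\, r(a) \to \Psi \to \bZ\,\bp \to 0,\qquad 0 \to \overline{\rmE} \to \Psi^{\perp}_{\rmK} \to \bp^{\perp} \to 0\]
and quotients the second by the first via the snake lemma. Your direct identification $\NS(\rmM)=\Psi^{\perp}_{\rmK}/(\Psi+\overline{\rmE})$ together with $(\Psi+\overline{\rmE})/\Psi\cong\overline{\rmE}/\bZ\,r(a)=\bZ\zeta$ is exactly what the snake lemma outputs here, so this part is the same argument written out.

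Where you genuinely diverge is on the assertion that $\rmM$ is a surface-like pseudolattice with point-like element $\bp$. The paper dispatches this by citing \cite[Lemma~3.3(a)]{ecslc} after observing that $\bp$ is isotropic; it asserts primitivity of $\bp$ just before the proposition without argument, and nondegeneracy of the form on $\rmM$ is not addressed explicitly. You instead prove both facts from scratch, and you correctly isolate the structural input needed: primitivity of $K_{\rmG_2}$ in $\NS(\rmG_2)$ (which holds because the labelling convention forces $\rmG_2$ into case~(1) of Theorem~\ref{thm:qdpclassification}, where $K_{\rmG_2}=(-3,1,\ldots,1)$ in the standard basis of $\rmI_{1,n-3}$). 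From this you get that $r_2(\rmE)$ is saturated in $\rmG_2$, hence $\pi_2(\overline{\rmE})=r_2(\rmE)$ and (incidentally) $\overline{\rmE}=\im(r)$; your primitivity and radical arguments then go through. Your identification $\Psi^{\perp}_{\rmK}/\Psi\cong\kappa^{\perp}$ with $\kappa=(K_{\rmG_1},-K_{\rmG_2})$ primitive isotropic in the unimodular lattice $\NS(\rmG_1)\oplus\NS(\rmG_2)$ is the same content as the paper's Lemma~\ref{lem:zetaisotrivial}, but you use it to bound $\rank\mathrm{rad}(\rmK)$, which the paper does not do.

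In short: the paper's proof is shorter because it outsources the pseudolattice axioms to \cite{ecslc}; yours is self-contained and makes visible exactly which classification fact is doing the work.
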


\begin{proof} Note first that $\zeta$ is well-defined as $r(a)$ is primitive in $\overline{\rmE}$, so $\overline{\rmE}/r(a)$ is a torsion-free $\bZ$-module of rank $1$.

The element $\bp$ is isotropic in $\rmM$, as the bilinear form on $\rmM$ is induced from that on $\rmK$ and $\Psi$ is isotropic in $\rmK$. So, by \cite[Lemma 3.3(a)]{ecslc}, $\rmM$ is surface-like with point-like vector $\bp$.

To prove that the given sequence is exact, we first observe that, by Lemma \ref{lem:psiproperties}, we have an exact sequence
\[0 \longrightarrow \bZ r(a) \longrightarrow \Psi \longrightarrow \bZ \bp \longrightarrow 0,\]
and the restriction of the bilinear forms to the terms of this sequence are all identically zero. Moreover, as $\overline{\rmE}$ is orthogonal to $\Psi$ (by Lemma \ref{lem:r(E)isotrivial}), we have $\overline{\rmE} \subset \Psi^{\perp}_{\rmK}$, so we obtain a second exact sequence
\[0 \longrightarrow \overline{\rmE} \longrightarrow \Psi^{\perp}_{\rmK} \longrightarrow \bp^{\perp} \longrightarrow 0,\]
and this is an exact sequence of lattices if we equip its terms with the induced bilinear forms from $\overline{\rmE}$, $\rmG$, and $\rmM$. Taking the quotient of the second sequence by the first, using the snake lemma, and flipping the signs on the bilinear forms gives the result. 
\end{proof}

To conclude this section, the following lemma describes the image of $\bZ \zeta$ inside $\Psi^{\perp}_{\rmK}/\Psi$.

\begin{lemma}\label{lem:zetaisotrivial}  The inclusion $\bZ \zeta \hookrightarrow \Psi^{\perp}_{\rmK}/\Psi$ realises $\bZ \zeta$ as a totally degenerate rank $1$ sublattice of $\Psi^{\perp}_{\rmK}/\Psi \subset \NS(\rmG_1) \oplus \NS(\rmG_2)$. This sublattice contains the nonzero element $(-K_{\rmG_1}, K_{\rmG_2})$.
\end{lemma}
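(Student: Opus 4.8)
The plan is to identify the class $\zeta$ explicitly as (a multiple of) the image of $r(b)$ in $\Psi^{\perp}_{\rmK}/\Psi$, and then use Proposition~\ref{prop:sphericalsurfacelike}(1) together with the explicit formula \eqref{eq:r} for the right adjoint $r$ to compute that image inside $\NS(\rmG_1)\oplus\NS(\rmG_2)$. First I would check that $r(b)$ lies in $\Psi^{\perp}_{\rmK}$: we have $r(b)\in\rmK$ by Lemma~\ref{lem:r(E)isotrivial}, and $r(b)\in\overline{\rmE}\subset\Psi^{\perp}_{\rmK}$ since $\overline{\rmE}$ is orthogonal to $\Psi$ by Lemma~\ref{lem:r(E)isotrivial} (or rather by the orthogonality argument in the proof of Proposition~\ref{prop:NS}). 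Next, since $r(a)$ generates $\im(r)\cap\Psi = \overline{\rmE}\cap\Psi$ up to finite index — more precisely $\overline{\rmE}$ is the saturation of $\bZ r(a)\oplus\bZ r(b)$ and $r(a)$ is primitive in it by Lemma~\ref{lem:psiproperties}(5) — the class of $r(b)$ in $\overline{\rmE}/r(a)$ is a generator up to sign and torsion; after adjusting by a suitable multiple we may take $\zeta$ to be represented by $r(b)$ (the statement only asserts $\bZ\zeta$ \emph{contains} a specified nonzero element, so we have room to absorb any index ambiguity).

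The computational heart is then to evaluate $r(b)\in\rmG_1\oright_{\rmE}\rmG_2$ modulo $\Psi$, viewed inside $\Psi^{\perp}_{\rmG}/\Psi\cong\NS(\rmG_1)\oplus\NS(\rmG_2)$ via Lemma~\ref{lem:psiperpG}. By \eqref{eq:r},
\[
r(b) = (r_1T_{f_2}(b),\,-r_2(b)).
\]
Using Proposition~\ref{prop:sphericalsurfacelike}(2), in the basis $(a,b)$ the twist $T_{f_2}$ has matrix $\begin{psmallmatrix}1 & -d_2\\ 0 & 1\end{psmallmatrix}$ with $d_2 = q(K_{\rmG_2},K_{\rmG_2})$, so $T_{f_2}(b) = b - d_2\,a$, whence $r_1T_{f_2}(b) = r_1(b) - d_2\,r_1(a)$. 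Since $r_1(a)$ is exactly the point-like vector of $\rmG_1$, the term $-d_2\,r_1(a)$ dies when we pass to $\NS(\rmG_1)$. Thus in $\NS(\rmG_1)\oplus\NS(\rmG_2)$ the class of $r(b)$ equals $([r_1(b)],\,-[r_2(b)])$. Finally Proposition~\ref{prop:sphericalsurfacelike}(1) gives $[r_i(b)] = -K_{\rmG_i}$ for each $i$, so the class of $r(b)$ is $(-K_{\rmG_1},\,K_{\rmG_2})$, which is the asserted element. That it is nonzero is immediate since $K_{\rmG_i}$ is nonzero in $\NS(\rmG_i)\otimes\bQ$ (e.g.\ $q(K_{\rmG_i},K_{\rmG_i})\neq 0$ in our normalisation, or by Lemma~\ref{lem:NSproperties}).

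It remains to confirm the stated \emph{properties} of the sublattice $\bZ\zeta$, namely that it is totally degenerate of rank $1$ inside $\Psi^{\perp}_{\rmK}/\Psi$. Rank $1$ is clear since $\overline{\rmE}/r(a)$ has rank $1$. Total degeneracy ($\zeta.\zeta=0$ and $\zeta$ orthogonal to everything in $\Psi^{\perp}_{\rmK}/\Psi$) is essentially the content of Proposition~\ref{prop:NS}, where $\bZ\zeta$ already appears as a totally degenerate kernel; alternatively one checks it directly from Lemma~\ref{lem:r(E)isotrivial}, which says $\langle v, r(u)\rangle_{\rmG} = 0$ for all $v\in\rmK$ and $u\in\rmE$, so $r(b)$ pairs trivially with all of $\rmK$, hence with all of $\Psi^{\perp}_{\rmK}$, and this descends to the quotient. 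A small point to be careful about is the relation between the bilinear form on $\Psi^{\perp}_{\rmK}/\Psi$ (induced from $-\langle\cdot,\cdot\rangle_{\rmG}$ via the inclusion into $\Psi^{\perp}_{\rmG}/\Psi$) and the form on $\overline{\rmE}$ (the zero form): the sign flip and the passage through two quotients need to be tracked consistently, but both forms restrict to zero on $\bZ\zeta$ so there is no genuine difficulty. I expect the main obstacle — such as it is — to be purely bookkeeping: keeping straight which point-like vector and which sign convention is in force when identifying $\Psi^{\perp}_{\rmG}/\Psi$ with $\NS(\rmG_1)\oplus\NS(\rmG_2)$, and verifying that the class of $r(b)$ computed before quotienting by $\Psi$ agrees with the one computed after. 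No new ideas beyond the already-established Propositions~\ref{prop:sphericalsurfacelike}, \ref{prop:NS} and Lemmas~\ref{lem:riproperties}, \ref{lem:r(E)isotrivial}, \ref{lem:psiproperties} should be needed.
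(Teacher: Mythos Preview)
Your proposal is correct and follows essentially the same route as the paper: both deduce total degeneracy directly from Lemma~\ref{lem:r(E)isotrivial}, then compute $r(b)$ via \eqref{eq:r} and Proposition~\ref{prop:sphericalsurfacelike}(2), observe that the $r_1(a)$ term vanishes modulo $\Psi$, and finish using $[r_i(b)] = -K_{\rmG_i}$ from Proposition~\ref{prop:sphericalsurfacelike}(1). One small caution: your parenthetical ``$q(K_{\rmG_i},K_{\rmG_i})\neq 0$ in our normalisation'' fails when $\deg(\rmG)=0$, so for nonvanishing you should rely on Lemma~\ref{lem:NSproperties} (or injectivity of $r_i$ from Lemma~\ref{lem:riproperties}) rather than the self-intersection.
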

\begin{proof} The first statement follows immediately from Lemma \ref{lem:r(E)isotrivial}. For the second statement, note that $r(b) \in \overline{\rmE} \subset \Psi_{\rmK}^{\perp}$ and that, by Equation \eqref{eq:r}, we have $r(b) = (r_1T_{f_2}(b),-r_2(b))$.
By Proposition \ref{prop:sphericalsurfacelike}, we have $T_{f_2}(b) = -d_2 a + b$, where $d_2 := q(K_{\mathrm{G}_2},K_{\mathrm{G}_2}) \in \bZ$, so $r(b) = (-d_2 r_1(a) + r_1(b),-r_2(b))$,
which is congruent to $(r_1(b),-r_2(b))$ modulo $\Psi$. The result follows by descending to the quotient and noting that, by Proposition \ref{prop:sphericalsurfacelike}, we have $K_{\rmG_i} = -[r_i(b)]$, where the square brackets $[- ]$ denote the class in $\NS(\rmG_i)$.
\end{proof}

\subsection{Lattice polarisations}\label{sec:latticespolarisingpseudo}

We define various notions of lattice polarisation on our N\'{e}ron-Severi lattices.

\begin{definition} \label{def:Lpol} \begin{enumerate}
\item A \emph{lattice polarisation} on $\NS(\rmM)$ is defined to be a primitive sublattice $L \subset \NS(\rmM)$.
\item A \emph{lifted polarisation} on $\Psi^{\perp}_{\rmK}/\Psi$ is a primitive sublattice $\hat{L} \subset \Psi^{\perp}_{\rmK}/\Psi$ such that $\zeta \in \hat{L}$.
\item Given a lifted polarisation $\hat{L} \subset \Psi^{\perp}_{\rmK}/\Psi$, the \emph{intersection polarisation} on $\NS(\rmG_i)$ is defined to be $L_i := \hat{L} \cap \NS(\rmG_i)$. Note that $L_i$ is primitive in $\NS(\rmG_i)$.
\end{enumerate}
\end{definition}

Our first result shows that lattice polarisations and lifted polarisations are two sides of the same coin.

\begin{lemma} \label{lem:liftinglattices} If $L \subset \NS(\rmM)$ is a lattice polarisation, then the preimage of $L$ under the map $\Psi^{\perp}_{\rmK}/\Psi \to \NS(\rmM)$ is a lifted polarisation. Conversely, if $\hat{L} \subset \Psi^{\perp}_{\rmK}/\Psi$ is a lifted polarisation, then the image of $\hat{L}$ under the map $\Psi^{\perp}_{\rmK}/\Psi \to \NS(\rmM)$ is a lattice polarisation.
\end{lemma}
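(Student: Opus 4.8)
The plan is to verify that the two constructions are mutually inverse bijections between lattice polarisations on $\NS(\rmM)$ and lifted polarisations on $\Psi^{\perp}_{\rmK}/\Psi$, using the exact sequence
\[0 \longrightarrow \bZ\zeta \longrightarrow \Psi^{\perp}_{\rmK}/\Psi \xrightarrow{\ \pi\ } \NS(\rmM) \longrightarrow 0\]
from Proposition \ref{prop:NS}. Write $\pi$ for the surjection. The core point is the standard order-theoretic fact that, for a surjection of finitely generated free abelian groups with kernel generated by the single primitive vector $\zeta$, the operations $L \mapsto \pi^{-1}(L)$ and $\hat L \mapsto \pi(\hat L)$ set up a bijection between primitive sublattices of the target and primitive sublattices of the source that contain $\zeta$, and moreover these operations preserve the induced bilinear forms. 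I would prove each direction in turn.

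First suppose $L \subset \NS(\rmM)$ is a primitive sublattice, and set $\hat L := \pi^{-1}(L)$. Clearly $\zeta = \pi^{-1}(0) \cap \ker\pi \subset \hat L$, and $\hat L$ is a subgroup containing $\zeta$. To see $\hat L$ is primitive: if $nx \in \hat L$ for some $x \in \Psi^{\perp}_{\rmK}/\Psi$ and $n \geq 1$, then $n\pi(x) = \pi(nx) \in L$, so $\pi(x) \in L$ by primitivity of $L$, whence $x \in \pi^{-1}(L) = \hat L$. The bilinear form on $\hat L$ is by definition the restriction of the form on $\Psi^{\perp}_{\rmK}/\Psi$, which is the intended one. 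So $\hat L$ is a lifted polarisation. Note also $\pi(\hat L) = \pi(\pi^{-1}(L)) = L$ since $\pi$ is surjective.

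Conversely suppose $\hat L \subset \Psi^{\perp}_{\rmK}/\Psi$ is a primitive sublattice with $\zeta \in \hat L$, and set $L := \pi(\hat L)$. This is a subgroup of $\NS(\rmM)$. Primitivity: if $ny \in L$ for $y \in \NS(\rmM)$ and $n \geq 1$, lift $y$ to $x \in \Psi^{\perp}_{\rmK}/\Psi$ with $\pi(x) = y$; then $\pi(nx) = ny \in L = \pi(\hat L)$, so $nx - w \in \ker\pi = \bZ\zeta$ for some $w \in \hat L$, i.e. $nx = w + m\zeta$ for some $m \in \bZ$; since $\zeta \in \hat L$ this gives $nx \in \hat L$, hence $x \in \hat L$ by primitivity of $\hat L$, hence $y = \pi(x) \in L$. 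For the bilinear form: since $\zeta$ is isotropic and orthogonal to everything in $\Psi^{\perp}_{\rmK}/\Psi$ (this is exactly what Lemma \ref{lem:zetaisotrivial} and Lemma \ref{lem:r(E)isotrivial} give — $\bZ\zeta$ is a totally degenerate sublattice), the form on the quotient $\NS(\rmM)$ restricted to $L$ agrees with the form on $\hat L$ pushed through $\pi$; concretely, for $w_1, w_2 \in \hat L$ one has $q(\pi(w_1),\pi(w_2)) = q(w_1,w_2)$ because changing $w_i$ by a multiple of $\zeta$ does not affect the pairing. Finally $\pi^{-1}(L) = \pi^{-1}(\pi(\hat L)) = \hat L + \ker\pi = \hat L + \bZ\zeta = \hat L$, so the two constructions are mutually inverse, completing the proof.

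The statement as phrased only asks for the two implications (that the preimage of a lattice polarisation is a lifted polarisation and vice versa), not the bijectivity, so in fact only the primitivity arguments above are strictly required; but I would include the round-trip identities $\pi(\pi^{-1}(L)) = L$ and $\pi^{-1}(\pi(\hat L)) = \hat L$ as they cost nothing and clarify the "two sides of the same coin" remark. The only genuinely non-formal input is that $\zeta$ is isotropic and orthogonal to all of $\Psi^{\perp}_{\rmK}/\Psi$, so that the form really does descend compatibly — and this has already been recorded in Lemma \ref{lem:zetaisotrivial}, so there is no real obstacle here; the proof is essentially a diagram chase together with this observation.
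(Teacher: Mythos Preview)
Your proof is correct and follows essentially the same route as the paper's: the paper declares the first direction ``obvious from the definitions'' while you spell it out, and for the converse both you and the paper lift an element, use that the kernel is $\bZ\zeta$ with $\zeta\in\hat L$ to conclude the lift lies in $\hat L$, and then invoke primitivity of $\hat L$. The extra material you include on round-trip identities and form compatibility is correct but not needed for the statement as written.
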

\begin{proof} The first statement is obvious from the definitions. For the converse, we need to check that the image of $\hat{L}$ is primitive in $\NS(\rmM)$. Let $\varphi$ denote the map $\varphi\colon \Psi^{\perp}_{\rmK}/\Psi \to \NS(\rmM)$. Suppose that $v \in \NS(\rmM)$ is a primitive element such that $mv \in \varphi(\hat{L})$, for some integer $m > 1$. As $\varphi$ is surjective, we may find $w \in \Psi^{\perp}_{\rmK}/\Psi$ such that $\varphi(w) = v$. Then $mw + n\zeta \in \hat{L}$ for some $n \in \bZ$, as $\varphi(mw) = mv \in \varphi(\hat{L})$ and $\zeta$ generates the kernel of $\varphi$. Now, as $\zeta \in \hat{L}$, it follows that $mw \in \hat{L}$. But $\hat{L}$ is primitive, so we must also have $w \in \hat{L}$, and hence $\varphi(w) = v \in \varphi(\hat{L})$ and $\varphi(\hat{L})$ is primitive in $\NS(\rmM)$.
\end{proof}

Next we look at the properties of the intersection polarisation.

\begin{lemma} \label{lem:Kperp} We have
\[\Psi^{\perp}_{\rmK}/\Psi \cap \NS(\rmG_i) = K_{\rmG_i}^{\perp}.\]
In particular, if $\hat{L} \subset \Psi^{\perp}_{\rmK}/\Psi$ is a lifted polarisation, then the corresponding intersection polarisation on $\NS(\rmG_i)$ satisfies ${L}_i \subset K_{\rmG_i}^{\perp}$
\end{lemma}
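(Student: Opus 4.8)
The plan is to prove both cases $i=1$ and $i=2$ by the same computation (the factor $(-f_2)$ in the gluing will turn out to play no role), so I describe $i=1$. First I would recall, from the proof of Lemma~\ref{lem:psiperpG}, that the identification $\Psi^{\perp}_{\rmG}/\Psi \cong \NS(\rmG_1)\oplus\NS(\rmG_2)$ realises the summand $\NS(\rmG_1)$ as the set of classes $[(u_1,0)]$ with $u_1 \in (r_1(a))^{\perp}_{\rmG_1}$, the class $[(u_1,0)]$ corresponding to $[u_1] \in \NS(\rmG_1)$. Since $\Psi^{\perp}_{\rmK} = \Psi^{\perp}_{\rmG}\cap\rmK$ and $\Psi \subseteq \rmK$ by Lemma~\ref{lem:psiproperties}(3), the class $[u_1]$ lies in $\Psi^{\perp}_{\rmK}/\Psi$ if and only if $(u_1,0) \in \rmK = \ker f$, i.e.\ if and only if $f_1(u_1) = 0$. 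Thus the claim reduces to showing that, for $u_1 \in (r_1(a))^{\perp}_{\rmG_1}$, one has $f_1(u_1)=0$ precisely when $[u_1]\in K_{\rmG_1}^{\perp}$.

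To prove this I would use the adjunction $\langle f_1(u_1), v\rangle_{\rmE} = \langle u_1, r_1(v)\rangle_{\rmG_1}$ together with the nondegeneracy of the form on $\rmE$. Since $u_1 \in (r_1(a))^{\perp}_{\rmG_1}$ we get $\langle f_1(u_1),a\rangle_{\rmE}=\langle u_1, r_1(a)\rangle_{\rmG_1}=0$ automatically; as $(a,b)$ is a basis of $\rmE$ and the pairing is nondegenerate, it follows that $f_1(u_1)=0$ if and only if $\langle f_1(u_1),b\rangle_{\rmE} = \langle u_1, r_1(b)\rangle_{\rmG_1} = 0$. By Proposition~\ref{prop:sphericalsurfacelike}(1) we have $r_1(b)\in (r_1(a))^{\perp}_{\rmG_1}$ with $[r_1(b)]=-K_{\rmG_1}$, so $\langle u_1, r_1(b)\rangle_{\rmG_1} = -q([u_1],[r_1(b)]) = q([u_1],K_{\rmG_1})$. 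Hence $f_1(u_1)=0 \iff q([u_1],K_{\rmG_1})=0 \iff [u_1]\in K_{\rmG_1}^{\perp}$, which gives $(\Psi^{\perp}_{\rmK}/\Psi)\cap\NS(\rmG_1) = K_{\rmG_1}^{\perp}$. The case $i=2$ is word for word the same with $f_2,r_2$ in place of $f_1,r_1$: the sign in $(-f_2)$ is irrelevant since $f$ already vanishes on $\Psi$ and $f_2r_2(a)=0$ by Lemma~\ref{lem:riproperties}. Finally, the ``in particular'' assertion is immediate: a lifted polarisation satisfies $\hat L \subseteq \Psi^{\perp}_{\rmK}/\Psi$, so $L_i = \hat L\cap\NS(\rmG_i) \subseteq (\Psi^{\perp}_{\rmK}/\Psi)\cap\NS(\rmG_i) = K_{\rmG_i}^{\perp}$.

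This is essentially a bookkeeping argument, and I do not expect a serious obstacle. The only point needing some care is the passage to the quotient by $\Psi$: one must verify that membership of a class of $\NS(\rmG_i)$ in $\Psi^{\perp}_{\rmK}/\Psi$ is detected by the single condition $f_i(u_i)=0$ on a lift, which is where $\Psi\subseteq\rmK$ and $f_ir_i(a)=0$ are used. Everything after that is a direct application of the adjunction property of the right adjoints $r_i$ and of Proposition~\ref{prop:sphericalsurfacelike}.
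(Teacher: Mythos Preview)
Your proof is correct and follows essentially the same approach as the paper's: both arguments lift a class in $\NS(\rmG_i)$ to $\Psi^{\perp}_{\rmG}$, observe that membership in $\Psi^{\perp}_{\rmK}/\Psi$ amounts to $f_i$ vanishing on the lift, and then use adjunction together with $[r_i(b)]=-K_{\rmG_i}$ to translate $f_i(u_i)=0$ into $q([u_i],K_{\rmG_i})=0$. The only cosmetic difference is that the paper keeps track of the sign $(-1)^{i-1}$ coming from $f(\hat v)=(-1)^{i-1}f_i(\hat v)$, whereas you work directly with $f_i$ and note that the sign is immaterial.
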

\begin{proof}
Begin by noting that, by Proposition \ref{prop:sphericalsurfacelike}, we have $K_{\rmG_i} = -[r_i(b)]$, where the square brackets $[- ]$ denote the class in $\NS(\rmG_i)$.

Now, if $v \in \NS(\rmG_i) =  (r_i(a))^{\perp}_{\rmG_i} / r_i(a)$ is any class, let $\hat{v} \in (r_i(a))^{\perp}_{\rmG_i}$ denote a representative of $v$. From the proof of Lemma \ref{lem:psiperpG} we have $\Psi^{\perp}_{\rmG} = (r_1(a))^{\perp}_{\rmG_1} \oplus (r_2(a))^{\perp}_{\rmG_2}$, so $\hat{v} \in \Psi^{\perp}_{\rmG} \subset \rmG$. Therefore, by definition of $f$, we have $f(\hat{v}) = (-1)^{i-1} f_i(\hat{v})$. Thus, in particular, we find
\[q(v,K_{\rmG_i}) = \langle \hat{v},r_i(b) \rangle_{\rmG_i} = \langle f_i(\hat{v}),b\rangle_{\rmE} = (-1)^{i-1} \langle f(\hat{v}),b\rangle_{\rmE}.\]

Now suppose that $v \in \Psi^{\perp}_{\rmK}/\Psi \cap \NS(\rmG_i)$. Then $\hat{v} \in \Psi^{\perp}_{\rmK} \subset \rmK$, so $q(v,K_{\rmG_i}) = (-1)^{i-1} \langle f(\hat{v}),b\rangle_{\rmE} = 0$,
as $\rmK = \ker(f)$. So $\Psi^{\perp}_{\rmK}/\Psi \cap \NS(\rmG_i) \subset K_{\rmG_i}^{\perp}$.

Conversely, suppose that $v \in K_{\rmG_i}^{\perp}$. Then
\begin{align*}
\langle f(\hat{v}),b\rangle_{\rmE} &= (-1)^{i-1} q(v,K_{\rmG_i}) = 0,\\
\langle f(\hat{v}),a \rangle_{\rmE} &= (-1)^{i-1}\langle f_i(\hat{v}), a\rangle_{\rmE} = (-1)^{i-1}\langle \hat{v},r_i(a) \rangle_{\rmG_i} = 0
\end{align*}
as $\hat{v} \in (r_i(a))^{\perp}_{\rmG_i}$. Therefore we must have $f(\hat{v}) = 0$, so $\hat{v} \in \rmK$. As we also have $\hat{v} \in \Psi^{\perp}_{\rmG}$, it follows that $\hat{v} \in \Psi^{\perp}_{\rmK}$, and so $v \in \Psi^{\perp}_{\rmK}/\Psi$. Thus $K_{\rmG_i}^{\perp} \subset \Psi^{\perp}_{\rmK}/\Psi \cap \NS(\rmG_i)$ and the proof is complete.
\end{proof}

Unfortunately, in general the intersection polarisations $L_1$ and $L_2$ are not sufficient to determine the corresponding lattice/lifted polarisation. However, we can recover some information in many cases.

\begin{lemma} \label{lem:latticeinjection} Let $\varphi_i \colon K_{\rmG_i}^{\perp} \to \NS(\rmM)$ denote the morphism induced by the morphism $\Psi^{\perp}_{\rmK}/\Psi \to \NS(\rmM)$ from Proposition \ref{prop:NS}, for $i \in \{1,2\}$, and let $\varphi := \varphi_1 \oplus \varphi_2 \colon K_{\rmG_1}^{\perp} \oplus K_{\rmG_2}^{\perp} \to \NS(\rmM)$. Then the $\varphi_i$ are injective and moreover:
\begin{itemize}
\item If $\deg(\rmG) \neq 0$, then $\varphi$ is injective. Its image is a finite index sublattice of $\NS(\rmM)$.
\item If $\deg(\rmG) = 0$, then $\zeta \in K_{\rmG_1}^{\perp} \oplus K_{\rmG_2}^{\perp}$ and $\ker(\varphi) = \bZ \zeta$.
\end{itemize}
In particular, $\varphi_i$ realises $L_i$ as a sublattice of $L$ for each $i \in \{1,2\}$ and if $\deg(\rmG) \neq 0$, then $\varphi$ realises $L_1 \oplus L_2$ as a sublattice of $L$. 
\end{lemma}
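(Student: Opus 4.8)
The plan is to deduce the whole statement from two facts already in hand: the exact sequence of lattices $0 \to \bZ\zeta \to \Psi^{\perp}_{\rmK}/\Psi \to \NS(\rmM) \to 0$ of Proposition~\ref{prop:NS}, and the identity of Lemma~\ref{lem:zetaisotrivial} that, viewed inside $\Psi^{\perp}_{\rmK}/\Psi \subseteq \NS(\rmG_1)\oplus\NS(\rmG_2)$, we have $(-K_{\rmG_1},K_{\rmG_2}) = m\zeta$ for some nonzero integer $m$. Writing $\zeta = (\zeta_1,\zeta_2)$ for the components of $\zeta$ in $\NS(\rmG_1)\oplus\NS(\rmG_2)$, this says $m\zeta_1 = -K_{\rmG_1}$ and $m\zeta_2 = K_{\rmG_2}$; in particular $\zeta_1$ and $\zeta_2$ are both nonzero (a canonical class of a rational surface is never $0$), and $q(\zeta_1,K_{\rmG_1}) = -\deg(\rmG)/m$. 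By Lemma~\ref{lem:Kperp} the quotient map $\Psi^{\perp}_{\rmK}/\Psi \to \NS(\rmM)$ restricts on $K_{\rmG_i}^{\perp} = \Psi^{\perp}_{\rmK}/\Psi \cap \NS(\rmG_i)$ to $\varphi_i$, so for $v_i \in K_{\rmG_i}^{\perp}$ it sends $v_1 + v_2$ to $\varphi(v_1,v_2)$; since its kernel is $\bZ\zeta$, we get $\varphi(v_1,v_2) = 0$ if and only if $v_i = k\zeta_i$ for a common integer $k$. This reduces everything to tracking these components.

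For injectivity of $\varphi_i$: if $\varphi_i(v) = 0$ with $v \in K_{\rmG_i}^{\perp} \subseteq \NS(\rmG_i)$, then $v = k\zeta$, so the $\NS(\rmG_{3-i})$-component $k\zeta_{3-i}$ vanishes; as $\zeta_{3-i} \neq 0$ this forces $k = 0$. Now suppose $\deg(\rmG) \neq 0$. If $\varphi(v_1,v_2) = 0$ then $v_1 = k\zeta_1 \in K_{\rmG_1}^{\perp}$, so $0 = q(v_1,K_{\rmG_1}) = k\,q(\zeta_1,K_{\rmG_1}) = -k\deg(\rmG)/m$; as $\deg(\rmG)\neq 0$ this gives $k = 0$, so $\varphi$ is injective. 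For the finite-index claim it suffices to check $\rank K_{\rmG_1}^{\perp} + \rank K_{\rmG_2}^{\perp} = \rank\NS(\rmM)$, since an injective homomorphism of lattices of equal rank has finite cokernel. Setting $n_i := \rank\rmG_i$, the left side is $(n_1-3) + (n_2-3)$. For the right side, $f$ vanishes on $\Psi$ (Lemma~\ref{lem:riproperties}) and hence descends to $\bar f\colon \Psi^{\perp}_{\rmG}/\Psi = \NS(\rmG_1)\oplus\NS(\rmG_2) \to \rmE$ with kernel $\Psi^{\perp}_{\rmK}/\Psi$; one checks that $\bar f$ takes values in $\bZ a$ and is nonzero (the latter because each $f_i$ has rank $2$, immediate from Theorem~\ref{thm:qdpclassification}), so $\bar f$ has rank $1$. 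Combining with the exact sequence of Proposition~\ref{prop:NS} gives $\rank\NS(\rmM) = (n_1+n_2-4) - 1 - 1 = n_1+n_2-6$, as required.

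Next suppose $\deg(\rmG) = 0$. Then $q(K_{\rmG_i},K_{\rmG_i}) = 0$, so $K_{\rmG_i} \in K_{\rmG_i}^{\perp}$, whence $m\zeta = (-K_{\rmG_1},K_{\rmG_2}) \in K_{\rmG_1}^{\perp}\oplus K_{\rmG_2}^{\perp}$. Since each $K_{\rmG_i}^{\perp}$ is saturated in $\NS(\rmG_i)$, their sum is saturated in $\NS(\rmG_1)\oplus\NS(\rmG_2)$; and $\zeta$ is primitive there (it is primitive in $\Psi^{\perp}_{\rmK}/\Psi$ by the exact sequence of Proposition~\ref{prop:NS}, and $\Psi^{\perp}_{\rmK}/\Psi$ is saturated in $\NS(\rmG_1)\oplus\NS(\rmG_2)$), so $\zeta \in K_{\rmG_1}^{\perp}\oplus K_{\rmG_2}^{\perp}$ and hence $\bZ\zeta \subseteq \ker\varphi$. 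The reverse inclusion is the observation from the first paragraph that any element of $\ker\varphi$ has the form $k\zeta$; thus $\ker\varphi = \bZ\zeta$. Finally, for the consequences: given a lifted polarisation $\hat L$ with image $L$ in $\NS(\rmM)$ and intersection polarisations $L_i$, we have $L_i = \hat L \cap \NS(\rmG_i) = \hat L \cap K_{\rmG_i}^{\perp}$ since $\hat L \subseteq \Psi^{\perp}_{\rmK}/\Psi$; the quotient map $\Psi^{\perp}_{\rmK}/\Psi \to \NS(\rmM)$ is form-preserving and sends $\hat L$ onto $L$, so its restriction $\varphi_i$ embeds $L_i$ isometrically into $L$ (injectivity of $\varphi_i$ having been shown). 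When $\deg(\rmG) \neq 0$ we have $L_1\oplus L_2 \subseteq \hat L$ because $\hat L$ is a subgroup containing both $L_1$ and $L_2$, and $\varphi$ is injective, so $\varphi$ likewise embeds $L_1\oplus L_2$ isometrically into $L$.

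The essential content is all packaged in Lemma~\ref{lem:zetaisotrivial}: once the identity $(-K_{\rmG_1},K_{\rmG_2}) = m\zeta$ is available, the rest is bookkeeping in linear algebra. I expect the one genuinely delicate step to be the rank count underlying the finite-index statement, which rests on knowing that $f$ (equivalently each $f_i$) surjects onto a finite-index sublattice of $\rmE$ — this is where the classification Theorem~\ref{thm:qdpclassification} (or a short direct argument from Proposition~\ref{prop:sphericalsurfacelike}) enters. The other point needing attention, though routine, is keeping track of which subgroups are saturated when moving between $\Psi^{\perp}_{\rmK}/\Psi$, $\NS(\rmG_1)\oplus\NS(\rmG_2)$ and $\NS(\rmM)$ — in particular that $\zeta$ remains primitive in $\NS(\rmG_1)\oplus\NS(\rmG_2)$ and that $K_{\rmG_1}^{\perp}\oplus K_{\rmG_2}^{\perp}$ is saturated there.
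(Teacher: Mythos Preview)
Your proof is correct and follows essentially the same approach as the paper: both arguments rest on the exact sequence of Proposition~\ref{prop:NS} together with the identification $m\zeta = (-K_{\rmG_1},K_{\rmG_2})$ from Lemma~\ref{lem:zetaisotrivial}, and the injectivity argument in the case $\deg(\rmG)\neq 0$ is identical in substance (the paper phrases it with a rational coefficient $c$, you with the integer $k$ and the factor $1/m$).

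Two minor remarks. First, your rank computation for the finite-index claim is more self-contained than the paper's, which simply asserts that both sides have rank $18$; your argument via the induced map $\bar f$ of rank $1$ is a clean way to see this without forward reference. Second, in the $\deg(\rmG)=0$ case your saturation argument is correct but more elaborate than necessary: since $\zeta = (\zeta_1,\zeta_2)$ already lies in $\NS(\rmG_1)\oplus\NS(\rmG_2)$, one can directly compute $q(\zeta_i,K_{\rmG_i}) = -q(K_{\rmG_i},K_{\rmG_i})/m = 0$, giving $\zeta_i \in K_{\rmG_i}^{\perp}$ immediately without invoking primitivity or saturation.
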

\begin{proof} Injectivity of $\varphi_i$ follows from the fact that the kernel $\bZ\zeta $ of the morphism $\Psi^{\perp}_{\rmK}/\Psi \to \NS(\rmM)$ has trivial intersection with $K_{\rmG_i}^{\perp} $ for each $i$. 

So suppose that $\deg(\rmG) \neq 0$. Let $(v_1,v_2)$ and $(w_1,w_2)$ be any two distinct elements of  $K_{\rmG_1}^{\perp} \oplus K_{\rmG_2}^{\perp}$ and assume for a contradiction that  $\varphi(v_1,v_2) = \varphi(w_1,w_2)$. Then $(v_1,v_2)$ and $(w_1,w_2)$ differ by an integer multiple of $\zeta$ so, by Lemma \ref{lem:zetaisotrivial}, we have $(v_1,v_2) = (w_1 - cK_{\rmG_1}, w_2 + c K_{\rmG_2})$,
for some nonzero $c \in \bQ$. Now, as $v_1,w_1 \in K_{\rmG_1}^{\perp}$, we have
\[0 = q(v_1,K_{\rmG_1}) = q(w_1,K_{\rmG_1}) - cq(K_{\mathrm{G}_1},K_{\mathrm{G}_1}) = -cq(K_{\mathrm{G}_1},K_{\mathrm{G}_1}).\]
But this contradicts the assumption that $\deg(\rmG) \neq 0$. Thus $\varphi$ must be injective. The fact that the image is a finite index sublattice then follows if we note that both $K_{\rmG_1}^{\perp} \oplus K_{\rmG_2}^{\perp}$ and $\NS(\rmM)$ have rank $18$.

If $\deg(\rmG) = 0$, then $q(K_{\rmG_i},K_{\rmG_i}) = 0$ for $i \in \{1,2\}$, so $K_{\rmG_i} \in K_{\rmG_i}^{\perp}$. It follows from Lemma \ref{lem:zetaisotrivial} that $\bZ\zeta \subset K_{\rmG_1}^{\perp} \oplus K_{\rmG_2}^{\perp}$, which is the kernel of the morphism $\Psi^{\perp}_{\rmK}/\Psi \to \NS(\rmM)$.
\end{proof}

\begin{remark} Using the fact that $\NS(\rmM) \cong H \oplus E_8 \oplus E_8$ (Proposition \ref{prop:NSfordegenerations}), which is unimodular, and the discriminant groups of $K_{\rmG_i}^{\perp}$ computed in Lemma \ref{lem:NSproperties}, one may show that the index of $\im(\varphi)$ in $\NS(\rmM)$ is equal to $3$ if $\deg(\rmG) = 9$, equal to $4$ if $\deg(\rmG) = 8'$, and equal to $\deg(\rmG)$ if $1 \leq \deg(\rmG) \leq 8$. If $\deg(\rmG) = 0$, then the quotient $\NS(M) / \im(\varphi)$ is a free $\bZ$-module of rank $1$.
\end{remark} 

\begin{definition} \label{def:coupling} The quotient group $Q(L) := L\, /\, \varphi(L_1 \oplus L_2)$ is called the \emph{coupling group} of $L$.
\end{definition}

The coupling group measures how far the lattice polarisation $L \subset \NS(\rmM)$ is from being induced by the two intersection polarisations $L_i \subset \NS(\rmG_i)$. It consists of elements of $L$ that ``cross between'' or ``couple'' the part of $L$ contained in $\NS(\rmG_1)$ and the part of $L$ contained in $\NS(\rmG_2)$. It fits into an exact sequence of abelian groups
\begin{equation}\label{eq:Qsequence}
L_1 \oplus L_2\stackrel{\varphi}{\longrightarrow} L \longrightarrow Q(L) \longrightarrow 0.
\end{equation}
Note that if $\deg(\rmG) \neq 0$ then $\varphi$ is injective, so in this case sequence \eqref{eq:Qsequence} is also exact on the left.

Our next result shows that the coupling group can be used to measure the failure of orthogonal complements to descend to the intersection polarisation.

\begin{proposition} \label{prop:couplingintersection}
Assume that $\deg(\rmG) \neq 0$. Then the coupling group $Q(L)$ is torsion if and only if $(L_i)^{\perp}_{K_{\rmG_i}^{\perp}} = \hat{L}^{\perp} \cap \NS(\rmG_i)$ for each $i = \{1,2\}$, where $(L_i)^{\perp}_{K_{\rmG_i}^{\perp}}$ denotes the orthogonal complement of $L_i$ taken in $K_{\rmG_i}^{\perp}$.
\end{proposition}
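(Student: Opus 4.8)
The plan is to work inside the unimodular lattice $\NS(\rmM) \cong H\oplus E_8\oplus E_8$ and use the exact sequence \eqref{eq:Qsequence} together with the identification $\Psi^{\perp}_{\rmK}/\Psi \cap \NS(\rmG_i) = K_{\rmG_i}^{\perp}$ from Lemma \ref{lem:Kperp}. First I would unwind what the two sides mean. On one side, $Q(L) = L/\varphi(L_1\oplus L_2)$ is torsion exactly when $L_1\oplus L_2$ and $L$ have the same rank, i.e. when $L_i$ spans $\varphi_i^{-1}(L)\otimes\bQ$ inside $K_{\rmG_i}^{\perp}$ for each $i$ — equivalently, $L$ does not contain any ``genuinely coupling'' rational class beyond those in $L_1\otimes\bQ$ and $L_2\otimes\bQ$. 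On the other side, $\hat L^{\perp}$ is taken inside $\Psi^{\perp}_{\rmK}/\Psi$, so $\hat L^{\perp}\cap \NS(\rmG_i)$ consists of classes $v\in K_{\rmG_i}^{\perp}$ (using Lemma \ref{lem:Kperp}) that are orthogonal, with respect to the form inherited from $\NS(\rmG_1)\oplus\NS(\rmG_2)$ via Lemma \ref{lem:psiperpG}, to all of $\hat L$ — not merely to $L_i$. Since a class $v\in K_{\rmG_i}^{\perp}$ sits in the summand $\NS(\rmG_i)$, it is automatically orthogonal to $L_j = \hat L\cap\NS(\rmG_j)$ for $j\neq i$, and to $\zeta$'s image in the $\NS(\rmG_j)$-summand; so the only obstruction to $v$ lying in $\hat L^{\perp}$ comes from the ``coupling'' generators of $\hat L$, those classes in $\hat L$ that project nontrivially to both summands. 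This is precisely the mechanism linking the two conditions.

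The key steps, in order, would be: (1) Recall $\Psi^{\perp}_{\rmK}/\Psi\subset\NS(\rmG_1)\oplus\NS(\rmG_2)$ and that, by \eqref{eq:Qsequence}, a set of generators of $\hat L$ can be taken to be $L_1\oplus L_2$ together with finitely many lifts $c_1,\dots,c_k$ of a generating set of $Q(L)$ (plus $\zeta$, which lies in $L_1\oplus L_2$ when $\deg(\rmG)\neq0$ — wait, it does not in general, so $\zeta$ should be kept as an explicit generator; note $\zeta$ contains $(-K_{\rmG_1},K_{\rmG_2})$ by Lemma \ref{lem:zetaisotrivial}, which is orthogonal to everything in $\Psi^{\perp}_{\rmK}/\Psi$ by Lemma \ref{lem:r(E)isotrivial}, so $\zeta$ imposes no condition and may be ignored). (2) For $v\in K_{\rmG_i}^{\perp}$, show $v\in\hat L^{\perp}\cap\NS(\rmG_i)$ iff $q(v, \pi_i(c))=0$ for every coupling lift $c$, where $\pi_i$ is projection to the $\NS(\rmG_i)$-summand; this uses the orthogonality of the two summands of $\NS(\rmG_1)\oplus\NS(\rmG_2)$ and Lemma \ref{lem:Kperp}. (3) For the forward direction, assume $Q(L)$ torsion: then each $\pi_i(c)$ is a $\bQ$-linear combination of $L_i$ and $K_{\rmG_i}$ — indeed $c$ differs from an element of $L_1\oplus L_2$ by a torsion contribution, and modulo $\zeta$ (hence modulo the rational span of $K_{\rmG_1},K_{\rmG_2}$, again by Lemma \ref{lem:zetaisotrivial}) some integer multiple $Nc$ lies in $L_1\oplus L_2$; so $N\pi_i(c)\in L_i + \bQ K_{\rmG_i}$, giving $N\pi_i(c)\in L_i$ after subtracting the $K_{\rmG_i}$-part (which is zero since $\pi_i(c)\in K_{\rmG_i}^{\perp}$ forces the coefficient to vanish). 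Hence $q(v,\pi_i(c))=0$ whenever $v\perp L_i$ inside $K_{\rmG_i}^{\perp}$, proving $(L_i)^{\perp}_{K_{\rmG_i}^{\perp}}\subseteq\hat L^{\perp}\cap\NS(\rmG_i)$; the reverse inclusion is immediate since $L_i\subset\hat L$. (4) For the converse, assume $Q(L)$ is not torsion: then some coupling lift $c$ has a component $\pi_1(c)$ (say) whose image in $K_{\rmG_1}^{\perp}\otimes\bQ$ is not in $L_1\otimes\bQ$; since the form on $K_{\rmG_1}^{\perp}$ is nondegenerate, I can produce $v\in K_{\rmG_1}^{\perp}$ with $v\perp L_1$ but $q(v,\pi_1(c))\neq0$, so $v\in(L_1)^{\perp}_{K_{\rmG_1}^{\perp}}$ but $v\notin\hat L^{\perp}$, establishing strict inequality.

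The main obstacle I expect is Step (3)–(4): cleanly extracting, from the torsion (resp. non-torsion) hypothesis on $Q(L)$, the precise statement that the projections $\pi_i(c)$ of coupling classes lie in (resp. escape) the rational span of $L_i$ inside $K_{\rmG_i}^{\perp}$. The subtlety is bookkeeping the role of $\zeta$ and $(-K_{\rmG_1},K_{\rmG_2})$: one must be careful that passing to $\Psi^{\perp}_{\rmK}/\Psi$ versus $K_{\rmG_1}^{\perp}\oplus K_{\rmG_2}^{\perp}$ (which differ by $\bZ\zeta$ only when $\deg(\rmG)=0$, excluded here by hypothesis — so in fact $\varphi$ is injective and $\bZ\zeta\subset K_{\rmG_1}^{\perp}\oplus K_{\rmG_2}^{\perp}$ fails, meaning $\zeta$ projects to genuine $K_{\rmG_i}$-contributions) does not corrupt the argument. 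Once that linear-algebra dictionary between ``$Q(L)$ torsion'' and ``$\pi_i(\hat L)\otimes\bQ = L_i\otimes\bQ$'' is set up correctly, the nondegeneracy of the forms on the $K_{\rmG_i}^{\perp}$ (Lemma \ref{lem:NSproperties}) does the rest in both directions.
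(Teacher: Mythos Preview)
Your approach is essentially correct, and for the forward direction it is the same computation as the paper's, just carried out upstairs in $\Psi^{\perp}_{\rmK}/\Psi\subset\NS(\rmG_1)\oplus\NS(\rmG_2)$ rather than downstairs in $\NS(\rmM)$. The paper simply takes $v\in (L_1)^{\perp}_{K_{\rmG_1}^{\perp}}$, notes that for any $w\in L$ some multiple $kw$ lies in $\varphi(L_1\oplus L_2)$, and computes $q(\varphi(v,0),w)$ directly to be zero.

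One small slip in your Step~(3): the parenthetical claim that $\pi_i(c)\in K_{\rmG_i}^{\perp}$ is false in general. An element $c=(c_1,c_2)\in\hat L\subset\Psi^{\perp}_{\rmK}/\Psi$ only satisfies $q(K_{\rmG_1},c_1)=q(K_{\rmG_2},c_2)$, not that each side vanishes. Fortunately this does not damage your argument: since $v\in K_{\rmG_i}^{\perp}$, the $K_{\rmG_i}$-component of $\pi_i(c)$ contributes nothing to $q(v,\pi_i(c))$ regardless. So the conclusion stands, but the reason given needs to be corrected to ``$q(v,K_{\rmG_i})=0$'' rather than ``$\pi_i(c)\in K_{\rmG_i}^{\perp}$''.

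For the converse you take a genuinely different route. You construct an explicit witness: a coupling class $c$ whose projection (after stripping the $K_{\rmG_1}$-part) escapes $L_1\otimes\bQ$, then use nondegeneracy of $K_{\rmG_1}^{\perp}$ to find $v\perp L_1$ with $q(v,\pi_1(c))\neq 0$. This works, and is in some sense the ``honest'' dual of the forward argument. The paper instead uses a one-line rank count: if $Q(L)$ is not torsion then $\rank(L_1)+\rank(L_2)<\rank(L)$, whence $\rank\big((L_1)^{\perp}_{K_{\rmG_1}^{\perp}}\big)+\rank\big((L_2)^{\perp}_{K_{\rmG_2}^{\perp}}\big)>\rank(L^{\perp})$, so the equalities cannot both hold. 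Your argument buys an explicit obstruction class; the paper's buys brevity and avoids having to track the $K_{\rmG_i}$-components at all.
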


\begin{proof}
Note first that, as the class $\zeta$ is totally isotropic, without ambiguity we can take $\hat{L}^{\perp}$ to be either the orthogonal complement to $\hat{L}$ in $\Psi^{\perp}_{\rmK}/\Psi$ or the lift of $L^{\perp}$ under $\Psi^{\perp}_{\rmK}/\Psi \to \NS(\rmM)$, as these both give the same sublattice of $\Psi^{\perp}_{\rmK}/\Psi$. Note also that $L_i \subset \hat{L}$, so $\hat{L}^{\perp} \cap \NS(\rmG_i) \subset (L_i)^{\perp}_{K_{\rmG_i}^{\perp}}$.

Now assume that $Q(L)$ is torsion. For simplicity of notation, in this part of the proof we will assume $i=1$; the proof for $i=2$ is identical.  Let $v \in (L_1)^{\perp}_{K_{\rmG_1}^{\perp}} \subset \NS(\rmG_1)$. Then $\varphi(v,0) \in \NS(\rmM)$. Let $w \in L$. Then as $Q(L)$ is torsion, there exists $k \in \bZ$ such that $w = k\varphi(w_1,w_2)$ for some $w_i \in L_i$. Then $q(\varphi(v,0),w)= q_1(v,w_1) + q_2(0,w_2) = 0$ as $v \in L_i^{\perp}$; here $q$ denotes the bilinear form on $\NS(\rmM)$ and $q_i$ denotes the bilinear form on $\NS(\rmG_i)$. So $\varphi(v,0)  \subset L^{\perp}$ and consequently $(v,0) \in \hat{L}^{\perp}$. This proves that $\hat{L}^{\perp} \cap \NS(\rmG_1) = (L_1)^{\perp}_{K_{\rmG_1}^{\perp}}$

To prove the converse, assume that $Q(L)$ is not torsion. Then $\rank(L_1) + \rank(L_2) < \rank(L)$. Taking orthogonal complements, we find that \[\rank\left((L_1)^{\perp}_{K_{\rmG_1}^{\perp}}\right) + \rank\left((L_2)^{\perp}_{K_{\rmG_2}^{\perp}}\right) > \rank\left(L^{\perp})\right.\] 
But then we cannot have $(L_i)^{\perp}_{K_{\rmG_i}^{\perp}} = \hat{L}^{\perp} \cap \NS(\rmG_i)$ for both $i = \{1,2\}$.
\end{proof}

We conclude this section with a result about automorphisms of N\'{e}ron-Severi lattices, which will be useful in Section \ref{sec:mirror}.

\begin{proposition} \label{prop:NSMautos} For each $i \in \{1,2\}$, suppose that $\psi_i$ is an isometry of $\NS(\rmG_i)$ which fixes $K_{\rmG_i}$. Then $\psi_1 \oplus \psi_2$ induces an isometry $\psi$ of $\NS(\rmM)$ which satisfies $\psi(K_{\rmG_i}^{\perp}) = K_{\rmG_i}^{\perp}$ for each $i \in \{1,2\}$ (where we identify $K_{\rmG_i}^{\perp}$ with its image under the injective map $\varphi_i$ from Lemma \ref{lem:latticeinjection}).

Conversely, suppose that $\psi$ is an isometry of $\NS(\rmM)$ satisfying $\psi(K_{\rmG_i}^{\perp}) = K_{\rmG_i}^{\perp}$ for each $i \in \{1,2\}$. Then, after possibly multiplying $\psi$ by $(-1)$, for each $i \in \{1,2\}$ there are isometries $\psi_i$ of $\NS(\rmG_i)$ which fix $K_{\rmG_i}$ and induce $\psi$.
\end{proposition}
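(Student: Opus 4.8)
The plan is to treat the two implications separately, using throughout the concrete description of $\Psi^{\perp}_{\rmK}/\Psi$ as a sublattice of $\NS(\rmG_1)\oplus\NS(\rmG_2)$. First I would record the following preliminary facts, all immediate from the computations in the proofs of Lemmas~\ref{lem:Kperp} and \ref{lem:zetaisotrivial}: under the identification $\Psi^{\perp}_{\rmG}/\Psi\cong\NS(\rmG_1)\oplus\NS(\rmG_2)$ of Lemma~\ref{lem:psiperpG}, a vector $(v_1,v_2)$ lies in $\Psi^{\perp}_{\rmK}/\Psi$ precisely when $q(v_1,K_{\rmG_1})=q(v_2,K_{\rmG_2})$; since $\NS(\rmM)$ is nondegenerate, $\bZ\zeta$ is the radical of $\Psi^{\perp}_{\rmK}/\Psi$; and $\bZ\zeta$ contains the nonzero vector $(-K_{\rmG_1},K_{\rmG_2})$.

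\textbf{Forward direction.} Suppose $\psi_i\in O(\NS(\rmG_i))$ fixes $K_{\rmG_i}$. Then $q(\psi_i v_i,K_{\rmG_i})=q(v_i,K_{\rmG_i})$, so $\psi_1\oplus\psi_2$ preserves the subgroup $\Psi^{\perp}_{\rmK}/\Psi$ cut out by the condition above; being an isometry of it, it preserves its radical $\bZ\zeta$, and as it fixes the nonzero element $(-K_{\rmG_1},K_{\rmG_2})\in\bZ\zeta$ it must fix $\zeta$. Hence $\psi_1\oplus\psi_2$ descends to an isometry $\psi$ of $\NS(\rmM)=(\Psi^{\perp}_{\rmK}/\Psi)/\bZ\zeta$, and since $\psi_i$ preserves $K_{\rmG_i}^{\perp}$, $\psi$ preserves $\varphi_i(K_{\rmG_i}^{\perp})$.

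\textbf{Converse.} Assume $\deg(\rmG)\neq0$ and let $\psi\in O(\NS(\rmM))$ preserve $P_i:=\varphi_i(K_{\rmG_i}^{\perp})$. Since $\varphi_i$ is injective (Lemma~\ref{lem:latticeinjection}), $\psi|_{P_i}$ transports to an isometry $\bar\psi_i$ of $K_{\rmG_i}^{\perp}$; and since $P_1\oplus P_2=\varphi(K_{\rmG_1}^{\perp}\oplus K_{\rmG_2}^{\perp})$ has finite index in $\NS(\rmM)$, an isometry of $\NS(\rmM)$ is determined by its restriction there. So it suffices to extend each $\bar\psi_i$ to some $\psi_i\in O(\NS(\rmG_i))$ fixing $K_{\rmG_i}$: then, by the forward direction, $\psi_1\oplus\psi_2$ descends to an isometry of $\NS(\rmM)$ agreeing with $\psi$ on $P_1\oplus P_2$, hence equal to $\psi$. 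Writing $K_{\rmG_i}=c_i\kappa_i$ with $\kappa_i$ primitive, fixing $K_{\rmG_i}$ is the same as fixing $\kappa_i$, and $\NS(\rmG_i)$ is a unimodular overlattice of $\bZ\kappa_i\oplus K_{\rmG_i}^{\perp}$ whose gluing group is the graph of an anti-isometry $\disc(\bZ\kappa_i)\xrightarrow{\sim}\disc(K_{\rmG_i}^{\perp})$; by the standard theory of overlattices, $\bar\psi_i$ extends to an isometry of $\NS(\rmG_i)$ fixing $\kappa_i$ if and only if $\bar\psi_i$ induces the identity on $\disc(K_{\rmG_i}^{\perp})$.

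\textbf{The main point.} The crux is then to arrange, after possibly replacing $\psi$ by $-\psi$, that $\bar\psi_i$ acts trivially on $\disc(K_{\rmG_i}^{\perp})$ for \emph{both} $i$. Running through the possibilities for $K_{\rmG_i}^{\perp}$ in Table~\ref{tab:NSlattices} (together with the lattice $A_1$ of the $\deg(\rmG)=8'$ case) one checks that $\disc(K_{\rmG_i}^{\perp})$ is always cyclic of order at most $9$ and that its orthogonal group, for the induced quadratic form, is contained in $\{\pm\mathrm{id}\}$; so $\bar\psi_i$ acts by a sign $\epsilon_i\in\{\pm1\}$. When $\deg(\rmG)\notin\{8',9\}$ both $K_{\rmG_i}$ are primitive, hence $\zeta=\pm(-K_{\rmG_1},K_{\rmG_2})$ and a short argument with saturations shows that both $P_i$ are primitive in $\NS(\rmM)$; unimodularity of $\NS(\rmM)$ then identifies $\disc(P_1)$ with $\disc(P_2)$ anti-isometrically via a map commuting with the $\bar\psi_i$, forcing $\epsilon_1=\epsilon_2$, so replacing $\psi$ by $\epsilon_1\psi$ makes both actions trivial. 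When $\deg(\rmG)\in\{8',9\}$ one of the lattices $K_{\rmG_i}^{\perp}$ is $\{0\}$ or $A_1$, on whose discriminant every isometry acts trivially, so $\bar\psi_i$ extends automatically there and a single sign change on $\psi$ clears the action of $\bar\psi_j$ on the remaining factor. I expect this last paragraph — the verification that $O(\disc(K_{\rmG_i}^{\perp}))\subseteq\{\pm\mathrm{id}\}$ and the treatment of the exceptional degrees $8'$ and $9$ — to be the main obstacle; the rest is routine bookkeeping with the exact sequences of Section~\ref{sec:pseudolattice}.
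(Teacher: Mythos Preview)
Your proof is correct and follows essentially the same route as the paper. The forward direction is identical in spirit (the paper writes it out in explicit matrices, but your radical-of-$\Psi^{\perp}_{\rmK}/\Psi$ argument is just a coordinate-free rephrasing of the same computation). For the converse, the paper also reduces to showing the induced action on discriminant groups is $\pm\mathrm{Id}$ and then extends via overlattice theory; the only packaging difference is that, instead of your case split between generic degrees and $\deg(\rmG)\in\{8',9\}$, the paper works uniformly with the isotropic subgroup $A:=\NS(\rmM)/(K_{\rmG_1}^{\perp}\oplus K_{\rmG_2}^{\perp})\subset\disc(K_{\rmG_1}^{\perp})\oplus\disc(K_{\rmG_2}^{\perp})$ and checks directly that the only automorphisms preserving $A$ and the quadratic forms are $\pm\mathrm{Id}$ --- this absorbs your sign-coupling argument and the exceptional degrees into a single computation, but the underlying content is the same.
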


\begin{proof} To prove the first part we use the matrix representation from the proof of Proposition \ref{prop:qdpisomorphism}. For each $i \in \{1,2\}$, choose bases for $\NS(\rmG_i)$, let $Q_i$ denote the matrix of the intersection form and let $K_i$ denotes the column vector representing $K_{\rmG_i}$ with respect to this basis. Let $P_i$ denote the matrix of $\psi_i$; note that $P_i^TQ_iP_i = Q_i$ and $P_iK_i = P_i$ by assumption. Let $P$ denote the matrix 
\[P = \begin{pmatrix} P_1 & 0 \\ 0 & P_2 \end{pmatrix}\]
of $\psi_1 \oplus \psi_2$.

From the proof of Proposition \ref{prop:qdpisomorphism}, the map $f$ induces the map $F\colon \NS(\rmG_1) \oplus \NS(\rmG_2) \to \rmE$ given by $F(v_1,v_2) = (-K_1^TQ_1v_1+K_2^TQ_2v_2)a$, where $a$ is one of the standard basis elements $(a,b)$ for $\rmE$. An easy matrix calculation then shows that $FP = F$. It follows that $F$ induces an isometry on $\ker(F) = \Psi^{\perp}_{\rmK}/\Psi$. This isometry fixes $\zeta = c(-K_{\rmG_1},K_{\rmG_2})$ (for some $c \in \bQ$), so further descends to an isometry of $\NS(\rmM)$.

To prove the converse, assume first that $\deg(\rmG) \neq 0$. Then $K_{\rmG_1}^{\perp}$ and $K_{\rmG_2}^{\perp}$ are non-degenerate by Lemma \ref{lem:NSproperties}. The map $\psi$ induces isometries of the lattices $K_{\rmG_i}^{\perp}$, which descend to automorphisms of the discriminant groups $\disc(K_{\rmG_i}^{\perp})$ that preserve the quadratic forms. These in turn induce an automorphism $\overline{\psi}$ of $\disc(K_{\rmG_1}^{\perp} \oplus K_{\rmG_2}^{\perp}) \cong \disc(K_{\rmG_1}^{\perp}) \oplus \disc(K_{\rmG_2}^{\perp})$.

Let $A := \NS(\rmM)\, / \, (K_{\rmG_1}^{\perp} \oplus K_{\rmG_2}^{\perp})$. Then $A$ is a subgroup of $\disc(K_{\rmG_1}^{\perp} \oplus K_{\rmG_2}^{\perp})$ which is isotropic with respect to the quadratic form, and the order of $A$ is the index of $K_{\rmG_1}^{\perp} \oplus K_{\rmG_2}^{\perp}$ in $\NS(\rmM)$. By \cite[Proposition 1.4.2]{isbfa}, the automorphism $\overline{\psi}$ must preserve $A$.

Now, using the explicit descriptions of the discriminant groups from Lemma \ref{lem:NSproperties}, one may compute that the only automorphisms of $\disc(K_{\rmG_1}^{\perp} \oplus K_{\rmG_2}^{\perp})$ preserving $A$ and the quadratic forms are $\pm \mathrm{Id}$. After multiplying $\psi$ by $(-1)$ if necessary, we may therefore assume that $\overline{\psi} = \mathrm{Id}$.

In particular, $\psi$ acts trivially on  $\disc(K_{\rmG_i}^{\perp})$ for each $i \in \{1,2\}$. By \cite[Proposition 8.2.5]{cagmv}, it follows that $\psi$ extends to isometries $\psi_i$ of $\NS(\rmG_i)$ which fix $K_{\rmG_i}$, as required.

Finally, assume that $\deg(\rmG) = 0$. Then $K_{\rmG_1}^{\perp} \cap K_{\rmG_2}^{\perp} \cong \bZ$ is preserved by $\psi$. Let $\alpha \in K_{\rmG_1}^{\perp} \cap K_{\rmG_2}^{\perp}$ be a primitive generator. After multiplying by $(-1)$ if necessary, we may assume that $\psi(\alpha)= \alpha$. Consequently, we may assume that $\varphi_1(K_{\rmG_1}) = \varphi_2(K_{\rmG_2}) \in K_{\rmG_1}^{\perp} \cap K_{\rmG_2}^{\perp}$ is fixed by $\psi$.

The class $\alpha$ is totally isotropic in $K_{\rmG_i}^{\perp}$ for each $i \in \{1,2\}$ and, by Lemma \ref{lem:NSproperties},  we have a (non-unique) isometry $K_{\rmG_i}^{\perp} \cong \bZ \alpha \oplus E_8$. After fixing a choice of such an isometry, we may extend a standard basis for $\bZ \alpha \oplus E_8$ to a basis for $\NS(\rmG_i) \cong \rmI_{1,9} \cong \begin{psmallmatrix} 0 & 1 \\ 1 & -1 \end{psmallmatrix} \oplus E_8$ by adding a $(-1)$-class $\beta$, so that $\{\alpha,\beta\}$ forms a standard basis for the $\begin{psmallmatrix} 0 & 1 \\ 1 & -1 \end{psmallmatrix}$ factor. 

To prove the result it suffices to show that any isometry $\psi$ of $K_{\rmG_i}^{\perp}$ with $\psi(\alpha) = \alpha$ extends to an isometry $\psi_i$ of $\NS(\rmG_i)$. To do this it suffices to find $\psi_i(\beta)$. But it is a straightforward linear algebra computation to show that $\psi_i(\beta)$ must be the unique $(-1)$-class which is orthogonal to $\psi_i(E_8)$ and has $q_i(\alpha,\psi_i(\beta)) = 1$ (where $q_i$ denotes the bilinear form on $\NS(\rmG_i)$).
\end{proof}

\section{Tyurin Degenerations of K3 Surfaces}\label{sec:degenerations}

Throughout this section we use the following notation. Let $\pi\colon \calX \to \Delta$ denote a semistable Type II degeneration of K3 surfaces over the open unit disc in $\bC$; note in particular that as part of the Type II condition we assume that $K_{\calX} \sim 0$. Let $X \subset \calX$ denote a general fibre and let $X_p \subset \calX$ denote the fibre over a point $p \in \Delta$.  We assume further that $\pi\colon \calX \to \Delta$ is a \emph{Tyurin degeneration}, meaning that the central fibre $X_0 = V_1 \cup_C V_2$ is a \emph{stable K3 surface of Type II} in the sense of the following definition, originally due to Friedman. We let $i_j\colon C \to V_j$ denote the inclusion.

\begin{definition}\cite[Definition 3.1]{npgttk3s} A \emph{stable K3 surface of Type II} is a surface with normal crossings $X_0 = V_1 \cup_C V_2$  consisting of two smooth rational surfaces $V_1$ and $V_2$ glued along a smooth anticanonical elliptic curve $C$, satisfying $N_{C/V_1} \otimes N_{C/V_2} \cong \calO_C$.
\end{definition}

\begin{remark} \label{rem:canonicalsquare} Note that the normal bundle condition implies that if $X_0 = V_1 \cup_C V_2$ is a stable K3 surface of Type II and $q$ denotes the bilinear form on $\NS(V_i)$, then $q(K_{V_1},K_{V_1}) = -q(K_{V_2},K_{V_2})$. In line with our convention in Section \ref{sec:pseudolattice}, we will henceforth assume that labels have been chosen so that $q(K_{V_1},K_{V_1}) \geq 0$.
\end{remark}

\begin{remark} Tyurin degenerations are in some sense primitive in the class of Type II degenerations: given a Type II degeneration of polarised K3 surfaces, after twisting the polarisation and performing some birational modifications on the central fibre, one may obtain a Tyurin degeneration. We refer the interested reader to \cite[Theorems 2.2 and 2.3]{npgttk3s} for details.
\end{remark}

\subsection{Hodge theory} \label{sec:hodge}

We begin by studying the Hodge theory of such degenerations. The material in this section largely follows from work of Friedman \cite{npgttk3s} and Scattone \cite{cmsak3s}.

In the limiting mixed Hodge structure $H^2_{\lim}(X)$, the weight filtration is defined over $\bZ$ and is given by
\[0 \subset I \subset I^{\perp} \subset H^2(X;\bZ),\]
where $I \subset H^2(X;\bZ)$ is a primitive rank two isotropic sublattice, the choice of which is unique up to isometries of $H^2(X;\bZ)$ (see \cite[Section 2.2]{cmsak3s}). 

\begin{lemma} \label{lem:HE8E8} The quotient $I^{\perp}/I$ is isomorphic to the lattice $H \oplus E_8 \oplus E_8$, where $H$ denotes the even unimodular lattice of signature $(1,1)$ (the hyperbolic plane) and $E_8$ denotes the usual negative definite root lattice.
\end{lemma}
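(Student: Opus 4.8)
The plan is to use the well-known structure of the K3 lattice $\Lambda_{K3} = H^2(X;\bZ) \cong H^{\oplus 3} \oplus E_8^{\oplus 2}$ together with the fact that $I$ is a primitive isotropic rank two sublattice, unique up to the action of the isometry group. So the first step is to fix an explicit embedding: write $\Lambda_{K3} = U_1 \oplus U_2 \oplus U_3 \oplus E_8 \oplus E_8$, where each $U_j$ is a copy of the hyperbolic plane $H$ with standard basis $\{e_j, f_j\}$ satisfying $e_j^2 = f_j^2 = 0$, $e_j \cdot f_j = 1$, and take $I = \langle e_1, e_2 \rangle$. One checks immediately that $I$ is primitive (it is a direct summand) and isotropic. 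By the cited uniqueness statement from \cite[Section 2.2]{cmsak3s}, any other choice of $I$ is related to this one by an isometry of $\Lambda_{K3}$, so it suffices to compute $I^{\perp}/I$ for this model.

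The second step is the computation itself. With $I = \langle e_1, e_2 \rangle$, a vector $\sum a_j e_j + b_j f_j + w$ (with $w \in E_8 \oplus E_8$) lies in $I^{\perp}$ precisely when its pairing with $e_1$ and $e_2$ vanishes, i.e. when $b_1 = b_2 = 0$. Hence $I^{\perp} = \langle e_1, e_2 \rangle \oplus U_3 \oplus E_8 \oplus E_8$, and quotienting by $I = \langle e_1, e_2 \rangle$ kills exactly that summand, leaving $I^{\perp}/I \cong U_3 \oplus E_8 \oplus E_8 \cong H \oplus E_8 \oplus E_8$. One should note that the pairing descends well-definedly to the quotient because $I$ is isotropic and $I \subset I^{\perp}$, and that the resulting form is nondegenerate and even since $U_3 \oplus E_8 \oplus E_8$ is unimodular and even.

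The third step is to observe that this is a statement purely about abstract lattices once the model is fixed, so no Hodge-theoretic input beyond the integrality and form of the weight filtration (already recorded in the excerpt) is needed; the monodromy weight filtration data only enters through the assertion that $I$ has the stated rank, isotropy, and primitivity. I would phrase the argument as: choose the standard isometry $H^2(X;\bZ) \cong H^{\oplus 3} \oplus E_8^{\oplus 2}$ identifying $I$ with $\langle e_1, e_2\rangle$ (possible by the uniqueness of $I$ up to isometry), then read off $I^{\perp}/I$ directly.

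I do not anticipate a serious obstacle here — the result is essentially a bookkeeping exercise with the K3 lattice. The only point requiring a modicum of care is justifying that one may reduce to the explicit model, which rests on the uniqueness of the primitive isotropic rank two sublattice up to isometry; this is standard (it follows from Eichler's theorem / the transitivity of the orthogonal group of an even lattice containing $U^{\oplus 2}$ on primitive isotropic vectors, applied twice) and is already quoted in the excerpt from \cite{cmsak3s}, so it can simply be cited. If one preferred to avoid even that citation, an alternative is to argue directly that any primitive isotropic plane $I$ in $\Lambda_{K3}$ embeds into a copy of $U \oplus U$ as the span of the two isotropic generators, giving an orthogonal splitting $\Lambda_{K3} = (U\oplus U) \oplus (H \oplus E_8 \oplus E_8)$ with $I \subset U \oplus U$, whence $I^{\perp}/I \cong H \oplus E_8 \oplus E_8$; but invoking the uniqueness statement is cleaner.
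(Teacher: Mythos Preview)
Your proposal is correct and follows essentially the same approach as the paper: invoke the uniqueness of $I$ up to isometry (citing \cite{cmsak3s}) to fix the explicit model $I = \langle e_1, e_2\rangle$ inside the first two hyperbolic summands of $\Lambda_{\mathrm{K3}} = H^{\oplus 3} \oplus E_8^{\oplus 2}$, then read off $I^{\perp}/I \cong H \oplus E_8 \oplus E_8$. Your version is simply more explicit than the paper's terse proof, but the argument is the same.
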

\begin{proof}
$H^2(X;\bZ)$ is isomorphic to the K3 lattice $\Lambda_{\mathrm{K3}} := H \oplus H \oplus H \oplus E_8 \oplus E_8$. Note that each copy of $H$ contains a primitive isotropic vector. As $I\subset H^2(X;\bZ)$ is unique up to isometry, without loss of generality we may take $I$ to be generated by taking one primitive isotropic vector from each of the first two factors of $H$. It follows that the quotient $I^{\perp}/I$ is isomorphic to $H \oplus E_8 \oplus E_8$.
\end{proof}

With this notation, we have the following proposition.

\begin{proposition}\label{prop:CSoverZ}
The Clemens-Schmid exact sequence induces exact sequences over $\bZ$ as follows:
\begin{align*}
0 \longrightarrow H^0(X_0;\bZ) \longrightarrow H^0_{\lim}(X) \longrightarrow 0,\\
0 \longrightarrow \bZ \xi \longrightarrow \xi^{\perp} \longrightarrow I^{\perp}/I \longrightarrow 0,\\
0 \longrightarrow \bZ \eta \longrightarrow H^4(X_0;\bZ) \longrightarrow H^4_{\lim}(X) \longrightarrow 0,
\end{align*}
where 
\begin{itemize}
\item $\xi \in H^2(V_1;\bZ) \oplus H^2(V_2;\bZ)$ is the class of $(C,-C)$,
\item $\eta \in H^4(V_1;\bZ) \oplus H^4(V_2;\bZ)$ is the class of $(p,-p)$, for $p$ a point.
\end{itemize}
\end{proposition}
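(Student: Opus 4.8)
The plan is to derive these three exact sequences from the Clemens--Schmid exact sequence, using the fact that for a Type II (Kulikov) degeneration the Clemens--Schmid sequence degenerates in a controlled way and is defined over $\bZ$ in the relevant range of cohomological degrees. Recall that the Clemens--Schmid sequence relates the cohomology of the central fibre $X_0$, the limiting mixed Hodge structure $H^*_{\lim}(X)$, and the monodromy operator $N$; concretely, in each degree it reads
\[
\cdots \longrightarrow H^{k}(X_0) \stackrel{\mathrm{sp}}{\longrightarrow} H^{k}_{\lim}(X) \stackrel{N}{\longrightarrow} H^{k}_{\lim}(X) \longrightarrow H_{2n-k-2}(X_0)\otimes(-n) \longrightarrow \cdots,
\]
where $n = 2$ is the dimension of the fibres and the homology terms come in via the Gysin/Poincar\'e--Lefschetz duality with $H_{2n-k-2}(X_0)\cong H^{2n-k-2}(X_0)$ (the latter using that $X_0$ is a normal crossings union of smooth rational surfaces, so its cohomology is torsion-free and concentrated in even degrees). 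First I would recall (citing Friedman \cite{npgttk3s} or Scattone \cite{cmsak3s}) that for a semistable Type II degeneration the nilpotent operator $N$ on $H^2_{\lim}(X)$ has $N^2 = 0$, with image and kernel related to the weight filtration $0 \subset I \subset I^\perp \subset H^2$ introduced just above, so that $\ker N = I^{\perp}$ and $\im N \cong I$ (canonically, once the identification $H^2_{\lim}(X)\cong H^2(X;\bZ)$ as lattices is fixed). The key structural inputs are thus: $H^0(X_0;\bZ)\cong\bZ$ and $H^4(X_0;\bZ)\cong\bZ^2$ (two components, each a rational surface), $H^k(X_0;\bZ)$ torsion-free, and the explicit form of $N$ on $H^2_{\lim}(X)$.

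For the first sequence, I would run Clemens--Schmid in degree $0$: the map $N$ on $H^0_{\lim}(X)$ is zero and the connecting map $H^0_{\lim}(X)\to H_{2}(X_0)$ is zero (both $H^0(X_0)$ and $H^0_{\lim}(X)$ are rank one and $X_0$ is connected), so $\mathrm{sp}\colon H^0(X_0;\bZ)\to H^0_{\lim}(X)$ is an isomorphism, giving the first line. For the third sequence, I would dualise: apply Poincar\'e--Lefschetz duality so that the relevant piece of Clemens--Schmid becomes
\[
0\longrightarrow \coker\bigl(N\colon H^0_{\lim}(X)\to H^0_{\lim}(X)\bigr)^{\vee}\longrightarrow H^4(X_0;\bZ)\longrightarrow H^4_{\lim}(X)\longrightarrow 0,
\]
where the kernel of the specialisation map $H^4(X_0;\bZ)\to H^4_{\lim}(X)$ is the image of the Gysin map from $H^0(X_0^{[1]};\bZ)$ twisted appropriately; concretely the kernel is the rank-one subgroup generated by $(p,-p)\in H^4(V_1;\bZ)\oplus H^4(V_2;\bZ)$, i.e.\ the class $\eta$ (this is the fundamental class of a point on the double curve $C$, pushed into each component with opposite signs, and its vanishing in $H^4_{\lim}$ records that a point on $C$ becomes homologous to zero in the limit). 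The middle sequence is the one requiring the most care: it should be obtained from the degree-$2$ Clemens--Schmid sequence by identifying $H^2_{\lim}(X)\cong H^2(X;\bZ)$, writing $\xi$ for the image under the Gysin/connecting map of the generator of $H^0$ of the double curve --- explicitly $\xi = (C,-C)\in H^2(V_1;\bZ)\oplus H^2(V_2;\bZ)$, which lies in $\ker$ of the specialisation map and spans $I$ (so $\bZ\xi$ plays the role of $I$), while $\xi^{\perp}$ (the orthogonal complement, equivalently $\ker N = I^\perp$ pulled back) surjects onto $I^{\perp}/I\cong H\oplus E_8\oplus E_8$ by Lemma \ref{lem:HE8E8}, with kernel exactly $\bZ\xi$.

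The main obstacle, and the step I would spend the most effort justifying, is the integrality and exactness of the middle sequence: Clemens--Schmid a priori gives exactness of rational (or real) mixed Hodge structures, and one must check that all the maps involved --- specialisation, the Gysin maps from the double curve $C$, and the monodromy weight filtration steps --- are defined over $\bZ$ and that no torsion is introduced when passing to the subquotient $\xi^\perp/\bZ\xi \cong I^\perp/I$. Here I would lean on the fact that $H^2(X;\bZ)$, $H^2(V_i;\bZ)$, and $H^2(C;\bZ)$ are all torsion-free, that the specialisation and Gysin maps in this geometric situation are morphisms of $\bZ$-lattices (this is already implicit in \cite[Section 2.2]{cmsak3s} and \cite{npgttk3s}), and that primitivity of $I$ in $H^2(X;\bZ)$ --- noted in the paragraph preceding Lemma \ref{lem:HE8E8} --- forces $\bZ\xi$ to be saturated in $\xi^{\perp}$, so the quotient is torsion-free and the displayed sequence is genuinely an exact sequence of lattices. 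I would also remark that the identifications of $\xi$ and $\eta$ with the classes $(C,-C)$ and $(p,-p)$ are natural once one traces through the definition of the Gysin maps for the normal crossings variety $X_0 = V_1\cup_C V_2$, the sign discrepancy between the two components being exactly the combinatorial input (the ``$-$'' in $(C,-C)$) that makes these classes vanish in the limit.
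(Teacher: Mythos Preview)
Your approach is correct and reaches the same conclusions, but it is organised differently from the paper's proof. The paper cites Friedman \cite[Lemma~3.5]{npgttk3s} directly for the middle sequence and then, following Friedman's method, derives the first and third sequences by \emph{decomposing} the Clemens--Schmid sequence into its two constituent pieces: the Wang sequence
\[
\cdots \to H^{i-1}(X;\bZ) \to H^{i}(\calX^*;\bZ) \to H^i(X;\bZ) \stackrel{N}{\to} H^i(X;\bZ) \to \cdots
\]
and the exact sequence of the pair $(\calX,\calX^*)$ (equivalently, local cohomology with support in $X_0$)
\[
\cdots \to H^i_{X_0}(\calX;\bZ) \to H^i(X_0;\bZ) \to H^i(\calX^*;\bZ) \to H^{i+1}_{X_0}(\calX;\bZ) \to \cdots.
\]
Both of these are purely topological long exact sequences, so they are manifestly defined over $\bZ$; the vanishing of $H^{i-1}(X;\bZ)$ and of $N$ for $i=0,4$, together with $H^i_{X_0}(\calX;\bZ)=0$ for $i=0,1,5$, then yields the first sequence immediately and identifies the kernel in the third as the image of $H^4_{X_0}(\calX;\bZ)\to H^4(X_0;\bZ)$, which is generated by $\eta$. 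This sidesteps the issue you flag as the main obstacle: rather than arguing separately that the assembled Clemens--Schmid sequence of mixed Hodge structures is integral in the relevant degrees, the paper works with its building blocks, where integrality is automatic. Your route via the Gysin description of the kernel and the dualisation for $H^4$ is valid but more indirect, and your display involving $\coker(N\colon H^0_{\lim}\to H^0_{\lim})^\vee$ is a little loose; the paper's description of the kernel of $H^4(X_0)\to H^4_{\lim}(X)$ as the image of $H^4_{X_0}(\calX;\bZ)$ is more precise and avoids having to unwind the duality.
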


\begin{proof}
The difficult part of this proposition, namely the second sequence, follows immediately from a result of Friedman \cite[Lemma 3.5]{npgttk3s}. We follow Friedman's approach to prove exactness of the remaining two sequences.

Exactness of these sequences follows from the fact that the Clemens-Schmid sequence is obtained by splicing together the Wang sequence
\[\ldots \to H^{i-1}(X;\bZ) \longrightarrow H^{i}(\calX^*;\bZ) \longrightarrow H^i(X;\bZ) \stackrel{N}{\longrightarrow} H^i(X;\bZ) \to \cdots\]
and the exact sequence of a pair
\[\ldots \to H^i_{V_0}(\calX;\bZ) \longrightarrow H^i(X_0;\bZ) \longrightarrow H^i(\calX^*;\bZ) \longrightarrow H^{i+1}_{V_0}(\calX,\bZ) \to \cdots,\]
where $\calX^* = \calX \setminus X_0$. When $i = 0$ or $i = 4$ we have $H^{i-1}(X,\bZ) = 0$ and $N = 0$, so the Wang sequence gives an isomorphism
$H^{i}(\calX^*,\bZ) \cong H^i(X,\bZ)$. Furthermore, we have $H^i_{X_0}(\calX;\bZ) = 0$ for $i = 0, 1, 5$, which completes the proof of the first sequence. To conclude the proof for the third sequence, note that the image of the map $H^4_{X_0}(\calX;\bZ) \to H^4(X_0;\bZ)$ is generated by the class $\eta$.
\end{proof}

\begin{remark}\label{rem:weightgraded}
Note that, by \cite[Lemma 3.5]{npgttk3s}, the second sequence here arises from the second weight graded piece of the Clemens-Schmid sequence
\[\Gr_2^WH^2_{X_0}(\calX) \longrightarrow \Gr_2^WH^2(X_0) \longrightarrow \Gr_2^WH^2_{\lim}(X) \longrightarrow 0,\]
where the class $\xi$ generates the image of the first map and $\xi^{\perp} = \Gr_2^WH^2(X_0)$.

Using the viewpoint of perverse and weight filtrations $P_{\bullet}$ and $W_{\bullet}$ described in \cite[Sections 4.1 and 5.2.1]{mcss}, the classes $\xi$ and $\eta$ are generators of the graded pieces $\Gr_2^W\Gr_2^PH^2(X_0)$ and $\Gr_4^W\Gr_4^PH^4(X_0)$, respectively.
\end{remark}

We may take the direct sum of the exact sequences from Proposition \ref{prop:CSoverZ} to obtain an exact sequence over $\bZ$
\begin{equation}\label{eq:CS}0 \to \bZ\xi \oplus \bZ \eta \to H^0(X_0)\oplus \xi^{\perp} \oplus H^4(X_0) \to H^0_{\lim}(X) \oplus I^{\perp}/I \oplus H^4_{\lim}(X) \to 0.\end{equation}

\subsection{Clemens-Schmid and pseudolattices over the rational numbers}\label{sec:CSpseudo}

Our next aim is to show how the results of Section \ref{sec:hodge} are naturally recovered from the abstract considerations in Section \ref{sec:pseudolattice}, for quasi del Pezzo homomorphisms $f_i \colon \mathrm{G}_i \to \rmE$ arising from the anticanonical pairs $(V_i,C)$ making up the central fibre. In this subsection we begin by working over the rational numbers $\bQ$; in the subsequent subsections we will discuss to what extent this continues to work over $\bZ$.

For any surface $V$, it is well-known that the Chern character map
\[\mathrm{ch}\colon \mathrm{K}_0^{\mathrm{num}}(\mathbf{D}(V)) \longrightarrow H^{\mathrm{even}}(V;\bQ) := \bigoplus_{i=0}^2 H^{2i}(V;\bQ)\]
is a ring homomorphism. Moreover, it follows easily from \cite[Example 3.5]{ecslc} that this map is injective and its $\bQ$-linear extension induces an isomorphism 
\[\mathrm{ch_{\bQ}}\colon \mathrm{K}_0^{\mathrm{num}}(\mathbf{D}(V)) \otimes \bQ \stackrel{\sim}{\longrightarrow} H^0(V;\bQ) \oplus (\NS(V) \otimes \bQ) \oplus H^4(V;\bQ).\]

The Chern character maps the class of $\calO_V$ to $(1,0,0)$, the class of the structure sheaf $\calO_p$ of a point to $(0,0,1)$, and the class of $\calO_V(D)$, for any divisor $D$ on $V$, to $(1,D,\chi(\calO_V(D)) + \frac{1}{2}q(K_V,D) - \chi(\calO_V))$, where $q(\cdot,\cdot)$ denotes the usual bilinear form on $H^2(V;\bQ)$ induced by the cup product; it follows from \cite[Example 3.5]{ecslc} that these classes span the image of $\mathrm{ch}$.

\begin{remark} Note that the Chern character map is \emph{not} the same as the isomorphism $\mathrm{K}_0^{\mathrm{num}}(\mathbf{D}(V))\cong \bZ \oplus \NS(V) \oplus \bZ$ from \cite[Example 3.5]{ecslc} that we used in the proof of Proposition \ref{prop:qdpisomorphism}
\end{remark}

 Using this, it is easy to show that the Euler form on $ \mathrm{K}_0^{\mathrm{num}}(\mathbf{D}(V))$ is induced by the bilinear form on $H^{\mathrm{even}}(V;\bQ)$ with matrix
\begin{equation} \label{eq:Eulermatrix}\kbordermatrix{ & H^0 & H^2 & H^4 \\ H^0 & \chi(\calO_V) & -\frac{1}{2}K^TQ & 1 \\ H^2 & \frac{1}{2}QK & -Q & 0 \\ H^4 & 1 & 0 & 0 },\end{equation}
where $Q$ denotes the matrix  of the bilinear form $q$ with respect to an appropriate choice of basis for $H^2(V;\bQ)$ and $K$ denotes the column vector representing the canonical divisor $K_V$ in this basis.

Suppose that $X_0 = V_1 \cup_C V_2$ is a stable K3 surface of Type II. Each of the surfaces $V_j$ is a quasi del Pezzo surface. So, by Example \ref{ex:qdp}, the derived pull-back map $i_j^*\colon \mathrm{K}_0^{\mathrm{num}}(\mathbf{D}(V_j)) \to \mathrm{K}_0^{\mathrm{num}}(\mathbf{D}(C))$ is a quasi del Pezzo homomorphism of pseudolattices, with right adjoint given by derived push-forward $i_{j*}$. The class of the structure sheaf of a point defines a point-like element in $\mathrm{K}_0^{\mathrm{num}}(\mathbf{D}(V_j))$ and with respect to this point-like element we have $\NS(\mathrm{K}_0^{\mathrm{num}}(\mathbf{D}(V_j))) \cong \NS(V_j)\cong H^2(V_j;\bZ)$. This isomorphism identifies the canonical classes, so by Remark \ref{rem:canonicalsquare} we have $q(K_{V_1},K_{V_1}) = -q(K_{V_2},K_{V_2})$. It follows that we are in the setting of Section \ref{sec:pseudolattice}, with $\rmG_j = \mathrm{K}_0^{\mathrm{num}}(\mathbf{D}(V_j)))$ and $f_j = i_j^*$. 

Let $f \colon \mathrm{G} \to \mathrm{E}$ denote the spherical homomorphism obtained by gluing the pseudolattices $\rmG_1 = \mathrm{K}_0^{\mathrm{num}}(\mathbf{D}(V_1))$ and $\rmG_2 = \mathrm{K}_0^{\mathrm{num}}(\mathbf{D}(V_2))$ along $  \mathrm{K}_0^{\mathrm{num}}(\mathbf{D}(C)) \cong \mathrm{E}$ via the homomorphisms $i_{1}^*$ and $(-i_{2}^*)$, as in Section \ref{sec:pseudolattice}, and let $r$ be its right adjoint. The results of Section \ref{sec:pseudolattice} give rise to lattices $\overline{\mathrm{E}}$, $\mathrm{K}$, and $\rmM$, related by the short exact sequence \eqref{eq:ses}. 

\begin{lemma} \label{lem:K} The Chern character map induces a natural isomorphism of $\bQ$-modules
\[\kappa\colon \mathrm{G}_{\bQ} \stackrel{\sim}{\longrightarrow} H^{\mathrm{even}}(V_1;\bQ) \oplus H^{\mathrm{even}}(V_2;\bQ),\]
where $\mathrm{G}_{\bQ} := \mathrm{G} \otimes \bQ$. The restriction of $\kappa$ to $\mathrm{K}_{\bQ} := \mathrm{K} \otimes \bQ$ induces an isomorphism
\[\mathrm{K}_{\bQ} \stackrel{\sim}{\longrightarrow} H^0(X_0;\bQ) \oplus \Gr_2^WH^2(X_0;\bQ) \oplus H^4(X_0;\bQ),\]
where $W_{\bullet}$ is the weight filtration arising from Deligne's mixed Hodge structure on a normal crossing variety.
\end{lemma}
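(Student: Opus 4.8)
The plan is to obtain $\kappa$ as the direct sum of the two rational Chern character isomorphisms, to transport the glued homomorphism $f$ to even cohomology using compatibility of the Chern character with pullback, and then to identify $\ker(f)\otimes\bQ$ degree by degree with the stated cohomology groups via the Mayer--Vietoris (Deligne weight) spectral sequence of the normal crossing surface $X_0$.

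For the first claim, one takes $\kappa$ to be the direct sum of the isomorphisms $\mathrm{ch}_{\bQ}\colon \mathrm{K}_0^{\mathrm{num}}(\mathbf{D}(V_j))\otimes\bQ \xrightarrow{\sim} H^0(V_j;\bQ)\oplus(\NS(V_j)\otimes\bQ)\oplus H^4(V_j;\bQ) = H^{\mathrm{even}}(V_j;\bQ)$ recalled at the start of this subsection (with $\NS(V_j)\otimes\bQ = H^2(V_j;\bQ)$ since $V_j$ is rational), for $j\in\{1,2\}$; this is a $\bQ$-linear isomorphism because the underlying group of $\rmG$ is $\rmG_1\oplus\rmG_2$, and naturality is immediate.

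For the second claim, the crucial input is that the Chern character commutes with derived pullback, so that for the smooth divisor inclusions $i_j\colon C\hookrightarrow V_j$ one has $\mathrm{ch}(Li_j^*\calF) = i_j^*\mathrm{ch}(\calF)$. Identifying $\rmE\otimes\bQ$ with $H^{\mathrm{even}}(C;\bQ) = H^0(C;\bQ)\oplus H^2(C;\bQ)$ via $\mathrm{ch}_{\bQ}$ — so that $a=[\calO_p]$ maps to the point class and $b=[\calO_C]$ to $1$ — and recalling that $f$ is given on underlying groups by $(u_1,u_2)\mapsto i_1^*(u_1)-i_2^*(u_2)$, it follows that $\kappa$ intertwines $f_{\bQ}$ with the cohomological map $H^{\mathrm{even}}(V_1;\bQ)\oplus H^{\mathrm{even}}(V_2;\bQ)\to H^{\mathrm{even}}(C;\bQ)$ sending $(\alpha_1,\alpha_2)$ to $i_1^*\alpha_1-i_2^*\alpha_2$. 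Since cohomological pullback preserves degree, this presents $\kappa(\rmK_{\bQ}) = \ker(f_{\bQ})$ as the direct sum, over $k\in\{0,1,2\}$, of the kernels of $i_1^*-i_2^*\colon H^{2k}(V_1;\bQ)\oplus H^{2k}(V_2;\bQ)\to H^{2k}(C;\bQ)$.

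It remains to identify these three kernels, in degrees $0$, $2$, $4$, with $H^0(X_0;\bQ)$, $\Gr_2^W H^2(X_0;\bQ)$, and $H^4(X_0;\bQ)$ respectively. I would invoke the Mayer--Vietoris spectral sequence of $X_0 = V_1\cup_C V_2$, whose $E_1$-page is the complex $H^{\bullet}(V_1;\bQ)\oplus H^{\bullet}(V_2;\bQ)\xrightarrow{i_1^*-i_2^*}H^{\bullet}(C;\bQ)$ and which degenerates at $E_2$; this yields $\Gr_q^W H^q(X_0;\bQ) = \ker(i_1^*-i_2^*)$ in degree $q$ and $\Gr_q^W H^{q+1}(X_0;\bQ)=\coker(i_1^*-i_2^*)$ in degree $q$. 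For $q=0$ this recovers $H^0(X_0;\bQ)$; for $q=2$ it gives exactly $\Gr_2^W H^2(X_0;\bQ)$, in agreement with Remark~\ref{rem:weightgraded} and the $\bQ$-linearisation of Proposition~\ref{prop:CSoverZ} once one checks that the class $\xi=(C,-C)$ has $\xi^{\perp}$ (inside $H^2(V_1;\bZ)\oplus H^2(V_2;\bZ)$) equal to $\ker(i_1^*-i_2^*)$ on $H^2$; and for $q=4$, since $H^4(C;\bQ)=0$ the kernel is all of $H^4(V_1;\bQ)\oplus H^4(V_2;\bQ)$, which equals $H^4(X_0;\bQ)$ because $\Gr_3^W H^4(X_0;\bQ)=\coker(H^3(V_1;\bQ)\oplus H^3(V_2;\bQ)\to H^3(C;\bQ))=0$, as rational surfaces and curves have vanishing $H^3$. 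All of these identifications are natural, which finishes the proof. The only genuinely non-formal step is the previous paragraph: matching the abstractly defined $f$ with geometric restriction to $C$ and reconciling sign and grading conventions with the existing description of $\Gr_2^W H^2(X_0)$ via $\xi^{\perp}$; everything else is routine bookkeeping with standard properties of the Chern character and of the weight filtration on a normal crossing variety.
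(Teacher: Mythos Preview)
Your proof is correct and reaches the same conclusion as the paper, but the execution is genuinely different. The paper proceeds by writing down explicit matrices: after identifying $\mathrm{K}_0^{\mathrm{num}}(\mathbf{D}(V_j))\otimes\bQ$ with $H^{\mathrm{even}}(V_j;\bQ)$ via the Chern character, it computes the matrix of $i_j^*$ directly (the $H^2\to[\calO_p]$ block is $-K_j^TQ_j$), assembles the matrix of $f$, and reads off the kernel as the subspace $\{r_1=r_2,\ q(K_{V_1},D_1)=q(K_{V_2},D_2)\}$, then matches this against the explicit Mayer--Vietoris description of $H^0(X_0)$, $\Gr_2^WH^2(X_0)$, and $H^4(X_0)$. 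You instead invoke naturality of $\mathrm{ch}$ with respect to (derived) pullback to identify $f_\bQ$ with the cohomological restriction $i_1^*-i_2^*$ in one stroke, and then appeal to the weight spectral sequence abstractly.

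Your route is cleaner as a standalone argument and makes the structural reason for the isomorphism transparent. The paper's computational route, however, buys something you do not: along the way it records the explicit matrix of the right adjoint $r$ and of the bilinear form on $\rmG_\bQ$, and these matrices are reused verbatim in the proofs of the next two lemmas (identifying $\overline{\rmE}_\bQ$ with $\bQ\xi\oplus\bQ\eta$ and checking that the bilinear form on $\rmM$ matches the one on $H^0_{\lim}\oplus I^\perp/I\oplus H^4_{\lim}$). If you intend to continue with your conceptual style in those lemmas, you will need separate arguments there; otherwise you may find yourself doing the matrix computation anyway.
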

\begin{proof}
As $H^2(V_j;\bZ) \cong \NS(V_j)$, the Chern character induces an isomorphism between $\mathrm{K}_0^{\mathrm{num}}(\mathbf{D}(V_j)) \otimes \bQ$ and $H^{\mathrm{even}}(V_j;\bQ)$. Under this identification, the bilinear form $\langle \cdot, \cdot \rangle_{\mathrm{K}_0^{\mathrm{num}}(\mathbf{D}(V_j))}$ is given by Equation \eqref{eq:Eulermatrix} and the maps $i_j^*$ and $i_{j*}$ are given by
\[\kbordermatrix{ & H^0 & H^2 & H^4 \\ {[\calO_p]} & 0 & -K_j^TQ_j & 0 \\ {[\calO_C]} & 1 & 0 & 0} \quad \text{and} \quad \kbordermatrix{ & {[\calO_p]} & {[\calO_C]} \\ H^0 & 0 & 0 \\ H^2 & 0 & -K_j \\ H^4 & 1 & -\frac{1}{2}q(K_{V_j},K_{V_j}) }\]
respectively,  where $Q_j$ denotes the matrix of the usual bilinear form on $H^2(V_j;\bQ)$ with respect to an appropriate choice of basis, and $K_j$ denotes the column vector representing the canonical divisor $K_{V_j}$ in this basis.

 We may thus identify $\mathrm{G} \otimes \bQ$ with $H^{\mathrm{even}}(V_1;\bQ) \oplus H^{\mathrm{even}}(V_2;\bQ)$ as $\bQ$-modules, proving the first statement. The bilinear form on $\mathrm{G}$ induces a bilinear form on $H^{\mathrm{even}}(V_1;\bQ) \oplus H^{\mathrm{even}}(V_2;\bQ)$ with matrix
\[\kbordermatrix{ & H^0(V_1) & H^2(V_1) & H^4(V_1) & H^0(V_2) & H^2(V_2) & H^4(V_2) \\ 
H^0(V_1) &  1 & -\frac{1}{2}K_1^TQ_1 & 1 & 0 & K_2^TQ_2 & 0 \\ 
H^2(V_1) & \frac{1}{2}Q_1K_1 & -Q_1 & 0 & -Q_1K_1 & 0 & 0 \\ 
H^4(V_1) &1 & 0 & 0 & 0 & 0 & 0 \\
H^0(V_2) &0 & 0 & 0 & 1 & -\frac{1}{2}K_2^TQ_2 & 1 \\
H^2(V_2) & 0 & 0 & 0 & \frac{1}{2}Q_2K_2 & -Q_2 & 0 \\ 
H^4(V_2) &0 & 0 & 0 & 1 & 0 & 0 }.\]

Now recall that $f(u_1,u_2) = i_1^*(u_1)  - i_2^*(u_2)$ for $u_i \in \mathrm{G}_i$ and, by the proof of \cite[Proposition 2.23]{pdpslf}, $r(v) = (i_{1*}(v) - i_{1*}i_2^*i_{2*}(v), -i_{2*}(v))$ for $v \in \mathrm{E}$. The maps $f$ and $r$ from Section \ref{sec:pseudolattice} are therefore given respectively by
\[\kbordermatrix{  & H^0(V_1) & H^2(V_1) & H^4(V_1) & H^0(V_2) & H^2(V_2) & H^4(V_2) \\
{[\calO_p]} & 0 & -K_1^TQ_1 & 0 & 0 & K_2^TQ_2 & 0 \\ 
{[\calO_C]} & 1 & 0 & 0 & -1 & 0 & 0 }\]
and
\[\kbordermatrix{ & {[\calO_p]} & {[\calO_C]} \\ 
H^0(V_1) & 0 & 0 \\ 
H^2(V_1) & 0 & -K_1  \\ 
H^4(V_1) & 1 & \frac{1}{2}q(K_{V_1},K_{V_1}) \\
H^0(V_2) & 0 & 0 \\
H^2(V_2) & 0 & K_2 \\ 
H^4(V_2) & -1 & \frac{1}{2}q(K_{V_2},K_{V_2})}.\]
If we write a general vector in $H^{\mathrm{even}}(V_j;\bQ)$ as $(r_j,D_j,s_j)$, it follows that the kernel $\mathrm{K}_{\bQ}$ of $f$ is identified with the subspace of $H^{\mathrm{even}}(V_1;\bQ) \oplus H^{\mathrm{even}}(V_2;\bQ)$ defined by
\[\{(r_1,D_1,s_1,r_2,D_2,s_2) \mid r_1 = r_2 \text{ and } q(K_{V_1},D_1) = q(K_{V_2},D_2)\}.\]

Now, Deligne's mixed Hodge structure on a normal crossing variety gives that
\begin{align*}H^0(X_0) &\cong \{(r_1,r_2) \in H^0(V_1) \oplus H^0(V_2) \mid r_1 = r_2\},\\
\Gr_2^WH^2(X_0) &\cong \{(D_1,D_2) \in H^2(V_1) \oplus H^2(V_2) \mid q(K_{V_1},D_1) = q(K_{V_2},D_2)\}, \\
H^4(X_0) &\cong  H^4(V_1) \oplus H^4(V_2),\end{align*}
so we see that $H^0(X_0) \oplus \Gr_2^WH^2(X_0) \oplus H^4(X_0) \cong \mathrm{K}_{\bQ}$, as required. \end{proof}

\begin{notation}\label{not:6tuple} In light of the isomorphisms from this lemma, we will write elements of $\mathrm{G}_{\bQ}$ as $6$-tuples
\[(r_1,D_1,s_1,r_2,D_2,s_2),\]
where $r_i \in H^{0}(V_i;\bQ)$, $D_i \in H^{2}(V_i,\bQ)$, and $s_i \in H^{4}(V_i;\bQ)$. By the proof of this lemma, we can express elements of the submodule $\mathrm{K}_{\bQ}$ as $6$-tuples as above satisfying the two conditions $r_1 = r_2$ and $q(K_{V_1},D_1) = q(K_{V_2},D_2)$.
\end{notation}

Now suppose that $X_0 = V_1 \cup_C V_2$ occurs as the central fibre of a Tyurin degeneration $\pi \colon \calX \to \Delta$. Then the following result gives an interpretation of the lattice $\rmM$.

\begin{lemma} \label{lem:L} There is a natural isomorphism of $\bQ$-modules 
\[\mu\colon \rmM_{\bQ} \stackrel{\sim}{\longrightarrow} H^0_{\lim}(X) \oplus \Gr_2^WH^2_{\lim}(X) \oplus H^4_{\lim}(X),\]
where $\rmM_{\bQ} := \rmM \otimes \bQ$  and $W_{\bullet}$ is the weight filtration arising from the limiting mixed Hodge structure. Moreover, under the isomorphisms $\kappa$, $\mu$ the map $\mathrm{K}_{\bQ} \to \rmM_{\bQ}$ from the short exact sequence \eqref{eq:ses} agrees with the one induced by the Clemens-Schmid exact sequence \eqref{eq:CS}.
\end{lemma}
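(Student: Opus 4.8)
The plan is to prove $\mu$ is an isomorphism by building it ``by hand'' out of the short exact sequence \eqref{eq:ses} and the Clemens--Schmid sequence \eqref{eq:CS}, and then checking compatibility is essentially automatic from that construction. Concretely, recall from Lemma \ref{lem:K} that $\kappa$ identifies $\rmK_{\bQ}$ with $H^0(X_0;\bQ) \oplus \Gr_2^WH^2(X_0;\bQ) \oplus H^4(X_0;\bQ)$. By Lemma \ref{lem:r(E)isotrivial} the image $\overline{\rmE}_{\bQ} := \im(r) \otimes \bQ$ lies in $\rmK_{\bQ}$; I would first compute $\kappa(\overline{\rmE}_{\bQ})$ explicitly using the matrix for $r$ written down in the proof of Lemma \ref{lem:K}. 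Applying $r$ to the basis $(a,b)$ of $\rmE$ one reads off $r(a) = (0,0,1,0,0,-1)$ and $r(b) = (0,-K_{V_1}, \tfrac12 q(K_{V_1},K_{V_1}), 0, K_{V_2}, \tfrac12 q(K_{V_2},K_{V_2}))$ in the $6$-tuple notation of Notation \ref{not:6tuple}. The first of these is exactly (a rational multiple of) the class $\eta$ appearing in Proposition \ref{prop:CSoverZ}, living in $H^4(X_0)$; the second, after recalling $q(K_{V_1},K_{V_1}) = -q(K_{V_2},K_{V_2})$ from Remark \ref{rem:canonicalsquare}, is a class whose $H^2$-component is $(-K_{V_1}, K_{V_2})$, which is precisely $\xi = (C,-C)$ up to sign (since $C = -K_{V_i}$). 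So $\kappa(\overline{\rmE}_{\bQ}) = \bQ\xi \oplus \bQ\eta$ under the identification of Lemma \ref{lem:K}.

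Given this, the quotient $\rmM_{\bQ} = \rmK_{\bQ}/\overline{\rmE}_{\bQ}$ is identified via $\kappa$ with
\[
\bigl(H^0(X_0;\bQ) \oplus \Gr_2^WH^2(X_0;\bQ) \oplus H^4(X_0;\bQ)\bigr)\big/\bigl(\bQ\xi \oplus \bQ\eta\bigr).
\]
On the other hand, the rational Clemens--Schmid sequence \eqref{eq:CS} exhibits $H^0_{\lim}(X) \oplus \Gr_2^WH^2_{\lim}(X) \oplus H^4_{\lim}(X)$ as exactly this same quotient: the first sequence of Proposition \ref{prop:CSoverZ} gives $H^0(X_0) \cong H^0_{\lim}(X)$, the third gives $H^4(X_0)/\bZ\eta \cong H^4_{\lim}(X)$, and by Remark \ref{rem:weightgraded} the middle sequence is the $\Gr_2^W$ piece, so $\Gr_2^WH^2(X_0)/\bZ\xi \cong \Gr_2^WH^2_{\lim}(X)$ (rationally). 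Thus I define $\mu$ to be the composite $\rmM_{\bQ} \xrightarrow{\;\bar\kappa\;} \bigl(H^0(X_0) \oplus \Gr_2^WH^2(X_0) \oplus H^4(X_0)\bigr)/(\bQ\xi \oplus \bQ\eta) \xrightarrow{\;\eqref{eq:CS}\;} H^0_{\lim}(X) \oplus \Gr_2^WH^2_{\lim}(X) \oplus H^4_{\lim}(X)$, where $\bar\kappa$ is the isomorphism induced by $\kappa$ on quotients. Both arrows are isomorphisms of $\bQ$-modules, so $\mu$ is too. The compatibility statement is then immediate: by construction, the map $\rmK_{\bQ} \to \rmM_{\bQ}$ from \eqref{eq:ses} followed by $\mu$ equals $\kappa$ followed by the quotient map $H^0(X_0) \oplus \Gr_2^WH^2(X_0) \oplus H^4(X_0) \twoheadrightarrow H^0_{\lim}(X) \oplus \Gr_2^WH^2_{\lim}(X) \oplus H^4_{\lim}(X)$, and the latter is precisely the map induced by \eqref{eq:CS} under $\kappa$.

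The main obstacle — really the only point requiring care — is the identification of $\kappa(\overline{\rmE}_{\bQ})$ with $\bQ\xi \oplus \bQ\eta$ correctly respecting the \emph{weight filtration}: one must check that the middle component of $\kappa(r(b))$, namely the class $(-K_{V_1},K_{V_2}) \in H^2(V_1;\bQ) \oplus H^2(V_2;\bQ)$, genuinely lies in the subspace $\Gr_2^WH^2(X_0;\bQ) \cong \{(D_1,D_2) \mid q(K_{V_1},D_1) = q(K_{V_2},D_2)\}$ described in the proof of Lemma \ref{lem:K}, and that it is identified with Friedman's class $\xi$ from Proposition \ref{prop:CSoverZ}. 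The first is a one-line check: $q(K_{V_1},-K_{V_1}) = -q(K_{V_1},K_{V_1}) = q(K_{V_2},K_{V_2})$, using Remark \ref{rem:canonicalsquare} again. The second follows from $C$ being anticanonical in each $V_i$, so $(C,-C) = (-K_{V_1},K_{V_2})$. One should also note that $r(b)$ has nonzero $H^4$-components, so $\kappa(\overline{\rmE}_{\bQ})$ is not literally $\bQ\xi \oplus \bQ\eta$ as a subspace but only modulo $\bQ\eta$; this is harmless since we quotient by $\bQ\eta$ anyway, but it is worth a remark so the reader sees the quotients match up on the nose. Everything else is bookkeeping with the matrices already displayed in the proof of Lemma \ref{lem:K} and the sequences of Proposition \ref{prop:CSoverZ}.
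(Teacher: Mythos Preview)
Your proposal is correct and takes essentially the same approach as the paper: both compute $\kappa(\overline{\rmE}_{\bQ})$ explicitly from the matrix of $r$, identify it with the span of $\xi$ and $\eta$, and then read off $\mu$ as the map induced on quotients by $\kappa$ and the Clemens--Schmid sequence. Your closing caveat is slightly over-cautious: since $r(b) - \tfrac{1}{2}q(K_{V_1},K_{V_1})\,r(a) = (0,-K_{V_1},0,0,K_{V_2},0)$, the $\bQ$-span of $\{r(a),r(b)\}$ is literally equal to $\bQ\xi \oplus \bQ\eta$ as a subspace, exactly as the paper obtains by ``simplifying'' its generators.
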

\begin{proof} By Proposition \ref{prop:CSoverZ} and Remark \ref{rem:weightgraded}, the Clemens-Schmid exact sequence gives rise to an isomorphism $\varphi_0\colon H^0(X_0) \stackrel{\sim}{\to} H^0_{\lim}(X)$, along with surjective maps $\varphi_2\colon \Gr_2^WH^2(X_0) \to \Gr_2^WH^2_{\lim}(X)$ and $\varphi_4\colon H^4(X_0) \to H^4_{\lim}(X)$. Composing $\varphi := \varphi_0 \oplus \varphi_2 \oplus \varphi_4$ with the isomorphism $\kappa$ from Lemma \ref{lem:K}, we thus obtain a surjective map of $\bQ$-modules
\[\varphi \circ \kappa \colon \mathrm{K}_{\bQ} \longrightarrow H^0_{\lim}(X) \oplus \Gr_2^WH^2_{\lim}(X) \oplus H^4_{\lim}(X).\] 
To prove the proposition, it suffices to show that the kernel of $\varphi \circ \kappa$ is isomorphic to $\overline{\mathrm{E}} \otimes \bQ = \im(r) \otimes \bQ \subset \mathrm{K}_{\bQ}$.

From the proof of Lemma \ref{lem:K}, we see that $\overline{\mathrm{E}} \otimes \bQ \subset \mathrm{K}_{\bQ}$ is generated by the columns of the matrix of the map $r$. Writing a general element of $\mathrm{K}_{\bQ}$ as $(r_1,D_1,s_1,r_2,D_2,s_2)$, and recalling that $q(K_{V_1},K_{V_1}) = -q(K_{V_2},K_{V_2})$, we may simplify this set of generators to $\{(0,0,1,0,0,-1), (0,-K_{V_1},0,0,K_{V_2},0)\}$.

By Proposition \ref{prop:CSoverZ}, we see that $\varphi_0$ is an isomorphism and the kernels of $\varphi_2$ and $\varphi_4$ are generated by the classes $\xi = (-K_{V_1}, K_{V_2}) \in \Gr_2^WH^2(X_0)$ and $\eta = (1,-1) \in H^4(X_0)$ respectively. It thus follows immediately that $\ker(\varphi) = \overline{\mathrm{E}} \otimes \bQ$; this completes the proof.\end{proof}

Putting everything in this subsection together, we have the following result, which gives a relationship between the sequences \eqref{eq:ses} and \eqref{eq:CS} over $\bQ$.

\begin{proposition} \label{prop:CSlattices} Let $\pi \colon \calX \to \Delta$ be a Tyurin degeneration with central fibre $X_0$ a stable K3 surface of Type II. Then there is a commutative diagram of $\bQ$-modules
\[
\begin{tikzcd}[cramped,column sep=1.6em]
0 \ar[r] & \overline{\mathrm{E}}_{\bQ} \ar[d] \ar[r]& \mathrm{K}_{\bQ}\ar[r]\ar[d,"\kappa"] & \rmM_{\bQ} \ar[d,"\mu"]\ar{r} & 0 \\
0 \ar[r] & \bQ\xi \oplus \bQ \eta \ar[r] & H^0(X_0)\oplus \xi^{\perp} \oplus H^4(X_0) \ar[r,"\varphi"] & H^0_{\lim}(X) \oplus I^{\perp}/I \oplus H^4_{\lim}(X) \ar[r] & 0
\end{tikzcd}\]
where the vertical arrows are isomorphisms.
\end{proposition}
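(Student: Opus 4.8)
The plan is to assemble the commutative diagram from the three isomorphisms already constructed in the preceding lemmas, checking only that the squares commute. The left-hand vertical arrow, the isomorphism $\overline{\mathrm{E}}_{\bQ} \cong \bQ\xi \oplus \bQ\eta$, is implicit in the proof of Lemma~\ref{lem:L}: there we computed that $\overline{\mathrm{E}} \otimes \bQ \subset \mathrm{K}_{\bQ}$ is spanned by $(0,0,1,0,0,-1)$ and $(0,-K_{V_1},0,0,K_{V_2},0)$, and that under $\kappa$ these correspond respectively to $\eta = (1,-1) \in H^4(X_0)$ and $\xi = (-K_{V_1},K_{V_2}) \in \Gr_2^WH^2(X_0) = \xi^{\perp}$ (using Remark~\ref{rem:weightgraded} to identify $\xi^{\perp}$ with $\Gr_2^W H^2(X_0)$). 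So I would first record that $\kappa$ restricts to an isomorphism $\overline{\mathrm{E}}_{\bQ} \xrightarrow{\sim} \bQ\xi \oplus \bQ\eta$, taking the two generators above to $\eta$ and $\xi$; call this restriction the left vertical arrow.

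Next, the middle vertical arrow is the isomorphism $\kappa|_{\mathrm{K}_{\bQ}} \colon \mathrm{K}_{\bQ} \xrightarrow{\sim} H^0(X_0) \oplus \Gr_2^W H^2(X_0) \oplus H^4(X_0) = H^0(X_0) \oplus \xi^{\perp} \oplus H^4(X_0)$ from Lemma~\ref{lem:K}, and the right vertical arrow is the isomorphism $\mu \colon \rmM_{\bQ} \xrightarrow{\sim} H^0_{\lim}(X) \oplus \Gr_2^W H^2_{\lim}(X) \oplus H^4_{\lim}(X) = H^0_{\lim}(X) \oplus I^{\perp}/I \oplus H^4_{\lim}(X)$ from Lemma~\ref{lem:L}, where the final identification uses $\Gr_2^W H^2_{\lim}(X) \cong I^{\perp}/I$ (Proposition~\ref{prop:CSoverZ} and Lemma~\ref{lem:HE8E8}). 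Commutativity of the right-hand square is precisely the last sentence of Lemma~\ref{lem:L}: under $\kappa$ and $\mu$, the quotient map $\mathrm{K}_{\bQ} \to \rmM_{\bQ}$ of \eqref{eq:ses} agrees with the Clemens--Schmid map $\varphi$ of \eqref{eq:CS}. Commutativity of the left-hand square is the compatibility of the left and middle vertical arrows with the two inclusions $\overline{\mathrm{E}}_{\bQ} \hookrightarrow \mathrm{K}_{\bQ}$ and $\bQ\xi \oplus \bQ\eta \hookrightarrow H^0(X_0) \oplus \xi^{\perp} \oplus H^4(X_0)$; since both inclusions are the obvious subspace inclusions and the left vertical arrow is by construction the restriction of $\kappa$, this square commutes on the nose.

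Finally I would note that all three vertical arrows are isomorphisms --- the left one by the explicit generator computation, the middle one by Lemma~\ref{lem:K}, the right one by Lemma~\ref{lem:L} --- and that both rows are exact: the top row is \eqref{eq:ses} tensored with $\bQ$ (exactness is preserved by flat base change), and the bottom row is \eqref{eq:CS} tensored with $\bQ$, which is exact by Proposition~\ref{prop:CSoverZ}. This completes the proof.

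I do not expect a serious obstacle here: essentially all the work has been front-loaded into Lemmas~\ref{lem:K} and~\ref{lem:L}, and what remains is bookkeeping. The only point requiring a small amount of care is the matching up of the three natural identifications on the bottom row --- $\xi^{\perp} = \Gr_2^W H^2(X_0)$ on the source and $\Gr_2^W H^2_{\lim}(X) = I^{\perp}/I$ on the target --- with the weight-graded form of the Clemens--Schmid sequence as in Remark~\ref{rem:weightgraded}; once these are pinned down, commutativity of the right square is a direct quotation of Lemma~\ref{lem:L} and there is nothing left to check.
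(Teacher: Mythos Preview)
Your proposal is correct and matches the paper's approach exactly: the paper gives no explicit proof of this proposition, simply stating that it follows by ``putting everything in this subsection together'' after Lemmas~\ref{lem:K} and~\ref{lem:L}. Your write-up spells out the bookkeeping (identifying the left vertical arrow as the restriction of $\kappa$, and checking the two squares) that the paper leaves implicit, but the content is identical.
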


\subsection{Integrality}

Next we investigate to what extent Proposition \ref{prop:CSlattices} holds over the integers. We note first that the map $\kappa$ is not integral: indeed, let $D_1 \in \NS(V_1) \cong H^2(V_1;\bZ)$ and $D_2 \in \NS(V_2) \cong H^2(V_2;\bZ)$. Then $(\calO_{V_1}(D_1), \calO_{V_2}(D_2))$ is an integral element of $\mathrm{G}$. Applying the map $\kappa$ gives 
\[\kappa(\calO_{V_1}(D_1), \calO_{V_2}(D_2)) = (1,D_1,s_1,1,D_2,s_2)\]
with
\begin{align*}
s_1 &= \chi(\calO_{V_1}(D_1))+\tfrac{1}{2}q(K_{V_1},D_1) - \chi(\calO_{V_1}),\\
s_2 &= \chi(\calO_{V_2}(D_2))+\tfrac{1}{2}q(K_{V_2},D_2) - \chi(\calO_{V_2}).
\end{align*}
From these expressions we see that $\kappa(\calO_{V_1}(D_1), \calO_{V_2}(D_2))$ is not integral unless $q(K_{V_1},D_1)$ and $q(K_{V_2},D_2)$ are even. The following lemma shows that this is, essentially, the only way in which integrality can fail.

\begin{lemma}\label{lem:KoverZ}
The map $\kappa$ induces an isomorphism between $\mathrm{G}$ and the subset of $H^{\mathrm{even}}(V_1;\bQ) \oplus H^{\mathrm{even}}(V_2;\bQ)$ defined by those $6$-tuples $(r_1,D_1,s_1,r_2,D_2,s_2)$ satisfying
\begin{itemize}
\item $r_i \in H^0(V_i;\bZ)$ for each $i = 1,2$;
\item $D_i \in H^2(V_i;\bZ)$ for each $i = 1,2$;
\item $2s_i \in H^4(V_i;\bZ)$ for each $i = 1,2$, and $s_i \in H^4(V_i;\bZ)$ if and only if $q(K_{V_i},D_i)$ is even.
\end{itemize}
This isomorphism takes $\mathrm{K}$ to the set of $6$-tuples as above satisfying $r_1 = r_2$ and $q(K_{V_1},D_1) = q(K_{V_2},D_2)$.
\end{lemma}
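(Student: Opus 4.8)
The plan is to use the description of $\mathrm{K}_0^{\mathrm{num}}(\mathbf{D}(V_i))$ via the Chern character that was set up just before Lemma \ref{lem:K}, and simply chase what integrality of a class in $\mathrm{G}$ means after applying $\kappa$. Recall that, for a surface $V$, the image of the Chern character map $\mathrm{ch}\colon \mathrm{K}_0^{\mathrm{num}}(\mathbf{D}(V)) \to H^{\mathrm{even}}(V;\bQ)$ is spanned by $(1,0,0)$, $(0,0,1)$, and the classes $(1,D,\chi(\calO_V(D)) + \tfrac12 q(K_V,D) - \chi(\calO_V))$ for $D \in H^2(V;\bZ)$. So a general integral element of $\mathrm{G}_1 = \mathrm{K}_0^{\mathrm{num}}(\mathbf{D}(V_1))$ can be written as an integer combination of these generators; taking such a combination one sees that the $H^0$ and $H^2$ components are automatically integral, while the $H^4$ component is an integer plus $\tfrac12 q(K_{V_1},D_1)$, which lies in $H^4(V_1;\bZ)$ precisely when $q(K_{V_1},D_1)$ is even and otherwise lies in $\tfrac12 H^4(V_1;\bZ)\setminus H^4(V_1;\bZ)$ (here we use that $q(K_{V_1},\cdot)$ is linear, so its parity depends only on $D_1$ modulo $2$, and that the span of the $(1,D,\ast)$ generators surjects onto $H^0\oplus H^2$). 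The same argument applies verbatim to $\mathrm{G}_2$.

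First I would make precise the ``$\Leftarrow$'' direction: given a $6$-tuple $(r_1,D_1,s_1,r_2,D_2,s_2)$ satisfying the three bulleted conditions, I want to exhibit a preimage in $\mathrm{G}$ under $\kappa$. Since $\kappa$ restricted to each $\mathrm{G}_i\otimes\bQ$ is the Chern character isomorphism onto $H^{\mathrm{even}}(V_i;\bQ)$, it suffices to check that each half $(r_i,D_i,s_i)$ lies in the image of $\mathrm{ch}$ over $\bZ$; this follows from the explicit description of that image recalled above, together with the parity condition relating $s_i$ to $q(K_{V_i},D_i)$. For the ``$\Rightarrow$'' direction, one takes an arbitrary element of $\mathrm{G} = \mathrm{G}_1\oplus\mathrm{G}_2$, writes each component as an integer combination of the three types of generators of $\mathrm{ch}(\mathrm{K}_0^{\mathrm{num}}(\mathbf{D}(V_i)))$, and reads off that $\kappa$ of it satisfies exactly the three bulleted conditions; the key point is the observation in the paragraph preceding the lemma (the computation of $s_1,s_2$), which shows the $H^4$-component is genuinely half-integral exactly when $q(K_{V_i},D_i)$ is odd, and this behaviour is not an artefact of a bad choice of representative because $q(K_{V_i},D_i)\bmod 2$ is a function of $D_i\in H^2(V_i;\bZ)$ alone.

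Finally, for the last sentence of the statement — the identification of the image of $\mathrm{K}$ — I would invoke Lemma \ref{lem:K} and Notation \ref{not:6tuple}: there it is already shown that $\kappa$ carries $\mathrm{K}_{\bQ}$ onto the subspace of $H^{\mathrm{even}}(V_1;\bQ)\oplus H^{\mathrm{even}}(V_2;\bQ)$ cut out by $r_1 = r_2$ and $q(K_{V_1},D_1) = q(K_{V_2},D_2)$. Since $\mathrm{K} = \mathrm{G}\cap \mathrm{K}_{\bQ}$ inside $\mathrm{G}_{\bQ}$ (as $\mathrm{K}$ is by definition the kernel of $f$ and $\mathrm{K}_{\bQ}$ the kernel of its $\bQ$-linear extension), the integral description of $\mathrm{K}$ is obtained simply by intersecting the three bulleted conditions on $\mathrm{G}$ with those two linear relations; no further work is needed.

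I expect the only mildly delicate point to be the ``only if'' half of the third bullet — i.e. verifying that half-integrality of $s_i$ cannot be repaired and genuinely records the parity of $q(K_{V_i},D_i)$ — but this is immediate from the closed formula for $s_i$ already displayed in the excerpt, so in truth the whole proof is a short unwinding of the Chern-character description of $\mathrm{G}_i$ together with Lemma \ref{lem:K}; there is no substantial obstacle.
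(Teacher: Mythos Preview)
Your proposal is correct and is essentially the same approach as the paper's own proof, which is a one-line reference to ``the explicit description of the Chern character map given at the start of Section~\ref{sec:CSpseudo}''. You have simply spelled out that unwinding in detail; the paper takes for granted exactly the computation you carry out.
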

\begin{proof} This is a straightforward consequence of the explicit description of the Chern character map given at the start of Section \ref{sec:CSpseudo}.
\end{proof}

Whilst the isomorphism $\kappa$ is not integral, somewhat surprisingly we find that the isomorphism $\mu$ is.

\begin{lemma} \label{lem:MoverZ}
The map $\mu \colon \rmM \to H^0_{\lim}(X) \oplus I^{\perp}/I \oplus H^4_{\lim}(X)$ is an isomorphism over $\bZ$.
\end{lemma}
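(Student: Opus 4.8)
The plan is to show that $\mu$ is an isomorphism over $\bZ$ by proving that $\rmM$ is a torsion-free $\bZ$-module of rank $18$ which is unimodular, and then invoking uniqueness of the target lattice. Concretely, I would first use the exact sequence \eqref{eq:ses}, namely $0 \to \overline{\rmE} \to \rmK \to \rmM \to 0$, together with Lemma~\ref{lem:KoverZ}, to get a completely explicit handle on $\rmK$ as a sublattice of $H^{\mathrm{even}}(V_1;\bQ) \oplus H^{\mathrm{even}}(V_2;\bQ)$: it consists of $6$-tuples $(r_1,D_1,s_1,r_2,D_2,s_2)$ with $r_i \in H^0(V_i;\bZ)$, $D_i \in H^2(V_i;\bZ)$, $2s_i \in H^4(V_i;\bZ)$ with $s_i$ integral iff $q(K_{V_i},D_i)$ is even, subject to $r_1 = r_2$ and $q(K_{V_1},D_1) = q(K_{V_2},D_2)$. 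By the proof of Lemma~\ref{lem:L}, $\overline{\rmE}$ is the rank $2$ sublattice spanned by $(0,0,1,0,0,-1)$ and $(0,-K_{V_1},0,0,K_{V_2},0)$. Both of these are visibly primitive in $\rmK$ (the first because of the $s$-coordinates, the second because $K_{V_i}$ is primitive in $\NS(V_i)$ unless $V_i \cong \bP^2$, and in the exceptional $\deg(\rmG) = 9$ case one checks the combination is still primitive in $\rmK$ using the two defining congruences). So the quotient $\rmM = \rmK/\overline{\rmE}$ is torsion-free of rank $18$.

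Next I would identify $\rmM$ with an explicit lattice. Since $\mu_{\bQ}$ is already a $\bQ$-isomorphism onto $H^0_{\lim}(X) \oplus \Gr_2^W H^2_{\lim}(X) \oplus H^4_{\lim}(X)$, and the first and third summands are each $\bZ$ of rank $1$ carrying the trivial form while the middle summand is $I^{\perp}/I \cong H \oplus E_8 \oplus E_8$ by Lemma~\ref{lem:HE8E8}, the image $\mu(\rmM)$ is a finite-index overlattice of the obvious integral sublattice, and it suffices to show this index is $1$. The cleanest way is to compute discriminants: I would show directly that $\rmM$, equipped with the induced form, has $|\disc(\rmM)| = 1$ by a careful determinant computation. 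The form on $\rmM$ is the restriction of $-\langle\cdot,\cdot\rangle_{\rmG}$ (via Lemma~\ref{lem:psiperpG} and Proposition~\ref{prop:NS} one sees the signs work out) to $\rmK$ modulo the radical $\overline{\rmE}$; because $\rmG = \rmG_1 \oright_{\rmE} \rmG_2$ is built by gluing unimodular pseudolattices, and $f\colon \rmG \to \rmE$ is the natural spherical homomorphism with $\rmE$ unimodular, the kernel $\rmK$ and the further quotient $\rmM$ inherit unimodularity — this is really the abstract content of the Clemens-Schmid sequence over $\bZ$, and I would make it precise by using the matrix forms of $f$, $r$, and the bilinear form on $\rmG$ computed in the proof of Lemma~\ref{lem:K} (together with the refined integral structure of Lemma~\ref{lem:KoverZ}), reducing the statement to a finite linear-algebra check: the Gram matrix of a $\bZ$-basis of $\rmM$ has determinant $\pm 1$.

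The main obstacle I anticipate is exactly the integrality bookkeeping in that last step: the map $\kappa$ is \emph{not} integral (the $s_i$-coordinates are half-integral precisely when $q(K_{V_i},D_i)$ is odd), so one cannot simply quote unimodularity of $\rmG$ and pass to subquotients. The delicate point is that the two "half-integral" phenomena on $V_1$ and $V_2$ are synchronised on $\rmK$ by the condition $q(K_{V_1},D_1) = q(K_{V_2},D_2)$, so that the non-integrality of $\kappa$ cancels when we pass to the kernel — and this is what makes $\mu$ integral even though $\kappa$ is not. I would make this rigorous by choosing an adapted $\bZ$-basis of $\rmK$ (splitting off the "rank" direction $r_1 = r_2$, a direction pairing with it nontrivially, the radical $\overline{\rmE}$, and a complement mapping isomorphically to $I^{\perp}/I$), exhibiting on the complement the standard generators of $H$, $E_8$, $E_8$, and then verifying the Gram determinant is a unit. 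Once $\rmM$ is shown to be an even (resp. the stated) unimodular lattice of rank $18$ with the right signature, it is isometric to $H \oplus E_8 \oplus E_8$ direct-summed with the two trivial rank-one pieces by uniqueness of indefinite unimodular lattices, and since $\mu_{\bQ}$ identifies it with the target, $\mu$ itself is an isomorphism over $\bZ$.
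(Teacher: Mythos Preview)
Your overall strategy has a genuine logical gap in the final step, and is in any case considerably more roundabout than the paper's argument.

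The gap is this: knowing that $\rmM$ and the target are \emph{abstractly} isometric (via the classification of indefinite unimodular lattices) tells you nothing about whether the \emph{specific} map $\mu$ realises that isometry. Two unimodular lattices can sit inside the same $\bQ$-vector space in many different ways. What would actually work is the simpler observation that once $\mu$ is integral and form-preserving, $\mu(\rmM)$ is a unimodular sublattice of a unimodular lattice of the same rank, hence of index $1$. But this requires (i) that $\mu$ preserves the bilinear form, which is the content of Lemma~\ref{lem:Lagrees} and is proved \emph{after} the present lemma in the paper, and (ii) an independent verification that $\rmM$ is unimodular, which your proposed Gram-matrix computation would achieve but only after considerable work. (Incidentally, $\rmM$ has rank $20$, not $18$; the two extra directions are the $H^0$ and $H^4$ pieces.)

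The paper's proof bypasses all of this. It shows that the composition $\varphi\circ\kappa$ (which induces $\mu$) lands in the integral target --- this is precisely your synchronisation observation: the condition $q(K_{V_1},D_1)=q(K_{V_2},D_2)$ forces $s_1$ and $s_2$ to be simultaneously integral or simultaneously half-integral, so $s_1+s_2\in\bZ$. It then proves surjectivity directly by a one-line trick: given any integral preimage $y=(r_1,D_1,s_1,r_2,D_2,s_2)$ under $\varphi$ (such a $y$ exists since Clemens--Schmid is exact over $\bZ$), if $y\notin\kappa(\rmK)$ then replacing $(s_1,s_2)$ by $(s_1+\tfrac12,\,s_2-\tfrac12)$ leaves $\varphi(y)$ unchanged but lands in $\kappa(\rmK)$ by Lemma~\ref{lem:KoverZ}. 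No discriminant computation, no comparison of bilinear forms, no classification theorem.
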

\begin{proof}
We first note that the top and bottom rows of the commutative diagram in the statement of Proposition \ref{prop:CSlattices} are defined over $\bZ$, by the results of Sections \ref{sec:pseudolattice} and \ref{sec:hodge}. By commutativity of this diagram, it suffices to show that the composition $\varphi \circ \kappa \colon \mathrm{K} \to H^0_{\lim}(X) \oplus I^{\perp}/I \oplus H^4_{\lim}(X)$ is well-defined and surjective over $\bZ$.

The map $\varphi$ is given explicitly by
\[\varphi(r_1,D_1,s_1,r_2,D_2,s_2) = (r_1,D_1+D_2,s_1+s_2) \in H^0_{\lim}(X) \oplus I^{\perp}/I \oplus H^4_{\lim}(X).\] 
Combining this with the description of the image of $\kappa$ from Lemma \ref{lem:KoverZ}, we find that the image of $\varphi \circ \kappa$ is integral in $H^0_{\lim}(X) \oplus I^{\perp}/I \oplus H^4_{\lim}(X)$, so  $\varphi \circ \kappa$ is defined over $\bZ$ .

It remains to show that $\varphi \circ \kappa$ is surjective. Indeed, as the map $\varphi$ is surjective over $\bZ$, for any $x \in H^0_{\lim}(X) \oplus I^{\perp}/I \oplus H^4_{\lim}(X)$ there exists 
\[y = (r_1,D_1,s_1,r_2,D_2,s_2) \in H^0(X_0;\bZ)\oplus \xi^{\perp} \oplus H^4(X_0;\bZ)\]
with $\varphi(y) = x$. To complete the proof, note that if $y \notin \kappa(\mathrm{K})$, we may use the description $\kappa(\mathrm{K})$ from Lemma \ref{lem:KoverZ} and the fact that
\[\varphi(r_1,D_1,s_1,r_2,D_2,s_2) = \varphi(r_1,D_1,s_1 + \tfrac{1}{2},r_2,D_2,s_2 - \tfrac{1}{2}),\]
to obtain an element of $\kappa(\mathrm{K})$ mapping to $x$.
\end{proof}

As noted in Section \ref{sec:pseudolattice}, the bilinear form on $\mathrm{K}$ descends under the map $\mathrm{K} \to \rmM$ to give a bilinear form on $\rmM$. However, $H^0_{\lim}(X) \oplus I^{\perp}/I \oplus H^4_{\lim}(X)$ also comes equipped with a bilinear form, induced by the bilinear form on $H^{\mathrm{even}}_{\lim}(X) \cong H^{\mathrm{even}}(X;\bZ)$ defined by Equation \eqref{eq:Eulermatrix} (which is integral as $K_X = 0$). The following result shows that these bilinear forms agree.

\begin{lemma} \label{lem:Lagrees} Equipping $\rmM$ and $H^0_{\lim}(X) \oplus I^{\perp}/I \oplus H^4_{\lim}(X)$ with bilinear forms as above, the map $\mu$ is an isomorphism of lattices.
\end{lemma}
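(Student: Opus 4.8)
The plan is to reduce the statement to a computation already essentially carried out in the proof of Lemma~\ref{lem:K}. Both bilinear forms in question are restrictions/descents of forms on ambient spaces: on the one side, the form on $\rmM$ is by construction the descent of $\langle\cdot,\cdot\rangle_{\rmG}$ from $\rmK$; on the other side, the form on $H^0_{\lim}(X)\oplus I^{\perp}/I\oplus H^4_{\lim}(X)$ is the descent of the form on $H^{\mathrm{even}}_{\lim}(X)\cong H^{\mathrm{even}}(X;\bZ)$ given by Equation~\eqref{eq:Eulermatrix} (with $K_X=0$, so the off-diagonal $\pm\frac12 K^TQ$ blocks vanish and the matrix is integral). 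By Lemma~\ref{lem:MoverZ} the map $\mu$ is already known to be an isomorphism of abelian groups, so it only remains to check it is an isometry. Since $\rmM\otimes\bQ$ and the target both have the same rank, and $\mu$ is integral, it suffices to verify the isometry property after tensoring with $\bQ$, where we may use the explicit $6$-tuple description.

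First I would fix a vector $x\in\rmM_{\bQ}$ and lift it, via the surjection $\kappa^{-1}$ composed with a section of $\varphi$, to a $6$-tuple $u=(r_1,D_1,s_1,r_2,D_2,s_2)\in\rmK_{\bQ}$ representing $x$; do the same for a second vector $y$ with representative $v$. Because the form on $\rmM$ is the descent of $\langle\cdot,\cdot\rangle_{\rmG}$, and $\rmK=\ker f$, we have (as in the proof of the first lemma of Section~\ref{sec:pseudolattice}) that $\langle u,v\rangle_{\rmG}$ depends only on the classes in $\rmM_{\bQ}$ — the kernel $\overline{\rmE}_{\bQ}=\im(r)_{\bQ}$ is orthogonal to all of $\rmK$ by Lemma~\ref{lem:r(E)isotrivial}. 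So the form on $\rmM_{\bQ}$ is computed by evaluating the explicit matrix for $\langle\cdot,\cdot\rangle_{\rmG}$ on $H^{\mathrm{even}}(V_1;\bQ)\oplus H^{\mathrm{even}}(V_2;\bQ)$ displayed in the proof of Lemma~\ref{lem:K}. On the other side, $\mu(x)=\varphi(\kappa(u))=(r_1,\,D_1+D_2,\,s_1+s_2)$ by the formula for $\varphi$ recalled in the proof of Lemma~\ref{lem:MoverZ}, and the pairing $q(\mu(x),\mu(y))$ is read off from Equation~\eqref{eq:Eulermatrix} applied to $H^{\mathrm{even}}(X;\bQ)$, using $\chi(\calO_X)=2$ and $K_X=0$, together with the fact that on $\rmK_{\bQ}$ one has $r_1=r_2$ and $q(K_{V_1},D_1)=q(K_{V_2},D_2)$.

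The core of the argument is then a direct comparison of the two quadratic-form matrices. Writing out $\langle u,v\rangle_{\rmG}$ using the big $6\times 6$ matrix in the proof of Lemma~\ref{lem:K}, one collects: the $H^0$--$H^4$ cross terms contribute $r_1 s_1 + s_1 r_1 + r_2 s_2 + s_2 r_2$; the $H^2$--$H^2$ diagonal blocks contribute $-q(D_1,D_1)-q(D_2,D_2)$; and the $H^0(V_1)$--$H^2(V_1)$, $H^0(V_1)$--$H^2(V_2)$, $H^2(V_1)$--$H^0(V_2)$ blocks contribute terms proportional to $q(K_{V_i},D_j)$ and $r_i$. Using $r_1=r_2=:r$ and $q(K_{V_1},D_1)=q(K_{V_2},D_2)$, the $K$-dependent terms either cancel in pairs or combine, and one checks the total equals $-q(D_1+D_2,D_1+D_2)+2\big(r(s_1+s_2)+(s_1+s_2)r\big)/2 + \ldots$, i.e. exactly $q(\mu(u),\mu(v))$ as computed from Equation~\eqref{eq:Eulermatrix} with $\chi=2$ and $K=0$. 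I would present this as a short bookkeeping calculation rather than spelling out every entry.

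The main obstacle I anticipate is purely one of careful bookkeeping: keeping track of the factors of $\frac12$ in the Chern-character conventions and the Todd/Euler-characteristic corrections (the $\chi(\calO_V)$ and $\pm\frac12 q(K_V,D)$ terms in Equation~\eqref{eq:Eulermatrix} and in the description of $\kappa$ at the start of Section~\ref{sec:CSpseudo}), and making sure the constraints $r_1=r_2$, $q(K_{V_1},D_1)=q(K_{V_2},D_2)$ defining $\rmK_{\bQ}$ are invoked at the right moment so that the $K$-dependent cross terms drop out. A secondary point worth a sentence is that the computation is manifestly independent of the choice of lift $u$ of $x$ — changing the lift alters $u$ by an element of $\overline{\rmE}_{\bQ}$, which is orthogonal to $\rmK$ on the source side, and changes $(r_1,D_1+D_2,s_1+s_2)$ by an element of $\bQ\xi\oplus\bQ\eta=\ker\varphi$, which (one checks from Equation~\eqref{eq:Eulermatrix}) is orthogonal to all of $\xi^{\perp}$ on the target side — so the pairing of classes is well-defined and the verification on representatives suffices.
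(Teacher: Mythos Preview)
Your proposal is correct and takes essentially the same approach as the paper: both proceed by explicit computation, using the $6\times 6$ matrix for $\langle\cdot,\cdot\rangle_{\rmG}$ from the proof of Lemma~\ref{lem:K} together with the constraints $r_1=r_2$, $q(K_{V_1},D_1)=q(K_{V_2},D_2)$ on $\rmK_{\bQ}$, and comparing to Equation~\eqref{eq:Eulermatrix} for $X$ with $\chi(\calO_X)=2$ and $K_X=0$. The paper simply records the common answer as the $3\times 3$ block matrix with diagonal blocks $2$, $-Q$, $0$ and off-diagonal $H^0$--$H^4$ entries equal to $1$, whereas you spell out the bookkeeping and the well-definedness of the descent more carefully.
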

\begin{proof} Using Equation \eqref{eq:Eulermatrix} and the explicit computations in the proofs of Lemmas \ref{lem:K} and \ref{lem:L}, one may show that both bilinear forms are given explicitly by the matrix
\[\kbordermatrix{ & H^0_{\lim} & \Gr_2^WH^2_{\lim} & H^4_{\lim} \\ H^0_{\lim} & 2 & 0 & 1 \\ \Gr_2^WH^2_{\lim} & 0 & -Q & 0 \\ H^4_{\lim} & 1 & 0 & 0 }\]
where $Q$ denotes the matrix induced by the cup-product form on $\Gr_2^WH^2_{\lim}(X)$ with respect to an appropriate basis. \end{proof}

Putting this lemma together with the exact sequence \eqref{eq:ses}, we thus obtain the following result.

\begin{proposition}\label{prop:Tyurinexactsequence} In the context of a Tyurin degeneration of K3 surfaces, the exact sequence \eqref{eq:ses} can be realised as
 \[0 \longrightarrow \overline{\mathrm{E}} \longrightarrow \mathrm{K} \longrightarrow H^0_{\lim}(X) \oplus I^{\perp}/I \oplus H^4_{\lim}(X) \longrightarrow 0,\]
 where 
 \begin{itemize}
\item $\overline{\mathrm{E}}$ is isomorphic to $\bZ^2$ with the zero form, 
\item $\mathrm{K}$ is the kernel of the spherical homomorphism $f \colon \mathrm{G} \to \mathrm{E}$ obtained by gluing $\mathrm{K}_0^{\mathrm{num}}(\mathbf{D}(V_1))$ and $\mathrm{K}_0^{\mathrm{num}}(\mathbf{D}(V_2))$ along $\mathrm{E} \cong \mathrm{K}_0^{\mathrm{num}}(\mathbf{D}(C))$, as in Section \ref{sec:pseudolattice}, and
\item the bilinear form on $H^0_{\lim}(X) \oplus I^{\perp}/I \oplus H^4_{\lim}(X)$ is induced by the bilinear form on $H^{\mathrm{even}}_{\lim}(X) \cong H^{\mathrm{even}}(X;\bZ)$ defined by Equation \eqref{eq:Eulermatrix}.
\end{itemize}
\end{proposition}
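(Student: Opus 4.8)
The plan is to assemble the statement directly from the abstract exact sequence \eqref{eq:ses} of Section \ref{sec:pseudolattice} together with the two identifications established in the preceding lemmas; no genuinely new argument is needed beyond bookkeeping.

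First I would recall the specialisation carried out at the start of Section \ref{sec:CSpseudo}. Given a Tyurin degeneration with central fibre $X_0 = V_1 \cup_C V_2$, each $V_j$ is a quasi del Pezzo surface, so by Example \ref{ex:qdp} the derived pull-back $i_j^*\colon \mathrm{K}_0^{\mathrm{num}}(\mathbf{D}(V_j)) \to \mathrm{K}_0^{\mathrm{num}}(\mathbf{D}(C)) \cong \rmE$ is a quasi del Pezzo homomorphism with right adjoint $i_{j*}$, and by Remark \ref{rem:canonicalsquare} one has $q(K_{V_1},K_{V_1}) = -q(K_{V_2},K_{V_2})$. Hence the hypotheses of Section \ref{sec:pseudolattice} are met with $\rmG_j = \mathrm{K}_0^{\mathrm{num}}(\mathbf{D}(V_j))$ and $f_j = i_j^*$, and the construction there produces the glued spherical homomorphism $f\colon \rmG \to \rmE$, its kernel $\rmK$, the totally degenerate sublattice $\overline{\rmE}$ (the saturation of $\im(r)$ in $\rmK$), and the exact sequence \eqref{eq:ses}. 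The identification $\overline{\rmE} \cong \bZ^2$ with the zero form is recorded in Section \ref{sec:pseudolattice} immediately before \eqref{eq:ses}, and the description of $\rmK$ appearing in the statement is simply its definition in this setting.

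It then remains to identify the third term $\rmM$ of \eqref{eq:ses}. Lemma \ref{lem:MoverZ} provides an isomorphism of abelian groups $\mu\colon \rmM \xrightarrow{\sim} H^0_{\lim}(X) \oplus I^{\perp}/I \oplus H^4_{\lim}(X)$, and Lemma \ref{lem:Lagrees} upgrades $\mu$ to an isometry once the target is equipped with the bilinear form induced from the Euler form on $H^{\mathrm{even}}_{\lim}(X) \cong H^{\mathrm{even}}(X;\bZ)$ of Equation \eqref{eq:Eulermatrix} (which is integral since $K_X = 0$). Transporting \eqref{eq:ses} along $\mu$ yields the claimed sequence with all three terms described as stated.

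The genuinely delicate points have already been settled: the integrality of $\mu$ in Lemma \ref{lem:MoverZ}, whose proof exploits that $\varphi$ is already surjective over $\bZ$ and that the $H^4$-components of a preimage can be shifted by $\pm\tfrac12$ to land in $\kappa(\rmK)$, and the verification in Lemma \ref{lem:Lagrees} that the lattice structure on $\rmM$ matches the one coming from \eqref{eq:Eulermatrix}. Consequently I do not anticipate any real obstacle here; the only thing to handle with care when writing it up is confirming that the weight-graded piece $\Gr_2^W H^2_{\lim}(X)$ used in Lemmas \ref{lem:MoverZ} and \ref{lem:Lagrees} is literally $I^{\perp}/I$ — this is the content of the second sequence in Proposition \ref{prop:CSoverZ}, cf. Remark \ref{rem:weightgraded} — so that the three bullet points hold on the nose rather than merely up to abstract isomorphism.
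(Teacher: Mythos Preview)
Your proposal is correct and matches the paper's approach exactly: the paper simply states that the proposition follows by ``putting this lemma [Lemma \ref{lem:Lagrees}] together with the exact sequence \eqref{eq:ses}'', and your write-up is a faithful, slightly more explicit unpacking of that sentence, invoking Lemmas \ref{lem:MoverZ} and \ref{lem:Lagrees} just as intended.
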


\subsection{Point-like vectors and the N\'{e}ron-Severi lattice}

Next we ask how the ideas of Section \ref{sec:pointsandNS} arise in this setting.

In this setting, by \cite[Example 3.10]{pdpslf}, $\Psi$ is the sublattice of $\mathrm{K}$ generated by the classes of $(r_1(a),0) = ([\calO_{p_1}],0)$ and $(0,r_2(a)) = (0,[\calO_{p_2}])$, for $p_i$ a point on $X_i$. Using Notation \ref{not:6tuple}, $\Psi$ is generated by $(0,0,1,0,0,0)$ and $(0,0,0,0,0,1)$. By Lemma \ref{lem:psiperpG}, the lattice isomorphisms $\NS(\rmG_i) \cong \NS(V_i) \cong H^2(V_i;\bZ)$ induce an isomorphism of lattices
\[\Psi^{\perp}_{\rmG}/\Psi \cong H^2(V_1;\bZ) \oplus H^2(V_2;\bZ).\]
Using the description of the bilinear form on $\rmG$ in the proof of Lemma \ref{lem:K}, we see that this induces an isomorphism of lattices
\[\Psi^{\perp}_{\rmK}/\Psi \cong \{(D_1,D_2) \in H^2(V_1;\bZ) \oplus H^2(V_2;\bZ) \mid q(K_{V_1},D_1) = q(K_{V_2},D_2)\} = \xi^{\perp},\]
where $\Psi^{\perp}_{\rmG}/\Psi$ and $\Psi^{\perp}_{\rmK}/\Psi$ are equipped with the bilinear forms from Lemma \ref{lem:psiperpG} and Proposition \ref{prop:NS}.

\begin{remark} Note that these isomorphisms are \emph{not} the ones induced by the map $\kappa$ coming from the Chern character (see Lemma \ref{lem:K}). Indeed, by Lemma \ref{lem:KoverZ}, the image of $\kappa$ may even contain some non-integral classes. Instead, they may be thought of as a composition of $\kappa$ with a projection to the subspace $H^2(V_1;\bZ) \oplus H^2(V_2;\bZ)$.
\end{remark}

The map to $\rmM \cong H^0_{\lim}(X) \oplus I^{\perp}/I \oplus H^4_{\lim}(X)$ takes the generators $(0,0,1,0,0,0)$ and $(0,0,0,0,0,1)$ of $\Psi$ to a generator $\bp \in H^4_{\lim}(X)$. By the description of the bilinear form on $\rmM$ from Lemma \ref{lem:Lagrees}, we see that the natural isomorphism  $\mu\colon \rmM \to H^0_{\lim}(X) \oplus I^{\perp}/I \oplus H^4_{\lim}(X)$ induces an isomorphism $\NS(\rmM) = \bp^{\perp}/\bp \cong I^{\perp}/I$. 

Finally, by Lemma \ref{lem:zetaisotrivial}, we see that the kernel of the map $\Psi^{\perp}_{\rmK}/\Psi \to \NS(\rmM)$ is generated by $\zeta = (-K_{V_1},K_{V_2}) = (C,-C) = \xi$. Putting things together, we thus obtain the following result.

\begin{proposition} \label{prop:NSfordegenerations}
If $X_0 = V_1 \cup_C V_2$ is a stable K3 surface of type II, then the lattice isomorphisms $\NS(\rmG_i) \cong \NS(V_i)$ induce an isomorphism of lattices
\[\Psi^{\perp}_{\rmG}/\Psi \cong H^2(V_1;\bZ) \oplus H^2(V_2;\bZ).\] 
If $X_0$ is the central fibre of a Tyurin degeneration $\pi \colon \calX \to \Delta$, then the isomorphism above induces an isomorphism between the exact sequence 
\[0 \longrightarrow \bZ \zeta \longrightarrow \Psi^{\perp}_{\rmK}/\Psi \longrightarrow \NS(\rmM) \longrightarrow 0\] 
from Proposition \ref{prop:NS} and the the exact sequence
\[0 \longrightarrow \bZ \xi \longrightarrow \xi^{\perp} \longrightarrow I^{\perp}/I \longrightarrow 0\]
from Proposition \ref{prop:CSoverZ}. From Lemma \ref{lem:HE8E8}, it follows that $\NS(\rmM) \cong H \oplus E_8 \oplus E_8$.
\end{proposition}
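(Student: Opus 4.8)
The plan is to assemble, into a single morphism of short exact sequences, the identifications set up in the paragraphs immediately preceding the statement; the proposition is essentially a bookkeeping step. First I would dispose of the first claim. Each $V_i$ is a smooth rational surface, so $\NS(V_i)\cong H^2(V_i;\bZ)$ is an isometry for the cup-product form, and by Example~\ref{ex:qdp} this matches the abstract lattice $\NS(\rmG_i)$ (with its form $q(\cdot,\cdot)$) and carries $K_{\rmG_i}$ to $K_{V_i}$. Feeding this into Lemma~\ref{lem:psiperpG} — keeping track of the sign flip there, under which $q(\cdot,\cdot)$ on $\NS(\rmG_i)$ corresponds to $-\langle\cdot,\cdot\rangle_{\rmG_i}$ — immediately yields the isometry $\Psi^{\perp}_{\rmG}/\Psi\cong H^2(V_1;\bZ)\oplus H^2(V_2;\bZ)$. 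As noted in the remark preceding the statement, this map is $\kappa$ followed by projection onto the $H^2$-summands, not the restriction of $\kappa$ itself.

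For the second claim I would track the three vertical arrows of the ladder separately. For the middle term: restrict the isometry above to $\Psi^{\perp}_{\rmK}/\Psi$, which by Proposition~\ref{prop:NS} carries the bilinear form induced from its inclusion into $\Psi^{\perp}_{\rmG}/\Psi$; using the explicit matrix of $\langle\cdot,\cdot\rangle_{\rmG}$ from the proof of Lemma~\ref{lem:K}, one checks that on the $H^2\oplus H^2$ part the conditions cutting out $\rmK$ collapse to the single equation $q(K_{V_1},D_1)=q(K_{V_2},D_2)$, which is exactly the defining condition for $\xi^{\perp}$, since $\xi=(C,-C)$ and $C=-K_{V_i}$ on $V_i$. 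For the quotient term: by Lemmas~\ref{lem:MoverZ} and~\ref{lem:Lagrees}, $\mu$ is an isometry $\rmM\cong H^0_{\lim}(X)\oplus I^{\perp}/I\oplus H^4_{\lim}(X)$ carrying both generators of $\Psi$ onto a generator of $H^4_{\lim}(X)$, so it descends to an isometry $\NS(\rmM)=\bp^{\perp}/\bp\cong I^{\perp}/I$. For the sub-term: by Lemma~\ref{lem:zetaisotrivial}, $\zeta=(-K_{V_1},K_{V_2})=(C,-C)=\xi$. Commutativity of the resulting ladder is then routine, since all three vertical maps are induced by the comparison isomorphisms already established, and we obtain the asserted isomorphism between the exact sequence of Proposition~\ref{prop:NS} and that of Proposition~\ref{prop:CSoverZ}. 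The final assertion follows at once: Lemma~\ref{lem:HE8E8} gives $I^{\perp}/I\cong H\oplus E_8\oplus E_8$, hence $\NS(\rmM)\cong H\oplus E_8\oplus E_8$.

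I do not expect a serious obstacle, since all the substance has been front-loaded into Lemmas~\ref{lem:psiperpG}, \ref{lem:K}, \ref{lem:MoverZ}, \ref{lem:Lagrees}, and~\ref{lem:zetaisotrivial}. The one point that genuinely requires care is that the claimed identifications are \emph{isometries}, not merely isomorphisms of abelian groups: one must keep the sign conventions in Lemma~\ref{lem:psiperpG} consistent with those in Propositions~\ref{prop:NS} and~\ref{prop:CSoverZ}, check that the form on $H^{\mathrm{even}}_{\lim}(X)$ coming from Equation~\eqref{eq:Eulermatrix} restricts to the intersection form on both $\xi^{\perp}$ and $I^{\perp}/I$, and remember that although $\kappa$ is not integral (by the description of its image in Lemma~\ref{lem:KoverZ}), the map $\mu$ is integral (Lemma~\ref{lem:MoverZ}), so the quotient arrow of the ladder is indeed defined over $\bZ$.
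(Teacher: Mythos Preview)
Your proposal is correct and follows essentially the same approach as the paper, which likewise treats the proposition as a bookkeeping summary of the identifications already established: Lemma~\ref{lem:psiperpG} for $\Psi^{\perp}_{\rmG}/\Psi$, the explicit form from Lemma~\ref{lem:K} to cut out $\xi^{\perp}$, Lemmas~\ref{lem:MoverZ} and~\ref{lem:Lagrees} for $\NS(\rmM)\cong I^{\perp}/I$, and Lemma~\ref{lem:zetaisotrivial} for the kernel. Your remarks on the sign conventions and on the distinction between $\kappa$ and the projection to $H^2\oplus H^2$ match the paper's own cautions exactly.
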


\subsection{Lattice polarisations}\label{sec:degenerationpolarisation}

We now consider how the constructions we have discussed so far work in the presence of a lattice polarisation. We will make use of the following standard definition.

\begin{definition} Let $L$ be a non-degenerate lattice. A class $\delta \in L$ with $\langle \delta,\delta\rangle_L = -2$ is called a \emph{root}. A subset $\Delta(L)^+$ of \emph{positive roots} is a set of roots with the following properties:
\begin{itemize}
\item for every root $\delta \in L$, we either have $\delta \in \Delta(L)^+$ or $-\delta \in \Delta(L)^+$, but not both;
\item if $\delta$ is a root that can be written as a non-negative integer combination of classes from $\Delta(L)^+$, then $\delta \in \Delta(L)^+$.
\end{itemize}
\end{definition}

Now suppose that $L$ is a non-degenerate primitive sublattice of the K3 lattice $\Lambda_{\text{K3}} \cong H \oplus H \oplus H \oplus E_8 \oplus E_8$ of signature $(1,\rank(L) -1)$. We next define the notion of a lattice polarisation on a K3 surface, following Alexeev's and Engel's \cite{cmk3s} modification of Dolgachev's \cite{mslpk3s} original definition.

Begin by fixing a vector $h \in L \otimes \bR$, such that $\langle h,h \rangle_L > 0$ and $h \notin L' \otimes \bR$ for any proper sublattice $L' \subsetneq L$ with $\rank(L') < \rank(L)$; Alexeev and Engel call such vectors \emph{very irrational}. 

\begin{definition} \cite[Definition 2.6]{cmk3s} \label{def:Lquasipol} An \emph{$L$-quasi\-polarised K3 surface} is a K3 surface $X$ along with a primitive lattice embedding $j\colon L \hookrightarrow \NS(X)$ for which $j(h)$ is nef and big.
\end{definition}

Let $\Delta(L)^+$ denote the set of classes
\[\Delta(L)^+ := \{\delta \in L \mid \langle \delta,\delta \rangle_L = -2,\ \langle \delta,h\rangle_L > 0\}.\]
The fact that $h$ is very irrational implies that $\Delta(L)^+$ is a set of positive roots for $L$. Moreover, it follows from \cite[Remark 1.1]{mslpk3s} that if $X$ is an $L$-quasi\-polarized K3 surface, then $j(\Delta(L)^+) \subset \NS(X)$ is precisely the set of classes of effective $(-2)$-curves contained in $j(L)$.


The following definition is based on \cite[Definition 2.1]{flpk3sm}, appropriately modified for compatibility with Definition \ref{def:Lquasipol}. Compare also Alexeev's and Engel's definition of a \emph{nef model} \cite[Definition 3.10]{cmk3s} in the presence of a lattice polarisation.

\begin{definition} \label{def:polarisedtyurin}
A \emph{$L$-polarised Tyurin degeneration of K3 surfaces} is a Tyurin degeneration $\pi \colon \calX \to \Delta$ such that
\begin{enumerate}
\item (monodromy invariance) over the punctured disc $\Delta^*$ there is a trivial local subsystem $\calL$ of $R^2\pi_*\bZ$ with fibre $L$, so that for each $p \in \Delta^*$ the fibre $\calL_p \subset H^2(X_p;\bZ)$ defines an $L$-quasi\-polarisation of $X_p$,
\item (positivity) there exists an effective, flat, and nef Cartier divisor $A$ on $\calX$  such that the restriction $A_p$ of $A$ to any fibre $X_p$ over $p \in \Delta^*$ is nef and big. 
\end{enumerate}
\end{definition}

\begin{remark} We note that, given a Tyurin degeneration  $\pi \colon \calX \to \Delta$ along with a Cartier divisor $A$ on $\calX$  such that the restriction $A_p$ of $A$ to any fibre $X_p$ over $p \in \Delta^*$ is nef and big, by \cite[Theorem 1]{bgd4} one may twist $A$ by a Cartier divisor $Z$ supported on components of $X_0$ and perform a series of flops to arrange that $A$ is effective, flat, and nef on $\calX$. 
\end{remark}


The following lemma relates Definition \ref{def:polarisedtyurin} to the description in terms of pseudolattices.

\begin{lemma}\label{lem:tyurinpseudolattice} Let $\pi \colon \calX \to \Delta$ be an $L$-polarised Tyurin degeneration of K3 surfaces and let $0 \subset I \subset I^{\perp} \subset H^2(X;\bZ)$
denote the associated limiting mixed Hodge structure. Then $L$ is orthogonal to $I$ and there is an embedding $L \hookrightarrow I^{\perp}/I$.
\end{lemma}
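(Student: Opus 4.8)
The plan is to extract the needed facts from the monodromy-invariance and positivity conditions in Definition~\ref{def:polarisedtyurin}, using the Clemens--Schmid machinery already set up. For the first assertion, let $N$ denote the logarithm of monodromy acting on $H^2_{\lim}(X)$, so that $I = \im(N)$ and $I^\perp = \ker(N)$ (with the weight filtration $0 \subset I \subset I^\perp \subset H^2(X;\bZ)$ as in Section~\ref{sec:hodge}). Condition (1) says there is a trivial local subsystem $\calL \subset R^2\pi_*\bZ$ with fibre $L$; triviality of this local system means precisely that monodromy acts trivially on $L$, so under the identification $H^2(X_p;\bZ) \cong H^2_{\lim}(X)$ the image of $L$ lies in $\ker(N) = I^\perp$. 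This already gives an embedding $L \hookrightarrow I^\perp$, and composing with the quotient $I^\perp \to I^\perp/I$ gives a homomorphism $L \to I^\perp/I$; I will need to check this composite is still injective, and in fact an embedding of lattices.

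For the orthogonality $L \perp I$ and injectivity of $L \to I^\perp/I$, the key input is positivity. Condition (2) supplies an effective, flat, nef Cartier divisor $A$ on $\calX$ whose restriction $A_p$ to a general fibre is nef and big; by construction of the $L$-quasipolarisation (the very irrational vector $h$ and the role of $j(h)$ being nef and big), the class $h \in L \otimes \bR$ is represented, up to the monodromy identification, by $A_p$ for $p$ general — more precisely $j(h)$ and $A_p$ lie in the same chamber, so $h$ pairs positively with effective curves. The point is that $I$ is spanned by vanishing-cycle-type classes: concretely, $I$ is the rank-two isotropic lattice appearing in the limiting MHS, and by the explicit description via Friedman \cite[Lemma 3.5]{npgttk3s} and Proposition~\ref{prop:CSoverZ} the generator $\xi$ of $\Gr^W_2$-level (the class of $(C,-C)$) together with the class of the vanishing cycle span $I$ after passing to the appropriate graded piece. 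Because $A_p$ is nef, it pairs non-negatively with all effective classes, and because $h$ is very irrational it cannot be orthogonal to a proper sublattice; combining nefness of $A_p$ with the fact that the vanishing cycles become effective $(-2)$-classes (or isotropic classes) forces $\langle h, I \rangle = 0$, hence $\langle L, I \rangle = 0$ since $L$ is generated over $\bZ$ by classes whose $\bR$-span contains $h$ and $L$ is non-degenerate of signature $(1,\rank L - 1)$ with $h$ the unique (up to sign) positive direction. Once $L \perp I$ is established, the composite $L \hookrightarrow I^\perp \to I^\perp/I$ has kernel $L \cap I$; but $L$ has signature $(1, \rank L -1)$ while $I$ is negative semidefinite (isotropic), so $L \cap I$ is a negative-definite-or-zero sublattice of the isotropic lattice $I$, forcing $L \cap I = 0$. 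Finally, primitivity of the embedding $L \hookrightarrow I^\perp/I$ (as a lattice embedding) follows because $L \hookrightarrow H^2(X_p;\bZ)$ is primitive by Definition~\ref{def:Lquasipol}, $I^\perp$ is primitive in $H^2(X;\bZ)$, and the quotient by the primitive sublattice $I \subset I^\perp$ preserves primitivity of $L$'s image since $L \cap I = 0$ and $L + I$ is primitive in $I^\perp$ (the last point using that $h$ is very irrational, so no proper overlattice of $L$ can be supported inside $(L+I) \otimes \bR \cap I^\perp$).

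The main obstacle I anticipate is the positivity step: rigorously deducing $\langle h, I \rangle = 0$ from nefness of the extended divisor $A$ and the flatness/effectivity hypotheses. One has to connect the abstract monodromy-invariant lattice $L$ on the nearby fibre with the geometry of the special fibre $X_0 = V_1 \cup_C V_2$: the local system $\calL$ restricts to give classes on each $V_i$, and one needs that the nef divisor $A$ intersects the vanishing cycles (which degenerate into the double curve $C$, or into the exceptional configurations on $V_2$) trivially. This is essentially the observation that a nef-and-big polarisation on the general fibre cannot ``see'' the classes being contracted in the limit; I expect this to follow from the standard theory of semistable degenerations together with Alexeev--Engel's nef model formalism \cite[Section 3]{cmk3s}, but the bookkeeping relating $A$, $h$, and the weight filtration is the delicate part. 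Everything else — injectivity of $L \to I^\perp/I$ and primitivity of the image — is then a formal consequence of lattice-theoretic signature considerations and the very-irrationality of $h$, as sketched above.
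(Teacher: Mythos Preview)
Your first paragraph already contains the entire proof, but you don't notice it. Once you have established $L \subset \ker(N) = I^{\perp}$ from monodromy invariance, the orthogonality $L \perp I$ is \emph{tautological}: by definition $I^{\perp}$ consists of classes orthogonal to $I$. There is nothing further to prove, and no positivity argument is needed. The entire second paragraph --- invoking the nef divisor $A$, the very irrational vector $h$, effectivity of vanishing cycles, etc. --- is both unnecessary and, as you yourself suspect in the final paragraph, not rigorous as written (nefness would at best give $\langle h, I \rangle \geq 0$, not equality). The paper's proof is three sentences: $L$ monodromy-invariant $\Rightarrow L \subset I^{\perp}$; $I$ totally isotropic and $L$ nondegenerate $\Rightarrow L \cap I = 0$; hence $L \hookrightarrow I^{\perp}/I$.

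There is a second, more serious issue: you are attempting to prove that the embedding $L \hookrightarrow I^{\perp}/I$ is \emph{primitive}. The lemma does not assert this, and in fact it is false in general. The paper introduces a separate Definition~\ref{def:primitivedegeneration} (``primitively $L$-polarised'') precisely for the case when the embedding is primitive, and the remark immediately following gives an explicit counterexample: $L = H \oplus D_{16}$ embeds primitively in $\Lambda_{\mathrm{K3}}$ but only as an index-$2$ sublattice of $I^{\perp}/I \cong H \oplus E_8 \oplus E_8$. Your argument that ``$L + I$ is primitive in $I^{\perp}$'' using very-irrationality of $h$ cannot work, and indeed very-irrationality says nothing about overlattices in $\Lambda_{\mathrm{K3}}$.
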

\begin{proof} From the definition of the limiting mixed Hodge structure, we have $I^{\perp} = \ker(N) \subset H^2(X;\bZ)$, where $N$ is the logarithm of the monodromy. As $L \subset H^2(X;\bZ)$ is primitive and monodromy invariant, we have that $L$ is a primitive sublattice of $\ker(N) = I^{\perp}$; in particular we see that $L$ is orthogonal to $I$. Moreover, $I \subset I^{\perp}$ is a totally degenerate sublattice and $L$ is nondegenerate, so $L \cap I = 0$. It follows that $L$ embeds into $I^{\perp}/I$.
\end{proof}

\begin{definition} \label{def:primitivedegeneration} A Tyurin degeneration of K3 surfaces is called \emph{primitively $L$-polarised} if it is $L$-polarised and the embedding $L \hookrightarrow I^{\perp}/I$ from Lemma \ref{lem:tyurinpseudolattice} is primitive.
\end{definition}

\begin{remark} It is not the case that all $L$-polarised Tyurin degenerations of K3 surfaces are primitively $L$-polarised. Indeed, from Shimada's list of elliptically fibred K3 surfaces \cite{oek3s} there exists an elliptically fibred K3 surface $X$ with N\'{e}ron-Severi lattice $H \oplus D_{16}$ which embeds primitively into $H^2(X;\bZ)$ (see \cite{oek3sarxiv} for the full list, where this case is number 1903 with trivial Mordell-Weil group). However, $H \oplus D_{16}$ does not embed primitively into $I^{\perp}/I \cong H \oplus E_8 \oplus E_8$; instead it is a sublattice of index $2$. Some conditions that are sufficient to ensure primitivity are explored in Section \ref{sec:primitivity}.
\end{remark}

Following the discussion of the previous section, composing the embedding $L \hookrightarrow \NS(\rmM)$ from Lemma \ref{lem:tyurinpseudolattice} with  the isomorphism $\NS(\rmM) \cong I^{\perp}/I$ gives an embedding $L \hookrightarrow \NS(\rmM)$. If the Tyurin degeneration is primitively $L$-polarised, then we obtain a lattice polarisation on $\NS(\rmM)$ in the sense of Definition \ref{def:Lpol}(1).

We next define the notion of a lattice polarisation on a  stable K3 surface of type II $X_0 = V_1 \cup_C V_2$; this definition generalises the notion of polarisation by a divisor from \cite[Definition 3.11]{npgttk3s}. Recall from Section \ref{sec:hodge} that we have an isomorphism
\[\Gr^W_2H^2(X_0) \cong \xi^{\perp} =\{(D_1,D_2) \in H^2(V_1) \oplus H^2(V_2) \mid q(K_{V_1},D_1) = q(K_{V_2},D_2)\},\]
where $\xi$ denotes the isotropic class $(-K_{V_1},K_{V_2}) = (C,-C)$.

%

\begin{definition}\label{def:liftedpolarisationstable}
An \emph{$\hat{L}$-polarisation} on a stable K3 surface of Type II is a primitive sublattice $\hat{L} \subset \xi^{\perp} \subset H^2(V_1;\bZ) \oplus H^2(V_2;\bZ)$ satisfying the following conditions:
\begin{enumerate}
\item $\xi \in \hat{L}$;
\item $\hat{L}$ contains the class of a nef Cartier divisor $A$ on $X_0$ with strictly positive self-intersection;
\item For every $(-2)$-class $\delta \in \hat{L}$, either $\delta$ or $-\delta$ is congruent modulo $\xi$ to the class of an effective Cartier divisor of self-intersection $(-2)$.
\end{enumerate}
\end{definition}

Following the discussion of the previous section, an $\hat{L}$-polarisation on a stable K3 surface of Type II defines a primitive embedding $\hat{L} \hookrightarrow \xi^{\perp} \cong \Psi^{\perp}_{\rmK}/\Psi$ and the condition that $\xi \in \hat{L}$ corresponds precisely to the condition that the image of this embedding contains the class $\zeta$. We therefore obtain a lifted polarisation on $\Psi^{\perp}_{\rmK}/\Psi$ in the sense of Definition \ref{def:Lpol}(2).

We can relate this to lattice polarised Tyurin degenerations as follows. Let $\pi \colon \calX \to \Delta$ be a primitively $L$-polarised Tyurin degeneration of  K3 surfaces. Then $L$ is a primitive sublattice of $I^{\perp}/I \cong \NS(\rmM)$. Let $\hat{L}$ denote the preimage of $L$ under the map from $\xi^{\perp} \cong \Psi_{\rmK}^{\perp}/\Psi$. Then $\hat{L}$ is the lifted polarisation determined by $L$, in the sense of Lemma \ref{lem:liftinglattices}, and we have the following result.

\begin{theorem} \label{thm:polarisedtyurinstable} With notation as above, if $\pi \colon \calX \to \Delta$ is a primitively $L$-polarised Tyurin degeneration of  K3 surfaces, then the central fibre $X_0$ of $\pi \colon \calX \to \Delta$ is an $\hat{L}$-polarised stable K3 surface of Type II.
\end{theorem}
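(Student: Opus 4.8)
The plan is to verify the three conditions of Definition~\ref{def:liftedpolarisationstable} for the sublattice $\hat{L} \subset \xi^{\perp}$ defined as the preimage of $L$ under $\xi^{\perp} \cong \Psi^{\perp}_{\rmK}/\Psi \to \NS(\rmM) \cong I^{\perp}/I$. Primitivity of $\hat{L}$ in $\xi^{\perp}$ follows immediately from Lemma~\ref{lem:liftinglattices} together with the hypothesis that $L \hookrightarrow I^{\perp}/I$ is primitive (this is exactly the primitively $L$-polarised assumption), and the containment $\xi \in \hat{L}$ holds because $\xi$ corresponds to $\zeta$, which generates the kernel of $\Psi^{\perp}_{\rmK}/\Psi \to \NS(\rmM)$ and hence lies in any such preimage. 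So the content of the theorem is conditions (2) and (3), both of which concern the existence of \emph{Cartier} divisors on the central fibre $X_0$ with prescribed numerical properties.

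For condition~(2), I would use the positivity hypothesis in Definition~\ref{def:polarisedtyurin}: there is an effective, flat, nef Cartier divisor $A$ on $\calX$ whose restriction to a general fibre is nef and big. The restriction $A_0$ of $A$ to the central fibre $X_0 = V_1 \cup_C V_2$ is then a nef Cartier divisor. Flatness of $A$ over $\Delta$, together with the monodromy invariance hypothesis, should force the class of $A_0$ to lie in $\hat{L} \subset \xi^{\perp}$: indeed the class of $A_p$ in $H^2(X_p;\bZ)$ lies in $\calL_p \cong L$ for $p \neq 0$, and specialising (using that $\calL$ extends the local system and the compatibility between the limiting MHS and the cohomology of $X_0$ established in Section~\ref{sec:hodge}) places the class of $A_0$ in the appropriate graded piece $\Gr^W_2 H^2(X_0) \cong \xi^{\perp}$ and in fact in $\hat{L}$. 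Strict positivity of the self-intersection $(A_0)^2$ in $\xi^{\perp}$ follows because it agrees with $(A_p)^2$ for nearby $p$ (intersection numbers are locally constant in a flat family), and $A_p$ is big so $(A_p)^2 > 0$.

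For condition~(3), let $\delta \in \hat{L}$ be a $(-2)$-class; its image $\bar\delta \in L \subset I^{\perp}/I \cong \NS(\rmM)$ is then a $(-2)$-class in $L$. By the discussion preceding the theorem (the set of positive roots $\Delta(L)^+$ defined via the very irrational vector $h$, and the fact cited from \cite[Remark~1.1]{mslpk3s} that $j(\Delta(L)^+)$ is the set of effective $(-2)$-curve classes in $j(L)$ on an $L$-quasipolarised K3), after possibly replacing $\delta$ by $-\delta$ we may assume $\bar\delta \in \Delta(L)^+$, so $\bar\delta$ is represented by an effective $(-2)$-curve on a general fibre $X_p$. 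The task is to promote this to an effective Cartier divisor on $X_0$ in the class $\delta$ modulo $\xi$. Here I would invoke the theory of \cite{cmk3s} (Alexeev–Engel) — specifically the \emph{nef model} machinery referenced just before Definition~\ref{def:polarisedtyurin} — or a direct limiting argument: the effective $(-2)$-curves in the family $\calL$ over $\Delta^*$ propagate to the central fibre, and since we work modulo $\xi = (C,-C)$ we have the freedom to adjust by components of $X_0$, which absorbs the ambiguity in how the limiting curve distributes between $V_1$ and $V_2$. The $(-2)$-ness of the resulting divisor is preserved because its self-intersection, computed in $\xi^{\perp}$, matches that of the $(-2)$-curve on $X_p$.

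The main obstacle I anticipate is condition~(3): unlike condition~(2), where a single divisor $A$ is handed to us by hypothesis, here one must produce an effective Cartier divisor on the (reducible, non-$\bQ$-factorial in general) central fibre for \emph{every} $(-2)$-class in $\hat{L}$, controlling both effectivity and the Cartier property at the singular locus $C$. Making the limiting/specialisation argument precise — i.e., showing that an effective $(-2)$-curve on the general fibre, which is effective and Cartier because it is a curve on a smooth surface, limits to something effective and Cartier (up to $\xi$) on $X_0$ — likely requires either a careful semicontinuity argument together with a twisting step as in the remark following Definition~\ref{def:polarisedtyurin} (using \cite{bgd4}), or a direct appeal to the nef model results of \cite{cmk3s}. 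The rest of the proof is bookkeeping with the isomorphisms already set up in Propositions~\ref{prop:NSfordegenerations} and~\ref{prop:CSlattices} and Lemma~\ref{lem:liftinglattices}.
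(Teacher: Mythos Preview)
Your proposal is correct and follows essentially the same approach as the paper: primitivity and $\xi \in \hat{L}$ from Lemma~\ref{lem:liftinglattices}, condition~(2) by restricting the divisor $A$ from Definition~\ref{def:polarisedtyurin}(2), and condition~(3) by taking flat limits of the monodromy-invariant effective $(-2)$-classes in $\Delta(L)^+$.

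Your worry about the Cartier property in condition~(3) is overblown. By the definition of a Tyurin degeneration, the total space $\calX$ is \emph{smooth}. Hence the Zariski closure in $\calX$ of an effective curve on a general fibre is an effective divisor on a smooth threefold, so automatically Cartier on $\calX$; its restriction to $X_0$ is then an effective Cartier divisor. No appeal to twisting as in \cite{bgd4} or to the nef-model machinery of \cite{cmk3s} is needed here --- the paper's proof simply says ``we may take flat limits'' and observes that the resulting classes are effective Cartier divisors of self-intersection $(-2)$ in $\hat{L}$, and then finishes exactly as you do: any $(-2)$-class $\delta \in \hat{L}$ has $\varphi(\delta)$ or $-\varphi(\delta)$ in $\Delta(L)^+$, so $\delta$ or $-\delta$ is congruent modulo $\xi$ to one of these limits.
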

\begin{proof}  The discussion above shows that there is a primitive embedding of  $\hat{L}$ into $\xi^{\perp}$. The image of this embedding contains $\xi$, as $\xi$ generates the kernel of the map $\xi^{\perp} \to I^{\perp}/I$. Denote this map by $\varphi$.

Let $A$ denote the Cartier divisor from Definition \ref{def:polarisedtyurin}(2). Then the restriction of $A$ to the central fibre $X_0$ is a nef Cartier divisor with positive self-intersection that lies in  $\hat{L}$.

Finally, Definition \ref{def:polarisedtyurin}(1) implies that effective $(-2)$-curves in $\Delta(L)^+ \subset L$ are monodromy invariant, so we may take flat limits to obtain an embedding of $\Delta(L)^+$ into $\hat{L} \subset \xi^{\perp}$ which commutes with $\varphi$. The image of this embedding consists of classes of effective Cartier divisors of self-intersection $(-2)$ in $\hat{L}$. If $\delta$ is any $(-2)$-class in $\hat{L}$, then either $\varphi(\delta)$ or $-\varphi(\delta)$ lies in $\Delta(L)^+ \subset L$ (as $\Delta(L)^+$ is a system of positive roots in $L$) so, by the argument above, either $\delta$ or $-\delta$ is congruent modulo $\xi$ to the class of an effective Cartier divisor of self-intersection $(-2)$.
\end{proof}


We conclude this section by showing that the definitions of lattice polarisation above are compatible with the definition of a lattice polarisation on a weak del Pezzo surface from \cite{mslpdps}. 

\begin{definition} \label{def:Lpolwdp} \cite[Definition 4.4]{mslpdps} Let $V$ be a weak del Pezzo surface and let $C \subset V$ be a smooth anticanonical divisor. Let $N$ be a negative definite lattice. An \emph{$N$-polarisation} of $(V,C)$ is a primitive embedding $j\colon N \hookrightarrow \NS(V)$ such that
\begin{itemize}
\item $j(v).C = 0$ for all $v \in N$, and
\item there exists a set of positive roots $\Delta(N)^+ \subset N$ such that $j(\Delta(N)^+)$ is contained in the effective cone in $\NS(V)$.
\end{itemize}
\end{definition}

The following result shows that this is compatible with our definitions.

\begin{theorem} \label{thm:Lpolwdp} Let $X_0 = V_1 \cup_C V_2$  be an $\hat{L}$-polarised stable K3 surface of Type II and suppose that $V_1$ is weak del Pezzo. Then the intersection polarisation $L_1 := \hat{L} \cap \NS(V_1)$ defines an $L_1$-polarisation of $(V_1,C)$, in the sense of Definition \ref{def:Lpolwdp}.
\end{theorem}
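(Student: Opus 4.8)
The plan is to verify the three conditions in Definition~\ref{def:Lpolwdp} for the embedding $j\colon L_1 \hookrightarrow \NS(V_1)$, where we identify $L_1 = \hat{L}\cap\NS(V_1)$ with its image under the natural inclusion $\NS(V_1) = \NS(\rmG_1) \hookrightarrow \Psi^{\perp}_{\rmG}/\Psi$. First I would record that $L_1$ is primitive in $\NS(V_1)$: this is immediate from Definition~\ref{def:Lpol}(3), since $\hat{L}$ is primitive in $\Psi^{\perp}_{\rmK}/\Psi$ and intersecting a primitive sublattice with a primitively embedded sublattice yields a primitive sublattice. Next, $L_1$ is negative definite: by Lemma~\ref{lem:Kperp} we have $L_1 \subset K_{\rmG_1}^{\perp}$, and since $V_1$ is weak del Pezzo it has $q(K_{V_1},K_{V_1}) = \deg(\rmG) > 0$, so $K_{\rmG_1}^{\perp}$ is negative definite by Lemma~\ref{lem:NSproperties} (the degree is strictly positive, hence $n < 12$ in case (1), or we are in case (2)). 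Finally, the condition $j(v).C = 0$ for all $v \in L_1$ is exactly the statement $L_1 \subset K_{V_1}^{\perp}$ under the identification $C = -K_{V_1}$, which again is Lemma~\ref{lem:Kperp} combined with $K_{\rmG_1} = K_{V_1}$ (the isomorphism $\NS(\rmG_1)\cong \NS(V_1)$ identifies canonical classes, as recorded in Example~\ref{ex:qdp}).

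The substantive point is the effectivity condition: I must produce a set of positive roots $\Delta(L_1)^+ \subset L_1$ whose image under $j$ lies in the effective cone of $\NS(V_1)$. The natural candidate is
\[
\Delta(L_1)^+ := \{\delta \in L_1 \mid q(\delta,\delta) = -2,\ \delta \in \Delta(\hat{L})^+\},
\]
where $\Delta(\hat{L})^+$ is a choice of positive roots for $\hat{L}$ compatible with the nef class $A$ from Definition~\ref{def:liftedpolarisationstable}(2); concretely, one picks a very irrational $h \in \hat{L}\otimes\bR$ close to the class of $A$ and sets $\Delta(\hat{L})^+ = \{\delta : q(\delta,\delta)=-2,\ q(\delta,h)>0\}$, then intersects with $L_1$. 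That this is a set of positive roots for $L_1$ is a formal check (every root of $L_1$ is a root of $\hat{L}$, and the non-negative-combination closure is inherited). The real work is showing $j(\Delta(L_1)^+)$ is effective on $V_1$.

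For this I would argue as follows. Let $\delta \in \Delta(L_1)^+$; so $\delta \in \hat{L}$ is a $(-2)$-class and, by Definition~\ref{def:liftedpolarisationstable}(3), either $\delta$ or $-\delta$ is congruent modulo $\xi$ to the class of an effective Cartier divisor $D$ on $X_0$ of self-intersection $(-2)$; the choice of sign is pinned down to be $\delta$ itself (not $-\delta$) by compatibility of $\Delta(\hat{L})^+$ with the nef class $A$, since an effective curve pairs non-negatively with $A$. Writing $D = (D_1,D_2)$ with $D_i$ effective on $V_i$, the class $[D]$ in $\xi^{\perp} = \Psi^{\perp}_{\rmK}/\Psi$ equals $\delta$. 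Now $\delta \in \NS(V_1) = \NS(\rmG_1) \subset \Psi^{\perp}_{\rmG}/\Psi$, which under the identification $\Psi^{\perp}_{\rmG}/\Psi \cong H^2(V_1;\bZ)\oplus H^2(V_2;\bZ)$ of Proposition~\ref{prop:NSfordegenerations} means $\delta = (\delta_1, 0)$. Comparing, we get $[D_1] = \delta_1 = \delta$ in $H^2(V_1;\bZ)$ and $[D_2] = 0 $ in $H^2(V_2;\bZ)$. But $D_2$ is an effective divisor on the rational surface $V_2$ with $[D_2]=0$, hence $D_2 = 0$; so $D = D_1$ is supported entirely on $V_1$, giving an effective divisor on $V_1$ with class $\delta$. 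This shows $j(\delta)$ is effective on $V_1$, as required.

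\textbf{Main obstacle.} The delicate step is the last one: extracting from the $X_0$-effective divisor a divisor supported on the single component $V_1$. The argument above uses that $\delta$ lies in the $\NS(V_1)$-summand of $\Psi^{\perp}_{\rmG}/\Psi$, forcing the $V_2$-component of the limit divisor to be numerically (hence actually, by rationality of $V_2$) trivial. One must be slightly careful that ``effective Cartier divisor on $X_0$'' decomposes as a sum of effective divisors on the two components — this is where the normal crossing structure and the fact that $C$ is a curve (codimension one) enter — and that the class in $\xi^{\perp}$ is computed correctly, i.e.\ the identification of $[D]\in\Gr^W_2 H^2(X_0)$ with $([D_1],[D_2])\in\xi^\perp$ from Section~\ref{sec:hodge} is the same as the identification $\Psi^{\perp}_{\rmK}/\Psi \cong \xi^\perp$ used in Proposition~\ref{prop:NSfordegenerations}. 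Granting these bookkeeping points, the proof is complete; I would also remark that the hypothesis ``$V_1$ weak del Pezzo'' (rather than merely quasi del Pezzo) is used precisely to guarantee $\deg(\rmG)>0$, equivalently that $K_{V_1}^{\perp}$ — and hence $L_1$ — is negative definite, which is required by Definition~\ref{def:Lpolwdp}.
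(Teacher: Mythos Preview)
Your setup for primitivity, negative-definiteness, and the orthogonality condition $j(v).C=0$ is fine and matches the paper. The gap is in the effectivity argument.

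You correctly invoke Definition~\ref{def:liftedpolarisationstable}(3), which says that (up to sign) $\delta$ is congruent \emph{modulo $\xi$} to the class of an effective Cartier divisor $D$ on $X_0$. But you then write ``the class $[D]$ in $\xi^{\perp} = \Psi^{\perp}_{\rmK}/\Psi$ equals $\delta$'', silently dropping the $\xi$-shift. This is wrong: $\Psi^{\perp}_{\rmK}/\Psi \cong \xi^{\perp}$ is the sublattice of $H^2(V_1)\oplus H^2(V_2)$ orthogonal to $\xi$, not the quotient by $\xi$, and the class $\xi = (C,-C)$ lives inside it. So in fact $[D] = \delta + k\xi = (\delta_1 + kC,\, -kC)$ for some $k\in\bZ$, and hence $[D_2] = -kC$, not $0$. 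Your conclusion that $D_2 = 0$ therefore fails, and with it the claim that $D$ is supported on $V_1$.

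The repair is exactly the paper's argument: since $D_2$ is effective with class a multiple of the irreducible effective curve $C$ on the rational surface $V_2$, that multiple must be non-negative; feeding the resulting sign of $k$ back into the $V_1$-component expresses $\delta_1$ as $(\delta_1 + kC) + (-k)C$ (or $[D_1] + kC$ in the paper's sign convention), a sum of two effective classes on $V_1$. Note that this does \emph{not} force $D$ to be supported on $V_1$; it only shows the class $\delta_1$ is effective there, which is all you need. Your ``main obstacle'' paragraph anticipates exactly the right worry but your proposed resolution of it is the step that breaks.
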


\begin{proof} The condition that $j(v).C = 0$ for all $v \in L_1$ follows from Lemma \ref{lem:Kperp}, and negative definiteness follows from the fact that $C^{\perp} = K_{V_i}^{\perp} \subset \NS(V_1)$ is negative definite for any weak del Pezzo $V_1$. To show the second condition from Definition \ref{def:Lpolwdp}, it suffices to show that if $\delta \in L_1$ is a class of self-intersection $(-2)$, then either $\delta$ or $(-\delta)$ is effective. 

So let $\delta \in L_1$ be a class of self-intersection $(-2)$. Write $\delta = (v,0) \in \NS(V_1)\oplus \NS(V_2)$. After possibly exchanging $\delta$ for $(-\delta)$, by Definition \ref{def:liftedpolarisationstable}, there exists $k \in \bZ$ such that $\delta - k \xi = (v-kC,kC)$ is effective. As $C$ is an effective curve in $\NS(V_2)$, this can only occur if $k \geq 0$. But then $v = (v-kC)+kC \in \NS(V_1)$ is a sum of effective classes, so is effective.
\end{proof}

\begin{remark} The condition that $V_1$ is weak del Pezzo holds if and only if $\deg(\rmG) = q(K_{V_1},K_{V_1}) > 0$. As our labelling convention ensures that $q(K_{V_1},K_{V_1}) \geq 0$ and we have $q(K_{V_1},K_{V_1}) = -q(K_{V_2},K_{V_2})$ by Remark \ref{rem:canonicalsquare}, $V_2$ can never be weak del Pezzo, as the canonical divisor on any weak del Pezzo surface always has positive self-intersection.
\end{remark}

Putting the results of this section together we obtain the following.  Let $\pi \colon \calX \to \Delta$ be a primitively $L$-polarised Tyurin degeneration of  K3 surfaces. Then $L$ is a primitive sublattice of $I^{\perp}/I \cong \NS(\rmM)$. If  $\hat{L} \subset \xi^{\perp} \cong \Psi_{\rmK}^{\perp}/\Psi$ is the lifted polarisation induced by $L$, in the sense of Definition \ref{def:Lpol}(2), then by Theorem \ref{thm:polarisedtyurinstable} the central fibre $X_0 = V_1 \cup_C V_2$ of $\pi \colon \calX \to \Delta$ is an $\hat{L}$-polarised stable K3 surface of Type II. Finally, assuming that $V_1$ is weak del Pezzo, if $L_1 \subset \NS(V_1) \cong \NS(\rmG_1)$ is the intersection polarisation, in the sense of Definition \ref{def:Lpol}(3), then by Theorem \ref{thm:Lpolwdp} we obtain an $L_1$-polarisation of $(V_1,C)$, in the sense of \cite[Definition 4.4]{mslpdps}.


\subsection{Primitivity}\label{sec:primitivity}

In this section we explore sufficient conditions under which an $L$-polarised  Tyurin degeneration of K3 surfaces is primitively $L$-polarised. As in the previous section, we let $L$ be a non-degenerate primitive sublattice of the K3 lattice $\Lambda_{\text{K3}} \cong H \oplus H \oplus H \oplus E_8 \oplus E_8$ of signature $(1,\rank({L}) -1)$. Denote the bilinear form on $\Lambda_{\rmK 3}$ by $\langle \cdot,\cdot \rangle_{\rmK 3}$ and the orthogonal complement of $L$ inside $\Lambda_{\text{K3}}$ by $L^{\perp}$.

The following definition is due to Dolgachev \cite{mslpk3s}.

\begin{definition} \label{def:madmissible} Let $L$ be a nondegenerate lattice and $e \in L$ be a primitive isotropic element. Define $\divop(e)$ to be the positive generator of the image of the linear map $\langle e, - \rangle_L \colon L \to \bZ$. Then $e$ is called \emph{$m$-admissible in $L$} if $\divop(e) = m$ and there exists an isotropic class $g \in L$ with $\divop(g) = m$ and $\langle e,g\rangle_{L} = m$.
\end{definition}

Let $\pi \colon \calX \to \Delta$ be an $L$-polarised Tyurin degeneration of K3 surfaces. By Lemma \ref{lem:tyurinpseudolattice} we have that $I$ is a primitive sublattice of $L^{\perp}$. This allows us to state the following definition.

\begin{definition}\label{def:doublyadmissible} Let $L$ be a nondegenerate lattice and let $I \subset L$ be a primitive rank two isotropic sublattice. $I$ is called \emph{doubly admissible in $L$} if there exist primitive generators $e_1$ and $e_2$ for $I$ and isotropic classes $g_1,g_2 \in L$ such that $\langle e_1,g_1 \rangle_{L} = \langle e_2,g_2 \rangle_{L} = 1$ and $\langle e_1,g_2 \rangle_{L} = \langle e_2,g_1 \rangle_{L} = 0$.

An $L$-polarised  Tyurin degeneration of K3 surfaces is called \emph{doubly admissible} if $I$ is doubly admissible in $L^{\perp}$.
\end{definition}

The following lemma gives some equivalent conditions for a rank two isotropic sublattice be doubly admissible.

\begin{lemma} \label{lem:doublyadmissibleconditions} Let $L$ be a nondegenerate primitive sublattice of $\Lambda_{\rmK 3}$ and let $I$ be a doubly admissible rank two isotropic sublattice of $L^{\perp}$. The following are equivalent:
\begin{enumerate}
\item $I$ is doubly admissible in $L^{\perp}$.
\item The primitive embedding $I \hookrightarrow L^{\perp}$ factors through primitive embeddings $I \hookrightarrow H \oplus H \hookrightarrow L^{\perp}$.
\item The primitive embedding $L \hookrightarrow I^{\perp}$ factors through primitive embeddings $L \hookrightarrow H \oplus E_8 \oplus E_8 \hookrightarrow I^{\perp}$.
\end{enumerate}
\end{lemma}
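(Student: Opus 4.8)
The strategy is to prove the cycle of implications $(1) \Rightarrow (2) \Rightarrow (3) \Rightarrow (1)$, relying throughout on Nikulin's theory of discriminant forms and primitive embeddings \cite{isbfa}. The key structural fact in the background is that $H^2(X;\bZ) \cong \Lambda_{\rmK3} = H^{\oplus 3} \oplus E_8^{\oplus 2}$ is unimodular, so that for any primitive sublattice $S \subset \Lambda_{\rmK3}$ the discriminant forms satisfy $\disc(S) \cong -\disc(S^{\perp})$ canonically, and the induced gluing subgroup inside $\disc(S) \oplus \disc(S^{\perp})$ is the graph of this anti-isometry; all three conditions will be translated into statements about how $I$ and $L$ sit inside $\Lambda_{\rmK3}$ relative to these discriminant data.

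\textbf{Step 1: $(1) \Rightarrow (2)$.} Given the vectors $e_1,e_2,g_1,g_2$ from the doubly admissible hypothesis, the sublattice spanned by $\{e_1,g_1\}$ is a copy of $H$ (it is unimodular of signature $(1,1)$ since $\langle e_i,e_i\rangle = \langle g_i,g_i\rangle = 0$ and $\langle e_1,g_1\rangle = 1$; one may need to adjust $g_1$ by a multiple of $e_1$ to arrange $\langle g_1,g_1\rangle = 0$, which is possible since $e_1$ is isotropic). Similarly $\{e_2,g_2\}$ spans a second copy of $H$, and the orthogonality relations $\langle e_1,g_2\rangle = \langle e_2,g_1\rangle = \langle e_1,e_2\rangle = 0$ (together with a further adjustment of $g_2$ by multiples of $e_1,e_2$) make the sum $H \oplus H$ an orthogonal direct sum inside $L^{\perp}$. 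This sublattice is unimodular, hence automatically primitive in $L^{\perp}$ (a unimodular primitive-or-not sublattice of any lattice is primitive, since its orthogonal complement splits off as a direct summand). By construction $I = \langle e_1,e_2\rangle$ sits primitively inside $H \oplus H$ (as the span of one isotropic generator from each hyperbolic summand), giving the desired factorisation.

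\textbf{Step 2: $(2) \Leftrightarrow (3)$ and $(3) \Rightarrow (1)$.} The equivalence of $(2)$ and $(3)$ is the content of the unimodularity of $\Lambda_{\rmK3}$: a primitive embedding $I \hookrightarrow H \oplus H$ with $I$ of rank two forces $(H \oplus H)$ to sit inside $\Lambda_{\rmK3}$ as $I \oplus (\text{something})$ after suitable choices, and the orthogonal complement $(H \oplus H)^{\perp}_{\Lambda_{\rmK3}} \cong H \oplus E_8 \oplus E_8$. Concretely, the factorisation $I \hookrightarrow H \oplus H \hookrightarrow L^{\perp}$ is equivalent to saying that the embedding $L \hookrightarrow I^{\perp}_{\Lambda_{\rmK3}}$ factors through the orthogonal complement of the second $H$-summand, which by Lemma~\ref{lem:HE8E8} (and a Witt-type uniqueness statement for $H \oplus H$ inside $\Lambda_{\rmK3}$) is $H \oplus E_8 \oplus E_8$; this gives $(3)$. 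Conversely, any factorisation $L \hookrightarrow H \oplus E_8 \oplus E_8 \hookrightarrow I^{\perp}$ produces, by taking orthogonal complements inside $\Lambda_{\rmK3}$, a copy of $H \oplus H$ containing $I$ primitively, yielding $(2)$. Finally $(3) \Rightarrow (1)$ is immediate: inside the explicit model $H \oplus E_8 \oplus E_8$ sitting in $\Lambda_{\rmK3}$, the complement of $L$ contains the ambient $H$-summand plus one extra $H$ coming from $\Lambda_{\rmK3} = (H \oplus E_8 \oplus E_8) \oplus H$, and reading off the standard hyperbolic bases of these two $H$'s produces exactly the vectors $e_1,e_2,g_1,g_2$ with the required pairings.

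\textbf{Main obstacle.} The genuinely delicate point is the uniqueness statement needed in Step 2 — that a primitive copy of $H \oplus H$ inside $\Lambda_{\rmK3}$ has orthogonal complement isometric to $H \oplus E_8 \oplus E_8$ and, more importantly, that the various embeddings in $(2)$ and $(3)$ can be chosen \emph{compatibly} (i.e. that one genuinely has a commuting diagram of factorisations, not just the existence of the intermediate lattices separately). This is where one must invoke Nikulin's criteria \cite[Theorem 1.14.4, Corollary 1.5.2]{isbfa} on uniqueness of primitive embeddings of indefinite lattices into unimodular ones, checking that the discriminant-form hypotheses are met (for $H \oplus H$ this is trivial since it is unimodular, which is what makes the argument go through cleanly); care is needed to track that the generator $h$ and the positive-root data play no role here, since all three conditions are purely lattice-theoretic. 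I would expect roughly a page of careful bookkeeping with discriminant forms to nail down the compatibility, but no conceptual difficulty beyond the standard Nikulin machinery.
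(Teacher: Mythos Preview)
Your approach is essentially the same as the paper's, and the core ideas are correct. A few corrections are worth noting.

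In Step 1, the adjustment of $g_1$ to make it isotropic is unnecessary: Definition~\ref{def:doublyadmissible} already requires $g_1,g_2$ to be isotropic. The only genuine adjustment needed is replacing $g_2$ by $g_2 - \langle g_1,g_2\rangle\, e_1$ to force orthogonality of the two hyperbolic planes; the paper does exactly this (phrasing it via \cite[Lemma 5.4]{mslpk3s} rather than building $H$ by hand, but the content is identical).

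Your $(3) \Rightarrow (1)$ is muddled. You write $\Lambda_{\rmK3} = (H \oplus E_8 \oplus E_8) \oplus H$, which is off by a summand of $H$, and the phrase ``the ambient $H$-summand'' has no clear referent since $L$ need not avoid the $H$ inside $H \oplus E_8 \oplus E_8$. What actually works is to take orthogonal complements of the chain in (3) inside $\Lambda_{\rmK3}$, obtaining $I \subset (H \oplus E_8 \oplus E_8)^{\perp} \cong H \oplus H \subset L^{\perp}$, which is (2); one must then still argue that $I$ can be put into standard position inside $H \oplus H$. The paper handles this by proving $(2) \Rightarrow (1)$ directly, using that primitive isotropic vectors in $H \oplus H$ are unique up to isometry \cite[Lemma 4.1.2]{cmsak3s} and then explicitly constructing $g_1,g_2$.

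Finally, your ``main obstacle'' overstates the difficulty. Once $H \oplus H$ is primitively embedded in $L^{\perp}$ (hence in $\Lambda_{\rmK3}$), taking orthogonal complements in the unimodular $\Lambda_{\rmK3}$ \emph{automatically} yields the chain $L \subset (H\oplus H)^{\perp} \subset I^{\perp}$ with all primitivity preserved; no further discriminant-form bookkeeping is required beyond the single uniqueness statement \cite[Proposition 1.6.1]{isbfa} that the paper invokes.
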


\begin{proof} $(1) \Rightarrow (2)$ It follows from the definition that $e_1$ is $1$-admissible in $L^{\perp}$. So, by \cite[Lemma 5.4]{mslpk3s}, we have a primitive embedding $i\colon H \hookrightarrow L^{\perp}$ such that $e_1$ is one of the generators of $i(H)$, and we may decompose  $L^{\perp} = i(H) \oplus (i(H))^{\perp}_{L^{\perp}}$. Now define $g_2' = g_2 - \langle g_1,g_2 \rangle_{\rmK 3} e_1$.
Then it is easy to show that $g_2' \in (i(H))^{\perp}_{L^{\perp}}$ is isotropic with $\langle e_2,g_2'\rangle_{\rmK 3} = 1$, so $e_2$ is $1$-admissible in $(i(H))^{\perp}_{L^{\perp}}$. We may therefore apply \cite[Lemma 5.4]{mslpk3s} again to obtain a primitive embedding $j\colon H \hookrightarrow (i(H))^{\perp}_{L^{\perp}}$, where $e_2$ is one of the generators of $j(H)$. Putting everything together gives the required factorisation of $I \hookrightarrow L^{\perp}$.

$(2) \Rightarrow (1)$ Let $\{e_1,e_2\}$ denote a set of primitive generators for $I \subset H \oplus H$. By \cite[Lemma 4.1.2]{cmsak3s}, primitive isotropic vectors in $H \oplus H$ are unique up to automorphism, so we have $e_1^{\perp}/e_1 \cong H$ (where the orthogonal complement is taken in $H \oplus H$). The vector $e_2$ gives rise to a primitive isotropic vector in $e_1^{\perp}/e_1$, so can be taken to be one of the standard generators for $H$. Let $g_2 \in e_1^{\perp}$ be a vector that gives rise to the other standard generator of $H$; then by construction $g_2$ satisfies $\langle e_1,g_2\rangle_{\rmK 3} = 0$, $\langle e_2,g_2\rangle_{\rmK 3} = 1$, and $\langle g_2,g_2 \rangle_{\rmK 3} = 0$.

Now, since $e_1$ is primitive, there exists $f_1 \in H \oplus H$ such that $\langle e_1,f_1 \rangle_{\rmK 3} = 1$. Define
\[g_1 = f_1 - \langle f_1,e_2\rangle_{\rmK 3} g_2 - \tfrac{1}{2}\langle f_1,f_1 \rangle_{\rmK 3} e_1 + \langle f_1,e_2\rangle_{\rmK 3} \langle f_1,g_2\rangle_{\rmK 3}e_1,\]
which is well-defined as $H \oplus H$ is an even lattice. Then $g_1$ satisfies $\langle e_1,g_1\rangle_{\rmK 3} = 1$, $\langle e_2,g_1\rangle_{\rmK 3} = 0$, and $\langle g_1,g_1 \rangle_{\rmK 3} = 0$, so the classes $\{e_1,e_2,g_1,g_2\}$ satisfy the conditions of Definition \ref{def:doublyadmissible}.

$(2) \Leftrightarrow (3)$ By \cite[Proposition 1.6.1]{isbfa}, up to isometry there are unique primitive embeddings $H \oplus H \hookrightarrow \Lambda_{\text{K3}}$ and $H \oplus E_8 \oplus E_8 \hookrightarrow \Lambda_{\text{K3}}$. Therefore the orthogonal complements $(H \oplus H)^{\perp}_{\Lambda_{\text{K3}}}$ and $(H \oplus E_8 \oplus E_8)^{\perp}_{\Lambda_{\text{K3}}}$ do not depend upon the choice of embedding, and are always isomorphic to $H \oplus E_8 \oplus E_8$ and $H \oplus H$ respectively. Given this, equivalence of (2) and (3) is obtained by taking orthogonal complements.
\end{proof}

The next proposition gives some consequences of the doubly admissible condition.

\begin{proposition} \label{prop:doublyadmissibleconsequnces} Let $L$ be a nondegenerate primitive sublattice of $\Lambda_{\rmK 3}$ and let $I$ be a doubly admissible rank two isotropic sublattice of $L^{\perp}$. Then:
\begin{enumerate}
\item The composition $H \oplus E_8 \oplus E_8 \hookrightarrow I^{\perp} \to I^{\perp}/I$ of the primitive embedding from Lemma \ref{lem:doublyadmissibleconditions}(3) and the quotient map is an isomorphism. In particular, the primitive embedding $L \hookrightarrow H \oplus E_8 \oplus E_8$ from Lemma \ref{lem:doublyadmissibleconditions}(3) induces a  primitive embedding 
 $L \hookrightarrow I^{\perp}/I$.
\item There is an orthogonal decomposition $L^{\perp} \cong H \oplus H \oplus \Gamma$, where the embedding $H \oplus H \hookrightarrow L^{\perp}$ is the one from Lemma \ref{lem:doublyadmissibleconditions} and $\Gamma$ is a negative definite lattice.
\end{enumerate}
In particular, if $\pi \colon \calX \to \Delta$ is a doubly admissible $L$-polarised Tyurin degeneration of  K3 surfaces, then $\pi \colon \calX \to \Delta$ is primitively $L$-polarised.. 
\end{proposition}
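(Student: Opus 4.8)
The plan is to prove the two numbered statements first, and then deduce the final "in particular" clause directly. For part (1), I would start from the factorisation $L \hookrightarrow H \oplus E_8 \oplus E_8 \hookrightarrow I^{\perp}$ provided by Lemma \ref{lem:doublyadmissibleconditions}(3). The key point is that the isotropic plane $I$ sits inside the complement of this copy of $H \oplus E_8 \oplus E_8$ inside $I^\perp$; more precisely, chasing through the proof of Lemma \ref{lem:doublyadmissibleconditions} one sees that $I$ together with the copy of $H \oplus E_8 \oplus E_8$ spans a sublattice which, after adding the "dual" isotropic plane generated by the $g_i$, fills out $H \oplus H \oplus E_8 \oplus E_8$. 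So $I$ is a primitive isotropic plane in $I^\perp$ with $I \cap (H \oplus E_8 \oplus E_8) = 0$, and a rank count (both $H \oplus E_8 \oplus E_8$ and $I^\perp/I$ have rank $18$, since $I^\perp$ has rank $20$) forces the composite $H \oplus E_8 \oplus E_8 \hookrightarrow I^\perp \to I^\perp/I$ to be an isomorphism once one checks injectivity. Injectivity is exactly the statement $I \cap (H \oplus E_8 \oplus E_8) = 0$, which follows because $I$ is totally isotropic while $H \oplus E_8 \oplus E_8$ is nondegenerate—any element in the intersection would be an isotropic vector orthogonal to the whole nondegenerate lattice, hence zero. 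The induced embedding $L \hookrightarrow I^\perp/I$ is then primitive because it is the composite of the primitive embedding $L \hookrightarrow H \oplus E_8 \oplus E_8$ with an isomorphism.

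For part (2), I would again use the factorisation, this time $I \hookrightarrow H \oplus H \hookrightarrow L^\perp$ from Lemma \ref{lem:doublyadmissibleconditions}(2). Since $H \oplus H$ is unimodular, it splits off as an orthogonal direct summand of any lattice containing it primitively: write $L^\perp \cong (H\oplus H) \oplus \Gamma$ with $\Gamma := (H \oplus H)^\perp_{L^\perp}$. It remains to identify the signature of $\Gamma$: the lattice $\Lambda_{\mathrm{K3}}$ has signature $(3,19)$, and $L$ has signature $(1, \rank(L)-1)$, so by Nikulin's theory $L^\perp$ has signature $(2, 19 - \rank(L))$. Subtracting the signature $(2,2)$ of $H \oplus H$ leaves $\Gamma$ with signature $(0, 17 - \rank(L))$, i.e. negative definite, as claimed.

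Finally, for the "in particular" statement, let $\pi\colon\calX\to\Delta$ be a doubly admissible $L$-polarised Tyurin degeneration. By definition this means the rank two isotropic sublattice $I$ associated to the limiting mixed Hodge structure (which sits primitively in $L^\perp$ by Lemma \ref{lem:tyurinpseudolattice}) is doubly admissible in $L^\perp$. By part (1), the embedding $L \hookrightarrow I^\perp/I$ from Lemma \ref{lem:tyurinpseudolattice} is primitive, which is precisely the defining condition for the degeneration to be primitively $L$-polarised (Definition \ref{def:primitivedegeneration}).

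I expect the main obstacle to be bookkeeping in part (1): verifying that $I$ really does intersect the chosen copy of $H \oplus E_8 \oplus E_8$ trivially, and that the rank count closes up, requires care about which copy of $H \oplus E_8 \oplus E_8$ one takes and how it interacts with the isotropic planes $I$ and $\langle g_1, g_2\rangle$. Everything else—the splitting off of unimodular summands and the signature computation—is routine lattice theory, citing Nikulin \cite{isbfa} as needed. One mild subtlety worth stating explicitly is that the uniqueness of primitive embeddings of $H \oplus H$ and $H \oplus E_8 \oplus E_8$ into $\Lambda_{\mathrm{K3}}$ (used in Lemma \ref{lem:doublyadmissibleconditions}) guarantees the two factorisations in (2) and (3) are "the same" up to isometry, so the $H \oplus H$ summand of $L^\perp$ and the $H \oplus E_8 \oplus E_8$ identification of $I^\perp/I$ are compatible; I would remark on this to make the deduction transparent.
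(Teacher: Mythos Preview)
Your approach matches the paper's and is correct in outline, but there is one gap in part (1): over $\bZ$, an injective map between free modules of equal rank need not be surjective, so the rank count together with injectivity only gives a finite-index sublattice, not an isomorphism. To close this you need one more observation: both $H \oplus E_8 \oplus E_8$ and $I^\perp/I$ are unimodular (the latter by Lemma \ref{lem:HE8E8}), so an injective isometry between them has index $1$. The paper sidesteps this entirely by using the orthogonal decomposition $\Lambda_{\mathrm{K3}} = (H \oplus H) \oplus (H \oplus E_8 \oplus E_8)$ established in the proof of Lemma \ref{lem:doublyadmissibleconditions}$(2)\Leftrightarrow(3)$: since $I$ lands in the $H \oplus H$ summand and is maximal isotropic there, one has $I^\perp = I \oplus (H \oplus E_8 \oplus E_8)$ as $\bZ$-modules, so the quotient by $I$ is literally the second summand.

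Your arguments for part (2) and the final clause are correct and agree with the paper (which cites \cite[Lemma 5.4]{mslpk3s} for the splitting-off of the unimodular summand, exactly as you do).
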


\begin{proof} \begin{enumerate}
\item This follows from the proof of $(2) \Leftrightarrow (3)$ in Lemma \ref{lem:doublyadmissibleconditions}. It follows from that argument that we may decompose $\Lambda_{\text{K3}} = (H \oplus H) \oplus (H \oplus E_8 \oplus E_8)$, with $I$ embedding primitively into the first factor $H \oplus H$ and $L$ embedding primitively into the second factor $H \oplus E_8 \oplus E_8$. The quotient $I^{\perp}/I$ is then isomorphic to the second factor $H \oplus E_8 \oplus E_8$ and the result follows.
\item Follows from \cite[Lemma 5.4]{mslpk3s} and the proof of $(1) \Rightarrow (2)$ in Lemma \ref{lem:doublyadmissibleconditions}.
\end{enumerate}
\end{proof}

\subsection{Moduli spaces}

As in the previous sections, we let $L \subset \Lambda_{\text{K3}}$ be a non-degenerate primitive sublattice of the K3 lattice of signature $(1,\rank(L) -1)$ and fix a vector $h \in L \otimes \bR$ such that $\langle h,h \rangle_L > 0$ and $h \notin L' \otimes \bR$ for any proper sublattice $L' \subsetneq L$. Following \cite[Section 2.3]{cmk3s}, we define a moduli space of $L$-polarised K3 surfaces as follows.



\begin{definition} \cite[Definition 2.10]{cmk3s} An \emph{$L$-polarised K3 surface} is a surface $\overline{X}$ with at worst du Val (ADE) singularities, whose minimal resolution is a smooth K3 surface, together with an isometric embedding $j\colon L \hookrightarrow \Pic(\overline{X})$ for which $j(h)$ is ample.
\end{definition}

Note that one may obtain an $L$-polarised K3 surface $\overline{X}$ from an $L$-quasi\-polarised K3 surface $X$ by contracting all $(-2)$-curves in $j(L)^{\perp} \subset \NS(X)$.

Now define the period domain for $L$-polarised K3 surfaces to be
\[\calD_L := \bP\{v \in \Lambda_{\text{K3}} \otimes \bC \mid \langle v,v\rangle = 0,\ \langle v, \overline{v} \rangle > 0,\ \langle v, L \rangle = 0\}\]
and define $\Gamma_L$ to be the group of isometries of $\Lambda_{\text{K3}}$ that fix $L$,
\[\Gamma_L := \{\phi \in O(\Lambda_{\text{K3}}) \mid \phi(v) = v \text{ for any } v \in L\}.\]
By \cite[Proposition 3.3]{mslpk3s}, there is a natural injective homomorphism $\Gamma_L \hookrightarrow O(L^{\perp})$ which identifies $\Gamma_L$ with the subgroup
\[\widetilde{O}(L^{\perp}) := \ker\{O(L^{\perp}) \to O(\disc(L^{\perp}))\},\]
where $O(\disc(L^{\perp}))$ is the group of automorphisms of $\disc(L^{\perp})$ that preserve the quadratic form. With this setup, the coarse moduli space of $L$-polarised K3 surfaces is given by the quotient $\calD_L / \Gamma_L$.

This moduli space admits a Baily-Borel compactification. Boundary components in this compactification correspond to primitive isotropic sublattices of $L^{\perp}$ of rank $1$ ($0$-cusps) or $2$ ($1$-cusps), up to the action of $\Gamma_L \cong \widetilde{O}(L^{\perp})$ (see \cite[Definition 5.5]{cmk3s}). The following proposition shows how this fits with our existing theory.

\begin{proposition} \label{prop:polarisationIperp} Each $1$-cusp in the Baily-Borel compactification of $\calD_L / \Gamma_L$ uniquely determines an embedding of $L$ into $I^{\perp}/I$, up to the action of $O(I^{\perp}/I)$.
\end{proposition}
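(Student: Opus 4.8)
The plan is to unwind the definition of a $1$-cusp and extract the relevant lattice data. By definition (see \cite[Definition 5.5]{cmk3s}), a $1$-cusp in the Baily-Borel compactification of $\calD_L/\Gamma_L$ corresponds to a primitive rank two isotropic sublattice $I \subset L^{\perp}$, taken up to the action of $\Gamma_L \cong \widetilde{O}(L^{\perp})$. So fix a representative $I \subset L^{\perp}$. Since $I \subset L^{\perp}$, the lattice $L$ is automatically orthogonal to $I$, hence $L \cap I = 0$ (as $L$ is nondegenerate and $I$ is totally isotropic) and there is an induced embedding $L \hookrightarrow I^{\perp}/I$, where here $I^{\perp}$ denotes the orthogonal complement of $I$ inside the full K3 lattice $\Lambda_{\mathrm{K3}}$. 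Primitivity of this embedding needs a short argument: if some multiple $mv$ of a primitive $v \in I^{\perp}/I$ were in the image of $L$, lift to $\hat v \in I^{\perp} \subset \Lambda_{\mathrm{K3}}$; then $m\hat v$ differs from an element of $L$ by an element of $I \subset L^{\perp}$, and since $L \oplus I$-type arguments (or rather: $L$ is primitive in $\Lambda_{\mathrm{K3}}$ and $L \perp I$) force $\hat v$ itself to lie in $L + I$, so $v$ is already in the image. This gives a primitive embedding $L \hookrightarrow I^{\perp}/I$.

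The second step is to check well-definedness, i.e. independence of the choices made. There are two choices: the representative $I$ within its $\Gamma_L$-orbit, and (implicitly) the identification of $I^{\perp}/I$ with a fixed abstract lattice. For the first: if $I' = \phi(I)$ for some $\phi \in \Gamma_L = \widetilde{O}(L^{\perp})$, extend $\phi$ to an isometry of $\Lambda_{\mathrm{K3}}$ fixing $L$ pointwise (this is precisely the content of the identification $\Gamma_L \cong \widetilde O(L^{\perp})$ via \cite[Proposition 3.3]{mslpk3s}: elements of $\Gamma_L$ are isometries of $\Lambda_{\mathrm{K3}}$ fixing $L$). Such $\phi$ carries $I^{\perp}$ to $I'^{\perp}$, hence induces an isometry $\overline\phi \colon I^{\perp}/I \to I'^{\perp}/I'$, and since $\phi$ fixes $L$ pointwise, $\overline\phi$ intertwines the two embeddings of $L$. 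Composing with a fixed abstract identification $I^{\perp}/I \cong I'^{\perp}/I'$, we see the embedding $L \hookrightarrow I^{\perp}/I$ is determined up to precomposition with an element of $O(I^{\perp}/I)$. Changing the representative of the $\Gamma_L$-orbit therefore changes the embedding only by the ambiguity already allowed in the statement.

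I expect the main obstacle to be purely bookkeeping rather than conceptual: one must be careful to distinguish the three relevant ambient lattices ($\Lambda_{\mathrm{K3}}$, $L^{\perp}$, and $I^{\perp}$), and to verify that the extension of $\phi \in \widetilde O(L^{\perp})$ to $\Lambda_{\mathrm{K3}}$ used in the well-definedness step genuinely restricts to the given $\phi$ on $L^{\perp}$ and fixes $L$ — this is exactly \cite[Proposition 3.3]{mslpk3s}, so no new work is needed. The mildly delicate point is that the proposition claims the embedding is determined \emph{up to the action of} $O(I^{\perp}/I)$ (not canonically), which is forced precisely because the lattice $I^{\perp}/I$ is only well-defined up to isometry as $I$ ranges over its orbit; once this is acknowledged the argument is complete. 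No deep input beyond the cited structural results and Proposition \ref{prop:NSfordegenerations} (which identifies $\NS(\rmM) \cong I^{\perp}/I$) is required.
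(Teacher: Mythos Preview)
Your well-definedness argument is correct and reaches the conclusion, but it is organised differently from the paper's proof. The paper uses a ``swap'' trick: rather than fixing $L \subset \Lambda_{\mathrm{K3}}$ and varying $I$ within its $\Gamma_L$-orbit, it observes that specifying a $1$-cusp is the same as specifying an orthogonal pair $(L,I)$ in $\Lambda_{\mathrm{K3}}$ up to $O(\Lambda_{\mathrm{K3}})$, then reverses the order---first fixing $I$ (unique up to isometry by \cite[Section 2.2]{cmsak3s}) and then choosing $L \subset I^{\perp}$ up to $\Stab(I)$. Since every element of $\Stab(I)$ descends to an element of $O(I^{\perp}/I)$, the conclusion follows. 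This approach sidesteps the need to track identifications between $I^{\perp}/I$ and $I'^{\perp}/I'$ for different representatives $I, I'$, which is the bookkeeping your version carries around. Both are valid; the paper's is somewhat cleaner.

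However, your primitivity paragraph contains a genuine error, and in fact claims something false. The proposition asserts only that the $1$-cusp determines an \emph{embedding} $L \hookrightarrow I^{\perp}/I$; it does not claim this embedding is primitive, and indeed it need not be. The paper introduces the notion ``primitively $L$-polarised'' (Definition~\ref{def:primitivedegeneration}) precisely to single out the cases where primitivity holds, and the remark following that definition gives an explicit counterexample (an elliptic K3 with $\NS \cong H \oplus D_{16}$, which embeds in $\Lambda_{\mathrm{K3}}$ primitively but sits in $I^{\perp}/I \cong H \oplus E_8 \oplus E_8$ with index $2$). Your argument breaks at the step ``$L$ primitive and $L \perp I$ force $\hat v \in L + I$'': primitivity of $L$ and of $I$ separately does not make $L + I$ primitive in $\Lambda_{\mathrm{K3}}$, so $m\hat v \in L + I$ does not imply $\hat v \in L + I$. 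You should simply drop the primitivity paragraph; the reference to Proposition~\ref{prop:NSfordegenerations} at the end is also unnecessary for this statement.
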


\begin{proof} By the discussion above, $1$-cusps in the Baily-Borel compactification are determined by first fixing an embedding $L \subset \Lambda_{\text{K3}}$, up to the action of $O(\Lambda_{\rmK 3})$, then choosing a rank $2$ isotropic sublattice $I \subset L^{\perp}$, up to the action of $\Gamma_L$. It is immediate from the definitions that this is equivalent to choosing a pair $(L,I)$ of sublattices of $\Lambda_{\text{K3}}$, up to the action of $O(\Lambda_{\text{K3}})$, such that $I$ is isotropic of rank $2$ and orthogonal to $L$.

Now we reverse the order in which we choose $L$ and $I$. Begin by fixing a rank $2$ isotropic sublattice $I$ of $\Lambda_{\text{K3}}$; by \cite[Section 2.2]{cmsak3s} this choice is unique up to $O(\Lambda_{\text{K3}})$. It follows from the discussion above that $1$-cusps in the Baily-Borel compactification then correspond to embeddings $L \subset I^{\perp}$, up to the action of 
\[\Stab(I) := \{\phi \in O(\Lambda_{\text{K3}}) \mid \phi(v) = v \text{ for any } v \in I\}.\]

As $L \subset I^{\perp}$ is non-degenerate we have $L \cap I = \{0\}$, so $L$ descends to a sublattice of $I^{\perp}/I$. Moreover, any element of $\Stab(I)$ fixes $I$ and acts as an isometry on $I^{\perp}$, so descends to an element of $O(I^{\perp}/I)$. We thus see that a choice of embedding $L \subset I^{\perp}$, up to the action of $\Stab(I)$, uniquely determines an embedding of $L$ into $I^{\perp}/I$, up to the action of $O(I^{\perp}/I)$.
\end{proof}

\begin{remark} The correspondence from Proposition \ref{prop:polarisationIperp} is not one-to-one: an embedding of $L$ into $I^{\perp}/I$, up to $O(I^{\perp}/I)$, does not necessarily uniquely determine a $1$-cusp in the Baily-Borel compactification; see \cite[Section 2.2]{cmsak3s}.
\end{remark}

In the Baily-Borel compactification of $\calD_L / \Gamma_L$, Type II degenerations occur over the $1$-cusps and Type III degenerations over the $0$-cusps. For an $L$-polarised Tyurin degeneration of K3 surfaces, the embedding of $L$ into $I^{\perp}/I$ appearing in Proposition \ref{prop:polarisationIperp} is precisely the one from Lemma \ref{lem:tyurinpseudolattice}.

In the reminder of this section we study the consequences of the doubly admissible condition for the structure of the Baily-Borel compactification. We begin with a lemma.

\begin{lemma} \label{lem:HHisometries}
Let $I, I' \subset H \oplus H$ be two primitive isotropic sublattices of rank $2$ and let $e \in I$, $e' \in I'$ be primitive isotropic vectors. Then there exists an isometry $g$ of $H \oplus H$ such that $g(I) = I'$ and $g(e) = e'$.
\end{lemma}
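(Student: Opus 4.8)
The plan is to reduce everything to a statement about the transitivity of the orthogonal group $O(H \oplus H)$ on flags consisting of a rank-$2$ primitive isotropic sublattice together with a distinguished primitive isotropic vector inside it. First I would normalise $I$: by \cite[Section 2.2]{cmsak3s} (or \cite[Lemma 4.1.2]{cmsak3s}) primitive rank-$2$ isotropic sublattices of $H \oplus H$ are unique up to isometry, so after applying an isometry we may assume $I = I'$. Thus it suffices to prove that the stabiliser $\Stab(I) := \{g \in O(H\oplus H) \mid g(I)=I\}$ acts transitively on the primitive isotropic vectors of $I$.

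Next I would identify $I$ with the span of two of the standard hyperbolic generators, say $I = \langle e_1, e_2\rangle$ where $(e_1,f_1)$ and $(e_2,f_2)$ are symplectic bases for the two copies of $H$. A primitive isotropic vector in $I$ is of the form $e = p e_1 + q e_2$ with $\gcd(p,q) = 1$ (every vector in $I$ is automatically isotropic since the form vanishes on $I$). Given two such vectors $e = pe_1+qe_2$ and $e' = p'e_1 + q'e_2$, I would build an explicit isometry of $H \oplus H$ carrying $e$ to $e'$ and preserving $I$. The cleanest way is to use the subgroup $SL_2(\bZ)$ acting on $H \oplus H$ via the natural action on $\langle e_1,e_2\rangle$ and the contragredient (inverse-transpose) action on $\langle f_1, f_2\rangle$; one checks directly that any block matrix of the form $\begin{psmallmatrix} M & 0 \\ 0 & (M^T)^{-1}\end{psmallmatrix}$ with $M \in SL_2(\bZ)$ preserves the pairing $\langle e_i,f_j\rangle = \delta_{ij}$, hence lies in $O(H\oplus H)$, and it visibly preserves $I = \langle e_1,e_2\rangle$ and restricts to $M$ on $I$. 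Since $SL_2(\bZ)$ acts transitively on primitive vectors of $\bZ^2$, we can choose $M$ with $M\begin{psmallmatrix} p\\q\end{psmallmatrix} = \begin{psmallmatrix} p'\\q'\end{psmallmatrix}$, and the associated isometry $g$ does the job.

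The only genuinely non-routine point is the very first reduction — knowing that all primitive rank-$2$ isotropic sublattices of $H\oplus H$ are equivalent under $O(H\oplus H)$ — but this is exactly \cite[Lemma 4.1.2]{cmsak3s}, which is already cited in the excerpt, so it may be invoked directly. Everything after that is elementary linear algebra over $\bZ$: verifying that the block matrices above are isometries and that $SL_2(\bZ)$ acts transitively on primitive vectors of $\bZ^2$ (the latter being a standard consequence of the Euclidean algorithm). So I do not anticipate any real obstacle; the proof will be short. I would write it up as follows.

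\begin{proof}
By \cite[Lemma 4.1.2]{cmsak3s}, any two primitive rank-$2$ isotropic sublattices of $H \oplus H$ are related by an isometry, so after applying such an isometry we may assume $I = I'$. It therefore suffices to show that the stabiliser of $I$ in $O(H\oplus H)$ acts transitively on the set of primitive isotropic vectors of $I$.

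Write $H \oplus H = \langle e_1,f_1\rangle \oplus \langle e_2,f_2 \rangle$, where $(e_i,f_i)$ is a standard basis of the $i$-th hyperbolic plane, i.e. $\langle e_i,e_i\rangle = \langle f_i,f_i\rangle = 0$ and $\langle e_i,f_i \rangle = 1$. Since \cite[Lemma 4.1.2]{cmsak3s} lets us move $I$ by an isometry, we may assume $I = \langle e_1,e_2\rangle$; note that the bilinear form vanishes identically on $I$, so every vector of $I$ is isotropic, and the primitive ones are precisely $pe_1 + qe_2$ with $\gcd(p,q)=1$.

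Given $M = \begin{psmallmatrix} \alpha & \beta \\ \gamma & \delta \end{psmallmatrix} \in SL_2(\bZ)$, define a homomorphism $g_M$ of $H \oplus H$ by
\[
g_M(e_1) = \alpha e_1 + \gamma e_2, \quad g_M(e_2) = \beta e_1 + \delta e_2, \quad g_M(f_1) = \delta f_1 - \gamma f_2, \quad g_M(f_2) = -\beta f_1 + \alpha f_2,
\]
so that $g_M$ acts on $\langle e_1,e_2\rangle$ by the matrix $M$ and on $\langle f_1,f_2\rangle$ by $(M^{T})^{-1}$. A direct check on the basis vectors shows that $g_M$ preserves all the pairings $\langle e_i,f_j\rangle = \delta_{ij}$ and $\langle e_i,e_j\rangle = \langle f_i,f_j\rangle = 0$, so $g_M \in O(H\oplus H)$; moreover $g_M(I) = I$ since $M$ is invertible over $\bZ$.

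Now let $e = pe_1 + qe_2$ and $e' = p'e_1 + q'e_2$ be primitive isotropic vectors of $I$. As $\gcd(p,q) = \gcd(p',q') = 1$, the columns $\begin{psmallmatrix} p\\q\end{psmallmatrix}$ and $\begin{psmallmatrix} p'\\q'\end{psmallmatrix}$ are primitive vectors of $\bZ^2$; since $SL_2(\bZ)$ acts transitively on primitive vectors of $\bZ^2$, there is $M \in SL_2(\bZ)$ with $M\begin{psmallmatrix} p\\q\end{psmallmatrix} = \begin{psmallmatrix} p'\\q'\end{psmallmatrix}$. Then $g_M$ is an isometry of $H\oplus H$ with $g_M(I) = I$ and $g_M(e) = e'$, as required.
\end{proof}
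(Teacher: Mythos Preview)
Your proof is correct, but it inverts the paper's order of reduction. The paper first normalises the \emph{vector}: it uses \cite[Lemma 4.1.2]{cmsak3s} to move $e$ to a standard basis vector $e_1$, and then argues directly that the image of $I$ must be the span of $e_1$ and one other standard isotropic basis vector (after possibly swapping two basis elements). You instead normalise the \emph{sublattice} first and then act by an explicit $SL_2(\bZ)$ inside the stabiliser of $I$ to move the vector. Your second step is arguably cleaner than the paper's endgame, since the $SL_2(\bZ)$ construction makes the transitivity on primitive vectors of $I$ completely transparent.

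The one point to watch is your citation in the first step. In this paper, \cite[Lemma 4.1.2]{cmsak3s} is invoked only for the transitivity on primitive isotropic \emph{vectors} in $H\oplus H$, not for rank-$2$ isotropic sublattices. If Scattone's lemma does not literally cover the rank-$2$ case, you would need a one-line justification---but that justification is exactly the computation the paper does (move a primitive $e\in I$ to $e_1$, then observe that the second generator of $I$ is forced into $\langle e_1,e_3\rangle$ up to a swap), so there is no real gap, only a possible mislabelling of what is being quoted.
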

\begin{proof}
Let $e \in I \subset H \oplus H$ be as in the lemma and let $e_1,e_2,e_3,,e_4$ be a standard basis for $H \oplus H$. To prove the lemma, it suffices to show that there is an isometry $g$ of $H \oplus H$ such that $g(e) = e_1$ and $g(I)$ is the isotropic sublattice of rank $2$ generated by $e_1$ and $e_3$.

Indeed, by \cite[Lemma 4.1.2]{cmsak3s}, there exists an isometry $g$ of $H \oplus H$ with $g(e) = e_1$. Let $f \in I$ be a primitive element such that $e$ and $f$ generate $I$. Write $g(f) = \sum_{i=1}^4a_ie_i$. Then $\langle g(f), g(e) \rangle_{H \oplus H} = \langle g(f), e_1 \rangle_{H \oplus H}=  a_2 = 0$ and as $\langle g(f), g(f) \rangle_{H \oplus H} = a_3a_4 = 0$ we have $a_3 = 0$ or $a_4 = 0$. By applying an isometry swapping $e_3$ and $e_4$, if necessary, we may therefore assume that $g(f) = a_1 e_1 + a_3 e_3$. So $g(f-a_1e) = a_3e_3$ and, by primitivity of $g(I)$, we must have that $g(I)$ is the sublattice generated by $e_1$ and $e_3$, as required.
\end{proof}

\begin{proposition} \label{prop:doublyadmissible0cusp} Suppose that $I$ is doubly admissible in $L^{\perp}$. Then the $1$-cusp corresponding to $I$ in the Baily-Borel compactification of $\calD_L / \Gamma_L$ is incident to a unique $0$-cusp.
\end{proposition}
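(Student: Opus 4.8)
The plan is to translate the statement into lattice-theoretic language using Proposition~\ref{prop:polarisationIperp} and the combinatorial description of the Baily-Borel boundary. Recall that $0$-cusps in the compactification of $\calD_L/\Gamma_L$ correspond to primitive isotropic rank $1$ sublattices of $L^{\perp}$ up to the action of $\Gamma_L \cong \widetilde{O}(L^{\perp})$, and a $0$-cusp spanned by a primitive isotropic $e$ is incident to the $1$-cusp spanned by $I$ precisely when $e \in I$ (up to the group action). So the statement to prove is: if $I$ is doubly admissible in $L^{\perp}$, then any two primitive isotropic vectors $e, e' \in I$ lie in the same $\widetilde{O}(L^{\perp})$-orbit, and moreover this orbit is uniquely determined.

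First I would use the orthogonal decomposition $L^{\perp} \cong (H\oplus H) \oplus \Gamma$ from Proposition~\ref{prop:doublyadmissibleconsequnces}(2), where $I \subset H \oplus H$ is primitive and $\Gamma$ is negative definite. Given two primitive isotropic vectors $e, e' \in I$, Lemma~\ref{lem:HHisometries} provides an isometry $g_0$ of $H \oplus H$ with $g_0(I) = I$ and $g_0(e) = e'$. Extending by the identity on $\Gamma$ gives an isometry $g$ of $L^{\perp}$ with $g(e) = e'$. The key point is then to upgrade $g$ to an element of $\widetilde{O}(L^{\perp})$, i.e. to arrange that it acts trivially on $\disc(L^{\perp})$. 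This is where I expect to lean on the unimodularity of $H \oplus H$: since $H \oplus H$ is unimodular, the natural map $\disc(\Gamma) \to \disc(L^{\perp})$ is an isomorphism, so the action of $g$ on $\disc(L^{\perp})$ is determined entirely by its action on $\disc(\Gamma)$, which is trivial since $g|_\Gamma = \mathrm{id}$. Hence $g \in \widetilde{O}(L^{\perp})$ and the two $0$-cusps given by $e$ and $e'$ coincide, establishing existence and uniqueness of the incident $0$-cusp.

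The main obstacle, and the step requiring the most care, is handling the case where a primitive isotropic vector of $L^{\perp}$ lying in $I$ need not be primitive \emph{as an element of $I$} in the naive sense, or conversely whether the $0$-cusps incident to the $1$-cusp of $I$ are exactly those generated by primitive isotropic $e$ with $\bZ e$ a primitive sublattice of $L^{\perp}$ contained in $I$. I would need to check that every primitive isotropic sublattice of rank $1$ in $L^{\perp}$ contained in (the saturation of) $I$ is spanned by a vector to which Lemma~\ref{lem:HHisometries} applies; since $I$ is already assumed primitive in $L^{\perp}$ and sits inside the unimodular factor $H \oplus H$, any primitive isotropic vector of $L^{\perp}$ in $I$ is primitive in $H \oplus H$, so Lemma~\ref{lem:HHisometries} applies directly. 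A secondary subtlety is confirming that the incidence relation in the Baily-Borel compactification is genuinely ``$e \in I$ up to $\Gamma_L$'' as opposed to something finer; I would cite \cite[Definition 5.5]{cmk3s} and the surrounding discussion to pin this down. Once these points are settled the argument is short.
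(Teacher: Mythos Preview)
Your proposal is correct and follows essentially the same route as the paper: decompose $L^{\perp} = (H\oplus H)\oplus\Gamma$ via Proposition~\ref{prop:doublyadmissibleconsequnces}(2), use Lemma~\ref{lem:HHisometries} to carry one primitive isotropic vector of $I$ to another inside $H\oplus H$, extend by the identity on $\Gamma$, and observe that unimodularity of $H\oplus H$ forces the extension to act trivially on $\disc(L^{\perp})\cong\disc(\Gamma)$. The additional subtleties you flag (primitivity in $I$ versus in $L^{\perp}$, and the precise incidence criterion) are handled exactly as you suggest and the paper leaves them implicit.
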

\begin{proof}
Let $e, e' \in I$ be any two primitive isotropic vectors. To show that the $1$-cusp corresponding to $I$ is incident to a unique $0$-cusp, it suffices to show that there is an element of $\Gamma_L \cong \widetilde{O}(L^{\perp})$ taking $e$ to $e'$.

By Proposition \ref{prop:doublyadmissibleconsequnces}, we have a decomposition $L^{\perp} = H \oplus H \oplus \Gamma$, where $I$ embeds primitively into the $H \oplus H$ factor. By Lemma \ref{lem:HHisometries}, there exists an isometry $g$ of $H \oplus H$ with $g(I) = I$ and $g(e) = e'$. We can extend $g$ to an isometry $\hat{g}$ of $L^{\perp}$ by the identity on $\Gamma$. As $\disc(L^{\perp}) = \disc(\Gamma)$ and $\hat{g}$ acts as the identity on $\Gamma$, it follows that $\hat{g} \in \widetilde{O}(L^{\perp})$, as required.
\end{proof}
 
 Our final result gives an explicit description of the $1$-cusps in the Baily-Borel compactification which correspond to doubly admissible sublattices $I \subset L$.
 
 \begin{proposition} There is a bijection between $1$-cusps in the Baily-Borel compactification of $\calD_L / \Gamma_L$ corresponding to doubly admissible $I \subset L^{\perp}$ and negative definite lattices $\Gamma$ of rank $(18-\rank(L))$ with $\disc(\Gamma) \cong \disc(L^{\perp})$.
 \end{proposition}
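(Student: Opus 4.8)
The plan is to define the correspondence by sending a doubly admissible $I \subset L^{\perp}$ to the isometry class of $\Gamma := I^{\perp}_{L^{\perp}}/I$; equivalently, by Proposition \ref{prop:doublyadmissibleconsequnces}(2), $\Gamma$ is the orthogonal complement of the distinguished embedding $H \oplus H \hookrightarrow L^{\perp}$, so that $L^{\perp} \cong H \oplus H \oplus \Gamma$ as an orthogonal direct sum. Three things are then immediate. First, $\Gamma$ is negative definite (Proposition \ref{prop:doublyadmissibleconsequnces}(2)). Second, since $H \oplus H$ is unimodular we get $\disc(\Gamma) \cong \disc(L^{\perp})$ (in particular $\Gamma$ is even, as $L^{\perp}$ is). Third, a rank count inside $\Lambda_{\text{K3}}$ gives $\rank(\Gamma) = (22 - \rank(L)) - 4 = 18 - \rank(L)$. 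The assignment descends to $\Gamma_L$-orbits because $\Gamma_L \cong \widetilde{O}(L^{\perp})$ acts by isometries of $L^{\perp}$, hence carries $I^{\perp}/I$ to an isometric lattice.

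For surjectivity, given a negative definite $\Gamma$ of rank $18 - \rank(L)$ with $\disc(\Gamma) \cong \disc(L^{\perp})$, I would compare $L^{\perp}$ with the even lattice $H \oplus H \oplus \Gamma$: the two have the same signature $(2, 20 - \rank(L))$ and the same discriminant form, so they lie in the same genus. When $\rank(L) \leq 17$ both lattices are indefinite and $\rank(L^{\perp}) = 22 - \rank(L) \geq 2 + \ell$, where $\ell$ is the minimal number of generators of $\disc(L^{\perp})$ (note $\ell = \ell(\disc(\Gamma)) \leq \rank(\Gamma)$), so Nikulin's uniqueness of lattices in a genus \cite[Corollary 1.13.3]{isbfa} gives $L^{\perp} \cong H \oplus H \oplus \Gamma$; the standard rank-two isotropic sublattice of the $H \oplus H$ summand is then doubly admissible in $L^{\perp}$ by Lemma \ref{lem:doublyadmissibleconditions}, with complement isometric to $\Gamma$, and so determines a $1$-cusp lying over $\Gamma$. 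The boundary cases are trivial: if $\rank(L) = 18$ the only admissible $\Gamma$ is $\{0\}$, forcing $L^{\perp} \cong H \oplus H$, which indeed carries a doubly admissible $I$; and if $\rank(L) \in \{19, 20\}$ there are neither admissible $\Gamma$ (the putative rank being negative) nor doubly admissible isotropic sublattices of $L^{\perp}$, so both sides are empty.

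Injectivity is the substantive point. Suppose $I_1, I_2 \subset L^{\perp}$ are doubly admissible with $I_1^{\perp}/I_1 \cong I_2^{\perp}/I_2 \cong \Gamma$. Proposition \ref{prop:doublyadmissibleconsequnces}(2) gives orthogonal decompositions $L^{\perp} = (H \oplus H)_j \oplus \Gamma_j$ with $I_j$ standard in $(H \oplus H)_j$ and $\Gamma_j \cong \Gamma$. Because $H \oplus H$ is unimodular, any two primitive embeddings of it into $L^{\perp}$ with isometric orthogonal complements are conjugate under $O(L^{\perp})$, so there is $\phi \in O(L^{\perp})$ with $\phi((H \oplus H)_1) = (H \oplus H)_2$ and $\phi(\Gamma_1) = \Gamma_2$; composing with an isometry of $(H \oplus H)_2$ taking $\phi(I_1)$ to $I_2$ (available by Lemma \ref{lem:HHisometries}), extended by the identity on $\Gamma_2$, we may arrange $\phi(I_1) = I_2$. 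It remains to replace $\phi$ by an element of $\Gamma_L \cong \widetilde{O}(L^{\perp})$, i.e. one acting trivially on $\disc(L^{\perp})$, without disturbing $\phi(I_1) = I_2$. For this I would pass to a common primitive generator $e$: Eichler's criterion, valid since $L^{\perp} \supseteq H \oplus H$, shows that all primitive isotropic vectors with $\divop = 1$ and trivial class in $\disc(L^{\perp})$ — in particular the generators of any doubly admissible $I$ — lie in a single $\widetilde{O}(L^{\perp})$-orbit, reducing the problem to isotropic vectors in the rank-lowered lattice $e^{\perp}/e \cong H \oplus \Gamma$ and allowing the residual ambiguity on the discriminant group to be absorbed.

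The hard part will be exactly this last upgrade from $O(L^{\perp})$ to $\widetilde{O}(L^{\perp})$. The isometries of $L^{\perp}$ stabilising a doubly admissible $I$ act on $\disc(L^{\perp}) \cong \disc(\Gamma)$ only through the image of $O(\Gamma) \to O(\disc(\Gamma))$, so the naive correction is insufficient whenever that map fails to be surjective; one must therefore show that the discriminant automorphism comparing the two honest splittings $L^{\perp} \cong (H \oplus H)_j \oplus \Gamma_j$ is always realised by an isometry of $\Gamma$, or equivalently (by an orbit-counting argument) that the index $[O(\disc(\Gamma)) : \im(O(\Gamma) \to O(\disc(\Gamma)))]$ is forced to equal $1$ for the lattices $\Gamma$ arising here. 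By contrast, the genus-theoretic input behind surjectivity and the well-definedness of the map are routine.
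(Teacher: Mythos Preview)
Your reduction matches the paper's exactly: both identify doubly admissible $1$-cusps with primitive embeddings $H\oplus H\hookrightarrow L^{\perp}$ modulo $\widetilde{O}(L^{\perp})$ (using Lemma~\ref{lem:doublyadmissibleconditions}, Lemma~\ref{lem:HHisometries}, and the argument of Proposition~\ref{prop:doublyadmissible0cusp}), and then send such an embedding to the isometry class of its orthogonal complement $\Gamma$. Where you diverge is in the second step. The paper disposes of it in one line by invoking Nikulin \cite[Proposition~1.5.1]{isbfa}: for a unimodular source lattice, primitive embeddings into a fixed even lattice are classified, up to the relevant orthogonal group, precisely by the isometry type of the complement. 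Your surjectivity argument via genus uniqueness \cite[Corollary~1.13.3]{isbfa} is a correct alternative (and your rank/indefiniteness check is fine), but it becomes redundant once the embedding classification is in hand.

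Your injectivity argument, however, is genuinely incomplete --- as you yourself acknowledge. You correctly isolate the obstruction as the cokernel of $O(\Gamma)\to O(\disc\Gamma)$, but the proposed fix via iterated Eichler transvections does not obviously close the gap: after the first step the ambient lattice drops to $e^{\perp}/e\cong H\oplus\Gamma$, which contains only a \emph{single} hyperbolic plane, and the standard form of Eichler's criterion you are invoking requires two. Your closing claim that the index $[O(\disc\Gamma):\im(O(\Gamma))]$ is forced to equal $1$ for the $\Gamma$ arising here is asserted rather than proved, and is not true for arbitrary negative definite lattices. Rather than attempting this correction by hand, you should follow the paper and cite Nikulin's classification directly; that result packages exactly the discriminant bookkeeping you are struggling with.
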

 
 \begin{proof}
 From Lemmas \ref{lem:doublyadmissibleconditions} and \ref{lem:HHisometries} and the proof of Proposition \ref{prop:doublyadmissible0cusp}, it follows that  $1$-cusps in the Baily-Borel compactification corresponding to doubly admissible $I \subset L^{\perp}$ are in bijection with embeddings of $H \oplus H \hookrightarrow L^{\perp}$ up to the action of $\widetilde{O}(L^{\perp})$. But by \cite[Proposition 1.5.1]{isbfa}, such embeddings are in bijection with negative definite lattices $\Gamma$ of rank $(18-\rank(L))$ with $\disc(\Gamma) \cong \disc(L^{\perp})$; here $\Gamma$ is the orthogonal complement of $H \oplus H$ inside $L^{\perp}$.
 \end{proof}

\section{Elliptic Fibrations on K3 Surfaces}\label{sec:fibrations}

\subsection{Pseudolattices from elliptic fibrations over discs}\label{sec:fibrationsondiscs}

Before discussing the setting of elliptic fibrations on K3 surfaces, we begin with a simpler setting. The definitions in this section broadly follow those in \cite[Section 4]{pdpslf}, although we note that we do not work in precisely the same setting: in particular, \cite[Section 4]{pdpslf} assumes that all singular fibres have Kodaira type $\rmI_1$.

 Let $\Delta := \{z \in \bC: |z| \leq 1\}$ denote the closed complex unit disc and let $\pi_W\colon W \to \Delta$ denote a proper surjective holomorphic map from an oriented nonsingular compact complex surface (with boundary) $W$ to $\Delta$ whose general fibre is a nonsingular elliptic curve. Assume that $\pi_W$ is relatively minimal and has no multiple fibres. Let $\Sigma_W \subset \Delta$ denote the set of points over which the fibres of $W$ are singular; we assume that $\Sigma_W$ is finite and contained in the interior of $\Delta$, so there are no singular fibres over points on the boundary. We call such $\pi_W\colon W \to \Delta$ satisfying these assumptions \emph{elliptic fibrations over discs}.

Let $p \in \partial \Delta$ denote a point on the boundary of $\Delta$ and let $F = \pi^{-1}_W(p)$ denote the fibre over $p$. We consider the relative homology $H_2(W,F;\bZ)$. There is a natural map $\phi\colon H_2(W,F;\bZ) \to H_1(F;\bZ)$ coming from the long exact sequence of a pair; we call this map the \emph{asymptotic charge}. Recall from Section \ref{sec:background} that $H_1(F;\bZ)$, equipped with the usual intersection form, can be identified with the elliptic curve pseudolattice $\mathrm{E}$.

We may define a bilinear form, called the \emph{Seifert pairing}, on $H_2(W,F;\bZ)$. Choose $\varepsilon > 0$ so that $\Sigma \subset \Delta_{\varepsilon} := \{z \in \bC: |z| \leq 1 - \varepsilon\}$ (i.e. there are no singular fibres in an $\varepsilon$-neighbourhood of $\partial \Delta$). Fix a group of diffeomorphisms $\varphi_{\theta}\colon \Delta \to \Delta$, for each $\theta \in \bR$, such that $\varphi$ fixes the disc $\Delta_{\varepsilon}$ and acts as a clockwise rotation through an angle of $\theta$ on $\partial \Delta$. Let $\psi_{\theta}$ denote a family of lifts of these diffeomorphisms to diffeomorphisms of $W$.

\begin{definition}(c.f. \cite[Definition 4.2]{pdpslf}) The \emph{Seifert pairing}
\[\langle \cdot,\cdot\rangle_{\mathrm{Sft}}\colon H_2(W,F;\bZ) \times H_2(W,F;\bZ) \to \bZ\]
is the map sending $(v,w)$ to the negative of the topological intersection product $-\langle v,(\psi_{\pi})_*w\rangle_{\mathrm{top}}$.
\end{definition}

\begin{remark} We note that there is a sign error in \cite[Definition 4.2]{pdpslf}; the negative sign on the topological intersection form is missing. This does not affect the remaining results of \cite{pdpslf}; in particular, the proof of \cite[Theorem 4.14]{pdpslf} uses the correct sign convention, in which the Seifert pairing has the opposite sign to the bilinear form defined by Auroux in the proof of \cite[Lemma 16]{sclf}.
\end{remark}

The first main result of this section is the following.

\begin{proposition}\label{prop:H2pseudolattice} Let $\pi_W\colon W \to \Delta$ be an elliptic fibration over a disc. Then with notation as above:
\begin{enumerate}
\item $H_2(W,F;\bZ)$, equipped with the Seifert pairing, is a unimodular pseudolattice.
\item The Serre operator on $H_2(W,F;\bZ)$ is given by $(\psi_{-2\pi})_*$.
\item The asymptotic charge map $\phi\colon H_2(W,F;\bZ) \to H_1(F;\bZ)$ is a relative $(-1)^0$-Calabi-Yau spherical homomorphism.
\item The twist map $T_{\phi}\colon H_1(F;\bZ) \to H_1(F;\bZ)$ is the action of anticlockwise monodromy around $\partial \Delta$.
\end{enumerate}
\end{proposition}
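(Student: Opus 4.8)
The plan is to prove the four parts in order, leaning heavily on the analogous results in \cite{pdpslf} (particularly \cite[Section 4]{pdpslf} and the treatment of the $\rmI_1$ case), but with the modifications needed to handle arbitrary Kodaira fibres. The key observation that makes the generalisation possible is that, over a disc, the relevant topological and homological structures only see the total monodromy around $\partial\Delta$ and not the individual singular fibres; so the proofs for arbitrary fibres should run almost verbatim once one replaces the product of $\rmI_1$ Picard--Lefschetz transformations by the total monodromy.

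\textbf{Parts (1) and (2).} First I would establish unimodularity and identify the Serre operator, since these are purely topological. The long exact sequence of the pair $(W,F)$, together with the fact that $W$ retracts onto a fibration over $\Delta_\varepsilon$ and $F$ is a smooth torus, shows that $H_2(W,F;\bZ)$ is a finitely generated free abelian group; Lefschetz duality (in the form used in \cite[Lemma 4.5 or thereabouts]{pdpslf}) identifies $H_2(W,F;\bZ)$ with $H^2(W, \partial W \setminus F;\bZ)$ or similar, and pairs it perfectly with itself via the topological intersection product composed with the monodromy diffeomorphism $(\psi_\pi)_*$. The sign convention in the definition of the Seifert pairing is chosen precisely so that the resulting form is $\bZ$-valued and unimodular. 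For (2), the defining property of the Serre operator is $\langle u_1,u_2\rangle = \langle u_2, S(u_1)\rangle$; since $\langle u_1, u_2\rangle_{\mathrm{Sft}} = -\langle u_1, (\psi_\pi)_* u_2\rangle_{\mathrm{top}}$ and the topological intersection form is antisymmetric on a $4$-manifold with boundary in the relevant degree (more precisely, satisfies $\langle v, w\rangle_{\mathrm{top}} = \langle w, v\rangle_{\mathrm{top}}$ on $H_2$ with the appropriate duality), one computes $\langle u_2, (\psi_{-2\pi})_* u_1\rangle_{\mathrm{Sft}} = -\langle u_2, (\psi_{-\pi})_* u_1\rangle_{\mathrm{top}} = -\langle (\psi_\pi)_* u_2, u_1\rangle_{\mathrm{top}} = \langle u_1, u_2\rangle_{\mathrm{Sft}}$, using that $\psi_\theta$ is orientation-preserving and isotopic relative to $\Delta_\varepsilon$ to its conjugates. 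This is exactly the argument of \cite[Proposition 4.4]{pdpslf} and needs no change.

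\textbf{Parts (3) and (4).} For (3), I would first check that $\phi$ is a homomorphism of abelian groups (immediate from the long exact sequence) and that $H_1(F;\bZ)$ with its intersection form is the elliptic curve pseudolattice $\rmE$ (recalled in Section \ref{sec:background}). The existence of a right adjoint $r$ follows from unimodularity of $H_2(W,F;\bZ)$ together with \cite[Remark 2.17]{pdpslf}. To see that $\phi$ is relative $(-1)^0$-CY one must show $C_\phi = \mathrm{id} - r\phi = S_{H_2(W,F)} = (\psi_{-2\pi})_*$; concretely, $r\phi$ should be the ``difference'' endomorphism $\mathrm{id} - (\psi_{-2\pi})_*$, which is supported near $F$ and records the monodromy. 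This is the place where one invokes the geometric description: the composite $r\phi$ sends a relative cycle to the cycle obtained by dragging its boundary once around $\partial\Delta$, which is precisely $\mathrm{id} - (\psi_{-2\pi})_*$. For (4), the twist $T_\phi = \mathrm{id}_{H_1(F)} - \phi r$ acts on $H_1(F;\bZ)$; tracing through the adjunction and the description of $r$, $\phi r$ is the endomorphism $\mathrm{id} - (\text{anticlockwise monodromy around }\partial\Delta)$, so $T_\phi$ is anticlockwise monodromy around $\partial\Delta$. Here one can also cross-check against Example \ref{ex:Z(v)} and Example \ref{ex:multiZ}: in the $\rmI_1$ case the total monodromy is a product of Dehn twists, matching $T_{\zeta_1}\cdots T_{\zeta_n}$, and for general Kodaira fibres the same product structure holds by Definition \ref{def:gluing} applied fibre-by-fibre (each singular fibre, regardless of type, contributes its local monodromy as a factor).

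\textbf{Expected main obstacle.} The technical heart is part (3), specifically verifying the identity $C_\phi = (\psi_{-2\pi})_*$ rigorously rather than heuristically. In \cite{pdpslf} this is done by an explicit vanishing-cycle computation valid only for $\rmI_1$ fibres; here I cannot decompose the monodromy into Picard--Lefschetz reflections with control over each vanishing cycle, so instead I would argue more abstractly, using the fact that $H_2(W,F;\bZ) \cong H_2(W';F;\bZ)$ where $W' \to \Delta$ is \emph{any} elliptic fibration over the disc with the same boundary monodromy (by a relative $h$-cobordism / deformation argument, since the disc is contractible and only the monodromy conjugacy class matters), and then reducing to a model case --- e.g. a single fibre obtained by ``gathering'' all singularities, whose monodromy is the total monodromy, or even to the $\rmI_1$ case via a further degeneration. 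The subtlety is ensuring such an isomorphism respects the Seifert pairing and the asymptotic charge map; I expect this to require a careful statement about how the Seifert pairing depends only on $(W,F)$ up to fibre-preserving diffeomorphism over $\Delta_\varepsilon$ together with the boundary monodromy, which is morally clear but needs to be spelled out. Once that reduction is in place, parts (1)--(4) all follow from the corresponding statements in \cite[Section 4]{pdpslf} applied to the model, together with the gluing formula $T_{f_1 \oright f_2} = T_{f_1}\cdot T_{f_2}$ of Definition \ref{def:gluing} to reassemble the general case.
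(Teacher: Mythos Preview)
Your argument for part~(2) is correct and is exactly the paper's argument. The problem is with your strategy for (1), (3), (4).

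Your key reduction --- that $H_2(W,F;\bZ)$ with its Seifert pairing depends only on the total boundary monodromy --- is false. The rank of $H_2(W,F;\bZ)$ is the sum of the Euler numbers of the singular fibres, and this is \emph{not} determined by the boundary monodromy: trivial boundary monodromy arises both from a fibration with no singular fibres (rank $0$) and from one whose singular fibres have Euler numbers summing to $12$. So there is no $h$-cobordism or deformation taking an arbitrary $W$ to a ``model'' $W'$ with the same boundary monodromy while preserving $H_2(W,F;\bZ)$. Your fallback of degenerating each Kodaira fibre to a cluster of $\rmI_1$'s is the right instinct, but you have not explained how to control the Seifert pairing and asymptotic charge under such a deformation, and that is precisely the content that needs proving. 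Likewise, the Lefschetz-duality sketch for unimodularity in (1) does not obviously apply to the \emph{twisted} pairing $-\langle\,\cdot\,,(\psi_\pi)_*\,\cdot\,\rangle_{\mathrm{top}}$; you would need to say why the twist preserves perfectness.

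The paper's route is much more concrete and avoids all of this. First, a Mayer--Vietoris argument shows that $H_2(W,F;\bZ)$ decomposes as an iterated pseudolattice sum $\rmG_1 \oright_{\rmE} \cdots \oright_{\rmE} \rmG_k$, one factor per singular fibre, and that (1), (3), (4) for $W$ follow formally from (1), (3), (4) for each single-fibre piece $W_i$ via the gluing properties of $\oright_{\rmE}$. Second, the single-fibre case is done by brute force: for each Kodaira type one writes down an explicit basis of $H_2(W_i,F;\bZ)$ built from thimbles and non-section fibre components, computes the Seifert matrix and asymptotic charge by hand, and checks by linear algebra that the result is isomorphic to some $\rmZ(v_1,\ldots,v_r) \to \rmE$ as in Example~\ref{ex:multiZ}. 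Unimodularity, the relative $(-1)^0$-CY property, and the identification of $T_\phi$ with the monodromy then come for free from the known properties of $\rmZ(v_1,\ldots,v_r)$. Your heuristic ``$r\phi = \mathrm{id} - (\psi_{-2\pi})_*$ because dragging the boundary around $\partial\Delta$'' is the correct picture, but the paper never proves it in that abstract form; it falls out of the explicit tables instead.
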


\begin{remark} With the additional assumption that all of the singular fibres of $\pi_W$ are of Kodaira type $\rmI_1$, this result follows immediately from \cite[Proposition 4.4]{pdpslf} and \cite[Corollary 4.5]{pdpslf}.
\end{remark}

\begin{proof} The main proof proceeds in two steps, which we have separated out as lemmas. In Lemma \ref{lem:decomposition}, we show that $H_2(W,F;\bZ)$ can be decomposed into a set of simpler pieces, and that if (1), (3), and (4) hold for the pieces, then they also hold for $H_2(W,F;\bZ)$. Then, in Lemma \ref{lem:kodaira}, we prove that (1), (3), and (4) hold for the pieces of the decomposition. Combining these results proves statements (1), (3), and (4).

Finally, the proof of (2) follows by the same argument that was used to prove the corresponding statement in \cite[Proposition 4.4]{pdpslf}. Indeed, for any $u_1,u_2 \in H_2(W,F;\bZ)$, we have 
\[\langle u_2 , (\psi_{-2\pi})_* u_1\rangle_{\mathrm{Sft}} = -\langle u_2,(\psi_{-\pi})_*u_1 \rangle_{\mathrm{top}}= -\langle (\psi_{\pi})_*u_2,u_1 \rangle_{\mathrm{top}},\]
where the second equality holds as $\psi_{\pi}$ is a diffeomorphism. Finally, the topological intersection product is symmetric on $4$-manifolds, so  $\langle u_2 , (\psi_{-2\pi})_* u_1\rangle_{\mathrm{Sft}} = -\langle u_1, (\psi_{\pi})_*u_2 \rangle_{\mathrm{top}} = \langle u_1, u_2\rangle_{\mathrm{Sft}}$, as required.
\end{proof}

Before stating the first lemma, we set up some more notation. Label the points in $\Sigma$ as $p_1,\ldots,p_k$ as follows. Connect $p_1,\ldots,p_k$ to $p$ by straight lines in $\Delta$, choosing labels so that these lines emanate from $p$ in a clockwise order; if $m \geq 2$ points lie on the same line through $p$, label them $p_i,p_{i+1},\ldots,p_{i+m-1}$ in order of increasing distance from $p$. Label the fibre $\pi_W^{-1}(p_i)$ over $p_i$ by $F_i$.

\begin{lemma}\label{lem:decomposition}
There are elliptic fibrations over discs $W_i \to \Delta_i$, for $i \in \{1,\ldots,k\}$, each containing precisely one singular fibre of the same Kodaira type as $F_i$, so that
\[H_2(W,F;\bZ) \cong \bigoplus_{i=1}^k H_2(W_i,F;\bZ).\] 

Moreover, if all of the $H_2(W_i,F;\bZ)$ satisfy (1), (3), and (4) from Proposition \ref{prop:H2pseudolattice}, then so does $H_2(W,F;\bZ)$, and we have a decomposition of pseudolattices
\[H_2(W,F;\bZ) \cong \rmG_1 \oright_{\rmE} \rmG_2 \oright_{\rmE} \cdots \oright_{\rmE} \rmG_k,\]
where $\rmG_i := H_2(W_i,F;\bZ)$ and $\rmE := H_1(F;\bZ)$.
\end{lemma}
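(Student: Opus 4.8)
\textbf{Proof plan for Lemma \ref{lem:decomposition}.}

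The plan is to build the fibrations $W_i \to \Delta_i$ by cutting $W$ along the straight lines from $p$ to the points $p_j$, then to organise the resulting pieces using the gluing construction of Definition \ref{def:gluing}. Concretely, for each $i$ choose a slightly thickened straight-line segment $\ell_i$ from $p$ to $p_i$ that meets $\Sigma$ only in $p_i$; a regular neighbourhood of the preimage $\pi_W^{-1}(\ell_i)$, suitably rounded off, is an elliptic fibration over a disc $\Delta_i$ containing exactly one singular fibre, of the same Kodaira type as $F_i$, and with $F$ as its boundary fibre. (Relative minimality and the absence of multiple fibres are inherited from $W$.) The clockwise ordering of the segments emanating from $p$, together with the convention for collinear points, gives a well-defined cyclic order, and I would fix the total order $W_1,\dots,W_k$ compatible with it.

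The decomposition $H_2(W,F;\bZ) \cong \bigoplus_i H_2(W_i,F;\bZ)$ is then a Mayer--Vietoris / excision argument: the interior of $W$ deformation retracts onto the union of the $\pi_W^{-1}(\ell_i)$ rel $F$ (the complement of these arcs in $\Delta$ retracts to the boundary arc near $p$, over which the fibration is trivial, contributing nothing to $H_2(-,F)$), and distinct arcs meet only along $F$, over which $H_2(F,F;\bZ)=0$, so the Mayer--Vietoris sequence splits into the asserted direct sum. I would note that the same excision identifies the restriction of the relative cycles: a class supported in $\pi_W^{-1}(\ell_i)$, pushed into $W$, is exactly its image under the inclusion $W_i \hookrightarrow W$.

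For the pseudolattice statement, I would compute the Seifert pairing on this direct sum. If $u \in H_2(W_i,F;\bZ)$ and $v \in H_2(W_j,F;\bZ)$ with $i \neq j$, then $\langle u,v\rangle_{\mathrm{Sft}} = -\langle u, (\psi_\pi)_* v\rangle_{\mathrm{top}}$; after the rotation $\psi_\pi$, the cycle $(\psi_\pi)_* v$ is supported in $\pi_W^{-1}$ of the segment $\ell_j$ rotated clockwise by $\pi$, which is disjoint from $\ell_i$ except possibly at $F$. Intersections there are governed by the boundary classes $\phi(u),\phi(v) \in H_1(F;\bZ)$: when $i<j$ the rotated segment $\ell_j$ still lies "clockwise-ahead" of $\ell_i$ and one gets $\langle f_i(u), f_j(v)\rangle_{\rmE}$ (with $f_i := \phi|_{H_2(W_i,F)}$), while when $i>j$ the rotation sweeps $\ell_j$ past $\ell_i$ and the contribution vanishes. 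This is precisely the recipe of Definition \ref{def:gluing}, so $H_2(W,F;\bZ) \cong \rmG_1 \oright_{\rmE} \cdots \oright_{\rmE} \rmG_k$ with $\rmG_i = H_2(W_i,F;\bZ)$ and the gluing maps $f_i$. Granting that each $\rmG_i$ satisfies (1), (3), (4) of Proposition \ref{prop:H2pseudolattice}: unimodularity is preserved by $\oright_{\rmE}$ (the Gram matrix is block upper-triangular with unimodular diagonal blocks, by Example \ref{ex:multiZ}-type reasoning and Definition \ref{def:gluing}), giving (1); by Lemma \ref{lem:sumisCY} and \cite[Proposition 2.23]{pdpslf} the glued map $f = f_1 \oright \cdots \oright f_k$ is relative $(-1)^0$-CY with twist $T_f = T_{f_1}\cdots T_{f_k}$, giving (3); and since anticlockwise monodromy of $\pi_W$ around $\partial\Delta$ factors as the composition of the local anticlockwise monodromies around $p_1,\dots,p_k$ in the correct order, $T_f$ equals that total monodromy, giving (4).

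The main obstacle I anticipate is keeping the \emph{signs and orderings} straight throughout the Seifert-pairing computation: the interaction of the clockwise rotation $\psi_\pi$, the clockwise labelling of the $p_i$, the sign in the definition of the Seifert pairing, and the asymmetry built into Definition \ref{def:gluing} (the "$0$ if $i>j$" clause) must all align so that the cross terms come out as $\langle f_i(u),f_j(v)\rangle_{\rmE}$ for $i<j$ and $0$ for $i>j$ --- not the reverse. I would pin this down carefully on the model case of two $\rmI_1$-fibres, cross-checking against \cite[Proposition 4.4]{pdpslf} and the sign correction noted in the remark after the Seifert pairing definition, and then argue the general case by induction on $k$, peeling off $W_1$ (the "closest" segment in the clockwise order) and applying the two-piece computation to $W_1$ and a thickening of $\pi_W^{-1}(\ell_2 \cup \cdots \cup \ell_k)$.
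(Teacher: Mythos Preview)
Your proposal is correct and follows essentially the same strategy as the paper: a Mayer--Vietoris decomposition of $\Delta$ relative to the critical values, induction on $k$ reducing to the two-piece case, and direct verification that the Seifert pairing on the sum matches the $\oright_{\rmE}$-form of Definition~\ref{def:gluing}. The only cosmetic difference is that the paper cuts $\Delta$ in half by a single line through $p$ separating $p_1$ from $\{p_2,\dots,p_k\}$ (with the two half-discs overlapping in a thickened strip), rather than taking tubular neighbourhoods of the individual vanishing paths as you do; this makes the Mayer--Vietoris step and the Seifert pairing verification marginally cleaner to write down, but is not a substantive change of method, and the paper likewise relegates the sign/ordering check you flag to the phrase ``straightforward consequence of the construction''.
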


\begin{remark} When all of the singular fibres in $W$ have Kodaira type $\rmI_1$, this result is essentially \cite[Proposition 4.4(3)]{pdpslf}.
\end{remark}

\begin{proof} It suffices to show that the result holds for a decomposition into $W_1 \to \Delta_1$ and $W_2 \to \Delta_2$, where $W_1 \to \Delta_1$ contains precisely one singular fibre of the same Kodaira type as $F_1$ and $W_2 \to \Delta_2$ contains all other singular fibres; the general result then follows by induction.

Let $\gamma$ be a line through $p$ that passes between $p_1$ and $p_2$ and divides $\Delta$ into two parts; if $p$, $p_1$ and $p_2$ all lie on the same line, deform this line to pass slightly to the right of $p_1$ and slightly to the left of $p_2$, as viewed from $p$. By construction, $\gamma$ divides $\Delta$ into two parts, with $p_1$ on the left and $p_2,\ldots,p_k$ on the right, as viewed from $p$. Label the part containing $p_i$ by $\Delta_i^{\circ}$, for each $i \in \{1,2\}$.

Let $\gamma_{\varepsilon} \subset \Delta$ denote a closed $\varepsilon$-neighbourhood of $\gamma$, where $\varepsilon$ is chosen small enough so that $\gamma_{\varepsilon}$ does not contain any of the $p_i$. Finally, define $\Delta_i = \Delta_i^{\circ} \cup \gamma_{\varepsilon}$ and $W_i = \pi_W^{-1}(\Delta_i)$ for each $i \in \{1,2\}$. Note that, for each $i \in \{1,2\}$, $\Delta_i$ is homeomorphic to a closed disc and $p \in \partial \Delta_i$, so $W_1 \to \Delta_1$ (resp. $W_2 \to \Delta_2$) is an elliptic fibration over a disc containing $F_1$ (resp. $F_2,\ldots,F_p$) and $F \subset W_i$ for each $i \in \{1,2\}$.

With this setup, noting that $\gamma_{\varepsilon}$ is simply connected and the restriction of the fibration $\pi_W$ to $\gamma_{\varepsilon}$ is trivial, the Mayer-Vietoris sequence gives an isomorphism 
\begin{align*}
H_2(W_1,F;\bZ) \oplus H_2(W_2,F;\bZ) & \longrightarrow H_2(W,F;\bZ)\\
(\alpha,\beta) &\longmapsto (\iota_1)_*(\alpha) + (\iota_2)_*(\beta),
\end{align*}
where $\iota_j\colon W_j \to W$ denotes the inclusion.

Given this, the remainder of the proposition will follow from general properties of the pseudolattice sum (Definition \ref{def:gluing}) if we can show that $H_2(W,F;\bZ) \cong \rmG_1 \oright_{\rmE} \rmG_2$ where $\rmG_i := H_2(W_i,F;\bZ)$ and $\rmE := H_1(F;\bZ)$. But this is a straightforward consequence of the construction: the asymptotic charge $\phi\colon H_2(W,F;\bZ) \to H_1(F;\bZ)$ is the sum $\phi_1 + \phi_2$ of the asymptotic charges for $W_1$ and $W_2$, and if $(u_1,u_2) \in H_2(W_1,F;\bZ) \oplus H_2(W_2,F;\bZ)$ then the Seifert pairing on $W$ is given by
\[\begin{pmatrix} \langle u_1,u_1 \rangle_{\mathrm{Sft}} & \langle \phi_1(u_1),\phi_2(u_2)\rangle_{\mathrm{E}} \\ 0 & \langle u_2,u_2 \rangle_{\mathrm{Sft}}\end{pmatrix},\]
which is precisely the definition of the bilinear form on $\rmG_1 \oright_{\rmE} \rmG_2$.
\end{proof}

This lemma reduces the proof of Proposition \ref{prop:H2pseudolattice} to the case of a fibration containing precisely one singular fibre. This setting is dealt with by our second lemma.

\begin{lemma}\label{lem:kodaira} Suppose that $\pi_W\colon W \to \Delta$ is an elliptic fibration over a disc containing precisely one singular fibre. Then $H_2(W,F;\bZ)$, equipped with the Seifert pairing, is a unimodular pseudolattice. Moreover, for some choice of basis $(a,b)$ for $H_1(F;\bZ) \cong \rmE$, we have an isomorphism between $H_2(W,F;\bZ) \to H_1(F;\bZ)$ and $\rmZ(v_1,\ldots,v_r) \to \rmE$ (see Example \ref{ex:multiZ}), for $v_i \in \rmE$ depending upon the Kodaira type of the singular fibre as given in Table \ref{tab:kodairafibres}. In particular, the asymptotic charge map $\phi\colon H_2(W,F;\bZ) \to H_1(F;\bZ)$ is a relative $(-1)^0$-Calabi-Yau spherical homomorphism. Finally, the twist map $T_{\phi}\colon H_1(F;\bZ) \to H_1(F;\bZ)$ is the action of anticlockwise monodromy around $\partial \Delta$.
\end{lemma}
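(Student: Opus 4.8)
The idea is to reduce to the case of nodal ($\rmI_1$) fibres, which is covered by \cite[Section 4]{pdpslf} together with Lemma \ref{lem:decomposition}; the tool for the reduction is a deformation of $\pi_W$ that breaks the unique singular fibre into a radial string of nodes. First I would invoke the classical fact (essentially due to Kodaira) that $\pi_W\colon W \to \Delta$ admits a deformation $\pi_{W'}\colon W' \to \Delta$ which is trivial on a collar of $\partial\Delta$ (hence fixes the fibre $F$ over $p$), remains relatively minimal and without multiple fibres, and in which the single fibre of Kodaira type $T$ is replaced by $r$ nodal fibres, where $r = e(T)$ is the topological Euler number of a type-$T$ fibre; moreover these $r$ points may be taken to lie, in order of increasing distance from $p$, along a single radial segment emanating from $p$. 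Since $\Delta$ is contractible and the deformation is trivial near the boundary, Ehresmann's theorem gives a diffeomorphism of pairs $(W,F)\cong(W',F)$ intertwining the two fibrations over a collar of $\partial\Delta$. Everything used to define the Seifert pairing is carried along by this diffeomorphism, so it induces an isomorphism of pseudolattices $H_2(W,F;\bZ)\cong H_2(W',F;\bZ)$, compatible with the asymptotic charge maps and with monodromy around $\partial\Delta$; it therefore suffices to prove the lemma for $\pi_{W'}$.

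Next I would apply Lemma \ref{lem:decomposition} to $\pi_{W'}$. Each resulting piece $W'_i \to \Delta_i$ contains exactly one nodal fibre, so by \cite[Proposition 4.4, Corollary 4.5]{pdpslf} it satisfies conditions (1), (3), (4) of Proposition \ref{prop:H2pseudolattice}, and the asymptotic charge $H_2(W'_i,F;\bZ)\to H_1(F;\bZ)$ is isomorphic to $\zeta_i\colon \rmZ(v_i)\to\rmE$ as in Example \ref{ex:Z(v)}, with $v_i$ the vanishing cycle of the $i$-th node. Lemma \ref{lem:decomposition} then shows $H_2(W',F;\bZ)$ also satisfies (1), (3), (4) and yields
\[
H_2(W',F;\bZ) \;\cong\; \rmZ(v_1)\oright_{\rmE}\cdots\oright_{\rmE}\rmZ(v_r) \;=\; \rmZ(v_1,\ldots,v_r).
\]
By Example \ref{ex:multiZ}, this pseudolattice is unimodular, the asymptotic charge $\phi$ is a relative $(-1)^0$-Calabi--Yau spherical homomorphism, and $T_\phi = T_{\zeta_1}\cdots T_{\zeta_r}$. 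By Example \ref{ex:Z(v)} each $T_{\zeta_i}$ is the Dehn twist about $v_i$, so $T_\phi$ is the composite of these Dehn twists, which by the Picard--Lefschetz formula is the geometric monodromy of $\pi_{W'}$ around a loop enclosing all of its singular points, i.e.\ anticlockwise monodromy around $\partial\Delta$; transporting back along the diffeomorphism above gives the corresponding statement for $\pi_W$.

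It remains to identify the tuple $(v_1,\ldots,v_r)$ with the one listed in Table \ref{tab:kodairafibres}. The product $T_{v_1}\cdots T_{v_r}$ computed above equals the monodromy of a Kodaira fibre of type $T$, which is the standard (quasi-)unipotent element of $\mathrm{SL}_2(\bZ)$ attached to $T$, well-defined up to conjugacy; the tabulated tuples are exactly the classical length-$e(T)$ factorisations of these elements into Dehn twists (for $\rmI_n$, $r=n$ twists about a single primitive cycle; for the additive types $\rmI\rmI$, $\rmI\rmI\rmI$, $\rmI\rmV$, $\rmI_n^*$, $\rmI\rmV^*$, $\rmI\rmI\rmI^*$, $\rmI\rmI^*$ the corresponding standard factorisations). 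Choosing the symplectic basis $(a,b)$ of $H_1(F;\bZ)\cong\rmE$ adapted to such a factorisation then produces the required isomorphism. I expect this last identification to be the point demanding the most care: different deformations yield tuples related by the Hurwitz action, so one must verify that the radial (versal) degeneration of a type-$T$ fibre produces precisely the \emph{ordered} tuples of the table, not merely some factorisation of the right product — although, since the pseudolattice $\rmZ(v_1,\ldots,v_r)$ is already pinned down by the topology of $W'$, it is enough to exhibit, for each Kodaira type, one factorisation realising the geometry of the radial string of nodes, and the remaining labour is the routine case-by-case check over the Kodaira types.
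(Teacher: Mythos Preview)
Your approach is correct and genuinely different from the paper's. The paper argues by direct case analysis: for each Kodaira type it uses the quotient of $W$ by the fibrewise elliptic involution to construct explicit thimbles and fibre-component classes, writes down the Seifert pairing and asymptotic charge in this basis, and then (with computer algebra) exhibits a change of basis to the $\rmZ(v_1,\ldots,v_r)$ of Table~\ref{tab:kodairafibres}. By contrast, you deform the Kodaira fibre to a radial string of $\rmI_1$ fibres, transport the pseudolattice structure across via an Ehresmann-type diffeomorphism of pairs (valid because the deformation is trivial on a collar, so the rotations $\psi_\theta$ defining the Seifert pairing are preserved), and then invoke Lemma~\ref{lem:decomposition} together with \cite[Proposition 4.4]{pdpslf} to identify the result with $\rmZ(v_1,\ldots,v_r)$ for the standard Dehn-twist factorisation of the monodromy.

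It is worth noting that the paper itself records essentially your argument as a separate result immediately after Lemma~\ref{lem:kodaira}, citing \cite{gtsj} for the existence of the deformation and the string-junction literature \cite{mwlsj} for the vanishing-cycle tuples, and observing that the resulting pseudolattices match Table~\ref{tab:kodairafibres}. So the two routes are known to agree. What your approach buys is conceptual economy: no explicit matrices, no computer check. What the paper's approach buys is self-containment and independence from the deformation-theoretic input; it also sidesteps the rather delicate bookkeeping of sign, orientation, and composition conventions across \cite{mwlsj,gtsj,pdpslf}, which the paper explicitly flags as a hazard. Your final paragraph correctly identifies this last identification as the place requiring care; since Hurwitz-equivalent factorisations yield isomorphic pseudolattices $\rmZ(v_1,\ldots,v_r)\to\rmE$, it is indeed enough to exhibit one factorisation per type and match it to the table, but you should be prepared for that ``routine'' check to be more fiddly than it looks.
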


\begin{table}
\begin{tabular}{c|c}
Kodaira type & $H_2(W,F;\bZ) \cong \rmZ(v_1,\ldots,v_r)$ \\
\hline
$\rmI_n$ & $\rmZ(a,a,\ldots,a)$ with $n$ copies of $a$\\
$\rmI_n^*$ & $\rmZ(a,a,\ldots,a,b-a,b+a)$ with $(n+4)$ copies of $a$\\
$\rmI\rmI$ & $\rmZ(a,b+a)$\\
$\rmI\rmI\rmI$ & $\rmZ(a,a,b+a)$\\
$\rmI\rmV$ & $\rmZ(a,a,a,b+a)$\\
$\rmI\rmV^*$ & $\rmZ(a,a,a,a,a,b-a,b+a,b+a)$\\
$\rmI\rmI\rmI^*$ & $\rmZ(a,a,a,a,a,a,b-a,b+a,b+a)$\\
$\rmI\rmI^*$ & $\rmZ(a,a,a,a,a,a,a,b-a,b+a,b+a)$
\end{tabular}
\caption{Pseudolattices $H_2(W,F;\bZ)$ for $W$ an elliptic fibration over a disc containing one singular fibre.}
\label{tab:kodairafibres}
\end{table}

\begin{proof}
This proof is a lengthy explicit case analysis. Essentially, for each type of singular fibre our approach is to use geometric considerations to construct a basis for $H_2(W,F;\bZ)$ and to compute the Seifert pairing for it, then to use a computer algebra system to solve the resulting linear algebra problem and find the required isomorphism. We present a representative sample of cases here to illustrate the method; the other cases are analogous.

We begin with some general setup. Let $p \in \partial \Delta$ denote a point on the boundary of $\Delta$, let $F = \pi_W^{-1}(p)$, and let $q \in \Delta$ be the point over which the singular fibre occurs. As $\pi_W$ has only one singular fibre, which is not multiple, we may assume that $\pi_W$ admits a section $\sigma$ over $\Delta$. Let $\overline{\pi}_W \colon \overline{W} \to \Delta$ denote the quotient of $W$ by the fibrewise elliptic involution defined by this section; note that the double cover $W \to \overline{W}$ is ramified along $\sigma$ and a trisection of $\pi_W$.

The simplest case is $\rmI_n$. Choose a path $\alpha$ from $p$ to $q$. The ramification locus of  $W \to \overline{W}$ contains two disjoint sections, $\sigma_1$ and $\sigma_2$, which meet the same component of the $\rmI_n$ fibre. $\sigma_1$ and $\sigma_2$ intersect $\overline{\pi}_W^{-1}(\alpha)$ in a pair of curves; take a strip in $\overline{\pi}_W^{-1}(\alpha)$ joining them (see Figure \ref{fig:In}). The double cover of this strip under $W \to \overline{W}$ is a thimble $t_{\alpha}$ in $H_2(W,F;\bZ)$; let $a$ denote the corresponding vanishing cycle. Consider the set of classes in $H_2(W,F;\bZ)$ given by $t_{\alpha}$ along with the components $e_1,\ldots,e_{n-1}$ of the $\rmI_n$ fibre which do not meet $\sigma$. One may check that these classes generate $H_2(W,F;\bZ)$ and the Seifert pairing $\chi$ and asymptotic charge map $\phi$ are given by
\[\chi = \kbordermatrix{ & e_{n-1} & e_{n-2} & e_{n-3} & \cdots & e_1 & t_{\alpha} \\ 
e_{n-1} & 2 & -1 & 0 &\cdots & 0 & 0 \\
e_{n-2} & -1 & 2 & -1 & \cdots & 0 & 0 \\
e_{n-3} & 0 & -1 & 2 & \cdots & 0 & 0 \\
\vdots & \vdots & \vdots & \vdots & \ddots & \vdots & \vdots \\
e_{1} & 0 & 0 & 0 &  \cdots & 2 & -1 \\
t_{\alpha} & 0 & 0 & 0 & \cdots & -1 & 1}
\quad \phi = \kbordermatrix{ & e_{n-1} & \cdots & e_1 & t_{\alpha} \\ 
a & 0  & \cdots & 0 & 1 \\
b & 0  & \cdots & 0 & 0 }.\]
As $\det(\chi) = 1$, it follows that the Seifert pairing is unimodular and these classes form a basis for $H_2(W,F;\bZ)$. Using linear algebra, one may then verify that there is a change of basis in $H_2(W,F;\bZ)$ which takes $H_2(W,F;\bZ) \to H_1(F;\bZ)$ to $\rmZ(a,\ldots,a) \to \rmE$ and that the twist map is the usual action $\begin{psmallmatrix} 1 & n \\ 0 & 1
\end{psmallmatrix}$ of anticlockwise monodromy around $\partial \Delta$.

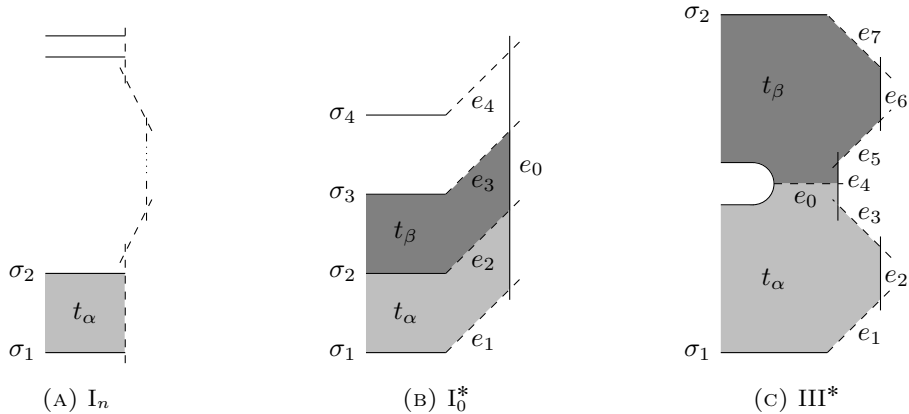
\begin{figure}
  \begin{center}
  \begin{subfigure}{0.3\textwidth}
  \begin{center}
\begin{tikzpicture}[scale=0.7]
\filldraw[lightgray] (0.5,0)--(2,0)--(2,1.5)--(0.5,1.5);
\filldraw[white] (2,0)--(2,1.5)--(2.2,1.5)--(2.2,0);

\draw (0.5,0)--(2,0);
\draw (0.5,1.5)--(2,1.5);
\draw[dashed] (2,-0.2)--(2,2.1);
\draw[dashed] (1.9,1.7)--(2.5,2.9);
\draw[dashed] (2.4,2.5)--(2.4,3.4);
\draw[dotted] (2.4,3.4)--(2.4,3.9);
\draw[dashed] (2.4,4)--(2.4,4.6);
\draw[dashed] (2.5,4.2)--(1.9,5.4);
\draw[dashed] (2,5.1)--(2,6.2);
\draw (0.5,5.6)--(2,5.6);
\draw (0.5,6)--(2,6);

\node[left] at (0.5,0) {$\sigma_1$};
\node[left] at (0.5,1.5) {$\sigma_2$};
\node at (1.25,0.75) {$t_{\alpha}$};

\end{tikzpicture}
\caption{$\rmI_n$}
\label{fig:In}
\end{center}
\end{subfigure}
\begin{subfigure}{0.3\textwidth}
  \begin{center}
\begin{tikzpicture}[scale=0.7]
\filldraw[gray] (0.5,1.5)--(2,1.5)--(3.2,2.7)--(3.2,4.2)--(2,3)--(0.5,3);
\filldraw[lightgray] (0.5,0)--(2,0)--(3.2,1.2)--(3.2,2.7)--(2,1.5)--(0.5,1.5);
\filldraw[white] (0.5,3)--(2,3)--(3.2,4.2)--(3.2,5.7)--(2,4.5)--(0.5,4.5);
\filldraw[white] (2,0)--(3.2,1.2)--(3.2,0);

\draw (0.5,0)--(2,0);
\draw (0.5,1.5)--(2,1.5);
\draw (0.5,3)--(2,3);
\draw (0.5,4.5)--(2,4.5);
\draw[dashed] (2,0)--(3.4,1.4);
\draw[dashed] (2,1.5)--(3.4,2.9);
\draw[dashed] (2,3)--(3.4,4.4);
\draw[dashed] (2,4.5)--(3.4,5.9);
\draw (3.2,1)--(3.2,6);

\node[left] at (0.5,0) {$\sigma_1$};
\node[left] at (0.5,1.5) {$\sigma_2$};
\node[left] at (0.5,3) {$\sigma_3$};
\node[left] at (0.5,4.5) {$\sigma_4$};
\node at (1.25,0.75) {$t_{\alpha}$};
\node at (1.25,2.25) {$t_{\beta}$};
\node[right] at (2.3,0.2) {$e_1$};
\node[right] at (2.3,1.7) {$e_2$};
\node[right] at (2.3,3.2) {$e_3$};
\node[right] at (2.3,4.7) {$e_4$};
\node[right] at (3.2,3.5) {$e_0$};

\end{tikzpicture}
\caption{$\rmI_0^*$}
\label{fig:I0*}
\end{center}
\end{subfigure}
\begin{subfigure}{0.3\textwidth}
  \begin{center}
\begin{tikzpicture}[scale=0.7]
\filldraw[gray] (0,3.2)--(2.2,3.2)--(2.2,3.6)--(3,4.4)--(3,5.4)--(2,6.4)--(0,6.4);
\filldraw[lightgray] (0,0)--(2,0)--(3,1)--(3,2)--(2.2,2.8)--(2.2,3.2)--(0,3.2);
\filldraw[white] (0.6,3.2) circle (0.4);

\draw (0.6,3.2) circle (0.4);
\filldraw[white] (-0.1,3.6)--(0.6,3.6)--(0.6,2.8)--(-0.1,2.8);
\filldraw[white] (3,5.4)--(2,6.4)--(3,6.4);
\filldraw[white] (2.2,3.6)--(3,4.4)--(3,3.6);
\filldraw[white] (3,2)--(2.2,2.8)--(3,2.8);
\filldraw[white] (2,0)--(3,1)--(3,0);

\draw (0,0)--(2,0);
\draw[dashed] (2,0)--(3.2,1.2);
\draw (3,0.8)--(3,2.2);
\draw[dashed] (3.2,1.8)--(2.1,2.9);
\draw (2.2,2.5)--(2.2,3.8);
\draw[dashed] (1,3.2)--(2.3,3.2);
\draw[dashed] (2.1,3.5)--(3.2,4.6);
\draw (3,4.2)--(3,5.6);
\draw[dashed] (3.2,5.2)--(2,6.4);
\draw (0,6.4)--(2,6.4);

\draw (0,3.6)--(0.62,3.6);
\draw (0,2.8)--(0.62,2.8);

\node[left] at (0,0) {$\sigma_1$};
\node[left] at (0,6.4) {$\sigma_2$};
\node at (1,1.4) {$t_{\alpha}$};
\node at (1,5) {$t_{\beta}$};
\node[right] at (2.4,0.3) {$e_1$};
\node[right] at (2.9,1.4) {$e_2$};
\node[right] at (2.4,2.6) {$e_3$};
\node[right] at (2.2,3.2) {$e_4$};
\node at (1.6,2.9) {$e_0$};
\node[right] at (2.4,3.7) {$e_5$};
\node[right] at (2.9,4.8) {$e_6$};
\node[right] at (2.4,6) {$e_7$};

\end{tikzpicture}
\caption{$\mathrm{III}^*$}
\label{fig:III*}
\end{center}
\end{subfigure}
\caption{Construction of strips in $\overline{W}$. The singular fibre is on the right of each diagram and the horizontal lines are sections/multisections. Solid lines denote branch curves of the cover $W \to \overline{W}$ and dashed lines denote components of the singular fibre which are not part of the branch locus. The strips giving rise to $t_{\alpha}$ and $t_{\beta}$ are shaded and labelled.}
\label{fig:mn}
\end{center}
\end{figure}

A case which is more indicative of the general approach is $\rmI_0^*$. In this case we choose two paths $\alpha$ and $\beta$ from $p$ to $q$, intersecting only at $p$ and $q$, with $\alpha$ passing to the left of $\beta$ as viewed from $p$. The ramification locus of  $W \to \overline{W}$ contains four disjoint sections, $\sigma_1,\ldots,\sigma_4$. These sections intersect $\overline{\pi}^{-1}_W(\alpha)$ and $\overline{\pi}^{-1}_W(\beta)$ in four curves each; take a strip in $\overline{\pi}^{-1}_W(\alpha)$ joining $\sigma_1$ to $\sigma_2$ and a strip in $\overline{\pi}^{-1}_W(\beta)$ joining $\sigma_2$ to $\sigma_3$ (see Figure \ref{fig:I0*}). The double cover of these strips under $W \to \overline{W}$ are thimbles $t_{\alpha}$ and $t_{\beta}$ in $H_2(W,F;\bZ)$; let $a$ and $b$ denote the corresponding vanishing cycles. Label the components of the $\rmI_0^*$ fibre such that $e_i$ meets $\sigma_i$ and $e_0$ is the central component. Consider the set of classes in $H_2(W,F;\bZ)$ given by $t_{\alpha}$ and $t_{\beta}$ along with the components $e_0,e_1,e_2,e_4$ of the $\rmI_0$ fibre which do not meet $\sigma_3$. One may check that these classes generate $H_2(W,F;\bZ)$ and the Seifert pairing $\chi$ and asymptotic charge map $\phi$ are given by
\[\chi = \kbordermatrix{ & e_0 & e_1 & e_2 & e_4 & t_{\alpha} & t_{\beta} \\ 
e_{0} & 2 & -1 & -1 & -1 & -1 & -1 \\
e_{1} & -1 & 2 & 0 & 0 & 1 & 0 \\
e_{2} & -1 & 0 & 2 & 0 & 1 & 1 \\
e_4 & -1 & 0 & 0 & 2 & 0 & 0 \\
t_{\alpha} & -1 & 1 & 1 & 0 & 1 & 0 \\
t_{\beta} & -1 & 0 & 1 & 0 & 1 & 1}\quad \phi = \kbordermatrix{ & e_{0} & e_1 & e_2 & e_4 & t_{\alpha} & t_{\beta} \\ 
a & 0  & 0 & 0 & 0 & 1 & 0\\
b & 0  & 0 & 0 & 0 & 0 & 1}.\]
Linear algebra can be used to verify that there is a change of basis in $H_2(W,F;\bZ)$ which takes $H_2(W,F;\bZ) \to H_1(F;\bZ)$ to $\rmZ(a,a,a,a,b-a,b+a) \to \rmE$ and that the twist map is the usual action $\begin{psmallmatrix} -1 & 0 \\ 0 & -1
\end{psmallmatrix}$ of anticlockwise monodromy around $\partial \Delta$.

The most complicated case that we will cover here is $\mathrm{III}^*$; the other cases are no more difficult. In this case we again choose two paths $\alpha$ and $\beta$ from $p$ to $q$, intersecting only at $p$ and $q$, with $\alpha$ passing to the left of $\beta$ as viewed from $p$. The ramification locus of  $W \to \overline{W}$ contains two disjoint sections, $\sigma_1$ and $\sigma_2$, along with a bisection. These sections intersect $\overline{\pi}^{-1}_W(\alpha)$ and $\overline{\pi}^{-1}_W(\beta)$ in two disjoint sections and a bisection ramified over $q$. Take a strip in $\overline{\pi}^{-1}_W(\alpha)$ joining $\sigma_1$ to one branch of the bisection and a strip in $\overline{\pi}^{-1}_W(\beta)$ joining $\sigma_2$ to the other branch (see Figure \ref{fig:III*}). The double cover of these strips under $W \to \overline{W}$ are thimbles $t_{\alpha}$ and $t_{\beta}$ in $H_2(W,F;\bZ)$; let $a$ and $b$ denote the corresponding vanishing cycles. Label the components of the $\rmI_0^*$ fibre such that $e_1,\ldots,e_7$ form the chain joining $\sigma_1$ to $\sigma_2$ and $e_0$ is the component meeting the bisection. Consider the set of classes in $H_2(W,F;\bZ)$ given by $t_{\alpha}$ and $t_{\beta}$ along with the components $e_0,\ldots,e_7$ of the $\rmI_0$ fibre which do not meet $\sigma_2$. One may check that these classes generate $H_2(W,F;\bZ)$ and the Seifert pairing $\chi$ and asymptotic charge map $\phi$ are given by
\[\chi = \kbordermatrix{ & e_0 & e_1 & e_2 & e_3 &  e_4 & e_5 & e_6 & t_{\alpha} & t_{\beta} \\ 
e_{0} & 2 & 0 & 0 & 0 & -1 & 0 & 0 & 1 & 1 \\
e_{1} & 0 & 2 & -1 & 0 & 0 & 0 & 0 & 1 & 1 \\
e_{2} & 0 & -1 & 2 & -1 & 0 & 0 & 0 & 1 & 1 \\
e_{3} & 0 & 0 & -1 & 2 & -1 & 0 & 0 & 1 & 1 \\
e_{4} & -1 & 0 & 0 & -1 & 2 & -1 & 0 & 1 & 1 \\
e_{5} & 0 & 0 & 0 & 0 & -1 & 2 & -1 & 1 & 1 \\
e_{6} & 0 & 0 & 0 & 0 & 0 & -1 & 2 & 1 & 1 \\
t_{\alpha} & 1 & 1 & -1 & 1 & -1 & 0 & 0 & 1 & 0 \\
t_{\beta} & 1 & 0 & 0 & 0 & -1 & 1 & -1 & 1 & 1}\]
\[\phi = \kbordermatrix{& e_0 & e_1 & e_2 & e_4 & e_5 & e_6 & e_7 & t_{\alpha} & t_{\beta} \\ 
a & 0  & 0 & 0 & 0 & 0 & 0 & 0 & 1 & 0\\
b & 0  & 0 & 0 & 0 & 0 & 0 & 0 & 0 & 1}.\]
Linear algebra can then be used to verify that the twist map is the usual action $\begin{psmallmatrix} 0 & -1 \\ 1 & 0
\end{psmallmatrix}$ of anticlockwise monodromy around $\partial \Delta$ and that there are changes of bases\footnote{It is important to note in the $\mathrm{III}^*$ case that the basis $(a,b) \in \rmE$ defining the pseudolattice $\rmZ(a,a,a,a,a,a,b-a,b+a,b+a)$ is \emph{not} the same as the basis $(a,b)$ of $H_1(F;\bZ)$ constructed here (the twist maps do not match), but the two are related by a change of basis in $H_1(F;\bZ) \cong \rmE$.} in $H_2(W,F;\bZ)$ and $H_1(F;\bZ)$ which take $H_2(W,F;\bZ) \to H_1(F;\bZ)$ to $\rmZ(a,a,a,a,a,a,b-a,b+a,b+a) \to \rmE$.
\end{proof}

The following result is a consequence of Lemma \ref{lem:kodaira}; whilst we will not need it in the sequel, we present it here out of interest.

\begin{proposition} Suppose that $\pi_W\colon W \to \Delta$ is an elliptic fibration over a disc containing precisely one singular fibre. Then there exists a  deformation of $\pi_W$ to a genus $1$ Lefschetz fibration $\tilde{\pi}_W\colon \tilde{W} \to \Delta$ (i.e. an elliptic fibration over a disc in which all singular fibres are of Kodaira type $\rmI_1$), such that $H_2(W,F;\bZ)$ and $H_2(\tilde{W},\tilde{F};\bZ)$ are isomorphic as pseudolattices.
\end{proposition}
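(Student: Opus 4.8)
The plan is to use Lemma~\ref{lem:kodaira} to replace the problem with an entirely combinatorial one about the pseudolattice $\rmZ(v_1,\ldots,v_r)$, and then to realise $\rmZ(v_1,\ldots,v_r)$ as the relative homology of a concrete nodal perturbation of the single singular fibre of $\pi_W$.

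First I would record that, by Lemma~\ref{lem:kodaira}, there is a basis $(a,b)$ of $\rmE \cong H_1(F;\bZ)$ for which $H_2(W,F;\bZ) \cong \rmZ(v_1,\ldots,v_r)$ as pseudolattices, with $(v_1,\ldots,v_r)$ the ordered tuple dictated by Table~\ref{tab:kodairafibres} and the Kodaira type of the singular fibre; every $v_i$ appearing there is primitive in $\rmE$, hence represented by an embedded simple closed curve on the torus $F$, unique up to isotopy. One also checks from the table that $r$ equals the Euler number $e(\mathcal T)$ of the Kodaira type $\mathcal T$ in every case. Next, I would invoke the classical fact that $\pi_W$ admits a deformation $\pi_s\colon W_s \to \Delta$, with $\pi_0 = \pi_W$ and supported in an arbitrarily small neighbourhood of the singular value (so trivial near $\partial\Delta$), such that for $s \neq 0$ the fibration $\pi_s$ has exactly $r$ nodal fibres and is otherwise smooth --- for instance by perturbing the $r$-fold point of the discriminant in a local Weierstrass model into $r$ simple zeroes. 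Setting $\tilde\pi_W := \pi_s$ for $s \neq 0$ then gives a genus $1$ Lefschetz fibration over $\Delta$ that is by construction a deformation of $\pi_W$. Applying Lemma~\ref{lem:decomposition} to $\tilde\pi_W$ and then the $\rmI_1$-case of Lemma~\ref{lem:kodaira} (equivalently \cite[Proposition 4.4]{pdpslf}) to each piece, together with Example~\ref{ex:multiZ}, yields a pseudolattice isomorphism $H_2(\tilde W,\tilde F;\bZ) \cong \rmZ(w_1,\ldots,w_r)$, where $w_1,\ldots,w_r \in \rmE$ are the vanishing cycles of the $r$ nodal fibres, ordered by the clockwise order in which their critical values are reached from $p$.

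To finish, I would identify $\rmZ(w_1,\ldots,w_r)$ with $\rmZ(v_1,\ldots,v_r)$. Since an automorphism of $\rmE$ preserves the bilinear form, it is enough to show that $(w_1,\ldots,w_r) = (g(v_1),\ldots,g(v_r))$ for a single $g \in \Aut(\rmE)$; then $\langle w_i,w_j\rangle_\rmE = \langle v_i,v_j\rangle_\rmE$ for all $i,j$, so matching up the two exceptional bases termwise gives a pseudolattice isomorphism. That the vanishing cycles of the nodal perturbation of each Kodaira fibre are, up to such a simultaneous change of marking of the reference fibre, exactly those recorded in Table~\ref{tab:kodairafibres} is the analogue --- read in the smoothing direction --- of the local computation carried out in the proof of Lemma~\ref{lem:kodaira}; it is also equivalent to the classical description of the positive Dehn twist factorisations of the monodromy matrices of Kodaira fibres. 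This last step, a finite case-by-case verification of the same flavour as the proof of Lemma~\ref{lem:kodaira}, is where the real work lies; the remainder of the argument is formal given the results already established.
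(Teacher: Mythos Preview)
Your proposal is correct and follows essentially the same approach as the paper: reduce both sides to $\rmZ(v_1,\ldots,v_r)$ via Lemma~\ref{lem:kodaira} and the $\rmI_1$-case of Lemma~\ref{lem:decomposition}, then check case-by-case that the vanishing cycles of a nodal perturbation match Table~\ref{tab:kodairafibres}. The paper's only difference is that it outsources the two nontrivial steps to explicit citations --- the existence of the deformation to \cite[Theorem 3.1]{gtsj} and the vanishing-cycle computation to the string junction literature \cite[Section 3.1]{mwlsj} --- rather than sketching them as you do; your remark that the final verification is ``equivalent to the classical description of the positive Dehn twist factorisations of the monodromy matrices of Kodaira fibres'' is exactly what those references provide.
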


\begin{proof} For each Kodaira fibre type, the existence of an appropriate deformation to a genus $1$ Lefschetz fibration $\tilde{\pi}_W\colon \tilde{W} \to \Delta$ is given by \cite[Theorem 3.1]{gtsj} and, for such deformations, explicit bases of thimbles for $H_2(\tilde{W},\tilde{F};\bZ)$ have been computed in the string junction literature (see, for example, \cite[Section 3.1]{mwlsj}). From this, the pseudolattice structure on $H_2(\tilde{W},\tilde{F};\bZ)$ can be immediately deduced from \cite[Proposition 4.4]{pdpslf}. The pseudolattices thus obtained are precisely those given in Table \ref{tab:kodairafibres}\footnote{Caution: the reader attempting to replicate the computation in this proof should take careful note of conventions for signs, orientations, and compositions, which are not consistent between the three papers \cite{mwlsj,gtsj,pdpslf}!}.
\end{proof}

To conclude this section, we discuss when $\phi\colon H_2(W,F;\bZ) \to H_1(F;\bZ)$ is a quasi del Pezzo homomorphism. The following assumption will be critical. 

\begin{assumption}\label{ass:quasiLG} There exists a symplectic basis $(a,b)$ for $H_1(F;\bZ)$ such that:
\begin{enumerate}
\item the action of anticlockwise monodromy around $\partial \Delta$ is given by 
\[\begin{pmatrix}1 & e(W)-12 \\ 0 & 1 \end{pmatrix},\]
 where $e(W)$ denotes the topological Euler number of $W$; and
\item $r(a)$ is primitive in $H_2(W,F;\bZ)$, where $r$ is the right adjoint to the asymptotic charge map $\phi\colon H_2(W,F;\bZ) \to H_1(F;\bZ)$.
\end{enumerate}
\end{assumption}

\begin{theorem}\label{thm:qdpfibration} Let $\pi_W\colon W \to \Delta$ be an elliptic fibration over a disc and assume that there exists a symplectic basis $(a,b)$ for $H_1(F;\bZ)$ so that Assumption \ref{ass:quasiLG} holds. Then $\phi\colon H_2(W,F;\bZ) \to H_1(F;\bZ)$ is a quasi del Pezzo homomorphism.
\end{theorem}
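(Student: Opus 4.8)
The plan is to verify the four conditions of Definition \ref{def:qdp} in turn for the pseudolattice $\mathrm{G} := H_2(W,F;\bZ)$, the asymptotic charge $\phi\colon \mathrm{G} \to H_1(F;\bZ) \cong \mathrm{E}$, and the basis $(a,b)$ provided by Assumption \ref{ass:quasiLG}. Conditions (1) and (2) come almost immediately from what is already available: Proposition \ref{prop:H2pseudolattice} tells us that $\mathrm{G}$ is a unimodular pseudolattice (so the right adjoint $r$ of $\phi$ exists) and that $\phi$ is a relative $(-1)^0$-Calabi--Yau spherical homomorphism, which is condition (2); and Proposition \ref{prop:H2pseudolattice}(4) identifies the twist $T_\phi$ with anticlockwise monodromy around $\partial\Delta$, which by Assumption \ref{ass:quasiLG}(1) fixes $a$. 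Since Assumption \ref{ass:quasiLG}(2) says $r(a)$ is primitive, Proposition \ref{prop:sphericalsurfacelike} applies and gives condition (1), namely that $\mathrm{G}$ is surface-like with point-like element $\mathbf{p} = r(a)$. As a by-product of Proposition \ref{prop:sphericalsurfacelike}(2) together with Assumption \ref{ass:quasiLG}(1), one reads off $q(K_{\mathrm{G}},K_{\mathrm{G}}) = 12 - e(W)$, which will be needed below.

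For condition (3) I would use the decomposition results of Section \ref{sec:fibrationsondiscs}. By Lemma \ref{lem:decomposition}, $\mathrm{G} \cong \mathrm{G}_1 \oright_{\mathrm{E}} \cdots \oright_{\mathrm{E}} \mathrm{G}_k$ with $\phi = \phi_1 \oright \cdots \oright \phi_k$, where each $\mathrm{G}_i = H_2(W_i,F;\bZ)$ corresponds to a single Kodaira fibre; by Lemma \ref{lem:kodaira} each $\phi_i$ is isomorphic to a homomorphism $\mathrm{Z}(v^{(i)}_1,\ldots,v^{(i)}_{r_i}) \to \mathrm{E}$ as in Example \ref{ex:multiZ}, with the $v^{(i)}_j$ read off from Table \ref{tab:kodairafibres}. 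The generators $z^{(i)}_j$ form an exceptional basis of $\mathrm{G}_i$ with $\phi_i(z^{(i)}_j) = v^{(i)}_j$, and every vector occurring in Table \ref{tab:kodairafibres} --- namely $a$, $b-a$, $b+a$ --- is primitive in $\mathrm{E}$ in a manner independent of the choice of basis, so primitivity survives the (possibly $i$-dependent) basis changes of Lemma \ref{lem:kodaira}. Inspecting the bilinear form in Definition \ref{def:gluing} then shows that the concatenation $(z^{(1)}_1,\ldots,z^{(1)}_{r_1},z^{(2)}_1,\ldots,z^{(k)}_{r_k})$ is an exceptional basis of $\mathrm{G}_1 \oright_{\mathrm{E}} \cdots \oright_{\mathrm{E}} \mathrm{G}_k$, whose image under $\phi$ is the concatenation of the $v^{(i)}_j$; this establishes condition (3). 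It also shows $n := \rank(\mathrm{G}) = \sum_i r_i = \sum_i e(F_i) = e(W)$, since each $r_i$ equals the topological Euler number of the $i$-th singular fibre and smooth fibres contribute nothing.

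The main obstacle is condition (4): that $\NS(\mathrm{G})$ has signature $(1,n-3)$. The approach would be to exploit the explicit presentation $\mathrm{G} \cong \mathrm{Z}(w_1,\ldots,w_n) \to \mathrm{E}$ obtained by re-expressing every $v^{(i)}_j$ in the single fixed basis $(a,b)$. The Néron--Severi lattice $\NS(\mathrm{Z}(w_\bullet)) = \mathbf{p}^{\perp}/\mathbf{p}$ can be written down combinatorially in terms of the $w_\bullet$, and its signature can be computed directly; the point is that the hypothesis that the total monodromy $T_\phi = \prod_i T_{\zeta(w_i)}$ is unipotent (Assumption \ref{ass:quasiLG}(1), equivalently $q(K_{\mathrm{G}},K_{\mathrm{G}}) = 12-n$) is precisely what pins the signature down to $(1,n-3)$, rather than allowing an extra positive or negative eigenvalue. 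An alternative, and conceptually cleaner, route is to observe that once conditions (1)--(3) and $q(K_{\mathrm{G}},K_{\mathrm{G}}) = 12-n$ are known, $\mathrm{G}$ is forced to be isomorphic to $\mathrm{K}_0^{\mathrm{num}}(\mathbf{D}(V))$ for a quasi del Pezzo surface $V$ with $q(K_V,K_V) = 12-n$ --- a blow-up of $\bP^2$ in $n-3$ points in almost general position, or one of the two degree-$8$ surfaces --- whence $\NS(\mathrm{G}) \cong \NS(V)$ has signature $(1,n-3)$ by the classical surface lattice theory summarised in Lemma \ref{lem:NSproperties} and Table \ref{tab:NSlattices}. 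I expect the delicate point in this second route to be showing that conditions (1)--(3) plus the value of $q(K_{\mathrm{G}},K_{\mathrm{G}})$ determine $\mathrm{G}$ up to isomorphism without a circular appeal to Theorem \ref{thm:qdpclassification}, so in practice I would carry out the direct signature computation on $\mathrm{Z}(w_\bullet)$ as the primary argument and use the geometric comparison only as a sanity check.
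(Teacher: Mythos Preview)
Your treatment of conditions (1)--(3) matches the paper's proof essentially line for line: Proposition~\ref{prop:H2pseudolattice} for (2), Proposition~\ref{prop:sphericalsurfacelike} plus Assumption~\ref{ass:quasiLG} for (1), and Lemmas~\ref{lem:decomposition} and~\ref{lem:kodaira} for the exceptional basis in (3). Your observation that $n = e(W)$ and $q(K_{\mathrm{G}},K_{\mathrm{G}}) = 12-n$ are also correct and used by the paper.

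The gap is in condition (4). You correctly identify that your second route is circular: Theorem~\ref{thm:qdpclassification} takes condition (4) as a hypothesis, so one cannot appeal to the classification to deduce it. Your first route --- a direct combinatorial signature computation on $\mathrm{Z}(w_\bullet)$ --- is asserted rather than carried out, and in fact this is precisely the hard step. You claim that the unipotency of the total monodromy ``pins the signature down to $(1,n-3)$'', but this is the content of the theorem, not an input. A purely lattice-theoretic argument would have to classify all tuples $(w_1,\ldots,w_n)$ of primitive vectors in $\mathrm{E}$ whose product of Dehn twists has the prescribed form, up to Hurwitz moves, and read off the signature in each case; the paper \cite{pdpslf} does exactly this in its Section~3.3 only \emph{after} assuming condition (4), and uses (4) in an essential way.

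The paper's actual argument for (4) is topological, not combinatorial. It invokes the proof of \cite[Theorem~4.14]{pdpslf} verbatim, which rests on two facts: first, that on $\ker(\phi)$ the Seifert pairing agrees (up to sign) with the topological intersection form on $H_2(W;\bZ)$, so the signature of $\NS(\mathrm{G})$ is governed by the signature of the $4$-manifold $W$; and second, Matsumoto's ``fractional signature'' formula \cite[Corollary~8.1]{4mft2}, which gives $\phi(\tau_i) + \sigma(N_i) = -\tfrac{2}{3}\chi(F_i)$ for each singular fibre $F_i$ of arbitrary Kodaira type. The paper's contribution here is to note that both ingredients extend from the $\rmI_1$-only setting of \cite{pdpslf} to general Kodaira fibres. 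This geometric approach sidesteps the combinatorics entirely and is what makes the proof go through; your proposal does not access it.
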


\begin{proof} We check the conditions from the definition. Condition (1) follows immediately from \cite[Proposition 3.1]{pdpslf} and condition (2) from Proposition \ref{prop:H2pseudolattice}. Condition (3) follows from Lemmas \ref{lem:decomposition} and \ref{lem:kodaira}, which together give an isomorphism between $H_2(W,F;\bZ)$ and $\rmZ(v_1,\ldots,v_r)$, for some $v_1,\ldots,v_r \in H_1(F;\bZ)$.

Condition (4) is the most difficult to check. The proof of \cite[Theorem 4.14]{pdpslf}, which occupies most of \cite[Section 4.3]{pdpslf}, applies verbatim in our setting, as long as one notes the following two facts.
\begin{itemize}
\item Auroux's result from \cite{sclf}, stated as \cite[Lemma 4.8]{pdpslf}, still holds in our more general setting. Indeed, if $u_1$ and $u_2$ are classes in the kernel of the asymptotic charge map $\phi$ then, by the long exact sequence of a pair, $u_1$ and $u_2$ are classes from $H_2(W;\bZ)$ with no boundary in $H_1(F;\bZ)$, so we have $\langle u_1,u_2\rangle_{\mathrm{Sft}} = -\langle  u_1,u_2 \rangle_{\mathrm{top}}$. It follows that the signature of the Seifert pairing on the kernel of $\phi$ is the negative of the signature of the topological intersection form on $H_2(Y;\bZ)$.
\item Matsumoto's result \cite[Corollary 8.1]{4mft2} on ``fractional signatures'' of singular fibres holds for fibres $F_i$ of any Kodaira type, giving $\phi(\tau_i) + \sigma(N_i) = -\frac{2}{3}\chi(F_i)$ where $\chi(F_i)$ is the Euler number of the fibre.
\end{itemize}
This completes the proof of Theorem \ref{thm:qdpfibration}. \end{proof}

\subsection{Elliptically fibred K3 surfaces} \label{sec:ellipticK3}

Now we shift our attention to K3 surfaces. Let $Y$ be a K3 surface which admits an elliptic fibration $\pi\colon Y \to \bP^1$. We do not assume that $\pi$ has a section but we note that, by \cite[Proposition 11.1.6]{lok3s}, $\pi$ cannot have multiple fibres. Let $\Sigma \subset \bP^1$ denote the finite set of points over which the fibres of $\pi$ are singular.

Let $\gamma \subset \bP^1\setminus \Sigma$ be a simple loop and let $p,q \in \gamma$ be two distinct points. Let $F_p = \pi^{-1}(p)$ and $F_q = \pi^{-1}(q)$ denote the fibres over $p$ and $q$. Finally, let $U$ denote the open set $U := Y \setminus F_q$.

 $\gamma$ divides $\bP^1$ into two pieces which are homeomorphic to discs $\Delta_1$ and $\Delta_2$. Let $\pi_1\colon Y_1 \to \Delta_1$ and $\pi_2\colon Y_2 \to \Delta_2$ denote the induced elliptic fibrations over discs. Then Proposition \ref{prop:H2pseudolattice} holds for the asymptotic charge maps $\phi_i\colon H_2(Y_i,F_p;\bZ) \to H_1(F_p;\bZ)$, for $i \in \{1,2\}$.

\begin{definition}\label{def:allowable} A simple loop $\gamma \subset \bP^1 \setminus \Sigma$ is called \emph{allowable} if Assumption \ref{ass:quasiLG} holds for both $\phi_1\colon H_2(Y_1,F_p;\bZ) \to H_1(F_p;\bZ)$ and $\phi_2\colon H_2(Y_2,F_p;\bZ) \to H_1(F_p;\bZ)$.
 \end{definition}

\begin{remark} We note that if Assumption \ref{ass:quasiLG}(1) holds for one of $\pi_1\colon Y_1 \to \Delta_1$ or $\pi_2\colon Y_2 \to \Delta_2$, then it automatically holds for both, as the anticlockwise monodromy around $\partial \Delta_1$ is the inverse of the anticlockwise monodromy around $\partial \Delta_2$ and $e(Y_1) + e(Y_2) = e(Y) = 24$.
\end{remark}

Assuming that $\gamma$ is allowable, Theorem \ref{thm:qdpfibration} implies that the maps $\phi_i\colon H_2(Y_i,F_p;\bZ) \to H_1(F_p;\bZ)$, for $i \in \{1,2\}$, are quasi del Pezzo homomorphisms of pseudolattices. Moreover, by Proposition \ref{prop:sphericalsurfacelike}, if we denote the canonical class for each $i \in \{1,2\}$ by $K_i \in \NS(H_2(Y_i,F_p;\bZ))$, then we have 
\[q(K_1,K_1) = 12-e(Y_1) = e(Y_2) - 12 = -q(K_2,K_2),\]
since $e(Y_1) + e(Y_2) = 24$. We are thus in the setting of Section \ref{sec:pseudolattice}; as in that setting, we will assume labels have been chosen so that $q(K_1,K_1) \geq 0$ and consequently $e(Y_1) \leq 12$. 

Let $f\colon \rmG \to \rmE$ denote the spherical homomorphism obtained by gluing the pseudolattices $\rmG_1 = H_2(Y_1,F_p;\bZ)$ and $\rmG_2 =H_2(Y_2,F_p;\bZ)$ along $\rmE \cong H_1(F_p;\bZ)$ via the map $\phi_1 \oright (-\phi_2)$, as in Section \ref{sec:pseudolattice}, and let $r$ be its right adjoint. Note that $\deg(\rmG) = 12-e(Y_1)$. The results of Section \ref{sec:pseudolattice} give rise to lattices $\overline{\mathrm{E}}$, $\mathrm{K}$, and $\rmM$, related by the short exact sequence \eqref{eq:ses}. 

\begin{lemma}\label{lem:GKforfibrations} $\rmG$ is naturally isomorphic to $H_2(U,F_p;\bZ)$ (as $\bZ$-modules) and the spherical homomorphism $f\colon \rmG \to \rmE$ coincides with the boundary $H_2(U,F_p;\bZ) \to H_1(F_p;\bZ)$ from the long exact sequence of a pair.

Under Poincar\'{e} duality, the lattice $\rmK$ is isomorphic to $H^2_c(U;\bZ)/\bZ F$ (as $\bZ$-modules), where $F$ denotes the class of a fibre in $H^2_c(U;\bZ)$.
\end{lemma}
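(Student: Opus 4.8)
The plan is to realise both isomorphisms geometrically, using the Mayer--Vietoris-type decomposition already in play and Poincar\'e--Lefschetz duality for the manifold-with-boundary $U$.

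First I would establish the statement about $\rmG$. Recall that $\gamma$ splits $\bP^1$ into discs $\Delta_1, \Delta_2$, and that $U = Y \setminus F_q$ deformation retracts onto $\pi^{-1}(\Delta_1 \cup \Delta_2')$, where $\Delta_2'$ is $\Delta_2$ with a small open neighbourhood of $q$ removed; in particular $U$ is homotopy equivalent to $Y_1 \cup_{\pi^{-1}(\gamma_\varepsilon)} Y_2'$ where $Y_2' \to \Delta_2'$ is an elliptic fibration over a disc and $\gamma_\varepsilon$ is a collar of $\gamma$. Since $\pi^{-1}(\gamma_\varepsilon)$ is a trivial fibration over a contractible set, the relative Mayer--Vietoris sequence for the pair $(U, F_p)$ (with $F_p$ sitting in the collar, exactly as in the proof of Lemma \ref{lem:decomposition}) gives a natural isomorphism of $\bZ$-modules $H_2(U,F_p;\bZ) \cong H_2(Y_1,F_p;\bZ) \oplus H_2(Y_2,F_p;\bZ) = \rmG$. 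Here I should note that $Y_2' \to \Delta_2'$ and $Y_2 \to \Delta_2$ have the same singular fibres (removing a neighbourhood of the smooth fibre over $q$ changes nothing), so $H_2(Y_2',F_p;\bZ) \cong H_2(Y_2,F_p;\bZ)$. Naturality of the connecting homomorphism in the long exact sequence of a pair, together with the fact that the asymptotic charge of $U$ restricted to each piece is the asymptotic charge $\phi_i$ of $Y_i$, then shows that the boundary map $H_2(U,F_p;\bZ) \to H_1(F_p;\bZ)$ is exactly $\phi_1 \oplus (-\phi_2)$ up to the orientation sign built into the definition of $\rmG_1 \oright_\rmE \rmG_2$ (the sign discrepancy is the same as the one handled in Section \ref{sec:pseudolattice} when gluing $f_1$ with $(-f_2)$). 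This identifies $f\colon \rmG \to \rmE$ with that boundary map.

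Next, the statement about $\rmK$. By definition $\rmK = \ker(f)$, and by the long exact sequence of the pair $(U,F_p)$, $\ker(\partial) = \im\big(H_2(U;\bZ) \to H_2(U,F_p;\bZ)\big)$, which is the cokernel of $H_2(F_p;\bZ) \to H_2(U;\bZ)$; since $F_p$ is a real surface mapping to the complex surface $U$, $H_2(F_p;\bZ) \cong \bZ$ generated by the fibre class $F$, and this map is injective (the fibre is not a boundary). So $\rmK \cong H_2(U;\bZ)/\bZ F$ as $\bZ$-modules. Now apply Poincar\'e--Lefschetz duality: $U$ is an oriented $4$-manifold with boundary $\partial U \cong F_p \times S^1$ (a neighbourhood of the removed fibre $F_q$), so $H_2(U;\bZ) \cong H^2_c(U;\bZ) = H^2(U, \partial U;\bZ)$; under this duality the fibre class $F \in H_2(U;\bZ)$ corresponds precisely to the class of a fibre in $H^2_c(U;\bZ)$ (both are represented by the cycle/cocycle supported near a generic fibre). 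Passing to the quotient gives $\rmK \cong H^2_c(U;\bZ)/\bZ F$ as claimed.

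The main obstacle I anticipate is bookkeeping with signs and orientations: ensuring that the Mayer--Vietoris gluing produces the gluing $\rmG_1 \oright_\rmE \rmG_2$ with the correct ``upper triangular'' off-diagonal block (matching Definition \ref{def:gluing}), and that the boundary map lands as $\phi_1 \oplus(-\phi_2)$ rather than $\phi_1 \oplus \phi_2$ — this is exactly the $(-f_2)$ convention adopted in Section \ref{sec:pseudolattice}, so it should be a matter of invoking the Seifert-pairing computation from the proof of Lemma \ref{lem:decomposition} rather than a genuine difficulty. A secondary point requiring a little care is checking that $F_p$ may legitimately be taken to lie inside the overlap $\pi^{-1}(\gamma_\varepsilon)$ so that the relative Mayer--Vietoris sequence applies with the same $F_p$ in all three terms; this is arranged exactly as in Lemma \ref{lem:decomposition} by placing $p$ on $\gamma$. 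Everything else is a routine application of standard algebraic topology, so I would keep the exposition brief and lean on the earlier lemmas.
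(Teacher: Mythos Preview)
Your proposal is correct and follows essentially the same approach as the paper: a relative Mayer--Vietoris decomposition of $(U,F_p)$ along a collar of $\gamma$ to identify $\rmG$ with $H_2(U,F_p;\bZ)$ and the boundary map with $\phi_1 - \phi_2$, followed by the long exact sequence of the pair and Poincar\'e duality to handle $\rmK$. You have also correctly anticipated the orientation/sign issue, which the paper singles out in a separate remark as the one genuine difference from the earlier Lemma~\ref{lem:decomposition}.
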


\begin{remark}\label{rem:perverseleray} From the viewpoint of the perverse Leray filtration $P_{\bullet}$ on $H^2_c(U;\bQ)$, as described in \cite[Sections 2 and 5.1.1]{mcss}, the class $F$ generates $P_1 H^2_c(U;\bQ)$, and the quotient $H^2_c(U;\bQ)/\bQ F$ may be identified with the graded piece $\Gr^P_1 H^2_c(U;\bQ)$.
\end{remark}

\begin{proof} The proof follows the same idea as the proof of Lemma \ref{lem:decomposition}, with a crucial difference (see Remark \ref{rem:orientations}).

Begin by choosing $\varepsilon > 0$ so that the open ball $B_{\varepsilon}(q) \subset \bP^1$ does not contain $p$ or any point of $\Sigma$. Then define $\overline{U} := Y \setminus \pi^{-1}B_{\varepsilon}(q)$. Note that $\overline{U}$ is a closed manifold with boundary and is a deformation retract of $U$, so $H_2(\overline{U},F_p;\bZ) \cong H_2(U,F_p;\bZ)$ and $H_2(\overline{U};\bZ) \cong H_2(U;\bZ) \cong H^2_c(U;\bZ)$, where the last isomorphism is Poincar\'{e} duality.

Now let $\gamma_{\delta}$ denote a closed $\delta$-neighbourhood of $\gamma$ inside $\bP^1\setminus B_{\varepsilon}(q)$, where $0 < \delta < \varepsilon$ is chosen small enough that $\gamma_{\delta}$ does not contain any point of $\Sigma$. Then define $\overline{\Delta_i} := (\Delta_i \cup \gamma_{\delta}) \setminus B_{\varepsilon}(q)$ and $\overline{Y}_i := \pi^{-1}(\overline{\Delta}_i)$. Note that, for each $i \in \{1,2\}$, $\overline{\Delta}_i$ is homeomorphic to a closed disc and $\overline{Y}_i$ is homeomorphic to $Y_i$. Note further that $\overline{Y}_1$ and $\overline{Y}_2$ cover $\overline{U}$, with $\overline{Y}_1 \cap \overline{Y}_2 = \pi^{-1}(\gamma_{\delta})$

With this setup, noting that $\gamma_{\delta}$ is simply connected and the restriction of the fibration $\pi$ to $\gamma_{\delta}$ is trivial, the Mayer-Vietoris sequence gives an isomorphism 
\begin{align*}
\rmG \cong H_2(\overline{Y}_1,F_p;\bZ) \oplus H_2(\overline{Y}_2,F_p;\bZ) & \longrightarrow H_2(\overline{U},F_p;\bZ) \cong H_2(U,F_p;\bZ)\\
(\alpha,\beta) &\longmapsto (\iota_1)_*(\alpha) - (\iota_2)_*(\beta),
\end{align*}
where $\iota_j\colon \overline{Y}_j \to \overline{U}$ denotes the inclusion. Moreover, by construction the boundary map is given by $\phi_1 - \phi_2$, where $\phi_i\colon H_2(Y_1,F_p;\bZ) \to H_1(F_p;\bZ)$ are the asymptotic charge maps. This proves the first part of the statement.

For the second part, note that the long exact sequence of a pair gives
\[0 \longrightarrow H_2(F_p;\bZ) \longrightarrow H_2(U;\bZ) {\longrightarrow} H_2(U,F_p;\bZ) \stackrel{\phi}{\longrightarrow} H_1(F_p;\bZ) \longrightarrow 0.\]
By definition, $\rmK$ is the kernel of the boundary map $\phi$. By exactness, this is isomorphic to the quotient of $H_2(U;\bZ)$ by the image of $H_2(F_p;\bZ)$, which is just the class of the fibre $F_p$. An application of Poincar\'{e} duality gives the result.
\end{proof}

\begin{remark} \label{rem:orientations} We note that this proof differs from the proof of Lemma \ref{lem:decomposition} in the choice of sign in the isomorphism coming from the Mayer-Vietoris sequence. This corresponds to a choice of orientation on the boundaries of the two pieces, which affects how they glue: in Lemma \ref{lem:decomposition}, we glue with the same orientation, whereas in the lemma above we glue with opposite orientations.
\end{remark}

\begin{definition}\label{def:bilinearonU} We may place a bilinear form on $H^2_c(U;\bZ)$ as follows. As in the proof of Lemma \ref{lem:GKforfibrations}, choose $\varepsilon > 0$ so that the open ball $B_{\varepsilon}(q) \subset \bP^1$ does not contain any point of $\Sigma$. Then define $\overline{U} := Y \setminus \pi^{-1}B_{\varepsilon}(q)$.

$\overline{U}$ is a closed manifold with boundary and is a deformation retract of $U$. We therefore have isomorphisms $H^2_c(U;\bZ) \cong H_2(U;\bZ) \cong H_2(\overline{U};\bZ)$, where the first isomorphism is Poincar\'{e} duality. As a manifold with boundary, $H_2(\overline{U};\bZ)$ has a topological intersection product, which defines a symmetric bilinear form on $H^2_c(U;\bZ)$ by the isomorphism above. \end{definition}

The class $F$ is totally degenerate with respect to this bilinear form, so this form descends to the quotient $H^2_c(U;\bZ)/\bZ F$.

\begin{lemma} \label{lem:GKforms} Under the isomorphism of Lemma \ref{lem:GKforfibrations}, the bilinear form induced on $\rmK$ from $\rmG$ coincides with the negative of the bilinear form on $H^2_c(U;\bZ)/\bZ F$ induced by Definition \ref{def:bilinearonU}.
\end{lemma}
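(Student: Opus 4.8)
The plan is to trace the two bilinear forms through the identifications established in Lemma~\ref{lem:GKforfibrations} and compare them on the level of the Seifert pairing versus the topological intersection product. Recall from Lemma~\ref{lem:GKforfibrations} that $\rmG \cong H_2(U,F_p;\bZ)$ with $f$ the boundary map, and that $\rmK = \ker(f)$ is identified via Poincar\'{e} duality with $H^2_c(U;\bZ)/\bZ F \cong H_2(\overline{U};\bZ)/\bZ F_p$. The key input is the observation, already used in the proof of Theorem~\ref{thm:qdpfibration} (the first bullet point), that if $u_1, u_2 \in \rmK = \ker(\phi)$ then the long exact sequence of a pair presents $u_1, u_2$ as classes coming from $H_2(U;\bZ)$ (equivalently $H_2(\overline{U};\bZ)$) with no boundary, and for such classes the Seifert pairing reduces to $\langle u_1, u_2 \rangle_{\mathrm{Sft}} = -\langle u_1, u_2 \rangle_{\mathrm{top}}$. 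So the main work is verifying that the Seifert pairing on $\rmG$ restricted to $\rmK$ really does reduce to the negative of the topological intersection product on $H_2(\overline{U};\bZ)$.

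First I would set up the precise chain of identifications: under the Mayer--Vietoris isomorphism $\rmG = \rmG_1 \oright_{\rmE} \rmG_2 \xrightarrow{\sim} H_2(\overline{U}, F_p;\bZ)$ from Lemma~\ref{lem:GKforfibrations}, the glued Seifert pairing on $\rmG_1 \oright_{\rmE} \rmG_2$ (Definition~\ref{def:gluing}) must be compared with a Seifert-type pairing on $H_2(\overline{U},F_p;\bZ)$. Here one should note that $\overline{U}$ itself is an elliptic fibration over a disc (over $\overline{\Delta} := \bP^1 \setminus B_\varepsilon(q)$, which is homeomorphic to a disc, with $F_p$ lying over a boundary point), so Proposition~\ref{prop:H2pseudolattice} applies and $H_2(\overline{U},F_p;\bZ)$ carries its own Seifert pairing $\langle\cdot,\cdot\rangle_{\mathrm{Sft}}$ defined by $-\langle v, (\psi_\pi)_* w\rangle_{\mathrm{top}}$. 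Since the Mayer--Vietoris decomposition of $\overline{U}$ into $\overline{Y}_1 \cup \overline{Y}_2$ is exactly a decomposition of the type handled by Lemma~\ref{lem:decomposition} (modulo the orientation subtlety of Remark~\ref{rem:orientations}, which is already absorbed into the sign conventions chosen in Lemma~\ref{lem:GKforfibrations}), the glued pairing on $\rmG_1 \oright_{\rmE} \rmG_2$ agrees with the Seifert pairing on $H_2(\overline{U},F_p;\bZ)$. Thus the bilinear form induced on $\rmK \subset \rmG$ is the restriction of this Seifert pairing to $\ker(\phi)$.

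Next I would invoke the reduction of the Seifert pairing on $\ker(\phi)$. For $u_1, u_2 \in \ker(\phi) = \ker\big(H_2(\overline{U},F_p;\bZ) \to H_1(F_p;\bZ)\big)$, the long exact sequence of the pair $(\overline{U}, F_p)$ exhibits them (up to the image of $H_2(F_p;\bZ) = \bZ F_p$) as absolute classes in $H_2(\overline{U};\bZ)$ with empty boundary; since $\psi_\pi$ is isotopic to the identity rel $F_p$ on such boundaryless classes (the monodromy diffeomorphism acts trivially away from the boundary), $(\psi_\pi)_* u_1 = u_1$ in $H_2(\overline{U};\bZ)$, and therefore
\[
\langle u_1, u_2 \rangle_{\mathrm{Sft}} \;=\; -\langle u_1, (\psi_\pi)_* u_2 \rangle_{\mathrm{top}} \;=\; -\langle u_1, u_2 \rangle_{\mathrm{top}},
\]
where $\langle\cdot,\cdot\rangle_{\mathrm{top}}$ is the intersection product on $H_2(\overline{U};\bZ)$. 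This is precisely the negative of the form on $H^2_c(U;\bZ)/\bZ F$ of Definition~\ref{def:bilinearonU}, after identifying $H^2_c(U;\bZ) \cong H_2(\overline{U};\bZ)$ by Poincar\'{e} duality and quotienting by $F \leftrightarrow F_p$. Finally I would check well-definedness on the quotient by $\bZ F_p$: $F_p \in \ker(\phi)$ lies in the image of $H_2(F_p;\bZ)$, hence is totally degenerate for both $\langle\cdot,\cdot\rangle_{\mathrm{Sft}}$ (it is the class $\bp$-type isotropic vector, being in $\im(r)$-adjacent position) and for $\langle\cdot,\cdot\rangle_{\mathrm{top}}$ on $H_2(\overline{U};\bZ)$ (a fibre class squares to zero and meets every boundaryless class trivially), so the identification descends to $\rmK/\bZ F_p \cong H^2_c(U;\bZ)/\bZ F$.

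\textbf{Main obstacle.} The delicate point is the claim that the monodromy diffeomorphism $\psi_\pi$ acts trivially (up to isotopy fixing $F_p$) on classes in $\ker(\phi)$, i.e.\ that the Seifert pairing genuinely collapses to $\pm$ the topological intersection form there. In the $\rmI_1$-only setting of \cite{pdpslf} this is Auroux's lemma (\cite[Lemma~4.8]{pdpslf}, restated in the proof of Theorem~\ref{thm:qdpfibration}), and the argument given there — that a class with no boundary in $H_1(F_p;\bZ)$ comes from $H_2$ of the total space, on which $(\psi_\pi)_*$ acts as the identity since $\psi_\pi$ is supported near the boundary — is purely topological and does not use the Kodaira type of the singular fibres, so it carries over verbatim. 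I would make this citation explicit and confirm that no $\rmI_1$-specific hypothesis sneaks in. The only other thing to be careful about is bookkeeping of the sign/orientation conventions flagged in Remark~\ref{rem:orientations}: one must use the \emph{opposite}-orientation gluing that produced the isomorphism of Lemma~\ref{lem:GKforfibrations}, so that the glued Seifert pairing matches the intrinsic Seifert pairing of $\overline{U}$ rather than its negative; given the care already taken in Lemma~\ref{lem:GKforfibrations} this is routine to track.
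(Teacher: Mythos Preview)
There is a concrete gap in your argument. You assert that $\overline{U}$ is an elliptic fibration over the disc $\bP^1 \setminus B_\varepsilon(q)$ ``with $F_p$ lying over a boundary point,'' but this is false: the boundary of that disc is $\partial B_\varepsilon(q)$, whereas $p$ lies on the loop $\gamma$, which runs through the \emph{interior} of $\bP^1 \setminus B_\varepsilon(q)$ (it meets $B_\varepsilon(q)$ only near $q \neq p$). Consequently there is no Seifert pairing on $H_2(\overline{U}, F_p; \bZ)$ in the sense of Section~\ref{sec:fibrationsondiscs}, so neither Proposition~\ref{prop:H2pseudolattice} nor Lemma~\ref{lem:decomposition} applies as stated, and the Auroux reduction has nothing to act on. You might try to repair this by sliding $p$ along $\gamma$ to a point $p'$ on $\partial B_\varepsilon(q)$; but then you must check that the intrinsic Seifert pairing on $H_2(\overline{U}, F_{p'}; \bZ)$ (defined by rotating $\partial B_\varepsilon(q)$) agrees with the glued form on $\rmG = \rmG_1 \oright_\rmE \rmG_2$ (built from the Seifert pairings on the $Y_i$, which rotate $\gamma$, with the sign conventions of Remark~\ref{rem:orientations}). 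These involve rotation around \emph{different} circles, and matching them is not a formality --- it is an argument of roughly the same weight as the one you are trying to shortcut.

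The paper avoids this by never invoking an intrinsic Seifert structure on $\overline{U}$. It computes directly from the glued form: writing $\alpha = (\alpha_1,\alpha_2)$, $\beta = (\beta_1,\beta_2) \in \rmK$ and using $\phi_1(\alpha_1)=\phi_2(\alpha_2)$, $\phi_1(\beta_1)=\phi_2(\beta_2)$, it applies \cite[Lemma~2.21]{pdpslf} and the Serre-operator identity $S=(\psi_{-2\pi})_*$ from Proposition~\ref{prop:H2pseudolattice}(2) on each piece to obtain
\[
\langle \alpha,\beta\rangle_{\rmG} \;=\; -\langle \alpha_1,(\psi_{-\pi})_*\beta_1\rangle_{\mathrm{top}} \;-\; \langle \alpha_2,(\psi_{\pi})_*\beta_2\rangle_{\mathrm{top}}.
\]
The key step is then that $\psi_{-\pi}$ on $Y_1$ and $\psi_{\pi}$ on $Y_2$ agree on the common boundary (both are a half-turn along $\gamma$ in the same direction), so they patch to a single homeomorphism $\psi$ of $\overline{U}$, giving $\langle \alpha,\beta\rangle_{\rmG} = -\langle \alpha,\psi_*\beta\rangle_{\mathrm{top}} = -\langle \alpha,\beta\rangle_{\mathrm{top}}$. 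This is morally the Auroux reduction you have in mind, but carried out by hand along $\gamma$ rather than via a Seifert pairing on $\overline{U}$ that is not actually available.
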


\begin{proof} Let $\alpha,\beta \in \rmK$ be any two elements. Expressing $\rmK$ as a sublattice of $\rmG$, by the proof of Lemma \ref{lem:GKforfibrations} we may decompose $\alpha = (\iota_1)_*(\alpha_1) - (\iota_2)_*(\alpha_2)$ and $\beta = (\iota_1)_*(\beta_1) - (\iota_2)_*(\beta_2)$ for $\alpha_j,\beta_j \in H_2(Y_j,F_p;\bZ)$. Note that, as $\alpha,\beta \in \rmK = \ker(\phi_1 \oright (-\phi_2))$, we have $\phi_1(\alpha_1) = \phi_2(\alpha_2)$ and $\phi_1(\beta_1) = \phi_2(\beta_2)$. Now we compute
\begin{align*}
\langle \alpha,\beta \rangle_{\rmG} &= \langle \alpha_1,\beta_1 \rangle_{\mathrm{Sft}} - \langle \phi_1(\alpha_1),\phi_2(\beta_2)\rangle_{\rmE} + \langle \alpha_2,\beta_2 \rangle_{\mathrm{Sft}}\\
&= \langle \alpha_1,\beta_1 \rangle_{\mathrm{Sft}} - \langle \phi_1(\alpha_1),\phi_1(\beta_1)\rangle_{\rmE} + \langle \alpha_2,\beta_2 \rangle_{\mathrm{Sft}}\\
&= \langle \beta_1,\alpha_1 \rangle_{\mathrm{Sft}} + \langle \alpha_2,\beta_2 \rangle_{\mathrm{Sft}},
\end{align*}
where the final equality comes from \cite[Lemma 2.21]{pdpslf}. Now, using the fact that the Serre operator on $H_2(Y_1,F_p;\bZ)$ is given by $(\psi_{-2\pi})_*$ (Proposition \ref{prop:H2pseudolattice}), we obtain
\begin{align*}
\langle \alpha,\beta \rangle_{\rmG} &= \langle \beta_1,\alpha_1 \rangle_{\mathrm{Sft}} + \langle \alpha_2,\beta_2 \rangle_{\mathrm{Sft}}\\
&= \langle \alpha_1, (\psi_{-2\pi})_*\beta_1 \rangle_{\mathrm{Sft}} + \langle \alpha_2,\beta_2 \rangle_{\mathrm{Sft}}\\
&= -\langle \alpha_1, (\psi_{-\pi})_*\beta_1 \rangle_{\mathrm{top}} - \langle \alpha_2,(\psi_{\pi})_*\beta_2 \rangle_{\mathrm{top}}.
\end{align*}
Now, note that $\psi_{-\pi}$ on $Y_1$ and $\psi_{\pi}$ on $Y_2$ agree on the boundary $Y_1 \cap Y_2$, so $(\psi_{-\pi})_*$ and $(\psi_{\pi})_*$ are induced by a homeomorphism $\psi$ on $Y$ that ``rotates the boundary''. Hence
\begin{align*}
\langle \alpha,\beta \rangle_{\rmG} &= -\langle \alpha_1, (\psi_{-\pi})_*\beta_1 \rangle_{\mathrm{top}} - \langle \alpha_2,(\psi_{\pi})_*\beta_2 \rangle_{\mathrm{top}}\\
&= -\langle \alpha, \psi_*\beta \rangle_{\mathrm{top}}\\
&= -\langle \alpha, \beta \rangle_{\mathrm{top}},
\end{align*}
where the topological intersection product involving $\alpha$ and $\beta$ is the one on $H_2(U;\bZ)$ given by Definition \ref{def:bilinearonU}.
\end{proof}

This result allows us to identify the exact sequence \eqref{eq:ses} in this setting.

\begin{proposition} There is a commutative diagram of $\bZ$-modules
\[
\begin{tikzcd}[cramped,column sep=1.6em]
0 \ar[r] & \overline{\mathrm{E}} \ar[d] \ar[r]& \mathrm{K}\ar[r]\ar[d] & \rmM \ar[d]\ar{r} & 0 \\
0 \ar[r] & H^1(F_p;\bZ) \ar[r] & H^2_c(U;\bZ)/\bZ F \ar[r] &  {F}^{\perp}/\bZ F \ar[r] & 0
\end{tikzcd}\]
where the vertical arrows are isomorphisms and  the orthogonal complement $F^{\perp}$ is taken inside $H^2(Y;\bZ)$. 

Moreover, the symmetric bilinear forms on $\mathrm{K}$ and $\rmM$ are identified with the negatives of the bilinear form on $H^2_c(U;\bZ)/\bZ F$ induced by Definition \ref{def:bilinearonU} and the form on $F^{\perp}/\bZ F$ induced by the cup product on $H^2(Y;\bZ)$, respectively.
\end{proposition}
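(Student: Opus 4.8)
The plan is to deduce the proposition from the abstract exact sequence \eqref{eq:ses}, the two preceding lemmas, and the long exact sequence of the pair $(U,F_p)$. First I would recall from Lemma \ref{lem:GKforfibrations} that $\rmG \cong H_2(U,F_p;\bZ)$ with $f$ the boundary map $\partial\colon H_2(U,F_p;\bZ)\to H_1(F_p;\bZ)$, so that $\rmK=\ker(f)$ is identified (via Poincaré duality) with $H^2_c(U;\bZ)/\bZ F$; and from Lemma \ref{lem:GKforms} that the induced form on $\rmK$ is the negative of the form of Definition \ref{def:bilinearonU}. So the middle vertical arrow is already in hand, including compatibility of forms. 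The task is then to identify $\overline{\rmE}$, $\rmM$, and the maps.

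Next I would identify the left-hand term. By definition $\overline{\rmE}$ is the saturation of $\im(r)\subset\rmK$, which by Section \ref{sec:pseudolattice} is isomorphic to $\bZ^2$ with the zero form. On the topological side, the long exact sequence of the pair $(U,F_p)$ reads
\[
0\longrightarrow H_2(F_p;\bZ)\longrightarrow H_2(U;\bZ)\longrightarrow H_2(U,F_p;\bZ)\stackrel{\phi}{\longrightarrow} H_1(F_p;\bZ)\longrightarrow 0,
\]
and passing to $\rmK=\ker\phi=H_2(U;\bZ)/H_2(F_p;\bZ)$ (i.e. $H^2_c(U;\bZ)/\bZ F$) the image of $H_1(U;\bZ)$ — equivalently, under Poincaré–Lefschetz duality, the image of $H^1(F_p;\bZ)\to H^2_c(U;\bZ)/\bZ F$ — is a rank $2$ totally isotropic submodule. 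I would check that this image coincides with $\overline{\rmE}$ by verifying that it is exactly $\im(r)$ after saturation: the adjoint $r$ sends $v\in\rmE=H_1(F_p;\bZ)$ to a relative $2$-cycle whose boundary is $v$ and which, up to the image of $H_2(F_p;\bZ)$, is canonically the Poincaré dual of the corresponding class in $H^1$; the formula \eqref{eq:r} together with Lemma \ref{lem:r(E)isotrivial} shows $\im(r)\subset\rmK$ is totally degenerate of rank $2$, matching $H^1(F_p;\bZ)$. Hence $\rmM=\rmK/\overline{\rmE}$ corresponds to $(H^2_c(U;\bZ)/\bZ F)/(\text{image of }H^1(F_p;\bZ))$, which by exactness of the cohomology sequence of the pair and Poincaré duality on the closed-with-boundary manifold $\overline U$ is $F^\perp/\bZ F$ inside $H^2(Y;\bZ)$ (the map $H^2_c(U)\to H^2(Y)$ has image $F^\perp$ and kernel the image of $H^1(F_p)$). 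This gives the right-hand vertical isomorphism, and commutativity of the diagram is then automatic since every vertical map is induced by the same identifications and every horizontal map is the relevant natural (inclusion or quotient) map.

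Finally I would address the compatibility of bilinear forms. The form on $\rmM$ is by construction the one descending from $\rmK$, which by Lemma \ref{lem:GKforms} is the negative of the Definition \ref{def:bilinearonU} form; and the latter, on $F^\perp/\bZ F$, is by definition induced from the topological intersection product on $H_2(\overline U;\bZ)$, which under $H^2_c(U;\bZ)\cong H_2(\overline U;\bZ)\to H^2(Y;\bZ)$ is compatible with the cup product form on $H^2(Y;\bZ)$ restricted to $F^\perp$ (this is the standard compatibility of intersection forms under the map from compactly supported to ordinary cohomology for a surface obtained by filling in boundary tori). So the form on $\rmM$ is the negative of the cup-product form on $F^\perp/\bZ F$, as claimed.

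The step I expect to be the main obstacle is the precise identification of $\overline{\rmE}=\mathrm{sat}(\im r)$ with the image of $H^1(F_p;\bZ)$ in $H^2_c(U;\bZ)/\bZ F$, together with checking it is genuinely the \emph{saturated} submodule: one must confirm that $r$, given abstractly by \eqref{eq:r}, agrees under Poincaré–Lefschetz duality with the topological coboundary/dual-of-inclusion map, and that no index-$2$ or similar discrepancy arises (cf. the integrality subtleties in Section \ref{sec:pseudolattice} and the non-primitivity examples mentioned after Definition \ref{def:primitivedegeneration}). I would handle this by tracing both maps through the Mayer–Vietoris decomposition of Lemma \ref{lem:GKforfibrations} and using that, for a single Kodaira fibre, the explicit bases in Table \ref{tab:kodairafibres} make $r(a),r(b)$ computable and manifestly match the duals of the two homology classes of the fibre $F_p$ spanning $H^1(F_p;\bZ)$.
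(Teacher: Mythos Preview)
Your overall architecture matches the paper's: use Lemmas~\ref{lem:GKforfibrations} and~\ref{lem:GKforms} for the middle column, then identify $\overline{\rmE}$ inside $\rmK\cong H^2_c(U;\bZ)/\bZ F$ with the image of $H^1$ of a fibre, and read off $\rmM$ from the long exact sequence with compact supports. The compatibility of forms is handled exactly as you say.

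The gap is in the step you yourself flag as the obstacle. First, two small but genuine errors: the exact sequence $H^1(\text{fibre})\to H^2_c(U)\to H^2(Y)\to H^2(\text{fibre})$ comes from the closed complement $F_q$ of $U=Y\setminus F_q$, not from $F_p$; and your description of $r(v)$ as ``a relative $2$-cycle whose boundary is $v$'' is wrong, since $r(v)\in\rmK=\ker f$ has vanishing asymptotic charge. More importantly, your proposed verification (trace through Mayer--Vietoris and compare with the explicit single-fibre bases of Table~\ref{tab:kodairafibres}) does not give what is needed: those tables describe the pieces $H_2(W_i,F;\bZ)$ around individual singular fibres, whereas $r$ here is the adjoint for the glued map $f=\phi_1\oright(-\phi_2)$, and the classes you need live near the \emph{smooth} fibre $F_q$.

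The paper's argument for this step is short and geometric. From \eqref{eq:r} one has $r(v)=(r_1T_{f_2}(v),-r_2(v))$; interpreting $r_i$ as parallel transport of $v\in H_1(F_p;\bZ)$ around $\partial\Delta_i$, the class $r(v)$ is the torus swept out by transporting $v$ around $\partial\Delta_1$ and then back around $\partial\Delta_2$ with the opposite orientation. Up to homotopy this is transport of $v$ around a small loop encircling $q$; since $q\notin\Sigma$ the monodromy there is trivial, so this torus closes up to a class in $H_2(U;\bZ)$, and the collection of all such tori is precisely the Poincar\'e dual of $\im\!\big(H^1(F_q;\bZ)\to H^2_c(U;\bZ)\big)$. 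This gives $\im(r)$ equal to (not merely contained in) that image, and saturation is automatic because the cokernel embeds in the torsion-free group $H^2(Y;\bZ)$. Replacing your sketch of the $\overline{\rmE}$ identification with this argument fixes the proof.
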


\begin{remark} From the viewpoint of the perverse Leray filtration $P_{\bullet}$ on $H^2(Y;\bQ)$, from \cite[Proposition 2.16]{mcss} we have $F^{\perp} \cong P_2H^2(Y;\bQ)$ and $F$ is a generator of $P_1H^2(Y;\bQ)$, so $F^{\perp}/\bQ F$ is isomorphic to the graded piece $\Gr_2^PH^2(Y;\bQ)$. Putting this together with Remark \ref{rem:perverseleray}, we see that the sequence in the proposition above appears as a perverse graded piece in the mirror Clemens-Schmid sequence; see \cite[Sections 3 and 5.1.1]{mcss}.
\end{remark}

\begin{proof} The long exact sequence of cohomology with compact support in this setting reads
\[0 \longrightarrow H^1(F_q;\bZ) \longrightarrow H^2_c(U;\bZ) \longrightarrow H^2(Y;\bZ) \longrightarrow H^2(F_q;\bZ) \longrightarrow 0\]
and the kernel of the map $H^2(Y;\bZ) \to H^2(F_q;\bZ)$ is $F^{\perp}$. Moreover, by construction, the bilinear form on $H^2_c(U;\bZ)$ is compatible with the cup product on $H^2(Y;\bZ)$, as both are Poincar\'{e} dual to the topological intersection pairing.

Consequently, in light of the result of Lemma \ref{lem:GKforms}, to prove the proposition it suffices to show that $\overline{\rmE} \subset {\rmK}$ coincides with the image of $H^1(F_q;\bZ) \to H^2_c(U;\bZ)$ under the isomorphism of Lemma \ref{lem:GKforfibrations}. By definition, $\overline{E}$ is the saturation of the image of the map $r(v) = (r_1T_{-f_2}(v),-r_2(v))$, which gives the class $\beta \in H_2(U,F_p;\bZ)$ obtained by parallel transporting $v \in \rmE \cong H_1(F_p;\bZ)$ around $\partial \Delta_1$, then back along $\partial \Delta_2$ in the opposite direction. The class $\beta$ is homotopy equivalent to the class obtained by parallel transporting $v \in H_1(F_p;\bZ)$ around a small loop containing $q$. As monodromy around $q$ acts trivially on $H_1(F_p;\bZ)$, the class $\beta$ has no boundary, so lifts to $H_2(U;\bZ)$. But the set of all such classes is Poincar\'{e} dual to the image of $H^1(F_q;\bZ) \to H^2_c(U;\bZ)$.
\end{proof}

\subsection{Point-like vectors and the N\'{e}ron-Severi lattice}\label{sec:NSforfibrations}

Next we ask how the ideas of Section \ref{sec:pointsandNS} arise in this setting. 

Let $r_i\colon H_1(F_p;\bZ) \to H_2(Y_i,F_p;\bZ)$ denote the right adjoint of the asymptotic charge map $\phi_i$. As $\phi_i$ is a quasi del Pezzo homomorphism (by Theorem \ref{thm:qdpfibration}), $H_2(Y_i,F_p;\bZ)$ is surface-like with point-like vector $r_i(a)$. 

We can give a topological description of the class $r_i(a)$ as follows. As the class $a \in H_1(F_p;\bZ)$ is monodromy invariant (by Assumption \ref{ass:quasiLG}), define $\alpha \in H_2(Y_i,F_p;\bZ)$ to be the class of a torus obtained by parallel transporting $a$ around $\partial\Delta_i$. Let $\beta \in H_2(Y_i,F_p;\bZ)$ be any other element. Then, since $\alpha$ is preserved under the diffeomorphism $(\psi_{\pi})_*$, we have
\[\langle \beta, -\alpha \rangle_{\mathrm{Sft}} = \langle \beta, (\psi_{\pi})_*\alpha\rangle_{\mathrm{top}} = \langle \beta, \alpha\rangle_{\mathrm{top}} = \langle \phi_i(\beta), a\rangle_{\rmE},\]
where the last equality follows because we can assume $\beta$ and $\alpha$ intersect only on $F_p$. Therefore, by uniqueness of the right adjoint, we have $r_i(a) = - \alpha$.

The N\'{e}ron-Severi group of $\rmG_i$ is given by
\[\NS(\rmG_i) = r_i(a)^{\perp}/r_i(a) .\]
From the right adjoint property, for all $v \in H_2(Y_i,F_p;\bZ)$ we have $\langle v,r_i(a) \rangle_{\rmG_i} = \langle \phi_i(v),a \rangle_{\rmE}$. Moreover, from the properties of the bilinear form on $\rmE \cong H_1(F_p;\bZ)$, we have $\langle \phi_i(v),a \rangle_{\rmE}$ if and only if $\phi_i(v)$ is a multiple of $a$. This gives an explicit description of the N\'{e}ron-Severi group
\[ \NS(\rmG_i)= \{v \in H_2(Y_i,F_p;\bZ)\mid \phi_i(v) \text{ is a multiple of } a\}/r_1(a).\]

Recall next that, from the proof of Lemma \ref{lem:GKforfibrations}, we have isomorphisms of $\bZ$-modules
\[\rmG \cong H_2(U,F_p;\bZ) \cong H_2(Y_1,F_p;\bZ)\oplus H_2(Y_2,F_p;\bZ).\]
In this description, $\Psi$ can be taken to be the sublattice of $\mathrm{G}$ generated by the classes of $\tau_1:=(-r_1(a),0)$ and $\tau_2:=(0,-r_2(a))$. Under the isomorphism $\rmK \cong H^2_c(U;\bZ)/\bZ F$, these generators are the classes of the tori in $U$ obtained by parallel transporting $a$ around the loop $\gamma$, passing to one side of the puncture $q$ to obtain $\tau_1$ and to the other side for $\tau_2$. We thus obtain an isomorphism
\[\Psi^{\perp}_{\rmK}/\Psi \cong (\bZ \tau_1 \oplus \bZ \tau_2)^{\perp}/(\bZ \tau_1 \oplus \bZ \tau_2),\]
where the orthogonal complement is taken in $\rmK \cong H^2_c(U;\bZ)/\bZ F$.

By Proposition \ref{prop:NS}, both $\tau_1$ and $\tau_2$ descend to the same primitive isotropic element $\overline{\tau}$ of $\rmM \cong {F}^{\perp}/\bZ F$. Topologically, $\overline{\tau}$ is induced by the class of the torus obtained by parallel transporting $a$ around the loop $\gamma$. Let $\tau$ be any lift of $\overline{\tau}$ to $F^{\perp} \subset H^2(Y;\bZ)$. Noting that $\bZ F \oplus \bZ \tau$ is a rank $2$ isotropic sublattice of $H^2(Y;\bZ)$ (and that this sublattice is independent of the choice of lift $\tau$), we obtain an isomorphism
\[\NS(\rmM) \cong (\bZ F \oplus \bZ \tau)^{\perp}/(\bZ F \oplus \bZ \tau),\]
where the orthogonal complement is taken in $H^2(Y;\bZ)$.

Finally, by Lemma \ref{lem:zetaisotrivial}, we see that the kernel of the map $\Psi^{\perp}_{\rmK}/\Psi \to \NS(\rmM)$ is generated by $\zeta$, where the sublattice $\bZ \zeta  \subset \Psi^{\perp}_{\rmK}/\Psi $ contains the nonzero element $(-K_{\rmG_1},K_{\rmG_2}) = (r_1(b),-r_2(b))$. Putting things together, we thus obtain the following result.

\begin{proposition} \label{prop:NSforfibrations}
In the setting above, the exact sequence
\[0 \longrightarrow \bZ \zeta \longrightarrow \Psi^{\perp}_{\rmK}/\Psi \longrightarrow \NS(\rmM) \longrightarrow 0\] 
from Proposition \ref{prop:NS} is realised as
\[0 \longrightarrow \bZ  \zeta \longrightarrow (\bZ \tau_1 \oplus \bZ \tau_2)^{\perp}/(\bZ \tau_1 \oplus \bZ \tau_2) \longrightarrow (\bZ F \oplus \bZ \tau)^{\perp}/(\bZ F \oplus \bZ \tau) \longrightarrow 0,\]
where the orthogonal complement in the middle term is taken in $\rmK \cong H^2_c(U;\bZ)/\bZ F$ and the orthogonal complement in the last term is taken in $H^2(Y;\bZ)$. The symmetric bilinear forms on the last two terms are induced by the bilinear form on $H^2_c(U;\bZ)$ given by Definition \ref{def:bilinearonU} and the cup product on $H^2(Y;\bZ)$, respectively.
\end{proposition}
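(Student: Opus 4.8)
The statement to prove is Proposition \ref{prop:NSforfibrations}, which unpacks the abstract exact sequence from Proposition \ref{prop:NS} in the concrete topological setting of an elliptic K3 surface $Y \to \bP^1$ with an allowable splitting loop $\gamma$. The plan is as follows. Most of the work has in fact already been done in the running discussion that precedes the proposition; the proof mainly needs to assemble the topological identifications established there and check that the maps in the two sequences agree.

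First I would recall the identifications set up in the preceding paragraphs: the isomorphisms $\rmG \cong H_2(U,F_p;\bZ)$ and $\rmK \cong H^2_c(U;\bZ)/\bZ F$ from Lemma \ref{lem:GKforfibrations}, the identification $\rmM \cong F^{\perp}/\bZ F$ from the preceding proposition, and the topological descriptions of the point-like vectors $r_i(a) = -\alpha_i$ (parallel transport of $a$ around $\partial \Delta_i$). This gives $\Psi = \bZ\tau_1 \oplus \bZ\tau_2$ with $\tau_i = (0,\ldots,-r_i(a),\ldots,0)$, realised topologically as the tori obtained by parallel transporting $a$ around $\gamma$ on either side of the puncture $q$. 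Then $\Psi^{\perp}_{\rmG}/\Psi$, $\Psi^{\perp}_{\rmK}/\Psi$, and $\NS(\rmM)$ inherit the stated descriptions: taking the orthogonal complement of $\bZ\tau_1 \oplus \bZ\tau_2$ inside $\rmK \cong H^2_c(U;\bZ)/\bZ F$ for the middle term, and of $\bZ F \oplus \bZ\tau$ inside $H^2(Y;\bZ)$ for the last term, where $\tau$ is any lift of the common image $\overline\tau$ of $\tau_1,\tau_2$ in $\rmM$. One should note that $\bZ F \oplus \bZ\tau$ is isotropic and independent of the choice of lift $\tau$ (since $F$ is totally degenerate), so this quotient is well-defined.

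Next I would verify that the abstract maps of Proposition \ref{prop:NS} agree with the topological ones. The inclusion $\Psi^{\perp}_{\rmK}/\Psi \to$ (middle term of the second row) is tautological from $\rmK \cong H^2_c(U;\bZ)/\bZ F$. The surjection onto $\NS(\rmM) \cong F^{\perp}/\bZ F$ is the one induced by $\rmK \to \rmM$, which under the identifications is the map $H^2_c(U;\bZ)/\bZ F \to F^{\perp}/\bZ F$ from the previous proposition's commutative square; the point-like vectors $\tau_1,\tau_2$ map to $\overline\tau$, so this descends to the stated quotient map. The kernel $\bZ\zeta$ is, by Lemma \ref{lem:zetaisotrivial}, generated by a class whose image in $\Psi^{\perp}_{\rmG}/\Psi \cong \NS(\rmG_1)\oplus\NS(\rmG_2)$ is $(-K_{\rmG_1},K_{\rmG_2})$; and by the discussion of $r(b)$ together with Proposition \ref{prop:sphericalsurfacelike}, $-K_{\rmG_i} = [r_i(b)]$, so $\zeta$ is represented by $(r_1(b),-r_2(b))$, as claimed. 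Finally, compatibility of the bilinear forms is immediate from Lemma \ref{lem:GKforms} (the form on $\rmK$ is the negative of the Definition \ref{def:bilinearonU} form on $H^2_c(U;\bZ)/\bZ F$) and the corresponding statement for $\rmM$ in the preceding proposition (the form on $\rmM$ is identified with the cup-product form on $F^{\perp}/\bZ F$), bearing in mind the sign conventions built into the definition of $\NS$ of a surface-like pseudolattice.

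The main obstacle — such as it is — is bookkeeping rather than conceptual: one must be scrupulous about signs and orientations, in particular the opposite-orientation gluing flagged in Remark \ref{rem:orientations} (versus the same-orientation gluing of Lemma \ref{lem:decomposition}), and the double sign flip relating the Seifert pairing, the topological intersection form, and the pseudolattice bilinear form on $\NS$. I would also take care to check that the topological class $\overline\tau$ genuinely lands in $F^{\perp}$ inside $H^2(Y;\bZ)$ — this follows since parallel transport of $a$ around $\gamma$ has no boundary and its Poincaré dual pairs to zero with $F$ — and that the class $\tau$ is independent of the chosen lift at the level of the sublattice $\bZ F \oplus \bZ\tau$. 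Since all the constituent facts are already in place, the proof is essentially a statement that the square of identifications commutes, and I would present it as a short assembly of the preceding lemmas.
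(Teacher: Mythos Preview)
Your proposal is correct and follows essentially the same approach as the paper. Indeed, the paper presents this proposition as a summary (``Putting things together, we thus obtain the following result'') of the running discussion in the preceding paragraphs, and your plan to assemble the identifications from Lemmas \ref{lem:GKforfibrations}, \ref{lem:GKforms}, the unnamed proposition identifying $\rmM \cong F^{\perp}/\bZ F$, and the topological description of $\Psi$ and $\overline{\tau}$ is exactly what is intended; your attention to the sign and orientation bookkeeping (Remark \ref{rem:orientations}) is appropriate and consistent with the paper's treatment.
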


\subsection{Lattice polarisations} \label{sec:fibrationpolarisation}

Let $\check{L}$ be a nondegenerate primitive sublattice of the K3 lattice $\Lambda_{\text{K3}} \cong H \oplus H \oplus H \oplus E_8 \oplus E_8$ of signature $(1,\rank(\check{L})-1)$. Let $Y$ be a K3 surface which admits an $\check{L}$-quasi\-polarisation $\check{L} \subset \NS(Y)$ (in the sense of Definition \ref{def:Lquasipol}) and an elliptic fibration $\pi \colon Y \to \bP^1$. Assume that $\gamma \subset \bP^1$ is an allowable loop; then we may define a class $\overline{\tau} \subset F^{\perp}/\bZ F$, as in the previous section.

\begin{definition}\label{def:compatible} The $\check{L}$-quasi\-polarisation is \emph{compatible with $\pi$ and $\gamma$} if $\check{L} \subset \NS(Y)$ contains the class $F$ of a fibre of $\pi$, and the orthogonal complement $\check{L}^{\perp} \subset H^2(Y;\bZ)$ contains the class $\tau$ of a lift of $\overline{\tau}$.
\end{definition}

\begin{remark} As $\check{L}$ is non-degenerate and $F \in \check{L}$, if a lift $\tau \in \check{L}^{\perp}$ of $\overline{\tau}$ exists, then it is unique, as any other lift $\tau + nF$, for some $n \neq 0$, will not be orthogonal to $\check{L}$.
\end{remark}

We will henceforth assume that the $\check{L}$-quasi\-polarisation on $Y$ is compatible with $\pi$ and $\gamma$, and we will let $\tau \in \check{L}^{\perp}$ denote the class from Definition \ref{def:compatible}. Let $\Gamma$ denote the negative definite lattice $\Gamma := F^{\perp}_{\check{L}}/\bZ F$, where $F^{\perp}_{\check{L}}$ denotes the orthogonal complement of $F$ taken in $\check{L}$.

\begin{lemma} \label{lem:compatiblepolarisationembedding} There is an embedding $\Gamma \hookrightarrow (\bZ F \oplus \bZ \tau)^{\perp}/(\bZ F \oplus \bZ \tau)$.
\end{lemma}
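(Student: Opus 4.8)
\textbf{Proof proposal for Lemma~\ref{lem:compatiblepolarisationembedding}.}
The plan is to produce the embedding $\Gamma \hookrightarrow (\bZ F \oplus \bZ\tau)^{\perp}/(\bZ F \oplus \bZ\tau)$ by exhibiting it first inside $H^2(Y;\bZ)$ and then checking it descends to the quotient lattice appearing in Proposition~\ref{prop:NSforfibrations}. Recall $\Gamma := F^{\perp}_{\check{L}}/\bZ F$, where $F^{\perp}_{\check{L}}$ is the orthogonal complement of $F$ taken inside $\check{L}$. Since $F \in \check{L}$ and $\check{L}$ is non-degenerate of signature $(1,\rank(\check{L})-1)$ with $F$ isotropic, $F^{\perp}_{\check{L}}$ is a primitive sublattice of $\check{L}$ containing $F$, and $\bZ F$ is totally degenerate in it, so the quotient $\Gamma$ is a genuine lattice; negative definiteness holds because $\check{L}$ has a one-dimensional positive part spanned (over $\bR$) by a vector pairing nontrivially with $F$.

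First I would observe that $F^{\perp}_{\check{L}} \subset \check{L} \subset \NS(Y) \subset H^2(Y;\bZ)$, and that by Definition~\ref{def:compatible} the class $\tau$ lies in $\check{L}^{\perp} \subset H^2(Y;\bZ)$. Hence every element of $F^{\perp}_{\check{L}}$ is orthogonal to both $F$ (by definition of $F^{\perp}_{\check{L}}$) and $\tau$ (since $\tau \perp \check{L} \supseteq F^{\perp}_{\check{L}}$), so $F^{\perp}_{\check{L}} \subseteq (\bZ F \oplus \bZ\tau)^{\perp}$, the orthogonal complement being taken in $H^2(Y;\bZ)$ as in Proposition~\ref{prop:NSforfibrations}. (One should also note here that $\bZ F \oplus \bZ\tau$ is indeed a rank-two isotropic sublattice of $H^2(Y;\bZ)$, which was already recorded in the discussion preceding Proposition~\ref{prop:NSforfibrations}, and in particular that $\tau$ is not a multiple of $F$ modulo torsion — $\overline{\tau}$ is primitive by construction.) Passing to the quotient by $\bZ F \oplus \bZ\tau$, the inclusion $F^{\perp}_{\check{L}} \hookrightarrow (\bZ F \oplus \bZ\tau)^{\perp}$ induces a homomorphism $F^{\perp}_{\check{L}}/(F^{\perp}_{\check{L}} \cap (\bZ F \oplus \bZ\tau)) \to (\bZ F \oplus \bZ\tau)^{\perp}/(\bZ F \oplus \bZ\tau)$; and since $\tau \in \check{L}^{\perp}$ while $F^{\perp}_{\check{L}} \subset \check{L}$ and $\check{L}$ is non-degenerate, we have $\tau \notin \check{L} \supseteq F^{\perp}_{\check{L}}$, so $F^{\perp}_{\check{L}} \cap (\bZ F \oplus \bZ\tau) = \bZ F$, giving exactly $\Gamma = F^{\perp}_{\check{L}}/\bZ F$ on the left. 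The final point is that this induced map is injective and respects the forms: injectivity follows because $F^{\perp}_{\check{L}} \cap (\bZ F \oplus \bZ\tau) = \bZ F$, and the forms match because in all cases the bilinear form is the restriction of the cup product on $H^2(Y;\bZ)$ (up to the overall sign convention fixed in Proposition~\ref{prop:NSforfibrations}, which is harmless).

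The main obstacle, such as it is, is the bookkeeping around $\tau$: one must be careful that the embedding is well-defined independently of the choice of lift $\tau$ of $\overline{\tau}$ — but this is immediate since changing $\tau$ by a multiple of $F$ does not change $\bZ F \oplus \bZ\tau$ — and one must confirm $\tau \notin \check{L}$, which as noted follows from non-degeneracy of $\check{L}$ together with $\tau \perp \check{L}$ and $\tau \neq 0$ in $H^2(Y;\bZ)$ modulo torsion. I do not expect this lemma to require the full strength of Theorem~\ref{thm:qdpfibration} or the pseudolattice machinery; it is essentially a statement about sublattices of $H^2(Y;\bZ)$, and the role of Proposition~\ref{prop:NSforfibrations} is only to identify the target $(\bZ F \oplus \bZ\tau)^{\perp}/(\bZ F \oplus \bZ\tau)$ with $\NS(\rmM)$ so that this embedding will later be recognisable as the analogue of the intersection-type polarisation data. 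One subtlety worth a sentence in the write-up: the embedding $\Gamma \hookrightarrow \NS(\rmM)$ so obtained need not be primitive — primitivity will presumably be imposed as an extra hypothesis, paralleling Definition~\ref{def:primitivedegeneration} in the degeneration setting — so I would state the lemma as giving merely an embedding, as written.
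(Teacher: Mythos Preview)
Your proposal is correct and follows essentially the same argument as the paper's own proof: both show $F^{\perp}_{\check{L}} \subset (\bZ F \oplus \bZ\tau)^{\perp}$ using compatibility, then use non-degeneracy of $\check{L}$ together with $\tau \in \check{L}^{\perp}$ to conclude $F^{\perp}_{\check{L}} \cap (\bZ F \oplus \bZ\tau) = \bZ F$, whence the quotient map is injective on $\Gamma$. Your surrounding commentary (independence of the lift of $\tau$, non-primitivity of the resulting embedding) is accurate and matches the paper's subsequent development.
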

\begin{proof} By the compatibility condition, we have $F^{\perp}_{\check{L}} \subset (\bZ F \oplus \bZ \tau)^{\perp}$. As $\check{L}$ is nondegenerate and $\tau \in \check{L}^{\perp}$, we cannot have $\tau \in F^{\perp}_{\check{L}}$, so $F^{\perp}_{\check{L}} \cap (\bZ F \oplus \bZ \tau) = \bZ F$. It follows that $\Gamma = F^{\perp}_{\check{L}}/\bZ F$ embeds into $(\bZ F \oplus \bZ \tau)^{\perp}/(\bZ F \oplus \bZ \tau)$.
\end{proof}

By Proposition \ref{prop:NSforfibrations}, we obtain an embedding $\Gamma \hookrightarrow \NS(\rmM) \cong (\bZ F \oplus \bZ \tau)^{\perp}/(\bZ F \oplus \bZ \tau)$. For the remainder of this section, we will assume that this embedding is primitive; some conditions to ensure this are explored in Section \ref{sec:primitivity2}. Under this primitivity assumption, $\Gamma \subset \NS(\rmM)$ defines a lattice polarisation, in the sense of Definition \ref{def:Lpol}(1).

Let $\hat{\Gamma}$ denote the pull-back of $\Gamma$ to $\Psi^{\perp}_{\rmK}/\Psi \cong (\bZ \tau_1 \oplus \bZ \tau_2)^{\perp}/(\bZ \tau_1 \oplus \bZ \tau_2)$. Then $\hat{\Gamma}$ defines a lifted polarisation on $\Psi^{\perp}_{\rmK}/\Psi$, in the sense of Definition \ref{def:Lpol}(2). Finally, let $\Gamma_i := \hat{\Gamma} \cap \NS(\rmG_i)$ denote the intersection polarisation.

Next we define the notion of a lattice polarisation on an elliptic fibration over a disc. This definition is motivated by the definition of a lattice polarisation on a rational elliptic surface, defined in \cite[Section 3.1]{mslpdps}; the relationship between the two definitions will be explored in Section \ref{sec:rationalelliptic}.

\begin{definition} \label{def:fibrationintersectionpol} Let $W \to \Delta$ be an elliptic fibration over a closed disc with fibre $F_p$ over a point $p \in \partial \Delta$, such that $\phi \colon H_2(W,F_p;\bZ) \to H_1(F_p;\bZ)$ is a quasi del Pezzo homomorphism with right adjoint $r$. Assume that the Euler number $e(W) < 12$. Let $N$ be a negative definite lattice. An \emph{$N$-polarisation on $W$} is a primitive embedding $j \colon N \hookrightarrow r(a)^{\perp}/r(a) = \NS(H_2(W,F_p;\bZ))$ such that
\begin{itemize}
\item $q(j(v),[r(b)]) = 0$ for all $v \in N$, where the square brackets $[-]$ denote the class in $r(a)^{\perp}/r(a)$, and
\item there exists a set of positive roots $\Delta(N)^+ \subset N$ such that for every $\delta \in \Delta(N )^+$, we may write $j(\delta) = \sum_{i=1}^k n_i[C_i]$, where $n_i \geq 0$ and $[C_i]$ are classes of irreducible components of singular fibres in $W$.
\end{itemize}
\end{definition}

The following result shows that this is compatible with our other definitions.

\begin{theorem} \label{thm:Gammapolfibration} With notation as above, assume that $Y_1$ satisfies $e(Y_1) < 12$. Then the intersection polarisation $\Gamma_1 := \hat{\Gamma} \cap \NS(\rmG_1)$ defines a $\Gamma_1$-polarisation on $Y_1$, in the sense of Definition \ref{def:fibrationintersectionpol}.
\end{theorem}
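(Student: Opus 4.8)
The plan is to verify the two bullet points in Definition \ref{def:fibrationintersectionpol} for the intersection polarisation $\Gamma_1 = \hat{\Gamma} \cap \NS(\rmG_1)$, using the topological descriptions of the relevant classes assembled in Sections \ref{sec:ellipticK3} and \ref{sec:NSforfibrations}. First I would record that $\Gamma_1$ is automatically a primitive sublattice of $\NS(\rmG_1)$ (Definition \ref{def:Lpol}(3)) and negative definite: indeed, by Lemma \ref{lem:Kperp} we have $\Gamma_1 \subset K_{\rmG_1}^{\perp}$, and since $e(Y_1) < 12$ we have $\deg(\rmG) = 12 - e(Y_1) > 0$, so $K_{\rmG_1}^{\perp}$ is negative definite by Lemma \ref{lem:NSproperties}. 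This same containment $\Gamma_1 \subset K_{\rmG_1}^{\perp}$ gives the first bullet point: recalling from Proposition \ref{prop:sphericalsurfacelike} that $K_{\rmG_1} = -[r_1(b)]$, the condition $q(j(v),[r_1(b)]) = 0$ for all $v \in \Gamma_1$ is exactly $\Gamma_1 \subset K_{\rmG_1}^{\perp}$, which is Lemma \ref{lem:Kperp}.

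The substantive part is the effectivity condition. Here the plan is to transport the effective curves witnessing the polarisation on $Y$ down to components of singular fibres of $Y_1$. Since $Y$ is $\check{L}$-quasi\-polarised, it follows from \cite[Remark 1.1]{mslpk3s} (as recalled after Definition \ref{def:Lquasipol}, applied with $\check{L}$ in place of $L$) that $\check{L}$ contains a set of positive roots $\Delta(\check{L})^+$ each of which is the class of an effective $(-2)$-curve on $Y$. Intersecting with $F^{\perp}_{\check{L}}$ and passing to $\Gamma = F^{\perp}_{\check{L}}/\bZ F$, I would define $\Delta(\Gamma_1)^+$ to consist of those roots in $\Gamma_1 \subset \Gamma$ whose image in $\Gamma$ (under the embedding from Lemma \ref{lem:latticeinjection}, so $\Gamma_1 \hookrightarrow \Gamma$) lies in the closure of the positive roots; one should check this is a legitimate set of positive roots for $\Gamma_1$ using that the embedding $\Gamma_1 \hookrightarrow \Gamma$ is an isometry of negative definite lattices. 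Given $\delta \in \Delta(\Gamma_1)^+$, its image in $\NS(Y)$ is effective, being a non-negative integer combination of classes of effective $(-2)$-curves; moreover $\delta \in \Gamma_1 \subset K_{\rmG_1}^{\perp}$ means, via the topological dictionary of Section \ref{sec:NSforfibrations}, that $\delta$ has vanishing asymptotic charge and pairs to zero with the fibre class, so its representative curve is disjoint from $F_q$ and contained in $Y_1$ up to homology. The key geometric input is then that an effective $(-2)$-curve on a K3 surface that pairs to zero with the fibre class and is supported over $\Delta_1$ must in fact be a non-negative integer combination of irreducible components of singular fibres of $\pi_1 \colon Y_1 \to \Delta_1$: a curve with $C.F = 0$ either is vertical or the fibration restricted to $C$ has degree $0$, hence $C$ is contained in fibres, and the $(-2)$-condition rules out $C$ being a (multi)section; one decomposes the effective vertical divisor into its components, each of which is an irreducible component of a singular fibre lying over $\Delta_1$.

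The main obstacle I anticipate is making rigorous the passage ``$\delta \in K_{\rmG_1}^{\perp}$, effective on $Y$, $\Rightarrow$ $\delta$ is a non-negative combination of components of singular fibres of $Y_1$''. There are two subtleties: (i) one must be careful that the topological class $\delta$, which a priori lives in $\rmK \cong H^2_c(U;\bZ)/\bZ F$ or in a quotient thereof, genuinely lifts to an effective divisor class \emph{supported on $Y_1$} rather than merely to one whose $U$-homology class happens to agree — here the compatibility hypothesis (Definition \ref{def:compatible}, giving $F, \tau \in \check{L}, \check{L}^\perp$) and the primitivity assumption on $\Gamma \hookrightarrow \NS(\rmM)$ are what pin down the lift; and (ii) the positive-root structure: one needs that the effective $(-2)$-curves on $Y$ landing in $\Gamma_1$ actually generate a genuine system of positive roots for $\Gamma_1$, not merely a spanning set, which I would handle by the standard argument that the effective cone of $(-2)$-classes in a negative definite lattice embedded compatibly determines a Weyl chamber (cf. the discussion around Definition \ref{def:polarisedtyurin} and \cite[Remark 1.1]{mslpk3s}). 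I would structure the proof so that the negative-definiteness and first bullet are dispatched in one short paragraph from Lemmas \ref{lem:Kperp} and \ref{lem:NSproperties}, and the bulk is devoted to the effectivity transfer, citing the topological identifications of Section \ref{sec:NSforfibrations} for the claim that $K_{\rmG_1}^{\perp}$-classes correspond to vertical-over-$\Delta_1$ divisor classes.
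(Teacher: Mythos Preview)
Your outline for negative definiteness and for the first bullet point is correct and essentially matches the paper (the paper deduces negative definiteness from the embedding $\Gamma_1 \hookrightarrow \Gamma$ via Lemma~\ref{lem:latticeinjection} rather than from Lemma~\ref{lem:NSproperties}, but either works). Your reduction of the second bullet to ``for every $(-2)$-class $\delta \in \Gamma_1$, either $\delta$ or $-\delta$ is a nonnegative sum of components of singular fibres of $Y_1$'' is also the right strategy, and it makes your worry (ii) about assembling a genuine system of positive roots disappear --- you do not need to build $\Delta(\Gamma_1)^+$ by restriction from $\Gamma$.

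The genuine gap is exactly the obstacle you flag as (i), and your proposed resolution is not the right one. Knowing that $\delta$ lives in $\NS(\rmG_1)$, i.e.\ is represented by a cycle in $Y_1$, does \emph{not} tell you that the \emph{effective} representative of its image in $\NS(Y)$ is supported over $\Delta_1$: that effective divisor is produced by Riemann--Roch on $Y$ and could a priori have components over $\Delta_2$. The compatibility hypothesis and the primitivity of $\Gamma \hookrightarrow \NS(\rmM)$ play no role here. What the paper does is the following. Lift $\delta$ to an effective vertical divisor on $Y$, strip off whole fibres, and write it as $\delta_1 + \delta_2$ with $\delta_i$ a nonnegative sum of components of singular fibres over $\Delta_i$. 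Then $(\delta,0)$ and $(\delta_1,\delta_2)$ have the same image in $\NS(\rmM)$, so they differ by a multiple of $\zeta$, say $k(\delta_1-\delta,\,\delta_2) = m(r_1(b),-r_2(b))$. Now apply the asymptotic charge $\phi_2$ to the second component: components of singular fibres have $\phi_2 = 0$, while $\phi_2(r_2(b)) = (\mathrm{id}-T_{\phi_2})(b)$ is a nonzero multiple of $a$ precisely because $e(Y_2)-12 = 12 - e(Y_1) > 0$. This forces $m=0$, hence $\delta_2 = 0$ and $\delta = \delta_1$. Equivalently, one observes that both $(\delta,0)$ and $(\delta_1,\delta_2)$ lie in $K_{\rmG_1}^\perp \oplus K_{\rmG_2}^\perp$ and invokes the injectivity of $\varphi$ from Lemma~\ref{lem:latticeinjection} (valid since $\deg(\rmG)\neq 0$). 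Either way, the hypothesis $e(Y_1) < 12$ (strictly) is used \emph{here}, not just for negative definiteness; your sketch does not invoke it at this step, which is a sign that the argument is incomplete.
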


\begin{proof} Begin by noting that, by Proposition \ref{prop:sphericalsurfacelike}, $q(K_{\mathrm{G}_1},K_{\mathrm{G}_1}) = 12-e(Y_1) > 0$. Applying Lemma \ref{lem:latticeinjection}, we see that the map $\hat{\Gamma} \to \Gamma$ is injective on $\Gamma_1$ and realises $\Gamma_1$ as a sublattice of $\Gamma$. As $\Gamma$ is negative definite, so is $\Gamma_1$. 

The condition that $q(j(v),[r_1(b)]) = 0$ for all $v \in \Gamma_1$ follows from Lemma \ref{lem:Kperp} and the fact that, by Proposition \ref{prop:sphericalsurfacelike}, we have $K_{\rmG_1} = -[r_1(b)]$.

To show the second condition from Definition \ref{def:fibrationintersectionpol}, it suffices to show that if $\delta \in \Gamma_1$ is a class of self-intersection $(-2)$, then either $\delta$ or $(-\delta)$ is a nonnegative sum of classes of components of singular fibres. Let $\delta \in \Gamma_1$ be any such class. Let $\delta'$ denote the image of $\delta$ under the injective map $\Gamma_1 \to \Gamma = F^{\perp}_{\check{L}}/\bZ F$ and let $\overline{\delta}$ denote a lift of $\delta'$ to $F^{\perp}_{\check{L}} \subset H^2(Y;\bZ)$. Then $\overline{\delta}$ is a $(-2)$-class in $\NS(Y)$ and so, replacing $\delta$ with $(-\delta)$ if necessary, we may assume that $\overline{\delta}$ is the class of an effective divisor \cite[Proposition VIII.3.7]{bpv}. Moreover, as $\overline{\delta}$ is orthogonal to the class $F$ of a fibre,  $\overline{\delta}$ must be a nonnegative sum of classes supported on fibres of $\pi$. As $\overline{\delta}$ is defined modulo $\bZ F$, by subtracting copies of $F$ we may assume that  $\overline{\delta}$ is a nonnegative sum of classes of components of singular fibres of $Y$.

Let $\delta_i$ denote the part of $\overline{\delta}$ supported on fibres over points in $\Delta_i$. Then each $\delta_i$ is a nonnegative sum of classes of components of singular fibres of $Y_i$, and $(\delta,0)$ is congruent modulo $\zeta$ to $(\delta_1,\delta_2)$ in $\NS(\rmG_1) \oplus \NS(\rmG_2)$. Writing $(r_1(b),-r_2(b)) = k\zeta$ for some $k \in \bZ\setminus \{0\}$, we therefore have $k(\delta_1 - \delta,\, \delta_2) = m(r_1(b),-r_2(b))$
for some $m \in \bZ$. Applying $\phi_2$ to the second factor and noting that $\phi_2(\delta_2) = 0$ (as $\delta_2$ is a sum of components of singular fibres), we obtain
\[0 = -m\phi_2(r_2(b)) = (\mathrm{id}_{\rmE} - T_{\phi_2})(-mb) = m(e(Y_2) - 12)b\]
as $\phi_2$ satisfies Assumption \ref{ass:quasiLG}. But as $e(Y_2) - 12 = 12 - e(Y_1) > 0$, we must have $m = 0$. Therefore $\delta_2 = 0$ as well, and so $\delta = \delta_1$. We therefore see that $\delta$ can be written as a nonnegative sum of classes of components of singular fibres of $Y_1$.
\end{proof}

\begin{remark} The condition that $e(Y_1) < 12$ holds if and only if $\deg(\rmG) > 0$. As our labelling convention ensures that $e(Y_1) \leq 12$ and we know that $e(Y_1) + e(Y_2) = 24$, we can never have $e(Y_2) < 12$.
\end{remark}

Finally, if $\pi \colon Y \to \bP^1$ has a section, then there is a particularly nice interpretation of the coupling group $Q(\Gamma)$ (Definition \ref{def:coupling}) in this setting.

\begin{proposition} \label{prop:MWQ}
Assume that $\pi \colon Y \to \bP^1$ admits a section $\sigma$ and that $\check{L} = \NS(Y)$. Then there is a surjective map $\MW(Y) \to Q(\Gamma)$.
\end{proposition}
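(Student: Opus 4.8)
The plan is to identify the Mordell-Weil group $\MW(Y)$ with a quotient of sections of $\pi$ and to show that the coupling group $Q(\Gamma)$ is naturally a further quotient. Recall from the exact sequence \eqref{eq:Qsequence} that $Q(\Gamma) = \Gamma / \varphi(\Gamma_1 \oplus \Gamma_2)$, where $\Gamma = \NS(\rmM) \cong (\bZ F \oplus \bZ \tau)^{\perp}/(\bZ F \oplus \bZ \tau)$ is identified with $F^{\perp}_{\NS(Y)}/\bZ F$ via Lemma \ref{lem:compatiblepolarisationembedding} (using $\check{L} = \NS(Y)$) together with the primitivity assumption. Under this identification, $\varphi(\Gamma_i)$ is the image of the sublattice of $F^{\perp}_{\NS(Y)}/\bZ F$ spanned by classes supported on singular fibres over points of $\Delta_i$, by the proof of Theorem \ref{thm:Gammapolfibration}; consequently $\varphi(\Gamma_1 \oplus \Gamma_2)$ is exactly the image in $F^{\perp}_{\NS(Y)}/\bZ F$ of the subgroup $T \subset \NS(Y)$ generated by $F$ and the classes of all irreducible components of singular fibres of $\pi$. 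Therefore $Q(\Gamma) \cong F^{\perp}_{\NS(Y)} / T'$, where $T' := T + \bZ\tau \cap F^{\perp}$; a small check (using that $\tau$ is the class of a torus over $\gamma$, hence orthogonal to fibre components and to $F$) shows $T' = T$, so $Q(\Gamma) \cong F^{\perp}_{\NS(Y)}/T$.

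Next I would invoke the standard theory of elliptic surfaces with section: by the Shioda-Tate formula (see \cite[Chapter 11]{lok3s}), when $\pi \colon Y \to \bP^1$ admits a section $\sigma$, there is an exact sequence
\[
0 \longrightarrow T_{\mathrm{triv}} \longrightarrow \NS(Y) \longrightarrow \MW(Y) \longrightarrow 0,
\]
where $T_{\mathrm{triv}}$ is the trivial sublattice generated by $F$, the zero section $\sigma$, and all fibre components. I would then observe that $T_{\mathrm{triv}} = \bZ \sigma \oplus T$ (the class $\sigma$ contributes one extra generator, and $\langle \sigma, F\rangle = 1$ so $\sigma \notin F^{\perp}$). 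Intersecting the Shioda-Tate sequence with $F^{\perp}_{\NS(Y)}$, we get that $F^{\perp}_{\NS(Y)} \cap T_{\mathrm{triv}} = T$ (since an element of $T_{\mathrm{triv}}$, written $n\sigma + t$ with $t \in T$, is orthogonal to $F$ iff $n = 0$), and hence $F^{\perp}_{\NS(Y)}/T$ injects into $\NS(Y)/T_{\mathrm{triv}} = \MW(Y)$. Combining with the identification of the previous paragraph, $Q(\Gamma) \cong F^{\perp}_{\NS(Y)}/T$ embeds into $\MW(Y)$; in particular there is a surjective map $\MW(Y) \to Q(\Gamma)$ as soon as we know $\MW(Y) \to \MW(Y)$ factors appropriately — but actually the cleanest formulation is to produce the surjection directly as the composite $\MW(Y) \leftarrow \NS(Y) \twoheadrightarrow F^{\perp}_{\NS(Y)}/T$? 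That does not quite parse, so instead I would argue as follows: the inclusion $F^{\perp}_{\NS(Y)} \hookrightarrow \NS(Y)$ composed with $\NS(Y) \twoheadrightarrow \MW(Y)$ has image exactly $F^{\perp}_{\NS(Y)}/(F^{\perp}_{\NS(Y)}\cap T_{\mathrm{triv}}) = F^{\perp}_{\NS(Y)}/T = Q(\Gamma)$, giving a \emph{surjection} $F^{\perp}_{\NS(Y)} \twoheadrightarrow Q(\Gamma)$ and an \emph{injection} $Q(\Gamma) \hookrightarrow \MW(Y)$; to obtain a surjection $\MW(Y) \to Q(\Gamma)$ one composes $\MW(Y) \hookleftarrow$ with...

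\textbf{Resolving the direction of the map.} On reflection the correct and robust argument is: $\MW(Y) = \NS(Y)/T_{\mathrm{triv}}$, and $Q(\Gamma) = F^{\perp}_{\NS(Y)}/T$. There is an obvious surjection $F^{\perp}_{\NS(Y)} \to F^{\perp}_{\NS(Y)}/T = Q(\Gamma)$; but $F^{\perp}_{\NS(Y)}$ has corank $1$ in $\NS(Y)$, and modulo $T_{\mathrm{triv}} = \bZ\sigma \oplus T$ we have $\NS(Y)/T_{\mathrm{triv}} = (\bZ\sigma \oplus F^{\perp}_{\NS(Y)} + \text{corrections})/T_{\mathrm{triv}}$, so the natural map $F^{\perp}_{\NS(Y)}/T \to \NS(Y)/T_{\mathrm{triv}} = \MW(Y)$ is injective but may fail surjectivity only by the class of $\sigma$, which is torsion-free of rank... no: $\mathrm{rank}\,F^{\perp}_{\NS(Y)} = \mathrm{rank}\,\NS(Y) - 1 = \mathrm{rank}\,T_{\mathrm{triv}} - 1 + \mathrm{rank}\,\MW(Y)$, and $\mathrm{rank}\,T = \mathrm{rank}\,T_{\mathrm{triv}} - 1$, so $\mathrm{rank}\,Q(\Gamma) = \mathrm{rank}\,\MW(Y)$; hence $Q(\Gamma) \hookrightarrow \MW(Y)$ is a finite-index inclusion, and dualising (using that $\MW(Y)$ modulo torsion is a lattice and $Q(\Gamma)$ a finite-index sublattice) produces a surjection $\MW(Y) \to Q(\Gamma)$ after tensoring or by an explicit retraction only up to finite index — the paper's statement is merely that a surjection exists, which I expect follows from identifying $Q(\Gamma)$ with the image of the relevant Shioda-Tate map rather than its source. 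The \textbf{main obstacle} is precisely pinning down the direction and naturality of this map: I would resolve it by showing that $\varphi(\Gamma_1 \oplus \Gamma_2)$ corresponds under Shioda-Tate to the full trivial lattice restricted to $F^{\perp}$, so that $\Gamma \to Q(\Gamma)$ is literally the Shioda-Tate quotient map restricted to $F^{\perp}_{\NS(Y)}$ followed by a surjection onto its image inside $\MW(Y)$, with the surjection $\MW(Y) \to Q(\Gamma)$ arising because every section meets the loop $\gamma$ and its class projects into $F^{\perp}_{\NS(Y)}/T$ via $s \mapsto s - \sigma - (\text{fibre components}) \bmod F$. Verifying that this projection is well-defined, independent of choices, and surjective onto $Q(\Gamma)$ is the technical heart of the proof.
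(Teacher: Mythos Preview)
Your proposal contains a genuine gap and the confusion about the direction of the map is a symptom of it. You assert that $\varphi(\Gamma_1 \oplus \Gamma_2)$ \emph{equals} the image in $\Gamma$ of the subgroup generated by fibre components, citing the proof of Theorem~\ref{thm:Gammapolfibration}. But that proof only shows that each $(-2)$-class of $\Gamma_1$ is a nonnegative combination of fibre components in $Y_1$; it says nothing about arbitrary classes of $\Gamma_1$, so the equality is unjustified. In fact only the reverse containment $R \subset \varphi(\Gamma_1 \oplus \Gamma_2)$ is both true and needed, and that is exactly what the paper checks: each irreducible component $C$ of a singular fibre lies in some $Y_i$, has $\phi_i([C]) = 0$, hence $[C] \in \NS(\rmG_i)$, and is visibly in $\Gamma$, so $[C] \in \Gamma_i$.

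Once you have $R \subset \varphi(\Gamma_1 \oplus \Gamma_2) \subset \Gamma$, the surjection is a one-line diagram chase, not the tortured sequence of injections and ``dualisations'' you attempt. Using the section $\sigma$, split $\NS(Y) = H \oplus \Gamma$ with $H = \langle [\sigma], F\rangle$; then the standard exact sequence for elliptic surfaces (\cite[(VII.2.5)]{btes}) reads $0 \to R \to \Gamma \to \MW(Y) \to 0$, i.e.\ $\MW(Y) \cong \Gamma/R$. The inclusion $R \subset \varphi(\Gamma_1 \oplus \Gamma_2)$ then gives the natural surjection
\[
\MW(Y) \;\cong\; \Gamma/R \;\twoheadrightarrow\; \Gamma/\varphi(\Gamma_1 \oplus \Gamma_2) \;=\; Q(\Gamma).
\]
Your difficulty in ``resolving the direction of the map'' arose precisely because your claimed equality would have forced $Q(\Gamma) \cong \MW(Y)$, which is stronger than what is asserted and need not hold; the paper's weaker containment gives the surjection directly and in the right direction.
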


\begin{proof} Let $H \subset \check{L}$ be the copy of the hyperbolic plane lattice spanned by $[\sigma]$ and $F$. Then by \cite[Lemma 5.4]{mslpk3s}, we have an orthogonal direct sum decomposition $\check{L} = \NS(Y) = H \oplus \Gamma$. Standard results on elliptic fibrations with section \cite[(VII.2.5)]{btes} show that there is an exact sequence
\begin{equation} \label{eq:MW} 0 \longrightarrow R \longrightarrow \Gamma \longrightarrow \MW(Y) \longrightarrow 0,\end{equation}
where $R \subset \NS(Y)$ is the sublattice generated by classes of components of fibres which do not meet $\sigma$. 

Let $\Gamma_i := \hat{\Gamma} \cap \NS(\rmG_i)$ be the intersection polarisation for each $i \in \{1,2\}$. By Lemma \ref{lem:latticeinjection} we have a map $\varphi\colon \Gamma_1 \oplus \Gamma_2 \to \Gamma$. We claim that $R$ is a sublattice of $\Gamma_1 \oplus \Gamma_2$. Indeed, $R$ is generated by classes of components of fibres which do not meet $\sigma$. Let $C$ be any such component. Then $C \subset Y_i$ for some $i \in \{1,2\}$ and it follows from the description of the N\'{e}ron-Severi group in Section \ref{sec:NSforfibrations} that the class $[C]$ of $C$ is in $\NS(\rmG_i)$. So $[C] \in \Gamma_i$. As this holds for all generators of $R$, we see that $R \subset \Gamma_1 \oplus \Gamma_2$. The result then follows by a straightforward diagram chase using sequences \eqref{eq:Qsequence} and \eqref{eq:MW}.
\end{proof}

\subsection{Primitivity}\label{sec:primitivity2}

In this section we explore conditions under which the embedding $\Gamma \hookrightarrow (\bZ F \oplus \bZ \tau)^{\perp}/(\bZ F \oplus \bZ \tau)$ from Lemma \ref{lem:compatiblepolarisationembedding} is primitive. Throughout we will assume that $\pi\colon Y \to \bP^1$ is an elliptically fibred K3 surface, $\gamma \subset \bP^1$ is an allowable loop, and that $Y$ admits an $\check{L}$-quasi\-polarisation that is compatible with $\pi$ and $\gamma$.

\begin{proposition} \label{prop:primitivefibration} Suppose that the class $\tau$ is $1$-admissible in $\check{L}^{\perp}$ (Definition \ref{def:madmissible}). Then the embedding $\Gamma \hookrightarrow (\bZ F \oplus \bZ \tau)^{\perp}/(\bZ F \oplus \bZ \tau)$ from Lemma \ref{lem:compatiblepolarisationembedding} is primitive.
\end{proposition}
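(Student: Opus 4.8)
The plan is to reduce the statement to an application of the characterisation of primitivity via Dolgachev's $1$-admissibility lemma, exactly as in the proof of Lemma \ref{lem:doublyadmissibleconditions}. The key observation is that the middle term $(\bZ F \oplus \bZ \tau)^{\perp}/(\bZ F \oplus \bZ \tau)$, with the orthogonal complement taken in $H^2(Y;\bZ) \cong \Lambda_{\mathrm{K3}}$, plays the role that $I^{\perp}/I$ played in Section \ref{sec:degenerations}, with $I = \bZ F \oplus \bZ\tau$ the relevant rank two isotropic sublattice. So the strategy is: first show that $\bZ F \oplus \bZ\tau$ embeds primitively into a copy of $H \oplus H$ inside $H^2(Y;\bZ)$ that is orthogonal to $\check L^{\perp}$ and contains $\check L$'s copy of $H$; then deduce that the quotient $(\bZ F \oplus \bZ\tau)^{\perp}/(\bZ F \oplus \bZ\tau)$ decomposes compatibly, so that $\Gamma = F^{\perp}_{\check L}/\bZ F$ sits primitively inside it.

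More concretely, first I would observe that since $\check L$ is non-degenerate, contains $F$, and $F$ is primitive and isotropic in $\check L$, the class $F$ is $1$-admissible in $\check L$ itself (there is $[\sigma]$ or any section-like class pairing with $F$; more abstractly $\operatorname{div}(F)$ in $\check L$ divides $\operatorname{div}(F)$ in $\Lambda_{\mathrm{K3}}$, which is $1$). By \cite[Lemma 5.4]{mslpk3s} this gives an orthogonal splitting $\check L = H_1 \oplus \Gamma$ with $F \in H_1$ and $\Gamma = F^{\perp}_{\check L}/\bZ F$ negative definite. Next, using the compatibility hypothesis $\tau \in \check L^{\perp}$ together with the hypothesis that $\tau$ is $1$-admissible in $\check L^{\perp}$, apply \cite[Lemma 5.4]{mslpk3s} again to get an orthogonal splitting $\check L^{\perp} = H_2 \oplus \Gamma'$ with $\tau \in H_2$. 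Now $H^2(Y;\bZ) = \check L \oplus_? \check L^{\perp}$ only up to finite index, but $H_1$ and $H_2$ are unimodular, hence split off as orthogonal summands of $\Lambda_{\mathrm{K3}}$: we obtain $\Lambda_{\mathrm{K3}} = H_1 \oplus H_2 \oplus (\Gamma \oplus \Gamma')$ with $I = \bZ F \oplus \bZ\tau$ embedding primitively into $H_1 \oplus H_2$ (both $F$ and $\tau$ primitive, one in each factor), and $\Gamma$ embedding primitively into $H_1 \oplus H_2 \oplus \Gamma \oplus \Gamma'$. Finally, taking $I^{\perp}$ and quotienting by $I$, the argument of Proposition \ref{prop:doublyadmissibleconsequnces}(1) shows that the composite $\Gamma \hookrightarrow I^{\perp} \to I^{\perp}/I$ is still a primitive embedding — because the complementary summand $H_1 \oplus H_2$ dies in the quotient and $\Gamma \oplus \Gamma'$ survives with $\Gamma$ primitive in it.

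The step I expect to be the main obstacle is verifying that $F$ and $\tau$ can be simultaneously absorbed into a single primitively embedded $H \oplus H$, i.e. that the two applications of \cite[Lemma 5.4]{mslpk3s} are genuinely compatible: the first splitting is inside $\check L$ and the second inside $\check L^{\perp}$, and one must check their direct sum still embeds primitively in $\Lambda_{\mathrm{K3}}$ and that $I = \bZ F \oplus \bZ\tau$ lands primitively in $H_1 \oplus H_2$. This is where unimodularity of $H$ is crucial — a unimodular sublattice is always an orthogonal direct summand — so the cleanest route is: establish $H_1 \subset \check L$ is a unimodular orthogonal summand, hence $\Lambda_{\mathrm{K3}} = H_1 \oplus H_1^{\perp}$ with $\check L^{\perp} \subset H_1^{\perp}$; then run the $\tau$-argument inside $H_1^{\perp}$ to peel off $H_2$, giving $H_1^{\perp} = H_2 \oplus (H_1 \oplus H_2)^{\perp}_{\Lambda_{\mathrm{K3}}}$. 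Primitivity of $I$ in $H_1 \oplus H_2$ then follows because $F$ generates a primitive isotropic line in $H_1$ and $\tau$ a primitive isotropic line in $H_2$, and these are orthogonal. Once that is in hand, the descent to $I^{\perp}/I$ is the routine part, identical to Proposition \ref{prop:doublyadmissibleconsequnces}(1). I would also remark, as a sanity check consistent with the failure example $H \oplus D_{16}$ discussed after Definition \ref{def:primitivedegeneration}, that the $1$-admissibility hypothesis on $\tau$ is exactly what rules out such index-two pathologies.
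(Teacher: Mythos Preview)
There is a genuine gap: you assume that $F$ is $1$-admissible in $\check L$, but this is \emph{not} part of the hypothesis of the proposition --- only $\tau$ being $1$-admissible in $\check L^{\perp}$ is assumed. Your justification for the extra claim, that ``$\operatorname{div}(F)$ in $\check L$ divides $\operatorname{div}(F)$ in $\Lambda_{\mathrm{K3}}$'', has the divisibility the wrong way round: restricting the linear form $\langle F,-\rangle$ from $\Lambda_{\mathrm{K3}}$ to the sublattice $\check L$ can only shrink its image in $\bZ$, so in fact $\operatorname{div}_{\Lambda_{\mathrm{K3}}}(F)$ divides $\operatorname{div}_{\check L}(F)$, which gives no information. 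Nor is the existence of an isotropic $g\in\check L$ with $\langle F,g\rangle = 1$ automatic (the fibration need not have a section whose class lies in $\check L$). Indeed, the condition ``$F$ is $1$-admissible in $\check L$'' is exactly the additional hypothesis in the \emph{doubly admissible} Definition~\ref{def:fibrationdoublyadmissible}; your argument is really a proof of (part of) Lemma~\ref{lem:doublyadmissibleconsequencesfibration}, not of the proposition as stated.

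The paper's proof sidesteps this by splitting off only the one copy of $H$ that the hypothesis on $\tau$ actually produces: write $\Lambda_{\mathrm{K3}} = H \oplus (H \oplus H \oplus E_8 \oplus E_8)$ with $\tau$ in the first $H$ and $\check L$ primitively embedded in the second factor. The question then reduces to showing that $\Gamma = F^{\perp}_{\check L}/\bZ F$ is primitive in $F^{\perp}/\bZ F$, with $F^{\perp}$ taken inside $H \oplus H \oplus E_8 \oplus E_8$. This follows by an elementary lift argument using only that $\check L$ is primitive and that $F \in \check L$: if $m\alpha \in \Gamma$ for some $m>1$ and some $\alpha$ in the ambient quotient, lift to $\hat\alpha \in F^{\perp}$; then $m\hat\alpha + nF \in \check L$ for some $n$, hence $m\hat\alpha \in \check L$ (since $F\in\check L$), hence $\hat\alpha \in \check L$ by primitivity of $\check L$. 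No second splitting is needed.
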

\begin{proof} By \cite[Lemma 5.4]{mslpk3s}, $1$-admissibility of $\tau$ implies that we may decompose $\check{L}^{\perp}$ as a direct sum of $H$ and another lattice, with $\tau \in H$. By \cite[Proposition 1.6.1]{isbfa}, up to isometry there is a unique primitive embedding $H \hookrightarrow \Lambda_{\text{K3}}$, so we may decompose $\Lambda_{\rmK 3} = H \oplus (H \oplus H \oplus E_8 \oplus E_8)$, with $\tau$ embedding into the first factor $H$ and $\check{L}$ embedding into the second factor $H \oplus H \oplus E_8 \oplus E_8$.

It then suffices to show that the induced embedding $\Gamma \hookrightarrow F^{\perp}/\bZ F$ is primitive, where the orthogonal complement of $F$ is taken in $H \oplus H \oplus E_8 \oplus E_8$. But this follows from the fact that $F \in \check{L}$. Indeed, suppose that $\Gamma$ is not primitive. Then there exists $\alpha \in F^{\perp}/\bZ F$ and $m > 1$ such that $\alpha \notin \Gamma$ but $m \alpha \in \Gamma$. Let $\hat{\alpha} \in F^{\perp}$ be any lift of $\alpha$. Then $\hat{\alpha} \notin \check{L}$, but $m \hat{\alpha} + nF \in \check{L}$ for some $n \in \bZ$. But as $F \in \check{L}$, this implies that $\hat{\alpha} \notin \check{L}$ and $m \hat{\alpha} \in \check{L}$, contradicting primitivity of $\check{L}$.
\end{proof}

It is interesting to study the analogue of the doubly admissible condition (Definition \ref{def:doublyadmissible}) in this setting.

\begin{definition} \label{def:fibrationdoublyadmissible} An $\check{L}$-quasi\-polarisation on $Y$ that is compatible with $\pi$ and $\gamma$ is called \emph{doubly admissible} if $\tau$ is $1$-admissible in $\check{L}^{\perp}$ and $F$ is $1$-admissible in $\check{L}$.
\end{definition}

The following result gives some conditions for a quasi\-polarisation to be doubly admissible.


\begin{proposition} Suppose that $\pi \colon Y \to \bP^1$ is an elliptically fibred K3 surface with an $\check{L}$-quasi\-polarisation that is compatible with $\pi$ and $\gamma$. Then:
\begin{enumerate}
\item If the $\check{L}$-quasi\-polarisation is doubly admissible, then $\pi$ admits a section.
\item If $\tau$ is $1$-admissible in $\check{L}^{\perp}$ and $\pi$ admits a section $\sigma$ with class $[\sigma]\in \check{L}$, then the $\check{L}$-quasi\-polarisation is doubly admissible.
\end{enumerate}
\end{proposition}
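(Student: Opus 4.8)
The plan is to prove the two implications separately, relying throughout on the $1$-admissibility criterion of \cite[Lemma 5.4]{mslpk3s}, which states that a primitive isotropic class $v$ in a lattice $N$ is $1$-admissible if and only if $N$ decomposes as an orthogonal direct sum $H \oplus N'$ with $v$ one of the standard generators of the hyperbolic plane $H$.

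For part (1), I would start from the doubly admissible hypothesis: $\tau$ is $1$-admissible in $\check{L}^{\perp}$ and $F$ is $1$-admissible in $\check{L}$. The second of these gives, by \cite[Lemma 5.4]{mslpk3s}, an orthogonal decomposition $\check{L} = H \oplus \Gamma$ with $F$ a standard generator of the hyperbolic plane summand; let $e$ denote the complementary generator, so $\langle F,F\rangle = \langle e,e\rangle = 0$ and $\langle F,e\rangle = 1$. Since $\check{L} \subset \NS(Y)$ via the $\check{L}$-quasi\-polarisation, the class $e$ (or a small correction $e + nF$) is effective and meets a general fibre of $\pi$ in exactly one point; the standard argument (see \cite[Proposition 11.1.6]{lok3s} or \cite[VII.2]{btes}) then shows that the effective divisor in this class contains a component that is a section of $\pi$. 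More carefully, one writes the effective class as a sum of an irreducible horizontal part $D$ and a vertical part; since the total intersection with $F$ is $1$, $D$ must itself be a section. So $\pi$ admits a section.

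For part (2), the hypotheses are that $\tau$ is $1$-admissible in $\check{L}^{\perp}$ and that $\pi$ has a section $\sigma$ with $[\sigma] \in \check{L}$. I only need to verify the remaining condition from Definition \ref{def:fibrationdoublyadmissible}, namely that $F$ is $1$-admissible in $\check{L}$. Consider the sublattice of $\check{L}$ spanned by $[\sigma]$ and $F$. Using $\langle F,F\rangle = 0$, $\langle [\sigma],F\rangle = 1$ (a section meets a fibre once), and $\langle [\sigma],[\sigma]\rangle = -2$ (the section is a smooth rational curve on the K3 surface), the sublattice $\langle [\sigma], [\sigma] + F\rangle$ is a copy of the hyperbolic plane $H$ inside $\check{L}$, with $F = ([\sigma]+F) - [\sigma]$; alternatively, apply \cite[Lemma 5.4]{mslpk3s} directly with the isotropic class $g := [\sigma] + F$, which satisfies $\langle F,g\rangle = 1$ and $\langle g,g\rangle = 0$. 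Either way, $F$ is $1$-admissible in $\check{L}$, and combined with the assumed $1$-admissibility of $\tau$ in $\check{L}^{\perp}$ this gives exactly the doubly admissible condition.

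The main obstacle I anticipate is in part (1): extracting an honest section (a curve, not merely a numerical class) from the effective class $e$. One must rule out the possibility that the horizontal part of the effective divisor is a multisection that only becomes a section after discarding vertical components, and track how adding multiples of $F$ interacts with effectivity. This is a standard point in the theory of elliptic surfaces, but it requires a careful case analysis of the decomposition of the effective divisor into irreducible components together with the observation that the intersection number with $F$ is exactly $1$, forcing a unique horizontal component of multiplicity one which is therefore a section. The remaining linear-algebra verifications — that the relevant rank-two sublattices are hyperbolic planes and that the admissibility criterion applies — are routine.
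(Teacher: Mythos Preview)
Your proof is correct and follows essentially the same approach as the paper: for (1) the paper uses \cite[Lemma 5.4]{mslpk3s} to get $H \hookrightarrow \check{L} \subset \NS(Y)$ with $F$ a standard generator, then cites \cite[Remark 11.1.4]{lok3s} for the existence of a section, while you unpack that citation explicitly; for (2) both you and the paper take $g = [\sigma] + F$ and verify the $1$-admissibility criterion directly. The ``obstacle'' you worry about in (1) is exactly the content of the cited remark, and your decomposition argument (unique horizontal component of multiplicity one from $e\cdot F = 1$) is the standard way to see it.
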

\begin{proof} To prove (1), by \cite[Lemma 5.4]{mslpk3s}, if the $\check{L}$-quasi\-polarisation is doubly admissible then there exists an embedding $H \hookrightarrow \check{L} \subset \NS(Y)$. By \cite[Remark 11.1.4]{lok3s}, this implies that $\pi$ admits a section. (2) follows by taking $g$ to be the class $g = [\sigma] + F \in \check{L}$ in the definition of $1$-admissible (Definition \ref{def:madmissible}).
\end{proof}

\begin{remark} Note that in part (1) of the above proposition, the class of the section of $\pi$ is not necessarily contained in $\check{L}$.
\end{remark}

The next lemma is the analogue of Proposition \ref{prop:doublyadmissibleconsequnces} in this setting; it will be useful when we come to discuss mirror symmetry in Section \ref{sec:mirror}.

\begin{lemma} \label{lem:doublyadmissibleconsequencesfibration} 
Let $\check{L} \subset \Lambda_{\rmK 3}$ be a nondegenerate primitive sublattice. Let $F$ be $1$-admissible in $\check{L}$ and $\tau$ be $1$-admissible in $\check{L}^{\perp}$.
Then $\check{L}$ admits an orthogonal decomposition $\check{L} = H \oplus \Gamma$, with $F$ primitive in $H$. Moreover, the primitive embedding $\Gamma \hookrightarrow (\bZ \tau \oplus \bZ F)^{\perp}$ factors through primitive embeddings $\Gamma \hookrightarrow H \oplus E_8 \oplus E_8 \hookrightarrow (\bZ \tau \oplus \bZ F)^{\perp}$, so that the composition $H \oplus E_8 \oplus E_8 \hookrightarrow (\bZ \tau \oplus \bZ F)^{\perp} \to (\bZ \tau \oplus \bZ F)^{\perp}/(\bZ \tau \oplus \bZ F)$ of this primitive embedding with the quotient map is an isomorphism.
\end{lemma}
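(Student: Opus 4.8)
The plan is to imitate the structure of the proof of Lemma~\ref{lem:doublyadmissibleconditions} and Proposition~\ref{prop:doublyadmissibleconsequnces}, peeling off one hyperbolic plane at a time. First I would use the $1$-admissibility of $F$ in $\check{L}$: by \cite[Lemma 5.4]{mslpk3s}, since $F$ is a primitive isotropic element of $\check{L}$ with $\divop(F)=1$, we obtain an orthogonal decomposition $\check{L} = H_F \oplus \Gamma$, where $H_F$ is a hyperbolic plane containing $F$ and $\Gamma := (H_F)^{\perp}_{\check{L}}$ is negative definite (it embeds into $\Lambda_{\rmK 3}$ and is orthogonal to a signature $(1,1)$ sublattice of a signature $(1,\rank(\check{L})-1)$ lattice). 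Note $\Gamma$ is then naturally identified with $F^{\perp}_{\check{L}}/\bZ F$, so this is the same $\Gamma$ as in the main text.

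Next I would handle the $\tau$ side. Since $\tau$ is $1$-admissible in $\check{L}^{\perp}$, \cite[Lemma 5.4]{mslpk3s} again gives an orthogonal decomposition $\check{L}^{\perp} = H_{\tau} \oplus \Gamma'$, with $H_{\tau}$ a hyperbolic plane containing $\tau$. Combining with the first decomposition, $\Lambda_{\rmK 3}$ contains the orthogonal sublattice $H_F \oplus H_{\tau}$ (these are orthogonal because $H_F \subset \check{L}$ and $H_{\tau} \subset \check{L}^{\perp}$), which is a copy of $H \oplus H$. By \cite[Proposition 1.6.1]{isbfa} there is a unique primitive embedding $H \oplus H \hookrightarrow \Lambda_{\rmK 3}$ up to isometry, and its orthogonal complement is $H \oplus E_8 \oplus E_8$; moreover $H_F \oplus H_{\tau}$ is automatically primitive in $\Lambda_{\rmK 3}$ since it is a unimodular direct summand. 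Thus $\Lambda_{\rmK 3} = (H_F \oplus H_{\tau}) \oplus \Lambda'$ with $\Lambda' \cong H \oplus E_8 \oplus E_8$. Now $\Gamma = (H_F)^{\perp}_{\check{L}} \subset (H_F \oplus H_{\tau})^{\perp}_{\Lambda_{\rmK 3}} = \Lambda'$, and since $\Gamma$ is orthogonal to $\tau$ (as $\Gamma \subset \check{L}$ and $\tau \in \check{L}^{\perp}$) and orthogonal to $F$ by construction, we get $\Gamma \hookrightarrow (\bZ\tau \oplus \bZ F)^{\perp}$ factoring through $\Lambda' = H\oplus E_8 \oplus E_8 \hookrightarrow (\bZ\tau\oplus\bZ F)^{\perp}$. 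Primitivity of $\Gamma$ in $\Lambda'$ follows because $\Gamma$ is primitive in $\check{L}$ (it is a direct summand there) and $\check{L} = H_F \oplus \Gamma$ is primitive in $\Lambda_{\rmK 3}$; a short saturation argument like the one in Proposition~\ref{prop:primitivefibration} closes this. Primitivity of the embedding $H\oplus E_8\oplus E_8 = \Lambda' \hookrightarrow (\bZ\tau\oplus\bZ F)^{\perp}$ is clear since $\Lambda'$ is an orthogonal direct summand of $\Lambda_{\rmK 3}$ and hence of $(\bZ\tau\oplus\bZ F)^{\perp} \supset \Lambda'$, noting $\bZ\tau \oplus \bZ F \subset H_F\oplus H_{\tau}$.

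Finally, for the isomorphism statement: the quotient $(\bZ\tau \oplus \bZ F)^{\perp}/(\bZ\tau\oplus\bZ F)$ should be computed exactly as in Proposition~\ref{prop:doublyadmissibleconsequnces}(1). Since $\bZ\tau \oplus \bZ F$ is a rank two primitive isotropic sublattice of $H_F \oplus H_{\tau} \cong H \oplus H$, and primitive rank two isotropic sublattices of $H\oplus H$ are unique up to isometry by \cite[Lemma 4.1.2]{cmsak3s}, we may assume $\bZ\tau \oplus \bZ F$ is spanned by one standard isotropic generator from each $H$ summand; then $(\bZ\tau\oplus\bZ F)^{\perp}_{H_F\oplus H_{\tau}} = \bZ\tau\oplus\bZ F$ itself, so $(\bZ\tau\oplus\bZ F)^{\perp}_{\Lambda_{\rmK 3}} = (\bZ\tau\oplus\bZ F) \oplus \Lambda'$ and quotienting kills exactly $\bZ\tau\oplus\bZ F$, giving $(\bZ\tau\oplus\bZ F)^{\perp}/(\bZ\tau\oplus\bZ F) \cong \Lambda' = H\oplus E_8\oplus E_8$, and the composite map is this isomorphism. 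The main obstacle I anticipate is purely bookkeeping: being careful that the two applications of \cite[Lemma 5.4]{mslpk3s} produce genuinely orthogonal hyperbolic planes $H_F$ and $H_{\tau}$ (which needs $H_F \subset \check{L} \perp \check{L}^{\perp} \supset H_{\tau}$) and that the resulting $H\oplus H$ is primitive, so that \cite[Proposition 1.6.1]{isbfa} and \cite[Lemma 4.1.2]{cmsak3s} apply cleanly; everything else is a direct transcription of the arguments already given for the degeneration side.
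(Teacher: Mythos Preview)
Your proposal is correct and follows essentially the same approach as the paper's proof: apply \cite[Lemma 5.4]{mslpk3s} to $F$ and $\tau$ separately to split off two orthogonal hyperbolic planes, invoke \cite[Proposition 1.6.1]{isbfa} for the orthogonal complement, and factor $\Gamma$ through it. You supply more detail than the paper does (the primitivity of $H_F \oplus H_{\tau}$ via unimodularity, the explicit computation of the quotient), but the skeleton is identical; the parenthetical claim that $\Gamma$ is negative definite implicitly assumes a signature on $\check{L}$ not present in the lemma statement, but you never use it.
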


\begin{proof}
Applying \cite[Lemma 5.4]{mslpk3s} to $F$ and $\tau$, we obtain orthogonal direct sum decompositions $\check{L} = H \oplus \Gamma$ and $\check{L}^{\perp} = H \oplus \Gamma'$, for some lattice $\Gamma'$, where the two copies of $H$ contain $F$ and $\tau$ respectively. The direct sum $\check{L} \oplus \check{L}^{\perp}$ embeds into the K3 lattice $\Lambda_{\rmK 3}$ and this embedding is primitive on each of the factors, so we obtain a primitive embedding $H \oplus H \hookrightarrow \Lambda_{\rmK 3}$. By \cite[Proposition 1.6.1]{isbfa}, this embedding is unique up to isometry and, consequently, its orthogonal complement is isometric to $H \oplus E_8 \oplus E_8$. As we have a primitive embedding $\bZ \tau \oplus \bZ F \hookrightarrow H \oplus H$, we obtain primitive embeddings
\[\Gamma \hookrightarrow (H \oplus H)^{\perp}_{\Lambda_{\rmK 3}} \cong H \oplus E_8 \oplus E_8 \hookrightarrow (\bZ \tau \oplus \bZ F)^{\perp}\]
and the final map descends to an isomorphism on the quotient.
\end{proof}

\subsection{Relationship with lattice polarised rational elliptic surfaces}\label{sec:rationalelliptic}

In this section we will show that the theory above is compatible with the notion of lattice polarisation on a rational elliptic surface introduced in \cite{mslpdps}.

We begin by recalling the setup from \cite[Section 3]{mslpdps}. For consistency with the notation in \cite{mslpdps}, \emph{in this section only} we will let $\pi \colon Y \to \bP^1$ be a rational elliptic surface, such that the fibre $D = \pi^{-1}(\infty)$ is of Kodaira type $\rmI_d$ for some $1 \leq d \leq 9$. Write $D = \bigcup_{i=1}^d D_i$ as a union of irreducible components, numbered cyclically. We assume that $\pi$ does not have multiple fibres, which for rational elliptic surfaces is equivalent to the existence of a section. 

Let $U$ denote the open set $U := Y \setminus D$. Let ${\Delta}$ denote the closed set $\Delta := \bP^1 \setminus B_{\varepsilon}(\infty)$, where $B_{\varepsilon}(\infty)  \subset \bP^1$ is an open disc of radius $\varepsilon > 0$ centred at $\infty$ and $\varepsilon$ is chosen small enough that the only singular fibre over the closure $\overline{B_{\varepsilon}(\infty)}$ is $D$, and let $W$ denote the closed set $W := \pi^{-1}(\Delta)$. Note that $W$ is a closed manifold with boundary, so there is an intersection pairing on $H_2(W;\bZ)$, and that $W$ is a deformation retract of $U$. Finally, let $p \in \partial \Delta$ be a point and let $F = \pi^{-1}(p)$ be the corresponding smooth fibre. 

Under this setup, we note that the Euler number $e(W) = e(Y) - d = 12-d$ and that the action of anticlockwise monodromy around $\partial \Delta$ is given by the inverse of the action of anticlockwise monodromy around an $\rmI_d$ fibre. Thus, from Kodaira's classification, there exists a symplectic basis $(a,b)$ for $H_1(F;\bZ)$ such that  the action of anticlockwise monodromy around $\partial \Delta$ is given by
\[\begin{pmatrix}1 & d \\ 0 & 1 \end{pmatrix}^{-1} = \begin{pmatrix}1 & -d \\ 0 & 1 \end{pmatrix} = \begin{pmatrix}1 & e(W)-12 \\ 0 & 1 \end{pmatrix}.\]
Let $\phi\colon H_2(W,F;\bZ) \to H_1(F;\bZ)$ denote the asymptotic charge map and $r$ denote its right adjoint.

\begin{proposition} \label{prop:ratellqdp}
With notation as above, $\phi\colon H_2(W,F;\bZ) \to H_1(F;\bZ)$ is a quasi del Pezzo homomorphism.
\end{proposition}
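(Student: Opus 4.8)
The plan is to verify the four conditions of Definition \ref{def:qdp} directly, exploiting the fact that the present setup is an instance of the one treated in Theorem \ref{thm:qdpfibration}, so that most of the work has already been done. Concretely, $W = \pi^{-1}(\Delta)$ is an elliptic fibration over the closed disc $\Delta$ in the sense of Section \ref{sec:fibrationsondiscs}: it is relatively minimal, has no multiple fibres (as $\pi$ has a section), and all singular fibres lie in the interior of $\Delta$. Thus it suffices to check that Assumption \ref{ass:quasiLG} holds for $\phi\colon H_2(W,F;\bZ) \to H_1(F;\bZ)$; Theorem \ref{thm:qdpfibration} then immediately gives the conclusion.

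For Assumption \ref{ass:quasiLG}(1), I would argue exactly as in the displayed computation just before the statement: since $W = \pi^{-1}(\bP^1 \setminus B_\varepsilon(\infty))$ and the only singular fibre removed is the $\rmI_d$ fibre $D$, the monodromy around $\partial\Delta$ is the inverse of the monodromy around an $\rmI_d$ fibre, which in a suitable basis $(a,b)$ of $H_1(F;\bZ)$ is $\begin{psmallmatrix} 1 & -d \\ 0 & 1\end{psmallmatrix}$; and $e(W) = e(Y) - e(D) = 12 - d$, so this equals $\begin{psmallmatrix} 1 & e(W)-12 \\ 0 & 1\end{psmallmatrix}$, as required. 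For Assumption \ref{ass:quasiLG}(2), I need $r(a)$ primitive in $H_2(W,F;\bZ)$, where $a$ is the monodromy-invariant class fixed above. Here I would use the topological description of $r(a)$ given in Section \ref{sec:NSforfibrations} (or rather its analogue for a single fibration over a disc): $r(a) = -\alpha$, where $\alpha \in H_2(W,F;\bZ)$ is the class of the torus swept out by parallel-transporting $a$ around $\partial\Delta$. The existence of a section $\sigma$ of $\pi$ (which restricts to a section of $W \to \Delta$) shows that the intersection pairing of $\alpha$ with the thimble class over a path from $p$ to a singular point can be made equal to $1$ after an appropriate choice, forcing $\alpha$ — equivalently $r(a)$ — to be primitive; alternatively one can invoke the explicit bases from Lemma \ref{lem:kodaira} combined with the decomposition of Lemma \ref{lem:decomposition} to see primitivity directly.

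The main obstacle I anticipate is the primitivity of $r(a)$ in Assumption \ref{ass:quasiLG}(2): it is the one condition that is genuinely geometric rather than purely combinatorial, and it is exactly the place where the existence of a section enters in an essential way. The cleanest route is probably to combine Lemmas \ref{lem:decomposition} and \ref{lem:kodaira} to write $H_2(W,F;\bZ) \cong \rmZ(v_1,\ldots,v_r) \to \rmE$ explicitly in terms of the Kodaira types of the singular fibres of $W$; since each $v_i$ is primitive in $\rmE$ and the right adjoint $r$ in Example \ref{ex:multiZ} sends $a$ to a $\bZ$-linear combination of the exceptional basis elements $z_i$ with at least one coefficient equal to $\langle v_i, a\rangle_\rmE = \pm 1$ (because some $v_i$ is not a multiple of $a$, as otherwise the monodromy would be trivial, contradicting $d \geq 1$), one reads off that $r(a)$ is primitive. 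Once all four conditions are in hand, the proposition follows. I would close with a single sentence: "Hence Assumption \ref{ass:quasiLG} holds for $\phi$, and Theorem \ref{thm:qdpfibration} shows that $\phi\colon H_2(W,F;\bZ) \to H_1(F;\bZ)$ is a quasi del Pezzo homomorphism."
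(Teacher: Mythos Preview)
Your overall strategy is exactly the paper's: reduce to Assumption \ref{ass:quasiLG} and invoke Theorem \ref{thm:qdpfibration}, with part (1) of the assumption already handled by the displayed monodromy computation. The gap is entirely in your treatment of Assumption \ref{ass:quasiLG}(2), the primitivity of $r(a)$. Your second (``cleanest'') route contains two errors. First, for $\rmZ(v_1,\ldots,v_n)$ the coefficients of $r(a)$ in the exceptional basis $(z_1,\ldots,z_n)$ are \emph{not} simply $\langle v_i,a\rangle_\rmE$: solving $\langle z_i, r(a)\rangle = \langle v_i,a\rangle_\rmE$ against the upper-triangular Gram matrix gives a recursion, and only the last coefficient is $\langle v_n,a\rangle_\rmE$. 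Second, even granting that some $v_i$ is not a multiple of $a$ (your monodromy reasoning should say ``the total twist would have positive upper-right entry, not $-d$'', rather than ``trivial''), primitivity of $v_i = \alpha a + \beta b$ only gives $\gcd(\alpha,\beta)=1$, so $\langle v_i,a\rangle_\rmE = \beta$ need not be $\pm 1$. Your first route (section plus thimble) is too vague to constitute an argument: the section restricted to $W$ has trivial boundary in $H_1(F;\bZ)$, and a thimble to an $\rmI_n$ fibre has boundary a multiple of $a$, so neither pairs to $\pm 1$ against $r(a)$ in any obvious way.

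The paper's argument for primitivity is short and different from both of your sketches. One shows, via Mayer--Vietoris applied to the covering $Y = W \cup W'$ (where $W'$ is a neighbourhood of $D$), that $H_1(W;\bZ)$ is torsion generated by the image of $a$; in particular the push-forward of $b$ to $H_1(W;\bZ)$ vanishes. The long exact sequence of the pair $(W,F)$ then gives $b \in \im(\phi)$, say $b = \phi(v)$, and adjointness yields $\langle v, r(a)\rangle_{\mathrm{Sft}} = \langle b,a\rangle_\rmE = 1$, forcing $r(a)$ primitive. Note that the section is not used directly in this step; its role is only to guarantee the absence of multiple fibres.
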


\begin{proof} If we can prove that $r(a) \in H_2(W,F;\bZ)$ is primitive, then Assumption \ref{ass:quasiLG} will hold in this setting and so, by Theorem \ref{thm:qdpfibration}, $\phi\colon H_2(W,F;\bZ) \to H_1(F;\bZ)$ will be a quasi del Pezzo homomorphism.

Let $\Delta' := B_{\delta}(\infty)$ be an open disc of radius $\delta > \varepsilon$ centred at $\infty \in \bP^1$, where $\delta$ is chosen small enough that the only singular fibre over ${B_{\delta}(\infty)}$ is $D$, and let $W' = \pi^{-1}(\Delta')$. Then $Y = W \cup W'$ and Mayer-Vietoris gives
\[H_2(Y;\bZ) \longrightarrow H_1(W \cap W';\bZ) \longrightarrow H_1(W;\bZ) \oplus H_1(W';\bZ) \longrightarrow 0.\]

Now, $W \cap W'$ deformation retracts onto $\pi^{-1}(\partial \Delta)$, which is a torus bundle over $S^1$ with monodromy $\begin{psmallmatrix} 1 & -d \\ 0 & 1\end{psmallmatrix}$. It is then easy to compute that $H_1(W \cap W';\bZ) = \bZ s \oplus \bZ b \oplus (\bZ/d\bZ) a$, where $s$ is the image of the section of $Y$ under the map $H_2(Y;\bZ) \to H_1(W \cap W';\bZ)$ and $(a,b)$ are the classes from $H_1(F;\bZ)$ pushed-forward by the inclusion $F \hookrightarrow W \cap W'$.

Moreover, $W'$ deformation retracts onto $D$, which is an $\rmI_d$ fibre with $d \geq 1$, so $H_1(W';\bZ) = \bZ b$ (the class $a$ is the vanishing cycle). Thus, by the Mayer-Vietoris sequence above, we see that $H_1(W;\bZ)$ must be a torsion group generated by the push-forward of $a$ under the inclusion $F \hookrightarrow W$.

From the long exact sequence of a pair we have
\[ H_2(W,F;\bZ) \stackrel{\phi}{\longrightarrow}  H_1(F;\bZ) \stackrel{f}{\longrightarrow} H_1(W;\bZ),\]
where $f$ is the push-forward under the inclusion. By the argument above, $H_1(W;\bZ)$ is generated by $f(a)$, so $f(b) = 0$. Thus $b \in \im(\phi)$. Choose any $v \in H_2(W,F;\bZ)$ with $\phi(v) = b$. Then $\langle v, r(a) \rangle_{\mathrm{Sft}} = \langle b, a \rangle_{\rmE} = 1,$
by adjunction. As the Seifert pairing is integral, it follows that $r(a)$ must be primitive.
\end{proof}

Based on \cite[Section 3.1]{mslpdps} we define a lattice polarisation on $Y$ as follows.

\begin{definition} \label{def:ratellpol} Let $\pi \colon Y \to \bP^1$ be a rational elliptic surface with section, let $D \subset Y$ be a fibre of Kodaira type $\rmI_d$, and let $[F] \in H^2(Y;\bZ) \cong \Pic(Y)$ be the class of a fibre of $\pi$. Let $N$ be a negative definite lattice. An \emph{$N$-polarisation on $(Y,D)$} is a primitive embedding $j\colon N \hookrightarrow [F]^{\perp}/\bZ [F]$ such that
\begin{itemize}
\item $j(\beta).D_i = 0$ for all $\beta \in N$ and all irreducible components $D_i \subset D$ (where $1 \leq i \leq d$), and
\item there exists a set of positive roots $\Delta(N)^+ \subset N$ such that every $\delta \in j(\Delta(N)^+)$ is the class of an effective divisor from $H^2(Y;\bZ)$.
\end{itemize}
\end{definition}

\begin{remark} This definition differs from \cite[Definition 3.6]{mslpdps} in that we embed $N$ into $[F]^{\perp}/\bZ [F]$, rather than  into $H^2(Y;\bZ) \cong \Pic(Y)$. These definitions are not equivalent: an embedding $N \hookrightarrow H^2(Y;\bZ)$ satisfying \cite[Definition 3.6]{mslpdps} induces an embedding $N \hookrightarrow [F]^{\perp}/\bZ [F]$ as above, but this latter embedding is not necessarily primitive. Conversely, an embedding $N \hookrightarrow [F]^{\perp}/\bZ [F]$ as above does not uniquely determine an embedding $N \hookrightarrow H^2(Y;\bZ)$ satisfying \cite[Definition 3.6]{mslpdps}. However, it follows from the results of \cite[Section 3]{mslpdps} that there is still a well-defined period mapping and corresponding moduli theory for lattice polarised rational elliptic surfaces in the sense of Definition \ref{def:ratellpol}.
\end{remark}

The aim of this section is to show that Definition \ref{def:ratellpol} is compatible with Definition \ref{def:fibrationintersectionpol}. We begin with a pair of lemmas that relate the lattices that we are interested in, before proving the main result (Theorem \ref{thm:ratellcompatibility}).

\begin{lemma} \label{lem:ratellembedding1} There is a primitive lattice embedding
\[H_2(W;\bZ)/\bZ[F] \hookrightarrow r(a)^{\perp} \subset H_2(W,F;\bZ),\]
where $H_2(W;\bZ)$ is equipped with the bilinear form induced by the topological intersection pairing, $[F]$ is the totally degenerate class of the fibre $F$ in $H_2(W;\bZ)$, and $r(a)^{\perp} \subset H_2(W,F;\bZ)$ is equipped with the negative of the Seifert pairing (which is symmetric on $r(a)^{\perp}$ as $r(a)$ is point-like).

Moreover, the totally degenerate class $r(a) \subset r(a)^{\perp}$ is in the image of this embedding, so we have a primitive lattice embedding
\[H_2(W;\bZ)/(\bZ r(a) \oplus \bZ[F]) \hookrightarrow r(a)^{\perp}/r(a),\]
where $r^{\perp}(a)/r(a)$ is the N\'{e}ron-Severi lattice of $H_2(W,F;\bZ)$. The image of this embedding is those classes $v$ in the N\'{e}ron-Severi lattice with $q(v,[r(b)]) = 0$.
\end{lemma}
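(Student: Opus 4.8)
The plan is to relate the relative homology $H_2(W,F;\bZ)$ directly to the absolute homology $H_2(W;\bZ)$ via the long exact sequence of the pair, and then to identify the relevant orthogonal complements and the bilinear forms. First I would use the long exact sequence
\[
0 \longrightarrow H_2(F;\bZ) \longrightarrow H_2(W;\bZ) \stackrel{\iota}{\longrightarrow} H_2(W,F;\bZ) \stackrel{\phi}{\longrightarrow} H_1(F;\bZ) \longrightarrow H_1(W;\bZ),
\]
where $H_2(F;\bZ) \cong \bZ$ is generated by the fibre class $[F]$. As shown in the proof of Proposition \ref{prop:ratellqdp}, the map $f\colon H_1(F;\bZ) \to H_1(W;\bZ)$ is injective on the subgroup generated by $b$ (indeed $b \in \im(\phi)$ because $f(b) = 0$), but more importantly the image of $\iota$ is exactly $\ker(\phi)$ by exactness, which is the lattice $\rmK$ in the notation of Section \ref{sec:pseudolattice} when $W$ occurs as one of the pieces; here, since $W$ itself has no boundary singular fibres removed in the relevant sense, exactness gives $\iota$ injective modulo $[F]$ with image $\ker(\phi) = r(a)^{\perp}$. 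More precisely, I would show: (i) $\iota([F]) = 0$, so $\iota$ descends to $\bar\iota\colon H_2(W;\bZ)/\bZ[F] \to H_2(W,F;\bZ)$; (ii) $\bar\iota$ is injective, since $\ker(\iota) = \im(H_2(F;\bZ) \to H_2(W;\bZ)) = \bZ[F]$; (iii) the image $\im(\bar\iota) = \im(\iota) = \ker(\phi)$, and this is contained in $r(a)^\perp$ because for $v \in \ker(\phi)$ we have $\langle v, r(a)\rangle_{\mathrm{Sft}} = \langle \phi(v), a\rangle_{\rmE} = 0$ by the right-adjoint property. To see the image is \emph{all} of $\ker(\phi)$ (hence equals it, not merely is contained in it) is immediate from exactness. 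But I must still argue $\im(\bar\iota)$ is a \emph{primitive} sublattice of $r(a)^\perp$: since $r(a)^\perp/\im(\bar\iota)$ injects (via $\phi$) into $H_1(F;\bZ) \cong \bZ^2$, which is torsion-free, primitivity follows.

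Next I would handle the bilinear forms. For $u,v \in \ker(\phi) \subset H_2(W,F;\bZ)$, the argument already used in the proof of Theorem \ref{thm:qdpfibration} (the bullet point citing \cite[Lemma 4.8]{pdpslf}, i.e. Auroux's result) gives $\langle u,v\rangle_{\mathrm{Sft}} = -\langle u,v\rangle_{\mathrm{top}}$, where the topological intersection product is the one on $H_2(W;\bZ)$ pulled back via $\iota$. Hence the negative of the Seifert pairing on $r(a)^\perp$ restricts, on $\im(\bar\iota)$, to the topological intersection form on $H_2(W;\bZ)/\bZ[F]$; this is precisely the claimed isometry. The fact that $[F]$ is totally degenerate for the topological intersection form on $H_2(W;\bZ)$ (so the form descends to the quotient) is standard for a fibre class on a fibred $4$-manifold.

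For the second embedding, I would show that $r(a) \in \im(\bar\iota)$. Recall from the discussion in Section \ref{sec:NSforfibrations} that $r(a)$ is, up to sign, the class of a torus obtained by parallel transporting the monodromy-invariant cycle $a$ around $\partial\Delta$; since $a$ is monodromy invariant this torus closes up, so it is the image under $\iota$ of an honest class in $H_2(W;\bZ)$ — concretely, one can take the multisection swept out by $a$. Thus $\bZ r(a) \subset \im(\bar\iota)$, and quotienting the primitive embedding $H_2(W;\bZ)/\bZ[F] \hookrightarrow r(a)^\perp$ by $\bZ r(a)$ (noting $r(a)$ maps to a primitive element in the source by the primitivity argument in the proof of Proposition \ref{prop:ratellqdp}) gives a primitive lattice embedding
\[
H_2(W;\bZ)/(\bZ r(a) \oplus \bZ[F]) \hookrightarrow r(a)^\perp/r(a) = \NS(H_2(W,F;\bZ)).
\]
Finally, to identify the image as $\{v : q(v,[r(b)]) = 0\}$, I would use that $[r(b)] = -K_{\mathrm{G}}$ (Proposition \ref{prop:sphericalsurfacelike}) and Lemma \ref{lem:Kperp}-type reasoning: for a lift $\hat v \in r(a)^\perp$ of $v$, one has $q(v,[r(b)]) = \langle \hat v, r(b)\rangle_{\mathrm{Sft}} = \langle \phi(\hat v), b\rangle_{\rmE}$, which vanishes precisely when $\phi(\hat v)$ is a multiple of $a$; but $\phi(\hat v)$ is a multiple of $a$ exactly when $\hat v$ can be modified by $r(a)$ (which does not change $v$) to lie in $\ker(\phi) = \im(\bar\iota)$. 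The main obstacle I anticipate is the careful bookkeeping of signs and of which orthogonal complement is taken where — in particular verifying cleanly that $\im(\bar\iota) = \ker(\phi)$ really is all of $r(a)^\perp$ up to the span of $r(a)$, rather than a proper sublattice, and that the identification of the image with $K_{\mathrm{G}}^\perp$ is exact on the nose rather than up to finite index.
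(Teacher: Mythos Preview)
Your approach is essentially the paper's: use the long exact sequence of the pair $(W,F)$ to identify $\im(\bar\iota)=\ker(\phi)$, show this sits inside $r(a)^\perp$ via adjunction, get primitivity from torsion-freeness of $H_1(F;\bZ)$, match bilinear forms via Auroux's lemma, and observe $r(a)\in\ker(\phi)$ (the paper does this algebraically via $\phi(r(a))=0$ from Lemma~\ref{lem:riproperties}, which is cleaner than your topological description, but both work).

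However, your final step identifying the image with $[r(b)]^\perp$ is genuinely muddled. You write that $\langle\phi(\hat v),b\rangle_{\rmE}$ vanishes ``precisely when $\phi(\hat v)$ is a multiple of $a$'', and then that $\hat v$ can be ``modified by $r(a)$'' to land in $\ker(\phi)$. Both claims are wrong. Since $\hat v\in r(a)^\perp$, adjunction already gives $\langle\phi(\hat v),a\rangle_{\rmE}=0$, so $\phi(\hat v)=\alpha a$ for some $\alpha\in\bZ$ automatically; then $\langle\phi(\hat v),b\rangle_{\rmE}=-\alpha$, which vanishes iff $\alpha=0$, i.e.\ iff $\phi(\hat v)=0$ outright. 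And since $\phi(r(a))=0$, adding multiples of $r(a)$ to $\hat v$ does not change $\phi(\hat v)$ at all, so there is no ``modification'' to be done. The correct argument (which is the paper's) is simply: for $v\in r(a)^\perp$ one has $v\in\ker(\phi)$ iff both $\langle\phi(v),a\rangle_{\rmE}=0$ and $\langle\phi(v),b\rangle_{\rmE}=0$; the first is automatic, and the second equals $\langle v,r(b)\rangle_{\mathrm{Sft}}=-q([v],[r(b)])$. Thus the image of the second embedding is exactly $K_{\rmG}^\perp=[r(b)]^\perp$, and in particular it is a \emph{proper} sublattice of $\NS(H_2(W,F;\bZ))$ whenever $d>0$ --- contrary to your anticipated obstacle, $\ker(\phi)$ is not ``all of $r(a)^\perp$ up to the span of $r(a)$''. (Your earlier slip writing ``$\ker(\phi)=r(a)^\perp$'' is a symptom of the same confusion, though you immediately corrected it in (iii).)
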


\begin{proof}
From the long exact sequence of a pair we have
\[H_2(F;\bZ) \longrightarrow H_2(W;\bZ) \stackrel{f}{\longrightarrow} H_2(W,F;\bZ) \stackrel{\phi}{\longrightarrow} H_1(F;\bZ),\]
and the map $f$ takes the topological intersection product on  $H_2(W;\bZ)$ to the negative of the Seifert pairing on $H_2(W,F;\bZ)$. The image of the first map is $\bZ[F]$. To prove the first part of the lemma, it thus suffices to show that the image of the map $f$ is a primitive sublattice of $r(a)^{\perp}$. Let $v \in \im(f)$ be any class. Then $\langle v,r(a) \rangle_{\mathrm{Sft}} = \langle \phi(v),a \rangle_{\rmE} = 0$, as $\phi(v) = 0$ by exactness, so $\im(f) \subset r(a)^{\perp}$. Primitivity follows from the fact that $H_1(F;\bZ)$ is torsion-free.

To prove the second part, by exactness $\im(f) = \ker(\phi)$. As the map $\phi$ is a quasi del Pezzo homomorphism, by Lemma \ref{lem:riproperties} we have $\phi \circ r(a) = 0$ and thus $r(a) \in \ker(\phi) = \im(f)$. Finally, note that $v \in \im(f) = \ker(\phi)$ if and only if $\langle \phi(v), a\rangle_{\rmE} = \langle \phi(v),b \rangle_{\rmE} = 0$. A class $v \in r(a)^{\perp}$ satisfies this if and only if $\langle v,r(b) \rangle_{\mathrm{Sft}} = -q(v,[r(b)]) = 0$.
\end{proof}

\begin{lemma} \label{lem:ratellembedding2} There is a primitive lattice embedding
\[H_2(W;\bZ)/\bZ r(a) \hookrightarrow [F]^{\perp} \subset  H^2(Y;\bZ) \cong \Pic(Y),\]
where $H_2(W;\bZ)$ is equipped with the bilinear form induced by the topological intersection pairing and $[F]$ is the class of a fibre $F$.

Moreover, the totally degenerate class $[F]$ is in the image of this embedding, so we have a primitive lattice embedding
\[H_2(W;\bZ)/(\bZ r(a) \oplus \bZ[F]) \hookrightarrow [F]^{\perp}/\bZ [F].\]
The image of this embedding is those classes $\beta$ with $\beta.D_i = 0$ for all irreducible components $D_i \subset D$ (where $1 \leq i \leq d$).
\end{lemma}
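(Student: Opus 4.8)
The statement to prove (Lemma \ref{lem:ratellembedding2}) parallels Lemma \ref{lem:ratellembedding1}, but now we work on the other side of the correspondence: instead of landing in the relative homology $H_2(W,F;\bZ)$ of the disc fibration, we land in $H^2(Y;\bZ)$ of the whole rational elliptic surface. The natural tool is again the long exact sequence of the pair $(Y,D)$, together with Poincar\'{e}--Lefschetz duality on the manifold-with-boundary $W$, which is a deformation retract of $U = Y \setminus D$. The plan is to first set up the identifications $H_2(W;\bZ) \cong H_2(U;\bZ) \cong H^2_c(U;\bZ)$ and $H_2(W,\partial W;\bZ) \cong H^2(W;\bZ)$ coming from duality, then run the long exact sequence of compactly-supported cohomology
\[
0 \longrightarrow H^1(D;\bZ) \longrightarrow H^2_c(U;\bZ) \longrightarrow H^2(Y;\bZ) \longrightarrow H^2(D;\bZ) \longrightarrow 0.
\]
Since $D$ is an $\rmI_d$ fibre with $d \geq 1$, we have $H^1(D;\bZ) \cong \bZ$, and one identifies this subgroup with $\bZ r(a)$ using the description of $r(a)$ as (the Poincar\'{e} dual of) the cycle obtained by parallel-transporting the vanishing cycle $a$ around $\partial\Delta$ — this is exactly the computation of $H_1(W\cap W';\bZ)$ carried out in the proof of Proposition \ref{prop:ratellqdp}, where $a$ generates the torsion $\bZ/d\bZ a$ and is killed upon passing into $W'$. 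The image of $H^2_c(U;\bZ) \to H^2(Y;\bZ)$ is the kernel of restriction to $D$, and since $D = \bigcup D_i$, a class $\beta \in H^2(Y;\bZ)$ lies in this kernel iff $\beta.D_i = 0$ for all $1 \leq i \leq d$; note this kernel is $[F]^{\perp}$ because $[F] = \sum_i [D_i]$ is the total fibre class. Primitivity of the embedding $H_2(W;\bZ)/\bZ r(a) \hookrightarrow [F]^{\perp}$ follows because the quotient $H^2(Y;\bZ)/\mathrm{im}$ injects into $H^2(D;\bZ)$, which is torsion-free.

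\textbf{The fibre-class subtlety.} The second half of the statement requires identifying where $[F]$ sits. The class $[F] \in H_2(W;\bZ)$ (pushed forward from $H_2(F;\bZ)$, hence totally degenerate for the intersection form) maps under $H_2(W;\bZ) \to [F]^{\perp} \subset H^2(Y;\bZ)$ to the fibre class $[F]$ of $Y$; this is immediate since the Poincar\'{e}--Lefschetz duality identifications are compatible with the inclusion $W \hookrightarrow Y$ and the fibre of $W$ is the fibre of $Y$. One must check that $[F]$ and $r(a)$ are linearly independent in $H_2(W;\bZ)$ (so that the quotient by $\bZ r(a) \oplus \bZ[F]$ makes sense): this follows because $r(a)$ maps to $0 \in H^2(Y;\bZ)$ (it came from $H^1(D;\bZ)$) while $[F] \neq 0$ there. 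Quotienting, we get the claimed primitive embedding
\[
H_2(W;\bZ)/(\bZ r(a) \oplus \bZ[F]) \hookrightarrow [F]^{\perp}/\bZ[F],
\]
with image precisely the classes $\beta$ (mod $[F]$) orthogonal to every $D_i$. Primitivity of this quotient embedding is inherited from primitivity of the embedding into $[F]^{\perp}$, since $[F]$ is primitive in $[F]^{\perp}$ (it is part of a hyperbolic plane $H = \langle [\sigma], [F]\rangle$ in $\Pic(Y)$ by the existence of a section).

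\textbf{Main obstacle.} I expect the principal technical point to be the careful bookkeeping of Poincar\'{e}--Lefschetz duality and orientations to confirm two things simultaneously: (i) that the long exact sequence of compactly-supported cohomology of $U$ genuinely corresponds, term by term, to the homology long exact sequence of the pair $(W,\partial W)$ under the duality isomorphisms, so that the bilinear forms match up as asserted (the "topological intersection pairing" on $H_2(W;\bZ)$ becoming compatible with the cup product on $H^2(Y;\bZ)$ via restriction), and (ii) that the generator of $H^1(D;\bZ) \cong \bZ$ is identified with $r(a)$ and not, say, $r(a)$ up to sign or some correction by $[F]$. The sign issue is real — compare Remark \ref{rem:orientations} — but for the statement as phrased (which concerns only the existence of a primitive lattice embedding, the location of $r(a)$ and $[F]$, and the characterisation of the image via orthogonality to the $D_i$) signs are immaterial; what matters is the identification of $\ker(H^1(D;\bZ) \to \text{never mind})$, i.e. that $r(a)$ generates the image of $H^1(D;\bZ)$, which is pinned down by the monodromy computation already performed in Proposition \ref{prop:ratellqdp}. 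So the obstacle is really expository rather than mathematical: assembling the duality square cleanly. Once that is in place the rest is a diagram chase identical in spirit to the proof of Lemma \ref{lem:ratellembedding1}.
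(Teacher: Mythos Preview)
Your approach is correct and closely parallels the paper's, with one difference in execution: you use the single long exact sequence of compactly-supported cohomology for the pair $(Y,D)$, whereas the paper removes the components $D_i$ one at a time via a tower of open sets $U_0 \supset U_1 \supset \cdots \supset U_{d+1} = U$, applying the exact sequence at each step. Your version is more economical but requires computing $H^*(D;\bZ)$ for the singular curve $D$ (which is straightforward via Mayer--Vietoris: $H^1(D;\bZ) \cong \bZ$ and $H^2(D;\bZ) \cong \bZ^d$, both torsion-free); the paper's stepwise approach only ever deals with smooth strata $U_i \cap D_i$, each of which retracts to a point or a circle, so the bookkeeping is more elementary.

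Two slips to note. First, the sentence ``note this kernel is $[F]^{\perp}$'' is wrong: the kernel of restriction $H^2(Y;\bZ) \to H^2(D;\bZ)$ consists of classes orthogonal to \emph{every} component $D_i$, which is strictly smaller than $[F]^\perp$ when $d > 1$ (for instance $[D_1]-[D_2] \in [F]^\perp$ but is not in the kernel). You later state the image correctly, so this is only a wording error, but it should read ``is contained in $[F]^\perp$''. Second, your justification for identifying the image of $H^1(D;\bZ)$ with $\bZ r(a)$ via Proposition~\ref{prop:ratellqdp} is not quite apt: that proposition computes $H_1$ of the torus bundle $\partial W$, not the connecting map in your sequence. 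The relevant reference is the description of $r(a)$ in Section~\ref{sec:NSforfibrations} as (minus) the torus obtained by parallel-transporting $a$ around $\partial\Delta$; this torus is precisely what arises from the generator of $H^1(D;\bZ)$ under the tube map, and this is exactly what the paper invokes at the end of its proof.
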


\begin{proof}  Note first that, under Poincar\'{e} duality, we have an isomorphism of lattices $H^2(Y;\bZ) \cong H_2(Y;\bZ)$, where $H_2(Y;\bZ)$ is equipped with the topological intersection pairing $\langle \cdot , \cdot \rangle_{\mathrm{top}}$. 

Set $D_0 := D_1 \cap D_d$; note that $D_0$ is a point. Define $U_0 = Y$ and  $U_{i+1} = U_{i} \setminus D_i$ for each $0 \leq i \leq d$; note that $U_{d+1} = U$. Then the embeddings $U_{i+1} \hookrightarrow U_{i}$ induce pushforward maps $H_2(U_{i+1};\bZ) \to H_2(U_{i};\bZ)$, which fit into the following long exact sequence that is Poincar\'{e} dual to the exact sequence of cohomology with compact support
\[H_1(U_{i} \cap D_i;\bZ) \longrightarrow H_2(U_{i+1};\bZ) \longrightarrow H_2(U_{i};\bZ) \longrightarrow H_0(U_{i} \cap D_i;\bZ).\]
Noting that $U_{i} \cap D_i$ deformation retracts onto a point if $0 \leq i \leq d-1$ and a loop $S^1$ homotopic to the vanishing cycle $a$ if $i=d$, we see that the map $H_2(U_{i+1};\bZ) \to H_2(U_{i};\bZ)$ is injective if $0 \leq i \leq d-1$ and has kernel $\bZ \alpha$ generated by the class $\alpha$ of a torus obtained by parallel transporting $a$ around $\partial \Delta$ if $i=d$. Moreover, the image of this map consists of those classes $\beta \in H_2(U_{i};\bZ)$ with $\langle \beta, D_i\rangle_{\mathrm{top}} = 0$. 

Composing these maps for $0 \leq i \leq d$, we see that the map in homology $H_2(U;\bZ) \to H_2(Y;\bZ)$ induced by the inclusion $U \to Y$ has kernel $\bZ \alpha$ and image consisting of those classes $\beta \in H_2(Y;\bZ)$ with $\langle \beta, D_i\rangle_{\mathrm{top}} = 0$ for all $1 \leq i \leq d$; in particular the image is contained in $[F]^{\perp}$. Moreover, by deformation retraction we have an isomorphism $H_2(W;\bZ) \cong H_2(U;\bZ)$, which identifies $\alpha$ with $-r(a)$ (see Section \ref{sec:NSforfibrations}). 

Putting everything together, we therefore obtain an injective map of lattices $H_2(W;\bZ)/\bZ r(a) \hookrightarrow [F]^{\perp}$, proving the first part of the lemma. The image of this map consists of those classes $\beta \in [F]^{\perp}$ with $\beta.D_i = 0$ for all $1 \leq i \leq d$. As the class $[F]$ satisfies $[F].D_i = 0$ for all $1 \leq i \leq d$, this class is in the image, and the second statement follows.
\end{proof}

\begin{theorem} \label{thm:ratellcompatibility} Let $N$ be a negative definite lattice. With notation as above, if $j\colon N \hookrightarrow [F]^{\perp}/\bZ [F]$ is an $N$-polarisation on $(Y,D)$, in the sense of Definition \ref{def:ratellpol}, then $j$ induces an $N$-polarisation on $W$, in the sense of Definition \ref{def:fibrationintersectionpol}.

Conversely, if $j'\colon N \hookrightarrow r^{\perp}(a)/r(a)$ is an $N$-polarisation on $W$, in the sense of Definition \ref{def:fibrationintersectionpol}, then $j'$ induces an $N$-polarisation on $(Y,D)$, in the sense of Definition \ref{def:ratellpol}.
\end{theorem}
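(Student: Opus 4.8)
The plan is to exploit the two primitive lattice embeddings established in Lemmas~\ref{lem:ratellembedding1} and \ref{lem:ratellembedding2}. Both lemmas produce the \emph{same} abstract lattice $H_2(W;\bZ)/(\bZ r(a) \oplus \bZ[F])$: Lemma~\ref{lem:ratellembedding1} embeds it primitively into $r(a)^{\perp}/r(a) = \NS(H_2(W,F_p;\bZ))$ with image exactly the classes orthogonal to $[r(b)] = -K_{\rmG}$, and Lemma~\ref{lem:ratellembedding2} embeds it primitively into $[F]^{\perp}/\bZ[F]$ with image exactly the classes $\beta$ with $\beta.D_i = 0$ for all components $D_i \subset D$. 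Since $K_{\rmG}^{\perp}$ in the first description corresponds under this identification to the subspace cut out by orthogonality to all $D_i$ in the second, we obtain a canonical isometry between the sublattice $K_{\rmG}^{\perp} \subset \NS(H_2(W,F_p;\bZ))$ and the sublattice of $[F]^{\perp}/\bZ[F]$ defined by $\{\beta : \beta.D_i = 0\ \forall i\}$. This isometry is the bridge between the two notions of polarisation, and the first step is simply to set it up carefully and record that it is an isometry (using the sign conventions already tracked in the two lemmas: the negative of the Seifert pairing matches the topological intersection form on both sides).

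Next I would check the two bullet conditions of each definition translate correctly across this isometry. For the orthogonality condition: in Definition~\ref{def:fibrationintersectionpol} one asks $q(j(v),[r(b)])=0$, while in Definition~\ref{def:ratellpol} one asks $j(\beta).D_i = 0$ for all $i$; by the image descriptions in the two lemmas these conditions are literally identical after passing through the isometry. For primitivity: an $N$-polarisation on $(Y,D)$ is a primitive embedding $N \hookrightarrow [F]^{\perp}/\bZ[F]$ whose image lies in the $D_i$-orthogonal sublattice, hence factors as a primitive embedding into that sublattice composed with a primitive inclusion of the sublattice (by Lemma~\ref{lem:ratellembedding2}), and transporting via the isometry gives a primitive embedding into $\NS(H_2(W,F_p;\bZ))$ landing in $K_{\rmG}^{\perp}$ and primitive there; since $K_{\rmG}^{\perp}$ is primitive in $\NS(H_2(W,F_p;\bZ))$ (it is an orthogonal complement in a unimodular lattice, cf.\ Lemma~\ref{lem:NSproperties}), the composite is primitive in $\NS(H_2(W,F_p;\bZ))$, giving the required primitive embedding for Definition~\ref{def:fibrationintersectionpol}. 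The converse direction is symmetric.

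The main obstacle, and the step deserving the most care, is the \textbf{effectivity condition}. Definition~\ref{def:ratellpol} asks for a set of positive roots $\Delta(N)^+$ with every $\delta \in j(\Delta(N)^+)$ the class of an effective divisor in $H^2(Y;\bZ)$; Definition~\ref{def:fibrationintersectionpol} asks instead that each such $\delta$ be a nonnegative combination $\sum n_i [C_i]$ of classes of irreducible components of singular fibres \emph{of $W$}. To pass from the former to the latter: a $(-2)$-class $\delta$ (or its negative) effective in $\NS(Y)$ and orthogonal to $[F]$ must be supported on fibres, and orthogonal to every component $D_i$ of $D$ forces its support to avoid $D$ entirely, hence to lie on singular fibres over $\Delta$, i.e.\ on $W$; after subtracting copies of $[F]$ (working modulo $\bZ[F]$) we may take it to be a nonnegative sum of components of singular fibres of $W$. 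This is exactly the argument already used in the proof of Theorem~\ref{thm:Gammapolfibration}, and I would invoke or reproduce it. Conversely, components of singular fibres of $W$ are themselves components of fibres of $Y$, hence effective in $H^2(Y;\bZ)$, so a nonnegative sum of such is effective; this direction is immediate. One must also verify that a system of positive roots for $N$ transported across the isometry is again a system of positive roots, which is automatic since the isometry preserves the quadratic form and hence the set of roots, and the two defining properties of a positive root system are formulated purely in terms of the lattice structure. Assembling these pieces in both directions completes the proof.
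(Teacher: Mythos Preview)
Your proposal is correct and follows essentially the same approach as the paper's proof: both use the two primitive embeddings of Lemmas~\ref{lem:ratellembedding1} and \ref{lem:ratellembedding2} to transport primitivity and the orthogonality conditions across, and handle effectivity by the same fibre-support argument. One minor imprecision: the claim that orthogonality to every $D_i$ forces the support to avoid $D$ \emph{entirely} is not literally true (the $D$-part could be a nonnegative multiple of the whole cycle $D$), but since this multiple equals a multiple of $[F]$ your subsequent ``after subtracting copies of $[F]$'' fixes this, and the paper's own argument is equally terse at this step.
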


\begin{proof} Note first that, by Lemmas \ref{lem:ratellembedding1} and \ref{lem:ratellembedding2}, we have embeddings
\[ 
\begin{tikzcd}
r(a)^{\perp}/r(a) \arrow[r, hookleftarrow, "\varphi"] & H_2(W;\bZ)/(\bZ r(a) \oplus \bZ[F]) \arrow[r, hookrightarrow, "\psi"] & {[F]}^{\perp}/\bZ {[F]}.
\end{tikzcd}
\]

 By Lemma \ref{lem:ratellembedding2}, if $j\colon N \hookrightarrow [F]^{\perp}/\bZ [F]$ is an $N$-polarisation on $(Y,D)$, then $j(N)$ is in the image of the embedding $\psi$, so there is a primitive embedding $N \hookrightarrow H_2(W;\bZ)/(\bZ r(a) \oplus \bZ[F])$. Composing with the primitive embedding $\varphi$, we obtain an embedding $N \hookrightarrow r(a)^{\perp}/r(a)$, such that $q(v,[r(b)]) = 0$ for all $v \in N$.

If $\delta \in  \Delta(N)^+$, then since $j(\delta).D_i = 0$ for all components $D_i \subset D$ and $j(\delta)$ is the class of an effective divisor, $j(\delta)$ can be represented as an effective sum of classes of components of fibres in $H_2(W;\bZ)$. Quotienting by $\bZ[F]$, we see that $j(\delta)$ can be expressed as a nonnegative sum of classes of components of singular fibres of $W$. We thus obtain an $N$-polarisation on $W$, in the sense of Definition \ref{def:fibrationintersectionpol}.

Conversely, by Lemma \ref{lem:ratellembedding1}, if $j\colon N \hookrightarrow r(a)^{\perp}/r(a)$ is an $N$-polarisation on $W$, then $j(N)$ is in the image of the embedding $\varphi$, so there is a primitive embedding $N \hookrightarrow H_2(W;\bZ)/(\bZ r(a) \oplus \bZ[F])$. Composing with the primitive embedding $\psi$,  we obtain an embedding $N \hookrightarrow [F]^{\perp}/\bZ [F]$, such that $v.D_i=0$ for all $v \in N$ and all $1 \leq i \leq d$.

If $\delta \in  \Delta(N)^+$, then $j(\delta)$ is a nonnegative sum of classes of components of singular fibres in $H_2(W;\bZ)$. But every such class is also the class of an effective divisor from $H^2(Y;\bZ)$. We thus obtain an $N$-polarisation on $(Y,D)$, in the sense of Definition \ref{def:ratellpol}.
\end{proof}

\section{Mirror Symmetry}\label{sec:mirror}

The DHT philosophy, introduced in \cite{mstdfcym}, postulates a mirror correspondence between Tyurin degenerations and codimension $1$ fibrations on Calabi-Yau manifolds. The aim of this section is to make this correspondence precise in the case of K3 surfaces.

\subsection{Mirror Pairs}\label{sec:mirrorpairs}

We begin by defining when a Tyurin degeneration of K3 surfaces and an elliptically fibred K3 surface form a \emph{mirror pair}.

Let $L$ and $\check{L}$ be two nondegenerate sublattices of $\Lambda_{\rmK 3}$ of signatures $(1,\rank(L)-1)$ and $(1,\rank(\check{L})-1)$, respectively. Note that we do not, a priori, assume any relationship between $L$ and $\check{L}$.

Let $\calX \to \Delta$ be an $L$-polarised Tyurin degeneration of K3 surfaces, with general fibre $X$ and central fibre $X_0 = V_1 \cup_C V_2$. Then we have an embedding $L \hookrightarrow I^{\perp}/{I}$, by Lemma \ref{lem:tyurinpseudolattice}. Moreover, by Lemma \ref{lem:latticeinjection} and Proposition \ref{prop:NSfordegenerations} there is a map
\[\varphi_X \colon K_{V_1}^{\perp} \oplus K_{V_2}^{\perp} \longrightarrow I^{\perp}/I\]
which is injective on each of the factors $K_{V_i}^{\perp} \subset \NS(V_i)$.

Let $\pi \colon Y \to \bP^1$  be an elliptically fibred K3 surface which admits an $\check{L}$-quasi\-polarisation. Assume that $\gamma \subset \bP^1$ is an allowable loop, such that the $\check{L}$-quasi\-polarisation is compatible with $\pi$ and $\gamma$. Let $p \in \gamma$ be a point, let $F_p = \pi^{-1}(p)$ be the fibre over $p$, and let $F$ denote the class of $F_p$ in $\check{L}$. If $\Gamma := F^{\perp}_{\check{L}}/\bZ F$, then by Lemma \ref{lem:compatiblepolarisationembedding} there is an embedding $\Gamma \hookrightarrow (\bZ F \oplus \bZ \tau)^{\perp} / (\bZ F \oplus \bZ \tau)$, where  $\tau \in \check{L}^{\perp}$ is the class from Definition \ref{def:compatible}. 
Then $\gamma$ induces a splitting $\bP^1 = \Delta_1 \cup_{\gamma} \Delta_2$ of $\bP^1$ into a pair of discs; let $\pi_i\colon Y_i \to \Delta_i$ denote the induced elliptic fibrations over discs. By Lemma \ref{lem:latticeinjection} and Proposition \ref{prop:NSforfibrations} there is a map
\[\varphi_Y \colon [r_1(b)]^{\perp} \oplus [r_2(b)]^{\perp} \longrightarrow (\bZ F \oplus \bZ \tau)^{\perp} / (\bZ F \oplus \bZ \tau)\]
which is injective on each of the factors $[r_i(b)]^{\perp} \subset \NS(H_2(Y_i,F_p;\bZ))$.

\begin{definition} \label{def:mirrorpair}
A Tyurin degeneration $\calX \to \Delta$ and elliptically fibred K3 surface $\pi\colon Y \to \bP^1$ as above are called a \emph{mirror pair} if the following conditions hold.
\begin{enumerate}
\item The embeddings $L \hookrightarrow I^{\perp}/{I}$ and $\Gamma \hookrightarrow (\bZ F \oplus \bZ \tau)^{\perp} / (\bZ F \oplus \bZ \tau)$ are primitive.
\item There is an isometry $\psi\colon (\bZ F \oplus \bZ \tau)^{\perp} / (\bZ F \oplus \bZ \tau)  \stackrel{\sim}{\longrightarrow} I^{\perp}/I$ satisfying 
\[\psi\circ \varphi_Y([r_i(b)]^{\perp}) = \varphi_X(K_{V_i}^{\perp}),\] 
for each $i \in \{1,2\}$.
\item $\psi$ can be lifted to an isometry $\hat{\psi}\colon H^2(Y;\bZ) \to H^2(X;\bZ)$ taking $\tau$ and $F$ to primitive generators $e_1$ and $e_2$ of $I$, respectively.
\item For some $m \in \bN$, either
\begin{enumerate}
\item $\tau$ is $m$-admissible in $\check{L}^{\perp}$ and $\hat{\psi}(\check{L}^{\perp}) = H(m) \oplus L$, with $e_1 = \hat{\psi}(\tau) \in H(m)$, or
\item $e_1$ is $m$-admissible in $L^{\perp}$ and $\hat{\psi}^{-1}(L^{\perp}) = H(m) \oplus \check{L}$, with $\tau = \hat{\psi}^{-1}(e_1) \in H(m)$.
\end{enumerate}
\end{enumerate}
\end{definition}

\begin{remark}By Theorem \ref{thm:qdpclassification} and Propositions \ref{prop:qdpisomorphism} and \ref{prop:NSMautos}, an isometry satisfying condition (2) always exists if the corresponding pseudolattices $\rmG$ on both sides have the same degree. In all cases except degrees $8$ and $8'$ this reduces to checking that $12 - e(Y_1) = K_{V_1}^2$.\end{remark}

Using conditions (1), (3) and (4) from Definition \ref{def:mirrorpair}, it is straightforward to show that $\psi(\Gamma) = L^{\perp}$, where the orthogonal complement is taken in $I^{\perp}/I$. Our next result shows that if our polarisations are doubly admissible, then the converse also holds. This allows for a dramatic simplification of Definition \ref{def:mirrorpair} in the doubly admissible setting.

\begin{proposition} \label{prop:mirrordoublyadmissible} Suppose that the $L$-polarised Tyurin degeneration $\calX \to \Delta$ and the $\check{L}$-quasi\-polarisation on $Y$ are both doubly admissible, in the sense of Definitions \ref{def:doublyadmissible} and \ref{def:fibrationdoublyadmissible}. Then $\calX \to \Delta$ and $\pi\colon Y \to \bP^1$ are a mirror pair, in the sense of Definition \ref{def:mirrorpair}, if and only if there is an isometry $\psi\colon (\bZ F \oplus \bZ \tau)^{\perp} / (\bZ F \oplus \bZ \tau)  \stackrel{\sim}{\longrightarrow} I^{\perp}/I$ satisfying $\psi(\Gamma) = L^{\perp}$ and $\psi\circ \varphi_Y([r_i(b)]^{\perp}) = \varphi_X(K_{V_i}^{\perp})$, for each $i \in \{1,2\}$.
\end{proposition}

\begin{proof} The forward direction is immediate from the discussion above; it suffices to prove the converse. So assume that there is an isometry $\psi\colon (\bZ F \oplus \bZ \tau)^{\perp} / (\bZ F \oplus \bZ \tau)  \stackrel{\sim}{\longrightarrow} I^{\perp}/I$ satisfying $\psi(\Gamma) = L^{\perp}$ and $\psi\circ \varphi_Y([r_i(b)]^{\perp}) = \varphi_X(K_{V_i}^{\perp})$, for each $i \in \{1,2\}$; we check the four conditions for $\calX \to \Delta$ and $\pi\colon Y \to \bP^1$ to be a mirror pair from Definition \ref{def:mirrorpair}. 

By Propositions \ref{prop:doublyadmissibleconsequnces} and \ref{prop:primitivefibration}, condition (1) is a consequence of the doubly admissible assumption. Moreover, condition (2) holds by assumption.

By the proof of Lemma \ref{lem:doublyadmissibleconditions}, there is an isometry $\phi_X \colon H^2(X;\bZ) \to H \oplus H \oplus H \oplus E_8 \oplus E_8$, taking primitive generators $e_1,e_2 \in I$ to primitive isotropic vectors in each of the first two factors of $H$ and taking $L$ to a primitive sublattice of the remaining $H \oplus E_8 \oplus E_8$. Similarly, by the proof of Lemma \ref{lem:doublyadmissibleconsequencesfibration}, there is an isometry $\phi_Y \colon H^2(Y;\bZ) \to H \oplus H \oplus H \oplus E_8 \oplus E_8$, taking $\tau$ and $F$ to primitive isotropic vectors in each of the first two factors of $H$ and taking $\Gamma$ to a primitive sublattice of the remaining $H \oplus E_8 \oplus E_8$. As primitive isotropic vectors in $H$ are unique up to isometry, we may arrange that $\phi_Y(\tau) = \phi_X(e_1)$ and $\phi_Y(F) = \phi_X(e_2)$. Moreover, by Proposition \ref{prop:doublyadmissibleconsequnces} and Lemma \ref{lem:doublyadmissibleconsequencesfibration}, there are isomorphisms $H \oplus E_8 \oplus E_8 \cong I^{\perp}/I$ and $H \oplus E_8 \oplus E_8 \cong (\bZ F \oplus \bZ \tau)^{\perp} / (\bZ F \oplus \bZ \tau)$ compatible with the quotients; consequently the isometry $\psi$ from condition (2) may be lifted to an isometry $\overline{\psi}$ of $H \oplus H \oplus H \oplus E_8 \oplus E_8$ which fixes the first two factors of $H$. Then $\phi_X^{-1} \circ \overline{\psi} \circ \phi_Y$ is the required isometry for condition (3). 

Finally, it follows from the definition of doubly admissible that $\tau$ is $1$-admissible in $\check{L}^{\perp}$ and $e_1$ is $1$-admissible in $L^{\perp}$. By the construction above and the assumption that $\psi(\Gamma) = L^{\perp}$, we have $\overline{\psi} \circ \phi_Y(\Gamma) = \phi_X(L)^{\perp}$. Consequently, we have a finite index sublattice $H \oplus H \oplus \phi_X(L) \oplus (\overline{\psi} \circ \phi_Y(\Gamma))$ of $\Lambda_{\rmK 3}$, where $\phi_Y(\tau) = \phi_X(e_1)$ and $\phi_Y(F) = \phi_X(e_2)$ are primitive isotropic elements in each of the two factors of $H$; both parts (a) and (b) of condition (4) follow.
\end{proof}

\subsection{Compatibility with other forms of mirror symmetry} \label{sec:mirrorcompatibility}

In this section we explore the compatibility between the notion of mirror pairs established in Definition \ref{def:mirrorpair} and some established notions of mirror symmetry.

\subsubsection{Lattice polarised mirror symmetry for K3 surfaces}\label{sec:dolgachevnikulin} 

Given a mirror pair in the sense of Definition \ref{def:mirrorpair}, condition (4) in the definition immediately implies that a general fibre $X$ of the Tyurin degeneration $\calX \to \Delta$ and the elliptically fibred K3 surface $Y$ are mirror as lattice polarised K3 surfaces, in the sense of \cite{mslpk3s}. 

Note that if $m = 1$ in condition (4), then (4a) holds if and only if (4b) does and the mirror correspondence is a duality; see \cite[Section 6]{mslpk3s}.

\subsubsection{Homological mirror symmetry for quasi del Pezzo surfaces and quasi LG models}

In \cite{pdpslf} it is shown that if $V$ is a quasi del Pezzo surface with smooth anticanonical divisor $C$, then there exists a genus $1$ Lefschetz fibration $W \to \Delta$ over a disc with a smooth fibre $F_p$ over a point $p$ on the boundary, so that there are isomorphisms of pseudolattices making the following diagram commute:
\[\begin{tikzcd}
H_2(W,F_p;\bZ) \ar{d}{\phi} \ar{r}{\sim} & \rmK_0^{\mathrm{num}}(\mathbf{D}(V))  \ar{d}{i^*} &  \\
H_1(F_p;\bZ) \ar{r}{\sim} & \rmK_0^{\mathrm{num}}(\mathbf{D}(C)) 
\end{tikzcd}\]
where $i\colon C \hookrightarrow V$ is the inclusion and $\phi$ is the asymptotic charge map. 

To see how this is a homological mirror symmetry statement, let $\mathbf{F}(F_p)$ denote the derived Fukaya category of $F_p$ and $\mathbf{F}(W)$ denote the derived Fukaya-Seidel category of $Y$. Then there is an isomorphism $\rmK_0^{\mathrm{num}}(\mathbf{F}(F_p)) \cong H_1(F_p;\bZ)$ which takes the Euler pairing to the intersection form, and an isomorphism $\rmK_0^{\mathrm{num}}(\mathbf{F}(W)) \cong H_2(W,F_p;\bZ)$ which takes the Euler pairing to the Seifert pairing \cite[Section 6.1]{fscslf}. Homological mirror symmetry predicts a derived equivalence between $\mathbf{D}(V)$ and $\mathbf{F}(W)$, compatible with the derived equivalence between $\mathbf{D}(C)$ and $\mathbf{F}(F_p)$ established in \cite{cmsec,hms4t}; the statement above can be thought of as a shadow of this on the level of numerical Grothendieck groups.

Our notion of mirror pairs enjoys the following compatibility with the theory above.

\begin{theorem} \label{thm:HMS}
Given a mirror pair in the sense of Definition \ref{def:mirrorpair}, for each $j \in \{1,2\}$ there are isomorphisms of pseudolattices which make the following diagram commute
\[\begin{tikzcd}
 H_2(Y_j,F_p;\bZ) \ar{d}{\phi_j} \ar{r}{\sim} & \rmK_0^{\mathrm{num}}(\mathbf{D}(V_j))  \ar{d}{i^*_j} & \\
 H_1(F_p;\bZ) \ar{r}{\sim} &  \rmK_0^{\mathrm{num}}(\mathbf{D}(C)).
\end{tikzcd}\]
\end{theorem}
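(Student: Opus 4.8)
The plan is to feed the data of Definition~\ref{def:mirrorpair} into the classification of quasi del Pezzo homomorphisms (Theorem~\ref{thm:qdpclassification}) and the rigidity result Proposition~\ref{prop:qdpisomorphism}. Fix $j \in \{1,2\}$. On the fibration side, Theorem~\ref{thm:qdpfibration} gives that $\phi_j \colon H_2(Y_j,F_p;\bZ) \to H_1(F_p;\bZ)$ is a quasi del Pezzo homomorphism, and by Proposition~\ref{prop:sphericalsurfacelike} its canonical class satisfies $[r_j(b)] = -K_{H_2(Y_j,F_p;\bZ)}$. On the degeneration side, Example~\ref{ex:qdp} (applied as in Section~\ref{sec:CSpseudo}) gives that $i_j^* \colon \rmK_0^{\mathrm{num}}(\mathbf{D}(V_j)) \to \rmK_0^{\mathrm{num}}(\mathbf{D}(C))$ is a quasi del Pezzo homomorphism whose canonical class, under the identification $\NS(\rmK_0^{\mathrm{num}}(\mathbf{D}(V_j))) \cong \NS(V_j)$, is the usual canonical class $K_{V_j}$. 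Condition (2) of Definition~\ref{def:mirrorpair} provides an isometry $\psi_j \colon \NS(H_2(Y_j,F_p;\bZ)) \to \NS(V_j)$ with $\psi_j([r_j(b)]) = -K_{V_j}$, i.e. $\psi_j$ takes the canonical class of the first quasi del Pezzo homomorphism to that of the second.

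First I would invoke Proposition~\ref{prop:qdpisomorphism}: since $\psi_j$ is an isometry of N\'eron--Severi lattices sending canonical class to canonical class, there exists an isomorphism of quasi del Pezzo homomorphisms, in the sense of Definition~\ref{def:qdpiso}, between $\phi_j \colon H_2(Y_j,F_p;\bZ) \to H_1(F_p;\bZ)$ and $i_j^* \colon \rmK_0^{\mathrm{num}}(\mathbf{D}(V_j)) \to \rmK_0^{\mathrm{num}}(\mathbf{D}(C))$, and moreover this isomorphism induces $\psi_j$ on N\'eron--Severi lattices. Unpacking Definition~\ref{def:qdpiso}, this isomorphism consists of: an isomorphism of pseudolattices $\Psi_j \colon H_2(Y_j,F_p;\bZ) \xrightarrow{\sim} \rmK_0^{\mathrm{num}}(\mathbf{D}(V_j))$, together with bases $(a,b)$, $(a',b')$ for the respective copies of $\rmE$ and an automorphism $\varphi_j$ of $\rmE$ carrying one to the other, such that $\varphi_j \circ \phi_j = i_j^* \circ \Psi_j$. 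This is precisely the commuting square in the statement, once we identify $H_1(F_p;\bZ)$ with $\rmK_0^{\mathrm{num}}(\mathbf{D}(C))$ via the composite of $\varphi_j$ with the standard identification $\rmK_0^{\mathrm{num}}(\mathbf{D}(C)) \cong \rmE$ from Section~\ref{sec:background} (sending $[\calO_p] \mapsto a$, $[\calO_C] \mapsto b$). The bottom isomorphism of the diagram is this composite, the top isomorphism is $\Psi_j$, the left vertical map is $\phi_j$, and the right vertical map is $i_j^*$; commutativity is exactly the relation $\varphi_j \circ \phi_j = i_j^* \circ \Psi_j$, and by construction $\Psi_j$ induces $\psi_j$ on N\'eron--Severi lattices.

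The only genuine subtlety — and I expect this to be the main point requiring care rather than a deep obstacle — is the bookkeeping around the \emph{choice of bases} for $\rmE$. The identification $H_1(F_p;\bZ) \cong \rmE$ used to build the pseudolattice $H_2(Y_j,F_p;\bZ)$ (via Lemma~\ref{lem:kodaira} and Assumption~\ref{ass:quasiLG}) need not be the one under which Definition~\ref{def:qdp} is satisfied with the standard $(a,b)$; as noted in the footnote to the proof of Lemma~\ref{lem:kodaira}, these can differ by an automorphism of $\rmE$. Likewise on the $V_j$ side the identification of $\rmK_0^{\mathrm{num}}(\mathbf{D}(C))$ with $\rmE$ is pinned down by $a' = [\calO_{p}]$, $b' = [\calO_C]$. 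I would therefore be explicit that the bottom horizontal isomorphism in the diagram is not the identity on any particular preferred copy of $\rmE$, but rather the composite $H_1(F_p;\bZ) \xrightarrow{\sim} \rmE \xrightarrow{\varphi_j} \rmE \xrightarrow{\sim} \rmK_0^{\mathrm{num}}(\mathbf{D}(C))$, with $\varphi_j$ supplied by Proposition~\ref{prop:qdpisomorphism}; this is harmless since the statement only asks for \emph{some} isomorphisms making the square commute, and it is compatible with the identification of $H_1(F_p;\bZ)$ and $\rmK_0^{\mathrm{num}}(\mathbf{D}(C))$ as mirror realisations of $\rmE$ discussed in Section~\ref{sec:background}. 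With that observation in place the proof is a direct appeal to Proposition~\ref{prop:qdpisomorphism} applied factor by factor, and nothing further is needed.
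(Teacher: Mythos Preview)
Your proposal is correct and follows exactly the same approach as the paper: observe that condition~(2) of Definition~\ref{def:mirrorpair} gives an isometry of N\'eron--Severi lattices preserving canonical classes, then invoke Proposition~\ref{prop:qdpisomorphism}. Your additional discussion unpacking Definition~\ref{def:qdpiso} and tracking the bases of $\rmE$ is correct and more explicit than the paper's two-line proof, but not logically required beyond what Proposition~\ref{prop:qdpisomorphism} already packages.
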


\begin{proof}
Condition (2) in Definition \ref{def:mirrorpair} and Proposition \ref{prop:NSMautos} imply that the isometry $\psi$ lifts to isometries $\psi_j\colon \NS(H_2(Y_j,F_p;\bZ)) \to \NS(V_j)$ such that $\psi_j([r_j(b)]) = -K_{V_j}$.  Existence of the isomorphisms in the theorem then follows from Proposition \ref{prop:qdpisomorphism}.
\end{proof}

\begin{remark} The pseudolattice isomorphisms $H_2(Y_j,F_p;\bZ) \cong \rmK_0^{\mathrm{num}}(\mathbf{D}(V_j))$  do not induce a lattice isometry between $H^2_c(U;\bZ)/\bZ F$ and $H^0(X_0;\bZ) \oplus \xi^{\perp} \oplus H^4(X_0;\bZ)$, as the Chern character is not defined over $\bZ$ (see Lemma \ref{lem:KoverZ}). However, they do induce a lattice isometry between $F^{\perp}/\bZ F$ and $H^0_{\lim}(X) \oplus I^{\perp}/I \oplus H^4_{\lim}(X)$ (see Lemma \ref{lem:Lagrees}), and (up to sign) this isomorphism induces $\psi$ on the N\'{e}ron-Severi lattices.
\end{remark}

\subsubsection{Mirror symmetry for lattice polarised del Pezzo surfaces}

In \cite{mslpdps}, Doran and the second author conjectured a mirror symmetric relationship between lattice polarised weak del Pezzo surfaces and lattice polarised rational elliptic surfaces. This should be thought of as a lattice polarised enhancement of Auroux's, Katzarkov's, and Orlov's \cite{msdpsvccs} description of Landau-Ginzburg models of del Pezzo surfaces as open sets inside rational elliptic surfaces. The aim of this section is to explore the relationship between this conjecture and the theory presented here.

In more detail, let $V$ be a weak del Pezzo surface with smooth anticanonical divisor $C$, such that $(V,C)$ admits an $N$-polarisation by a negative definite lattice $N$, in the sense of Definition \ref{def:Lpolwdp}. Let $\check{N}$ denote the orthogonal complement of $N$ in $K_V^{\perp}$; note that $\check{N}$ is also negative definite, as $V$ is weak del Pezzo (see Lemma \ref{lem:NSproperties}). Then, roughly speaking, \cite[Conjecture 5.1]{mslpdps} postulates that the mirror to an $N$-polarised weak del Pezzo surface $(V,C)$ with $C^2=d$ should be given by the open set $Y \setminus D$ in an $\check{N}$-polarised rational elliptic surface $(Y,D)$ (in the sense of Definition \ref{def:ratellpol}), where $D \subset Y$ is a fibre of Kodaira type $\rmI_d$. For a more detailed statement we refer the reader to \cite{mslpdps}; in particular, we note that there is some subtlety when $d=8$.

By Proposition \ref{prop:ratellqdp}, a rational elliptic surface $(Y,D)$ as above determines a quasi del Pezzo homomorphism $\phi\colon H_2(W,F_p;\bZ) \to H_1(F_p;\bZ)$, where $W$ is obtained by removing an open tubular neighbourhood around $D$ and $F_p$ is a fibre on the boundary of $W$. Moreover, by Theorem \ref{thm:ratellcompatibility}, specifying an $\check{N}$-polarisation on $(Y,D)$, in the sense of Definition \ref{def:ratellpol}, is equivalent to specifying an $\check{N}$-polarisation on $W$, in the sense of Definition \ref{def:fibrationintersectionpol}. These observations allow us to reformulate \cite[Conjecture 5.1]{mslpdps} in the language of this paper, as follows.

\begin{conjecture}[Mirror symmetry for lattice polarised weak del Pezzo surfaces] \label{con:wdpmirror}  The mirror to an $N$-polarised weak del Pezzo surface $(V,C)$ is given by an elliptic fibration $W \to \Delta$ over a closed disc, with fibre $F_p$ over a point $p \in \partial \Delta$, such that
\begin{enumerate}
\item $\phi\colon H_2(W,F_p;\bZ) \to H_1(F_p;\bZ)$ is a quasi del Pezzo homomorphism which is isomorphic to $\rmK_0^{\mathrm{num}}(\mathbf{D}(V)) \to \rmK_0^{\mathrm{num}}(\mathbf{D}(C))$, and
\item $W$ is $\check{N}$-polarised, in the sense of Definition \ref{def:fibrationintersectionpol}, where $\check{N}$ denotes the orthogonal complement of $N$ in $K_V^{\perp}$.
\end{enumerate}
\end{conjecture}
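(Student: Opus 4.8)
The plan is to verify that Conjecture \ref{con:wdpmirror} is a faithful restatement of \cite[Conjecture 5.1]{mslpdps} in the language developed above, so that no new mirror-symmetric input is required beyond a dictionary between the two pictures. Concretely, one must show that the ``mirror object'' of \cite{mslpdps} (an $\check{N}$-polarised rational elliptic surface $(Y,D)$ with $D$ of Kodaira type $\rmI_d$, together with the open set $Y \setminus D$) carries exactly the same data as an elliptic fibration $W \to \Delta$ over a closed disc satisfying conditions (1) and (2), and that this correspondence is compatible with the relevant notions of isomorphism.

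First I would pass from $(Y,D)$ to $W$. Removing an open tubular neighbourhood of $D$ from $Y$ produces a relatively minimal elliptic fibration $W \to \Delta$ over a closed disc with $e(W) = e(Y) - d = 12 - d$ and no multiple fibres, since $Y$ is a rational elliptic surface with section. By Proposition \ref{prop:ratellqdp} the asymptotic charge map $\phi\colon H_2(W,F_p;\bZ) \to H_1(F_p;\bZ)$ is then a quasi del Pezzo homomorphism, and combining Proposition \ref{prop:sphericalsurfacelike} with the monodromy computation gives that its canonical class $K$ satisfies $q(K,K) = 12 - e(W) = d$. By the classification of quasi del Pezzo homomorphisms (Theorem \ref{thm:qdpclassification}), together with the identification of each isomorphism class with $i^*\colon \rmK_0^{\mathrm{num}}(\mathbf{D}(V)) \to \rmK_0^{\mathrm{num}}(\mathbf{D}(C))$ for a quasi del Pezzo surface $V$, this pins $\phi$ down (up to the degree-$8$ ambiguity discussed below) to the homomorphism attached to a quasi del Pezzo $V$ with $C^2 = d$: this is condition (1). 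Condition (2) is then exactly Theorem \ref{thm:ratellcompatibility}, which identifies $\check{N}$-polarisations on $(Y,D)$ in the sense of Definition \ref{def:ratellpol} with $\check{N}$-polarisations on $W$ in the sense of Definition \ref{def:fibrationintersectionpol}.

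Next I would run this in reverse: starting from $W \to \Delta$ satisfying (1) and (2), one reads off $d := q(K,K) = C^2$ from the quasi del Pezzo homomorphism and fills in an $\rmI_d$ fibre over $\bP^1 \setminus \Delta$, whose monodromy cancels the monodromy around $\partial \Delta$, to recover a rational elliptic surface $(Y,D)$ with $e(Y) = 12$. Checking that these two passages are mutually inverse, compatibly with Theorem \ref{thm:ratellcompatibility}, then shows that the hypothesis of \cite[Conjecture 5.1]{mslpdps} translates verbatim into the statement of Conjecture \ref{con:wdpmirror}.

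The one genuinely delicate point, which I expect to be the main obstacle, is the case $d = 8$: there $C^2 = 8$ is realised by two inequivalent quasi del Pezzo surfaces ($\bF_1$ on the one hand, $\bP^1 \times \bP^1$ or $\bF_2$ on the other), corresponding to cases (1) and (2) of Theorem \ref{thm:qdpclassification}, so the dictionary must record which of these is meant --- matching the known subtlety at $d = 8$ in \cite{mslpdps}. Finally, since Conjecture \ref{con:wdpmirror} is stated as a conjecture, this argument establishes only the equivalence of the two formulations; the assertion that $(V,C)$ and $W \to \Delta$ are genuinely mirror is not proved and rests on \cite{mslpdps} together with Auroux, Katzarkov, and Orlov's description \cite{msdpsvccs} of Landau-Ginzburg models of del Pezzo surfaces.
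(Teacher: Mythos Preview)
Your proposal is correct and matches the paper's own justification: the statement is a conjecture, not a theorem, and the paper's supporting argument is precisely the paragraph preceding it, which invokes Proposition~\ref{prop:ratellqdp} to obtain the quasi del Pezzo homomorphism from $(Y,D)$ and Theorem~\ref{thm:ratellcompatibility} to transfer the $\check{N}$-polarisation between $(Y,D)$ and $W$. You have used exactly these two ingredients, added a more explicit discussion of the reverse passage $W \rightsquigarrow (Y,D)$ and of the degree-$8$ ambiguity (both of which the paper only alludes to), and correctly flagged that the mirror-symmetric content itself remains conjectural.
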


Now we explore the relationship between this conjecture and the notion of a mirror pair from Definition \ref{def:mirrorpair}. In particular, if we have a mirror pair with $12 - e(Y_1) = K_{V_1}^2 > 0$, so $V_1$ is weak del Pezzo, we would like to know when the intersection polarisations on $(V_1,C)$ and $Y_1$ satisfy Conjecture \ref{con:wdpmirror}.

From Proposition \ref{prop:couplingintersection}, we expect the answer to be related to the coupling group. The following lemma will be helpful.

\begin{lemma} \label{lem:couplingtorsion}
Suppose we have a mirror pair, in the sense of Definition \ref{def:mirrorpair}, and assume that $12 - e(Y_1) = K_{V_1}^2 > 0$. Then the coupling group $Q(L)$ is torsion if and only if the coupling group $Q(\Gamma)$ is torsion.
\end{lemma}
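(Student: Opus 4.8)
The key observation is that both coupling groups sit in analogous exact sequences coming from \eqref{eq:Qsequence}, and that a mirror pair provides an isometry identifying all the relevant lattices. Concretely, on the Tyurin side we have $0 \to L_1 \oplus L_2 \to L \to Q(L) \to 0$, where $L_i = \hat{L}\cap\NS(V_i)$ with $L_i \subset K_{V_i}^{\perp}$ (Lemma~\ref{lem:Kperp}); on the fibration side we have $0 \to \Gamma_1 \oplus \Gamma_2 \to \Gamma \to Q(\Gamma) \to 0$, where $\Gamma_i = \hat{\Gamma}\cap\NS(\rmG_i) \subset [r_i(b)]^{\perp} = K_{\rmG_i}^{\perp}$. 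Since $K_{V_1}^2 > 0$, we are in the degree-nonzero case, so Lemma~\ref{lem:latticeinjection} tells us the maps $\varphi$ are honest injections and the quotients genuinely measure the failure of $L$ (resp.\ $\Gamma$) to split.

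First I would use condition (2) of Definition~\ref{def:mirrorpair} together with Proposition~\ref{prop:qdpisomorphism} to fix, for each $i$, an isometry $\psi_i\colon \NS(\rmG_i) \to \NS(V_i)$ with $\psi_i(K_{\rmG_i}) = K_{V_i}$; this restricts to an isometry $K_{\rmG_i}^{\perp} \to K_{V_i}^{\perp}$. These assemble (via Proposition~\ref{prop:NSMautos}, exactly as invoked in Definition~\ref{def:mirrorpair}(2)) into an isometry $\psi\colon (\bZ F \oplus \bZ\tau)^{\perp}/(\bZ F \oplus \bZ\tau) \xrightarrow{\sim} I^{\perp}/I$, i.e.\ $\NS(\rmM_Y) \cong \NS(\rmM_X)$, and by the compatibility in Proposition~\ref{prop:NSMautos} this $\psi$ satisfies $\psi\circ\varphi_Y(K_{\rmG_i}^{\perp}) = \varphi_X(K_{V_i}^{\perp})$. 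Then I would show that $\psi$ carries the polarisation data across: one needs $\psi(\Gamma) = L$ as sublattices of $\NS(\rmM)$. This is where the structure of Definition~\ref{def:mirrorpair}(3)--(4) enters — as noted in the paper just before Proposition~\ref{prop:mirrordoublyadmissible}, conditions (1), (3), (4) force $\psi(\Gamma) = L^{\perp}$ inside $I^{\perp}/I$; but here I actually want $\psi$ to match the polarisation lattices $L$ and $\Gamma$ themselves, which is the content of how the lifted/intersection polarisations were defined in Sections~\ref{sec:degenerationpolarisation} and~\ref{sec:fibrationpolarisation} (the polarisation $\Gamma$ on $Y_1$, resp.\ $L_1$ on $V_1$, is by construction $\hat{\Gamma}\cap\NS(\rmG_1)$, resp.\ $\hat{L}\cap\NS(V_1)$). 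The cleanest route is: since $\psi\circ\varphi_Y(K_{\rmG_i}^{\perp}) = \varphi_X(K_{V_i}^{\perp})$ and $\Gamma_i \subset K_{\rmG_i}^{\perp}$, $L_i \subset K_{V_i}^{\perp}$, the isometry $\psi$ restricted to each summand sends $\Gamma_i$ to $\psi_i(\Gamma_i)$; and one checks $\psi_i(\Gamma_i) = L_i$ from the mirror-pair hypotheses (this amounts to the fact that a mirror pair is, in particular, a matching of the two lattice-polarised data sets — I would make this explicit by tracing through how $\Gamma$ and $L$ are determined by $\Gamma_i$, $L_i$ together with the coupling).

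With $\psi(\Gamma) = L$ and $\psi(\Gamma_1\oplus\Gamma_2) = L_1 \oplus L_2$ (the latter because $\psi$ respects the two summands $K_{\rmG_i}^{\perp} \mapsto K_{V_i}^{\perp}$ and $\varphi_Y \mapsto \varphi_X$), the five lemma applied to the two exact sequences
\begin{equation*}
\begin{tikzcd}[cramped,column sep=1.4em]
0 \ar[r] & \Gamma_1\oplus\Gamma_2 \ar[r]\ar[d,"\psi_1\oplus\psi_2"] & \Gamma \ar[r]\ar[d,"\psi"] & Q(\Gamma) \ar[r]\ar[d] & 0 \\
0 \ar[r] & L_1\oplus L_2 \ar[r] & L \ar[r] & Q(L) \ar[r] & 0
\end{tikzcd}
\end{equation*}
yields an isomorphism $Q(\Gamma) \cong Q(L)$; in particular one is torsion if and only if the other is, which is the claim. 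I expect the main obstacle to be the middle step: verifying rigorously that the mirror-pair conditions really do force $\psi_i(\Gamma_i) = L_i$ on each summand, rather than merely matching the ambient lattices $\NS(\rmM)$ and the orthogonal complements $L^{\perp}$. This requires carefully unwinding the definitions of the lifted polarisations $\hat{\Gamma}$, $\hat{L}$ and their preimages under the maps to $\NS(\rmM)$, using that $\psi$ sends $\zeta_Y$ to $\zeta_X$ (which follows from Definition~\ref{def:mirrorpair}(3), as $\hat{\psi}$ sends $F, \tau$ to the generators of $I$, and $\zeta$ is built from $K_{\rmG_i}$, $K_{V_i}$ via Lemma~\ref{lem:zetaisotrivial}); once that identification of the $\zeta$-classes is in hand, Lemma~\ref{lem:liftinglattices} and Lemma~\ref{lem:Kperp} should let everything fall into place.
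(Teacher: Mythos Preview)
Your approach has a genuine gap at the key step. You correctly observe that the mirror-pair conditions force $\psi(\Gamma) = L^{\perp}$ inside $I^{\perp}/I$, but then you try to argue that $\psi(\Gamma) = L$ instead, because that is what your five-lemma diagram would require. These two statements are incompatible: $L$ and $L^{\perp}$ are in general different sublattices of $I^{\perp}/I$ (they even have different ranks, since $\rank(L) + \rank(\Gamma) = 18$). Consequently your attempted identification $\psi_i(\Gamma_i) = L_i$ is not what the mirror-pair data gives you; what you actually get is $\psi_i(\Gamma_i) = \hat{L}^{\perp}\cap\NS(V_i)$, the intersection polarisation associated to $L^{\perp}$ rather than to $L$. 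So the diagram you write down does not commute, and there is no reason to expect $Q(\Gamma)\cong Q(L)$ as groups.

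The paper's argument goes through $\psi(\Gamma)=L^{\perp}$ and Proposition~\ref{prop:couplingintersection} instead. That proposition characterises ``$Q(L)$ torsion'' by the condition $(L_i)^{\perp}_{K_{V_i}^{\perp}} = \hat{L}^{\perp}\cap\NS(V_i)$ for each $i$. Write $(L^{\perp})_i := \hat{L}^{\perp}\cap\NS(V_i)$ for the intersection polarisation of $L^{\perp}$. Then the condition reads $(L_i)^{\perp} = (L^{\perp})_i$, and since $L_i$ and $(L^{\perp})_i$ are both primitive in the nondegenerate lattice $K_{V_i}^{\perp}$, taking orthogonal complements shows this is equivalent to $((L^{\perp})_i)^{\perp} = L_i$, which is exactly the Proposition~\ref{prop:couplingintersection} criterion for $Q(L^{\perp})$ to be torsion. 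Thus $Q(L)$ torsion $\iff Q(L^{\perp})$ torsion. Now $\psi$ carries $\Gamma$ to $L^{\perp}$ and respects the decomposition into $K^{\perp}$-pieces, so $Q(\Gamma)$ torsion $\iff Q(L^{\perp})$ torsion, and you are done. Note that the lemma only claims equivalence of the torsion property, not an isomorphism $Q(L)\cong Q(\Gamma)$; this is consistent with the fact that $\psi$ relates $\Gamma$ to $L^{\perp}$, not to $L$.
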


\begin{proof} This is a straightforward consequence of Proposition \ref{prop:couplingintersection} and the fact that, for a mirror pair, we have $\psi(\Gamma) = L^{\perp}$.
\end{proof}

Now we state the main theorem of this section.

\begin{theorem} \label{thm:wdpcompatibility}
Suppose that we have a mirror pair, in the sense of Definition \ref{def:mirrorpair}, and assume that $V_1$ is weak del Pezzo. Then $(V_1,C)$ is an $L_1$-polarised weak del Pezzo surface (in the sense of Definition \ref{def:Lpolwdp}) and $Y_1 \to \Delta_1$ is a $\Gamma_1$-polarised fibration over a disc (in the sense of Definition \ref{def:fibrationintersectionpol}), where $L_1$ and $\Gamma_1$ denote the intersection polarisations induced by $L$ and $\Gamma$ respectively. 

Moreover, if the coupling groups $Q(L)$ and $Q(\Gamma)$ are torsion, then $(V_1,C)$ and $Y_1\to \Delta_1$ satisfy Conjecture \ref{con:wdpmirror}, with $N = L_1$ and $\check{N} = \Gamma_1$.
\end{theorem}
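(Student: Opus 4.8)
The plan is to establish the two assertions of the theorem in turn. The first part — that $(V_1,C)$ is $L_1$-polarised and $Y_1\to\Delta_1$ is $\Gamma_1$-polarised — should follow almost immediately from the machinery already set up. Indeed, a mirror pair is in particular a primitively $L$-polarised Tyurin degeneration (condition (1) of Definition~\ref{def:mirrorpair}) whose central fibre is $X_0=V_1\cup_CV_2$, so by Theorem~\ref{thm:polarisedtyurinstable} the central fibre is an $\hat L$-polarised stable K3 surface of Type~II, where $\hat L$ is the lifted polarisation determined by $L$. Since $V_1$ is assumed weak del Pezzo, Theorem~\ref{thm:Lpolwdp} then gives that $L_1:=\hat L\cap\NS(V_1)$ is an $L_1$-polarisation of $(V_1,C)$ in the sense of Definition~\ref{def:Lpolwdp}. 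Symmetrically, from the $\check L$-quasipolarised elliptically fibred K3 surface $Y$ with the allowable loop $\gamma$ one gets the lifted polarisation $\hat\Gamma$ on $\Psi^\perp_{\rmK}/\Psi$, and Theorem~\ref{thm:Gammapolfibration} (applicable since $e(Y_1)=12-K_{V_1}^2<12$) yields that $\Gamma_1:=\hat\Gamma\cap\NS(\rmG_1)$ is a $\Gamma_1$-polarisation on $Y_1$ in the sense of Definition~\ref{def:fibrationintersectionpol}. This handles the first paragraph of the conclusion.

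For the second part, I would first record that condition~(1) of Conjecture~\ref{con:wdpmirror} is a direct consequence of Theorem~\ref{thm:HMS}: the isomorphisms of pseudolattices $H_2(Y_1,F_p;\bZ)\cong\rmK_0^{\mathrm{num}}(\mathbf D(V_1))$ supplied there intertwine $\phi_1$ with $i_1^*$, so $\phi_1$ is a quasi del Pezzo homomorphism isomorphic to $\rmK_0^{\mathrm{num}}(\mathbf D(V_1))\to\rmK_0^{\mathrm{num}}(\mathbf D(C))$. So the substantive content is condition~(2), namely that the intersection polarisation $\Gamma_1$ on $Y_1$ matches $\check N$, the orthogonal complement of $N=L_1$ inside $K_{V_1}^\perp$. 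Unwinding definitions, $\Gamma_1$ is realised inside $K_{\rmG_1}^\perp=[r_1(b)]^\perp$ via the injection $\varphi_1$ of Lemma~\ref{lem:latticeinjection}, and under the isometry $\psi_1$ of Definition~\ref{def:mirrorpair}(2) (which sends $[r_1(b)]$ to $-K_{V_1}$, hence $K_{\rmG_1}^\perp$ to $K_{V_1}^\perp$) the claim becomes: $\psi_1(\Gamma_1)=\bigl(L_1\bigr)^\perp_{K_{V_1}^\perp}$, the orthogonal complement of $L_1$ taken in $K_{V_1}^\perp$. Here is where the coupling hypothesis enters. Since $\psi(\Gamma)=L^\perp$ (the orthogonal complement inside $I^\perp/I\cong\NS(\rmM)$) for a mirror pair, and $\hat\Gamma$, $\hat L$ are the lifted polarisations corresponding to $\Gamma$, $L$, the lifted complement $\hat\Gamma^\perp$ is identified with $\hat L$ under $\psi$ (using that $\zeta$ is totally isotropic, so orthogonal complement and lift of $L^\perp$ coincide, as in the proof of Proposition~\ref{prop:couplingintersection}). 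Intersecting with $\NS(\rmG_1)$ resp.\ $\NS(V_1)$ and invoking Proposition~\ref{prop:couplingintersection} — whose torsion hypothesis on $Q(L)$ (equivalently $Q(\Gamma)$, by Lemma~\ref{lem:couplingtorsion}) is exactly what we are assuming — gives $\hat\Gamma^\perp\cap\NS(\rmG_1)=(\Gamma_1)^\perp_{K_{\rmG_1}^\perp}$ and $\hat L\cap\NS(V_1)=L_1$. Combining these, and transporting across $\psi_1$, delivers $\Gamma_1=\check N$ as desired.

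The main obstacle I anticipate is the bookkeeping required to show that the several identifications in play are compatible: the isometry $\psi$ on $\NS(\rmM)$ is built from the $\psi_i$ on the factors via Proposition~\ref{prop:NSMautos}, and one must check that restricting $\psi$ to the image of $K_{\rmG_1}^\perp$ under $\varphi_X^Y$ really does agree with $\psi_1$ on $K_{\rmG_1}^\perp$ (identified with $K_{V_1}^\perp$), and furthermore that the lifted-polarisation intersection operations on the two sides correspond. This is essentially the statement that the diagram relating $\varphi_X$, $\varphi_Y$, the $\psi_i$ and $\psi$ commutes, which is the content of Proposition~\ref{prop:NSMautos} together with Proposition~\ref{prop:mirrordegreenonzero}'s reformulation (2$'$); but assembling it cleanly, while keeping track of which orthogonal complements are taken in which ambient lattice ($K_{V_i}^\perp$ versus $\NS(V_i)$ versus $\NS(\rmM)$), is where the care is needed. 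Once that is in place, the torsion hypothesis feeds directly into Proposition~\ref{prop:couplingintersection} on each side, and the argument closes. I would also remark that the condition $K_{V_1}^2>0$ is used twice: once to ensure $\deg(\rmG)\neq 0$ so that $\varphi$ and the intersection-polarisation formalism of Lemma~\ref{lem:latticeinjection} and Proposition~\ref{prop:couplingintersection} apply, and once to guarantee $e(Y_1)<12$ so that Theorem~\ref{thm:Gammapolfibration} is available.
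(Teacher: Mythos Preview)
Your proposal is correct and follows essentially the same approach as the paper: the first part via Theorems~\ref{thm:Lpolwdp} and~\ref{thm:Gammapolfibration}, condition~(1) of Conjecture~\ref{con:wdpmirror} via Theorem~\ref{thm:HMS}, and condition~(2) via $\psi(\Gamma)=L^{\perp}$ combined with Proposition~\ref{prop:couplingintersection}. Your treatment is in fact more explicit than the paper's own proof, which is quite terse; your final paragraph correctly identifies the only real subtlety (compatibility of $\psi$ with the $\psi_i$ under $\varphi_X,\varphi_Y$), and your route through $\hat{\Gamma}^{\perp}\leftrightarrow\hat{L}$ followed by a final $\perp$ is a harmless variant of the direct route $\hat{\Gamma}\leftrightarrow\hat{L}^{\perp}$.
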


\begin{remark}
By Lemma \ref{lem:couplingtorsion}, if one of $Q(L)$ and $Q(\Gamma)$ is torsion, then both are. By Proposition \ref{prop:MWQ}, if $\check{L} = \NS(Y)$ then a sufficient condition for this to hold is that $\MW(Y)$ is torsion.
\end{remark}

\begin{proof}
$(V_1,C)$ is an $L_1$-polarised weak del Pezzo surface by Theorem \ref{thm:Lpolwdp} and $Y_1$ is a $\Gamma_1$-polarised fibration over a disc by Theorem \ref{thm:Gammapolfibration} (noting that $12 - e(Y_1) = K_{V_1}^2 > 0$).

Given this, condition (1) from Conjecture \ref{con:wdpmirror} follows from Theorem \ref{thm:HMS}. Finally, to prove condition (2) we need to show that the orthogonal complement of $L_1$ in ${K_{V_1}^{\perp}}$ is isomorphic to $\Gamma_1$; this follows from  Proposition \ref{prop:couplingintersection} and the fact that, for a mirror pair, we have $\psi(\Gamma) = L^{\perp}$.
\end{proof}

\subsection{The DHT conjecture} \label{sec:dht}

The DHT philosophy \cite{mstdfcym} postulates that there is a mirror correspondence between Tyurin degenerations and codimension $1$ fibrations on Calabi-Yau manifolds. Moreover, it suggests that it should be possible to divide the base of the fibration into pieces which ``look like'' the Landau-Ginzburg models of the components of the Tyurin degeneration.

The K3 case of this idea is explored somewhat in \cite[Section 4]{mstdfcym}, which presents some evidence for this idea but does not state a concrete conjecture. Using the theory developed in this paper, we are now able to state a precise version of the DHT conjecture for K3 surfaces.

\begin{conjecture}[DHT for K3 surfaces] \label{conj:DHT}
Let $\calX \to \Delta$ be an $L$-polarised Tyurin degeneration of K3 surfaces, with general fibre $X$. Assume that there is a primitive $e \in I$ which is $m$-admissible in $L^{\perp}$ and let $\check{L} = e^{\perp}_{L^{\perp}}/\bZ e$ denote the mirror lattice to $L$. Let $Y$ be an $\check{L}$-quasi\-polarised K3 surface. Then $Y$ admits an elliptic fibration $\pi\colon Y \to \bP^1$ and there exists an allowable loop $\gamma \subset \bP^1$ such that the $\check{L}$-quasi\-polarisation is compatible with $\pi$ and $\gamma$, so that $\calX \to \Delta$ and $\pi\colon Y \to \bP^1$ are a mirror pair in the sense of Definition \ref{def:mirrorpair}.

Conversely, let $Y$ be an $L'$-quasi\-polarised K3 surface which admits an elliptic fibration  $\pi\colon Y \to \bP^1$. Suppose that there is an allowable loop $\gamma \subset \bP^1$ such that the $L'$-quasi\-polarisation is compatible with $\pi$ and $\gamma$. Assume that the class $\tau$ is $m$-admissible in $(L')^{\perp}$ and let $\check{L}' = \tau^{\perp}_{(L')^{\perp}} / \bZ\tau$ denote the mirror lattice to $L'$. Then there exists an $\check{L}'$-polarised Tyurin degeneration of K3 surfaces $\calX \to \Delta$, so that $\calX \to \Delta$ and $\pi\colon Y \to \bP^1$ are a mirror pair in the sense of Definition \ref{def:mirrorpair}.
\end{conjecture}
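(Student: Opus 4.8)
The plan is to establish both directions of the conjecture by combining three inputs: Dolgachev--Nikulin mirror symmetry for the K3 lattices (which delivers condition~(4) of Definition~\ref{def:mirrorpair}), the classification of quasi del Pezzo homomorphisms (Theorem~\ref{thm:qdpclassification}) together with the passage to honest quasi del Pezzo surfaces recorded after it and Friedman's smoothing theory (which constructs whichever of the two geometric objects is to be produced), and the pseudolattice dictionary of Sections~\ref{sec:pseudolattices}--\ref{sec:fibrations} (which verifies conditions~(2) and (3)). I would first treat the doubly admissible case, where Propositions~\ref{prop:mirrordegreenonzero} and~\ref{prop:mirrordoublyadmissible} collapse the mirror-pair conditions to a single requirement: the existence of an isometry $\psi\colon(\bZ F\oplus\bZ\tau)^{\perp}/(\bZ F\oplus\bZ\tau)\stackrel{\sim}{\longrightarrow}I^{\perp}/I$ with $\psi(\Gamma)=L^{\perp}$ and $\psi\circ\varphi_Y([r_i(b)]^{\perp})=\varphi_X(K_{V_i}^{\perp})$ for $i\in\{1,2\}$. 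The general $m$-admissible case would then be handled by the same argument with a copy of $H(m)$ replacing one copy of $H$ in the lattice bookkeeping below.

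\textbf{Forward direction.} Given the $L$-polarised Tyurin degeneration $\calX\to\Delta$ and the $m$-admissible $e\in I$, \cite[Lemma 5.4]{mslpk3s} writes $L^{\perp}_{\Lambda_{\rmK 3}}=H(m)\oplus\check{L}$ with $e$ a primitive generator of $H(m)$; in the doubly admissible case $m=1$ and Proposition~\ref{prop:doublyadmissibleconsequnces} refines this to $L^{\perp}_{\Lambda_{\rmK 3}}=H\oplus H\oplus\Gamma$, so that $\check{L}=H\oplus\Gamma$ and $F^{\perp}_{\check{L}}/\bZ F\cong\Gamma$. Since $H\subset\check{L}\subset\NS(Y)$, \cite[Remark 11.1.4]{lok3s} produces an elliptic fibration $\pi\colon Y\to\bP^1$ with section, with fibre class $F$ the isotropic generator of $H$. (In the general $m$-admissible case one must first check separately that $\NS(Y)$ contains a primitive isotropic vector.) Meanwhile Proposition~\ref{prop:NSfordegenerations} supplies $I^{\perp}/I\cong\NS(\rmM)\cong H\oplus E_8\oplus E_8$ together with the primitive embedding $L\hookrightarrow I^{\perp}/I$, and one may take $\Gamma=L^{\perp}$ computed inside this copy of $H\oplus E_8\oplus E_8$. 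The genuinely hard step is to exhibit an \emph{allowable} loop $\gamma\subset\bP^1\setminus\Sigma$ that is compatible with $\pi$ and the polarisation and that partitions the singular fibres of $\pi$ so that $12-e(Y_1)=K_{V_1}^2$ and the induced intersection polarisations on $Y_1,Y_2$ match those of $V_1,V_2$. Once such a $\gamma$ is found, allowability gives quasi del Pezzo homomorphisms $\phi_i$ by Theorem~\ref{thm:qdpfibration}, Proposition~\ref{prop:qdpisomorphism} supplies the isometries $\psi_i$ of condition~(2), and the reduction of the first paragraph closes the argument.

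\textbf{Converse direction.} Here $Y$, $\pi$ and an allowable $\gamma$ are given, so the pieces $Y_i\to\Delta_i$ already determine quasi del Pezzo homomorphisms $\phi_i$ by Theorem~\ref{thm:qdpfibration}. By the passage from quasi del Pezzo homomorphisms to quasi del Pezzo surfaces recorded after Theorem~\ref{thm:qdpclassification}, each $\phi_i$ is isomorphic to $i^{*}\colon\mathrm{K}_0^{\mathrm{num}}(\mathbf{D}(V_i))\to\mathrm{K}_0^{\mathrm{num}}(\mathbf{D}(C))$ for a quasi del Pezzo surface $V_i$ with smooth anticanonical curve $C$, and this identifies $\NS(V_i)\cong\NS(H_2(Y_i,F_p;\bZ))$ compatibly with canonical classes. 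Since $e(Y_1)+e(Y_2)=24$, Proposition~\ref{prop:sphericalsurfacelike} gives $K_{V_1}^2=-K_{V_2}^2$, so $N_{C/V_1}$ and $N_{C/V_2}$ have opposite degrees and $N_{C/V_1}\otimes N_{C/V_2}$ has degree $0$ on $C$; using the freedom in the gluing identification of $C$ (translation by an element of $\Pic^0(C)$, classical, cf.\ Friedman \cite{npgttk3s}) one arranges $N_{C/V_1}\otimes N_{C/V_2}\cong\calO_C$, so that $X_0=V_1\cup_C V_2$ is a $d$-semistable stable K3 surface of Type~II. Friedman's smoothing theorem for $d$-semistable surfaces then produces a Tyurin degeneration $\calX\to\Delta$ with central fibre $X_0$. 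It remains to promote this to a \emph{polarised} Tyurin degeneration: one builds the $\hat{L}$-polarisation on $X_0$ from the intersection polarisations (Theorem~\ref{thm:Lpolwdp} together with Lemma~\ref{lem:liftinglattices} run in reverse), checks via the lattice computation of Proposition~\ref{prop:mirrordoublyadmissible} that its polarising lattice is exactly the $\check{L}'$ prescribed by the $m$-admissibility of $\tau$, and then uses a local Torelli / semicontinuity argument for the family produced by Friedman's theorem---choosing the smoothing so that the classes in $\Gamma$ remain of type $(1,1)$---to see that the polarisation extends monodromy-invariantly and positively in the sense of Definition~\ref{def:polarisedtyurin}.

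\textbf{The main obstacle.} As the authors note in the introduction, the crux is the choice of $\gamma$ in the forward direction. The Shioda--Tate formula pins down the multiset of Kodaira types of singular fibres of $\pi$ from $\NS(Y)\cong\check{L}$, but not their positions on $\bP^1$; yet it is precisely the partition of these fibres by $\gamma$ that fixes $e(Y_1)$---hence $\deg(\rmG)$---and the two intersection polarisations. The evidence in \cite{nftdk3spr18l} shows that no such $\gamma$ can be singled out by lattice data alone, so a complete proof will need to supplement the lattice-theoretic framework of this paper with extra combinatorial input (monodromy around $\gamma$, or equivalently the string-junction data of the fibration) constraining which fibre configurations an allowable loop may separate.
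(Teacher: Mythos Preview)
The statement you are attempting to prove is a \emph{conjecture}, not a theorem: the paper does not supply a proof and explicitly says that it does not know how to give one. The remark immediately following Conjecture~\ref{conj:DHT} states that the difficult part is precisely the choice of the loop $\gamma$ (and dually the choice of birational model for the Tyurin degeneration), and that the examples in \cite{nftdk3spr18l} show this choice cannot be made from lattice data alone. Your own final paragraph acknowledges exactly this: you write that ``no such $\gamma$ can be singled out by lattice data alone'' and that ``a complete proof will need to supplement the lattice-theoretic framework of this paper with extra combinatorial input.'' So what you have written is not a proof but a proof \emph{strategy} with a self-identified gap at its core; the paper's position is the same.

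Beyond the acknowledged gap, several steps in your sketch are not yet arguments. In the forward direction you do not establish, in the general $m$-admissible case, that $\NS(Y)$ contains the primitive isotropic class needed to produce an elliptic fibration. In the converse direction, the passage from the quasi del Pezzo homomorphisms $\phi_i$ to actual quasi del Pezzo surfaces $V_i$ only pins down the isomorphism class of $\NS(V_i)$, not the surfaces themselves or the specific anticanonical curve $C$; Friedman's smoothing theorem then gives \emph{some} Tyurin degeneration, but promoting it to an $\check{L}'$-polarised one requires choosing the smoothing direction so that the classes in $\hat{L}$ remain algebraic and nef---your ``local Torelli / semicontinuity argument'' is a placeholder, not a proof. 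Finally, even granting all of this, you have not verified condition~(3) of Definition~\ref{def:mirrorpair} outside the doubly admissible case: Proposition~\ref{prop:mirrordoublyadmissible} is what makes that condition automatic, and it does not apply for general $m$.
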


\begin{remark}
The difficult part of this conjecture is the choice of the loop $\gamma$ or, conversely, the choice of birational model for the Tyurin degeneration. The examples in \cite{nftdk3spr18l} show that knowledge of the class $\tau \in \check{L}^{\perp}$ is not sufficient to determine $\gamma$ and, conversely, knowledge of the the rank $2$ isotropic sublattice $I \subset (\check{L}')^{\perp}$ is not sufficient to determine the birational model for the Tyurin degeneration.
\end{remark}

\begin{remark} We expect the conjecture above to relate doubly admissible Tyurin degenerations to doubly admissible elliptically fibred K3 surfaces.

Indeed, suppose that $\calX \to \Delta$ is a doubly admissible $L$-polarised Tyurin degeneration. Choose primitive generators $e_1,e_2 \in I$ as in Definition \ref{def:doublyadmissible}. By Proposition \ref{prop:doublyadmissibleconsequnces} and the proof of Lemma \ref{lem:doublyadmissibleconditions}, we may decompose $L = H \oplus H \oplus \Gamma$, where $e_1$ (resp. $e_2$) is a primitive element of the first (resp. second) factor of $H$.

By Proposition \ref{prop:doublyadmissible0cusp}, the $1$-cusp in the Baily-Borel compactification of the moduli space of $L$-polarised K3 surfaces corresponding to $I \subset L^{\perp}$ is incident to a unique $0$-cusp which, up to isometry, we may assume corresponds to the isotropic vector $e_1$. Let $\check{L} = (e_1)^{\perp}_{L^{\perp}}/\bZ e_1$ be the mirror lattice. Then $\check{L} \cong H \oplus \Gamma$ and $e_2$ descends to a primitive isotropic vector in the factor $H \subset \check{L}$. It follows that $e_2 \in \check{L}$ and $e_1 \in \check{L}^{\perp}$ are both $1$-admissible. Moreover, if $Y$ is a K3 surface with $\NS(Y) \cong \check{L}$, it follows from \cite[Remark 8.2.13]{lok3s} that $Y$ is elliptically fibred and that we may choose the isometry $\check{L} \to \NS(Y)$ so that $e_2$ is taken to the class of a fibre. If Conjecture \ref{conj:DHT} holds, we expect that the class $\tau \in \check{L}^{\perp}$ defined by the loop $\gamma \subset \bP^1$ should equal $e_1$; if this is the case then the $\check{L}$-quasi\-polarisation on $Y$ will be doubly admissible and compatible with $\pi$ and $\gamma$.

Conversely, if $\pi\colon Y \to \bP^1$ is an elliptically fibred K3 surface with a doubly admissible $L'$-quasi\-polarisation, then $F \in L'$ and $\tau \in (L')^{\perp}$ are both $1$-admissible. If $\check{L}' = \tau^{\perp}_{(L')^{\perp}} / \bZ\tau$ is the mirror lattice to $L'$, then $F$ and $\tau$ give rise to a doubly admissible isotropic sublattice $I \subset (\check{L}')^{\perp}$. If Conjecture \ref{conj:DHT} holds, we expect this $I \subset (\check{L}')^{\perp}$ to be the rank $2$ isotropic sublattice corresponding to the mirror $\check{L}'$-polarised Tyurin degeneration.
\end{remark}

\bibliography{publications,preprints}
\bibliographystyle{amsalpha}
\end{document}